\documentclass[12pt]{article}


\usepackage{amsmath,amssymb,amsfonts,amsthm}
\usepackage{ifpdf}
\usepackage{enumerate}
\usepackage{ulem}
\usepackage{leftidx}

\usepackage{breqn}

\usepackage{tikz}
\usetikzlibrary{calc}
\usepackage{tikz-qtree}
\usetikzlibrary{decorations.pathmorphing}

\ifpdf
\usepackage[pdftex,plainpages=false,hypertexnames=false,pdfpagelabels]{hyperref}
\else
\usepackage[dvips,plainpages=false,hypertexnames=false]{hyperref}
\fi

\vfuzz2pt 
\hfuzz2pt 


\newcommand{\arxiv}[1]{\href{http://arxiv.org/abs/#1}{\tt arXiv:\nolinkurl{#1}}}

\newcommand{\googlebooks}[1]{(preview at \href{http://books.google.com/books?id=#1}{google books})}

\theoremstyle{plain}
\newtheorem{prop}{Proposition}[section]
\newtheorem{conj}[prop]{Conjecture}
\newtheorem{thm}[prop]{Theorem}

\newtheorem{lem}[prop]{Lemma}

\newtheorem{cor}[prop]{Corollary}
\newtheorem*{cor*}{Corollary}

\numberwithin{equation}{section}

\theoremstyle{remark}

\newtheorem{remark}[prop]{Remark}           
\newtheorem*{rem*}{Remark}               
\newtheorem*{ex*}{Example}                

\theoremstyle{definition}
\newtheorem{defn}[prop]{Definition}         
   
\newtheorem{nota}[prop]{Notation}   
\newtheorem*{defn*}{Definition}             

\theoremstyle{plain}

\newcounter{comment}
\newcommand{\noop}[1]{}

\def\clap#1{\hbox to 0pt{\hss#1\hss}}

\newcommand{\Natural}{\mathbb N}
\newcommand{\Integer}{\mathbb Z}

\newcommand{\Complex}{\mathbb C}

\def\semicolon{;}
\def\applytolist#1{
    \expandafter\def\csname multi#1\endcsname##1{
        \def\multiack{##1}\ifx\multiack\semicolon
            \def\next{\relax}
        \else
            \csname #1\endcsname{##1}
            \def\next{\csname multi#1\endcsname}
        \fi
        \next}
    \csname multi#1\endcsname}

\def\calc#1{\expandafter\def\csname c#1\endcsname{{\mathcal #1}}}
\applytolist{calc}QWERTYUIOPLKJHGFDSAZXCVBNM;
\def\bbc#1{\expandafter\def\csname bb#1\endcsname{{\mathbb #1}}}
\applytolist{bbc}QWERTYUIOPLKJHGFDSAZXCVBNM;
\def\bfc#1{\expandafter\def\csname bf#1\endcsname{{\mathbf #1}}}
\applytolist{bfc}QWERTYUIOPLKJHGFDSAZXCVBNM;

\newcommand{\id}{\boldsymbol{1}}
\renewcommand{\imath}{\mathfrak{i}}
\renewcommand{\jmath}{\mathfrak{j}}

\newcommand{\iso}{\cong}

\newcommand{\set}[1]{\left\{#1\right\}}

\makeatletter
\newcommand{\hashdef}[2]{\@namedef{#1}{#2}}
\newcommand{\hashlookup}[1]{\@nameuse{#1}}
\makeatother

\IfFileExists{../../graphs/lookup.tex}%
	{\newcommand{\pathtographs}{../../graphs/}}%
	{\newcommand{\pathtographs}{diagrams/graphs/}}

\input{\pathtographs lookup.tex}

\newcommand{\bigraph}[1]{{\hspace{-3pt}\begin{array}{c}%
  \raisebox{-2.5pt}{\includegraphics[height=6mm]{\pathtographs \hashlookup{#1}}}%
\end{array}\hspace{-3pt}}}

\newcommand{\graph}[1]{{\hspace{-3pt}\begin{array}{c}%
  \raisebox{-2.5pt}{\includegraphics[height=10mm]{\pathtographs \hashlookup{#1}}}%
\end{array}\hspace{-3pt}}}

\newcommand{\tensor}{\otimes}

\newcommand{\Mat}[1]{\operatorname{\mathbf{Mat}}\left(#1\right)}

\newcommand{\End}[1]{\operatorname{End}\left(#1\right)}


\textwidth   5.5in%
\textheight  9.0in%
\oddsidemargin 12pt%
\evensidemargin 12pt

\topmargin -.6in%
\headsep .5in

\usepackage{xifthen}  
\usepackage{breqn}
\usepackage{yfonts}
\usepackage{ulem}

\tikzstyle{shaded}=[fill=rho!10!theta!20!gray!30!white]
\tikzstyle{unshaded}=[fill=white]
\tikzstyle{empty box}=[circle, draw, thick, fill=white, opaque, inner sep=2mm]
\tikzstyle{annular}=[scale=.7, inner sep=1mm, baseline]
\tikzstyle{rectangular}=[scale=.75, inner sep=1mm, baseline=-.1cm]

\usetikzlibrary{shapes}
\usetikzlibrary{backgrounds}
\usetikzlibrary{decorations,decorations.pathreplacing,decorations.markings}
\usetikzlibrary{fit,calc,through}
\usetikzlibrary{external}

\tikzstyle{mid>}=[decoration={markings, mark=at position 0.5 with {\arrow{>}}}, postaction={decorate}]
\tikzstyle{mid<}=[decoration={markings, mark=at position 0.5 with {\arrow{<}}}, postaction={decorate}]
\tikzstyle{upper>}=[decoration={markings, mark=at position 0.8 with {\arrow{>}}}, postaction={decorate}]
\tikzstyle{upper<}=[decoration={markings, mark=at position 0.8 with {\arrow{<}}}, postaction={decorate}]
\tikzstyle{lower>}=[decoration={markings, mark=at position 0.2 with {\arrow{>}}}, postaction={decorate}]
\tikzstyle{lower<}=[decoration={markings, mark=at position 0.2 with {\arrow{<}}}, postaction={decorate}]

\usepackage{xcolor}
\definecolor{dark-red}{rgb}{0.7,0.25,0.25}
\definecolor{dark-blue}{rgb}{0.15,0.15,0.55}
\definecolor{medium-blue}{rgb}{0,0,0.65}
\definecolor{A3}{RGB}{0,150,0}
\definecolor{A4}{rgb}{1,0.5,0}
\definecolor{zeta}{named}{violet}
\definecolor{alpha}{RGB}{0,150,0}
\definecolor{theta}{named}{blue}
\definecolor{rho}{named}{red}
\definecolor{mu}{named}{orange}
\hypersetup{
   colorlinks, linkcolor={purple},
   citecolor={medium-blue}, urlcolor={medium-blue}
}

\DeclareMathOperator{\spann}{span}
\DeclareMathOperator{\Out}{Out}

\let\Mat\undefined
\let\End\undefined
\DeclareMathOperator{\End}{End}
\DeclareMathOperator{\Mat}{Mat}

\DeclareMathOperator{\train}{train}
\DeclareMathOperator{\wheel}{wheel}
\DeclareMathOperator{\twocar}{twocar}
\DeclareMathOperator{\engine}{engine}
\DeclareMathOperator{\caboose}{caboose}
\newcommand{\jw}[1]{f^{(#1)}}

\renewcommand{\set}[2]{\left\{#1\middle|#2\right\}}

\usepackage{longtable}

\newcommand{\fish}{\cB\cH\cF}
\renewcommand{\AA}{\cA\cA}
\newcommand{\AT}{\cA\cT}
\newcommand{\TT}{\cT\cT}
\newcommand{\Afour}[1]{\textcolor{A4}{#1}}
\newcommand{\Athree}[1]{\textcolor{A3}{#1}}

\newcommand{\Ucircle}[2]{
	\draw[thick, unshaded] #2 circle (.4cm);
	\node at #2 {$#1$};
	\node at ($#2+(0,.55)$) {$\star$};
}
\newcommand{\largeUcircle}[2]{
	\draw[thick, unshaded] #2 circle (.4cm);
	\node at #2 {\scriptsize{{$#1$}}};
	\node at ($#2+(0,.55)$) {$\star$};
}
\newcommand{\UcircleStar}[2]{
	\draw[thick, unshaded] #2 circle (.4cm);
	\node at #2 {$#1$};
	\node at ($#2+(0,-.55)$) {$\star$};
}
\newcommand{\largeUcircleStar}[2]{
	\draw[thick, unshaded] #2 circle (.4cm);
	\node at #2 {\scriptsize{{$#1$}}};
	\node at ($#2+(0,-.55)$) {$\star$};
}
\newcommand{\nbox}[5]{
	\draw[thick, #1] ($#2+(-.4,-.4)+(-#3,0)$) -- ($#2+(-.4,.4)+(-#3,0)$) -- ($#2+(.4,.4)+(#4,0)$) -- ($#2+(.4,-.4)+(#4,0)$) -- ($#2+(-.4,-.4)+(-#3,0)$); 
	\coordinate (a) at ($#2+(-#3,0)$);
	\coordinate (b) at ($#2+(#4,0)$);
	\node at ($1/2*(a)+1/2*(b)$) {$#5$};
}
\newcommand{\zetaUpArrow}[1]{
	\draw[thick, zeta] ($#1+(-.1,-.05)$) -- ($#1+(0,.05)$) -- ($#1+(.1,-.05)$);
}
\newcommand{\zetaDownArrow}[1]{
	\draw[thick, zeta] ($#1+(-.1,.05)$) -- ($#1+(0,-.05)$) -- ($#1+(.1,.05)$);
}

\newcommand{\noshow}[1]{{}}
\newcommand{\numdam}[1]{{}}


\title{Fusion categories between $\cC \boxtimes \cD$ and $\cC * \cD$}
\author{Masaki Izumi, Scott Morrison, and David Penneys}
\begin{document}
\maketitle

\begin{abstract}
Given a pair of fusion categories $\cC$ and $\cD$, we may form the free product $\cC * \cD$ and the tensor product $\cC \boxtimes \cD$. It is natural to think of the tensor product as a quotient of the free product. What other quotients are possible?

When $\cC=\cD=A_2$, there is an infinite family of quotients interpolating between the free product and the tensor product (closely related to the $A_{2n-1}^{(1)}$ and $D_{n+2}^{(1)}$ subfactors at index 4).
Bisch and Haagerup discovered one example of such an intermediate quotient when $\cC=A_2$ and $\cD=T_2$, and suggested that there might be another family here. 
We show that such quotients are characterized by parameters $n \geq 1$ and $\omega$ with $\omega^{2n}=1$. 
For $n=1,2,3$, we show $\omega$ must be $1$, and construct the corresponding quotient ($n=1$ is the tensor product, $n=2$ is the example discovered by Bisch and Haagerup, and $n=3$ is new). 
We further show that there are no such quotients for $4 \leq n \leq 10$.
Our methods also apply to the case when $\cC=\cD=T_2$, and we prove similar results there.

During the preparation of this manuscript we learnt of an independent result of Liu's on subfactors. With the translation between the subfactor and fusion category settings provided here, it follows there are no such quotients for any $n \geq 4$.
\end{abstract}

\section{Introduction}

Given two pivotal categories $\cC$ and $\cD$, there are always two constructions resulting in pivotal categories which deserve to be called `composites' of $\cC$ with $\cD$, namely the tensor product $\cC\boxtimes \cD$ and the free product $\cC * \cD$.
Diagrammatically, these are very easy to describe. 
In the free product, we allow planar disjoint unions of diagrams from $\cC$ and $\cD$. In the tensor product, we superimpose a diagram from $\cC$ with a diagram from $\cD$. We can alternatively think about this as a diagram built from generators from both $\cC$ and $\cD$, allowing strings from $\cC$ and $\cD$ to cross each other, obeying the usual (symmetric, not braided) relations for crossings.

\begin{figure}[ht]
\centering
\begin{minipage}{0.47\textwidth}
\centering
\begin{tikzpicture}
\draw[gray] (0,0) circle (2cm);
\clip (0,0) circle (2cm);
\draw[thick,red] (-3,0) circle (3cm);
\draw[thick,red] (-2,0) -- (0,0);
\draw[thick,blue] (135:2) circle (0.5cm);
\draw[thick,blue] (0,-2) -- (1,2);
\draw[thick,red] (2,0) circle (0.8cm);
\end{tikzpicture} \\
(a) a typical free product morphism
\end{minipage}
\begin{minipage}{0.45\textwidth}
\centering
\begin{tikzpicture}
\draw[gray] (0,0) circle (2cm);
\clip (0,0) circle (2cm);
\draw[thick,red] (-3,0) circle (3cm);
\draw[thick,red] (-2,0) -- (0,0);
\draw[thick,blue] (135:2) circle (0.5cm);
\draw[thick,blue] (-2,-1) -- (2,0);
\draw[thick,red] (2,0) circle (0.8cm);
\end{tikzpicture} \\
(b) a typical $\tensor$-product morphism
\end{minipage}
\end{figure}

Now, one should think of the tensor product as a quotient of the free product: there is a faithful, dominant functor $\cC * \cD \to \cC \boxtimes \cD$ (because diagrams without crossings can be thought of as diagrams with zero crossings). Generalizing, we will call any such quotient of $\cC * \cD$ a composite of $\cC$ and $\cD$. 

We would like to study the question of which composites are possible, and begin in this paper by looking at the concrete examples where $\cC$ and $\cD$ are both either $A_2$ or $T_2$. These are the smallest non-trivial unitary fusion categories, and each is generated by a symmetrically self-dual object.

In any such composite, there is a particularly important invariant, the least length $\ell\geq 2$ such that the alternating words of length $\ell$ in $\alpha,\beta$ and in $\beta,\alpha$ are isomorphic, i.e.,
$$
\underbrace{(\alpha \beta \cdots)}_{\text{$\ell$ letters}} \cong \underbrace{(\beta\alpha \cdots)}_{\text{$\ell$ letters}},
$$ 
where $\alpha$ and $\beta$ are the symmetrically self-dual generators of $\cC$ and $\cD$.
In fact, we show that a composite, if it exists, is entirely determined by $\ell$ and another parameter $\omega$, which is an $\ell$-th root of unity. 
The argument relies on the jellyfish algorithm introduced in \cite{MR2979509}.
(It is more convenient to introduce an invariant $n$; when $\cC=\cD$, $n=\ell$, and when $\cC=A_2$ and $\cD=T_2$, $n=\ell/2$ since $\ell$ can never be odd because $\dim(\alpha)\neq \dim(\beta)$. Thus when $\cC=A_2$ and $\cD=T_2$, we have $(\alpha\beta)^n\cong (\beta\alpha)^n$.)

The composites of $A_2$ with itself offer an interesting example; there is an infinite family of composites interpolating between the tensor product and the free product. (These are closely related to the $A_{n}^{(1)}$ subfactors at index 4; see below for the connection with subfactor theory.)
For every value of $n \geq 2$, there exist exactly $n$ such composites, corresponding to each of the possible $n$-th roots of unity $\omega$.
We describe these in Section \ref{sec:A2A2}.

Bisch and Haagerup \cite{MR1386923} proposed in 1994 that they may also be an infinite family of composites of $A_2$ with $T_2$ (in the guise of a family of subfactors with index $3+\sqrt{5}$). We give constructions for some, namely the cases $n=1, 2,$ or $3$, each with $\omega=1$ \S \ref{sec:Existence}, and we show this is the only $\omega$ that is possible in these cases in Theorem \ref{thm:Unique}.
Our main result in this paper, however, is that not all of these exist: there are no such composites for $4 \leq n \leq 10$.

Our non-existence proof again uses the ideas from the jellyfish algorithm: we indicate two different ways to evaluate a particular diagram using a different sequence of applications of jellyfish relations, illustrated in Figure \ref{fig:illustration}. (In fact, this is a diagram with boundary, and we exhibit an explicit basis and two different ways to rewrite the diagram into that basis.) We find that for $4 \leq n \leq 10$ these give different results. Each individual $n$ requires a somewhat lengthy calculation, and at this point we do not have a uniform argument.

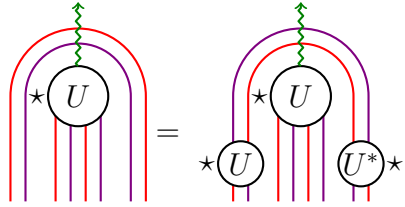
\begin{figure}[!ht]
$$\begin{tikzpicture}[baseline = -.6cm]
	\draw[thick, rho] (-.9,-1.4)--(-.9,0) arc (180:0:.9cm)--(.9,-1.4);
	\draw[thick, zeta] (-.7,-1.4)--(-.7,0) arc (180:0:.7cm)--(.7,-1.4);
	\draw[thick, zeta] (.3,0)--(.3,-1.4);
	\draw[thick, rho] (.1,0)--(.1,-1.4);
	\draw[thick, zeta] (-.1,0)--(-.1,-1.4);
	\draw[thick, rho] (-.3,0)--(-.3,-1.4);
	\draw[thick, unshaded] (0,0) circle (.4);
	\draw[->,line join=round,thick,green!50!black,decorate, decoration={    zigzag,    segment length=4,    amplitude=.9,post=lineto,    post length=2pt}] (0,0.4) -- (0,1.25);
	\node at (0,0) {$U$};
	\node at (180:.55cm) {$\star$};
\end{tikzpicture}
=
\begin{tikzpicture}[baseline = -.6cm]
	\coordinate (a) at (-.9,-.9);
	\coordinate (b) at (-.7,-.9);
	\coordinate (c) at (.7,-.9);
	\coordinate (d) at (.9,-.9);	 
	\draw[thick, rho] (a) -- (-.9,-1.4);
	\draw[thick, zeta] (b) -- (-.7,-1.4);
	\draw[thick, zeta] (c) -- (.7,-1.4);
	\draw[thick, rho] (d) -- (.9,-1.4);
	\draw[thick, zeta] (a)--(-.9,0) arc (180:0:.9cm)--(d);
	\draw[thick, rho] (b)--(-.7,0) arc (180:0:.7cm)--(c);
	\draw[thick, zeta] (.3,0)--(.3,-1.4);
	\draw[thick, rho] (.1,0)--(.1,-1.4);
	\draw[thick, zeta] (-.1,0)--(-.1,-1.4);
	\draw[thick, rho] (-.3,0)--(-.3,-1.4);
	\draw[thick, unshaded] (0,0) circle (.4);
	\node at (0,0) {$U$};
	\node at (180:.55cm) {$\star$};
	\draw[thick, unshaded] (-.8,-.9) circle (.3);
	\node at (-.8,-.9) {$U$};
	\node at (-1.25,-.9) {$\star$};
	\draw[thick, unshaded] (.8,-.9) circle (.3);
	\node at (.8,-.9) {$U^*$};
	\node at (1.25,-.9) {$\star$};
	\draw[->,line join=round,thick,green!50!black,decorate, decoration={    zigzag,    segment length=4,    amplitude=.9,post=lineto,    post length=2pt}] (0,0.4) -- (0,1.25);
\end{tikzpicture}
$$
\caption{The two different jellyfish evaluations. We either pull a certain unitary element $U$ through a pair of strings, or first create a canceling pair $UU^*$ on those strings, then pull the element $U$ between them. (In fact, the purple string here is a bundle of $2n-1$ strings.) Obtaining different answers, we see there can be no composites of $A_2$ with $T_2$ with parameter $4 \leq n \leq 10$.}
\label{fig:illustration}
\end{figure}

(Liu has independently proved, by a quite different method, that the corresponding subfactors do not exist for all $4 \leq n < \infty$ \cite{LiuFish}. Our Theorem \ref{thm:ATToFish} allows one to show the nonexistence of composites of $A_2$ with $T_2$ for all $n \geq 4$ as a corollary.)

When $\cC = \cD = T_2$, we have a similar result; unique composites exist for $n=2$ and $n=3$ (with $\omega=1$), and there are no composites for $4 \leq n \leq 10$. (Again, Liu's proof also applies to composites of two $A_4$ subfactors, and he obtains nonexistence for all $n \geq 4$. The same translation between fusion categories and subfactors then eliminates all composites of $T_2$ with $T_2$ for $n\geq 4$.)

\subsection{Connection to subfactor theory}

Given two hyperfinite subfactors $A \subset B$ and $C \subset D$, there is not a well-defined composition --- one first needs to pick an isomorphism between $B$ and $C$, and the resulting composite $A \subset D$ can depend sensitively on this choice.
If the isomorphism is generic, we get, by results of Popa and Vaes \cite{MR1372533,MR2471930}, the free composite subfactor of Bisch and Jones (see \cite[Section 8]{MR1950890}).

Jones' index rigidity theorem \cite{MR0696688} shows that the first indices of composite subfactors are $4=2\cdot 2$, $3+\sqrt{5}=2\left( \frac{3+\sqrt{5}}{2}\right)$, and $6=2\cdot 3$.

Subfactors of index 4 were completely classified in \cite{MR1278111}. Those subfactors with intermediates are easily seen to come from groups. In \cite{MR1386923}, Bisch and Haagerup start with two finite groups $H,K$ with outer actions on the hyperfinite $\textrm{II}_1$-factor $R$, and they study the composite subfactor $R^H\subset R\rtimes K$. As one might expect, this subfactor depends on the group $G$ generated by $H,K$ in $\Out(R)$ (and in particular not just on the isomorphism class of the subfactors $R^H \subset R$ and $R \subset R \rtimes K$). For $H=K=\Integer/2$, $G$ must be some quotient of the infinite dihedral group $D_\infty=\Integer/2 * \Integer/2$, and this construction exhausts all subfactors with intermediates at index 4.

At index 6, we can produce composite subfactors for any $G$ which is a quotient of the modular group $PSL(2,\Integer)=\Integer/2*\Integer/3$. 
The situation here is much more complicated, and we do not expect that the Bisch-Haagerup subfactors exhaust all possibilities. 
(In particular, one should also expect composites of $A_3$ and $A_5$ subfactors.)

A composite of $A_3$ and $A_4$ subfactors has index $3+\sqrt{5}$, and in fact any composite subfactor at this index is a composite of these subfactors. One of the main goals of this paper is to understand such composites. None come from the Bisch-Haagerup construction, merely because the index is not a composite integer.
(However, one can see that each composite subfactor at this index is of the form $N^\alpha \subset N\subset M$ where $N\subset M$ is the hyperfinite $A_4$ subfactor and $\alpha$ is an outer automorphism of period 2 \cite{MR0107827}.)

Bisch and Jones constructed the free composite of Temperley-Lieb subfactors in \cite{MR1437496}, and in 1994 Bisch and Haagerup found a sequence of possible principal graphs at index $3+\sqrt{5}$. Our work on this project was motivated by understanding these principal graphs.

In fact, this question is very closely related to the question of composites of fusion categories raised above---the even part of a composite subfactor at index $3+\sqrt{5}$ is itself a composite of the fusion categories $\frac{1}{2} A_3 \iso A_2$ and $\frac{1}{2} A_4 \iso T_2$. See Section \ref{sec:Subfactors} for details. Thus whenever we can rule out the existence of some class of composites of the fusion categories, we rule out possible composite subfactors.

Moreover, the connection is even tighter. The free product $A_2 * T_2$ tensor category contains an algebra object, which allows us to reconstruct a subfactor. This gives the free composite constructed by Bisch and Jones. This algebra object passes down to any quotient of the free product, and so from any composite of $A_2$ with $T_2$, we obtain a subfactor composed of $A_3$ and $A_4$ subfactors. Thus there is actually a one-to-one correspondence between the two classes of objects under study.

Since the Fuss-Catalan subfactor planar algebra at $3+\sqrt{5}$ is not amenable, it remains a completely open question whether there are infinitely many non-isomorphic hyperfinite subfactors with this standard invariant. 
Fuss-Catalan arises as the standard invariant of a hyperfinite subfactor by \cite{MR1372533,MR2471930}. As alluded to above, if standard invariants $P_\bullet$ and $Q_\bullet$ arise from hyperfinite $\textrm{II}_1$-subfactor, then $P_\bullet*Q_\bullet$ does too.

\subsection{Definitions}\label{sec:Definitions}

The free product of two shaded planar algebras, due to Bisch and Jones, was defined in \cite[Section 8]{MR1950890} (see also \cite{MR1437496}). 
We give the straightforward translation of this notion to the general case of the free product of two strict pivotal categories.

\begin{defn}
Given two strict pivotal tensor categories $\cC$ and $\cD$, we define their free product $\cC*\cD$ as follows. For simplicity, let's assume the tensor identities of $\cC$ and $\cD$ are simple.
The objects of $\cC*\cD$ are words in the objects of $\cC$ and $\cD$. 

To specify the morphisms, we describe the invariant vectors for each word $w$ (that is, the morphisms from the tensor identity to $w$). In fact, the morphisms from $w_1$ to $w_2$ are exactly the same as the invariant vectors for $w_2 \bar{w_1}$.

Consider a disc with $w$ written around the boundary. A planar partition of this disc is a partition of the disc into two submanifolds, each disjoint unions of discs, partially glued along their boundaries, up to isotopies fixing the boundary of the disc. We call these two submanifolds the $\cC$ region and the $\cD$ region. 
We require that the letters of $w$ from $\cC$ appear only in the $\cC$ regions, and similarly for $\cD$. Each disc of each region thus inherits a word of objects from the appropriate category around its boundary (possibly empty).

An invariant vector for $w$ then consists of a planar partition along with an invariant vector for the corresponding word for each disc of each of the two regions, up to a certain equivalence relation,  given by the following diagram
$$
\begin{tikzpicture}[x=0.7cm,y=0.7cm,baseline]
\path[fill=blue!20] (0,0) circle (3);
\clip (0,0) circle (3);
\draw[fill=red!20] (7,0) circle (6);
\draw[fill=red!20] (-7,0) circle (6);
\draw[very thick] (0,0) circle (3);
\node at (0,0) {$c_1 \otimes c_2$};
\node at (-2,0) {$d_1$};
\node at (2,0) {$d_2$};
\end{tikzpicture}
=
\begin{tikzpicture}[x=0.7cm,y=0.7cm,baseline]
\path[fill=red!20] (0,0) circle (3);
\clip (0,0) circle (3);
\draw[fill=blue!20] (0,7) circle (6);
\draw[fill=blue!20] (0,-7) circle (6);
\draw[very thick] (0,0) circle (3);
\node at (0,0) {$d_1 \otimes d_2$};
\node at (0,-2) {$c_2$};
\node at (0,2) {$c_1$};
\end{tikzpicture}
$$
where the $c_i$ are morphisms from $\cC$ and the $d_i$ are morphisms from $\cD$.

Composition and tensor product are defined in the obvious way, gluing such discs together along their boundaries.

We see that $\cC*\cD$ is again a strict pivotal category. 
The dual of $w=a_1\cdots a_n$ where $a_i\in\cC$ or $\cD$ is $\overline{a_n}\cdots \overline{a_1}$.
The evaluation and coevaluation maps for words are given by the obvious planar partitions labelled by evaluation or coevaluation maps for the letters. 
\end{defn}

\begin{remark}
If the tensor identities are not simple (i.e. there are closed diagrams which are not multiples of the empty diagram), we would need to allow more general planar partitions, where the two submanifolds are not necessarily disjoint unions of discs. We leave the details to an interested reader.
\end{remark}

\begin{remark}
The even more general cases of pivotal 2-categories, or free products amalgamated over a common subcategory, are interesting subjects for future research!
\end{remark}

We expect that the free product of two unitary categories is again unitary. If both categories are Temperley-Lieb categories or the even halves of Temperley-Lieb categories, the free product is unitary by results of Bisch-Jones \cite{MR1437496}. The general case would take us too far afield for now.

\begin{defn}
A quotient of a pivotal category $\cC$ is a pivotal category $\cD$ together with a faithful, dominant tensor functor $F\colon \cC\to \cD$ which preserves the pivotal structure.
\end{defn}

One should think of a quotient of $\cC$ as  some pivotal category generated by  the same objects as $\cC$ but more morphisms. Because of these additional morphisms, objects that were simple in $\cC$ may break up into smaller objects in $\cD$, or objects that were not isomorphic in $\cC$ may become isomorphic in $\cD$. Conversely every simple object in $\cD$ arises as a summand of some object in $\cC$.

\subsection{Acknowledgements}

We would like to thanks Vaughan Jones, Dietmar Bisch, and Uffe Haagerup for many helpful conversations about the `fish' subfactors. We also thank Zhengwei Liu for explaining his perspective on the problem, and sharing his excellent results with us prior to publication.

We would like to thank the Mathematical Sciences Institute at the Australian National University for hosting all the authors while undertaking this research.

Masaki Izumi was supported by JSPS, the Grant-in-Aid for Scientific Research (B) 22340032.
Scott Morrison was supported by an Australian Research Council `Discovery Early Career Researcher Award', DE120100232.
David Penneys was supported in part by the Natural Sciences and Engineering Research Council of Canada.
Scott Morrison and David Penneys were both supported by DOD-DARPA grant HR0011-12-1-0009.

\section{$A_2$ with $A_2$}

We recall that the pivotal category $A_2$ has two simple objects $1$ and $\theta$, which are both symmetrically self-dual. The category $A_2$ has no generators as a pivotal category (meaning all morphisms are tensor generated by identities and (co)evaluations) and the following relations:
 \begin{align}
\begin{tikzpicture}[baseline = -.1cm]
	\draw[thick, theta] (0,0) circle (.3cm);
	\draw[thick] (-.4,-.4)--(-.4,.4)--(.4,.4)--(.4,-.4)--(-.4,-.4);
\end{tikzpicture}\,
&=
1
\tag{AA1}\label{rel:AA1}
\\
\begin{tikzpicture}[baseline = -.1cm]
	\draw[thick, theta] (-.2,-.4) -- ( -.2, .4);	
	\draw[thick, theta] (.2,-.4) -- (.2, .4);
	\draw[thick] (-.4,-.4)--(-.4,.4)--(.4,.4)--(.4,-.4)--(-.4,-.4);
\end{tikzpicture}\,
&=
\begin{tikzpicture}[baseline = -.1cm]
	\draw[thick, theta] (-.2,-.4)arc (180:0:.2cm);	
	\draw[thick, theta] (-.2,.4) arc (-180:0:.2cm);
	\draw[thick] (-.4,-.4)--(-.4,.4)--(.4,.4)--(.4,-.4)--(-.4,-.4);
\end{tikzpicture}\,.
\tag{AA2}\label{rel:AA2}
\end{align}
The first relation simply says $\dim{\theta}=1$. In a unitary category, the second relation follows from the first by calculating the norm of the difference of the two terms.

We begin by considering the free product $A_2* A_2$ where the two copies of $A_2$ are generated by objects $\alpha$ and $\theta$ satisfying $\alpha\otimes \alpha\cong 1$ and $\theta\otimes \theta\cong 1$. 
 This setting has been studied previously, e.g. in \cite{MR1065437,MR1213139},  and is a warmup case to our real goal, where we replace one or both copies of $A_2$ with $T_2$.

We represent $\alpha,\theta$ by unoriented green and blue strings respectively:
$$
\alpha=
\begin{tikzpicture}[baseline = -.1cm]	
	\draw[thick, alpha] (0,-.4) -- (0, .4);
	\draw[thick] (-.4,-.4)--(-.4,.4)--(.4,.4)--(.4,-.4)--(-.4,-.4);
\end{tikzpicture}
\text{ and }
\theta=
\begin{tikzpicture}[baseline = -.1cm]	
	\draw[thick, theta] (0,-.4) -- (0, .4);
	\draw[thick] (-.4,-.4)--(-.4,.4)--(.4,.4)--(.4,-.4)--(-.4,-.4);
\end{tikzpicture}\,.
$$
By Frobenius reciprocity, the distinct simple objects of $A_2*A_2$ are all alternating words in $\alpha,\theta$. Hence representatives of the isomorphism classes of simples are given by
$$
\underset{1}{
\begin{tikzpicture}[baseline = -.1cm]
	\draw[thick] (-.4,-.4)--(-.4,.4)--(.4,.4)--(.4,-.4)--(-.4,-.4);
\end{tikzpicture}
}
\,,\,
\underset{\alpha}{
\begin{tikzpicture}[baseline = -.1cm]
	\draw[thick, alpha] (0,-.4) -- ( 0, .4);	
	\draw[thick] (-.4,-.4)--(-.4,.4)--(.4,.4)--(.4,-.4)--(-.4,-.4);
\end{tikzpicture}
}
\,,\,
\underset{\theta}{
\begin{tikzpicture}[baseline = -.1cm]
	\draw[thick, theta] (0,-.4) -- ( 0, .4);	
	\draw[thick] (-.4,-.4)--(-.4,.4)--(.4,.4)--(.4,-.4)--(-.4,-.4);
\end{tikzpicture}
}
\,,\,
\underset{\alpha\theta}{
\begin{tikzpicture}[baseline = -.1cm]
	\draw[thick, alpha] (-.1,-.4) -- ( -.1, .4);	
	\draw[thick, theta] (.1,-.4) -- ( .1, .4);	
	\draw[thick] (-.4,-.4)--(-.4,.4)--(.4,.4)--(.4,-.4)--(-.4,-.4);
\end{tikzpicture}
}
\,,\,
\underset{\theta\alpha}{
\begin{tikzpicture}[baseline = -.1cm]
	\draw[thick, theta] (-.1,-.4) -- ( -.1, .4);	
	\draw[thick, alpha] (.1,-.4) -- ( .1, .4);	
	\draw[thick] (-.4,-.4)--(-.4,.4)--(.4,.4)--(.4,-.4)--(-.4,-.4);
\end{tikzpicture}
}
\,,\,
\underset{\alpha\theta\alpha}{
\begin{tikzpicture}[baseline = -.1cm]
	\draw[thick, alpha] (-.2,-.4) -- (-.2, .4);	
	\draw[thick, theta] (0,-.4) -- ( 0, .4);	
	\draw[thick, alpha] (.2,-.4) -- (.2, .4);
	\draw[thick] (-.4,-.4)--(-.4,.4)--(.4,.4)--(.4,-.4)--(-.4,-.4);
\end{tikzpicture}
}
\,,\,
\underset{\theta\alpha\theta}{
\begin{tikzpicture}[baseline = -.1cm]
	\draw[thick, theta] (-.2,-.4) -- (-.2, .4);	
	\draw[thick, alpha] (0,-.4) -- ( 0, .4);	
	\draw[thick, theta] (.2,-.4) -- (.2, .4);
	\draw[thick] (-.4,-.4)--(-.4,.4)--(.4,.4)--(.4,-.4)--(-.4,-.4);
\end{tikzpicture}
}
\,,\dots
$$

\subsection{Quotients of $A_2*A_2$}\label{sec:A2A2}

We will now show that all unitary quotients of $A_2*A_2$ are parametrized by an $n\geq 2$ and an $n$-th root of unity $\omega$.

Note that all alternating words in $\alpha,\theta$ are always simple in any unitary quotient of $A_2*A_2$ by Frobenius reciprocity.

The proof of the following proposition is a simple induction argument and is left to the reader. (It is also similar to Proposition \ref{prop:DistinctIrreducibleBimodules}, and the reader may look there for the general technique.)

\begin{prop}\label{prop:DistinctIrreducibleBimodulesA2A2}
Suppose that for some $n\geq 2$, the alternating words in $\alpha,\theta$ and $\theta,\alpha$ of length $n$ are not isomorphic, i.e.,
 $$
\underbrace{(\alpha \theta \cdots)}_{\text{length $n$}} \ncong \underbrace{(\theta\alpha \cdots)}_{\text{length $n$}} .
 $$ 
 Then all alternating words in $\alpha,\theta$ with length at most $n+1$ give distinct simple objects, except that the alternating words in $\alpha,\theta$ and $\theta,\alpha$ of length $n+1$ might not be distinct.
 \end{prop}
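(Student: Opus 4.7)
The plan is to argue by induction on $n$, with inductive hypothesis at stage $n$ that all alternating words of length at most $n$, together with the tensor unit $1$, are pairwise non-isomorphic. Write $w_k^x$ for the length-$k$ alternating word starting with $x\in\{\alpha,\theta\}$, and $\bar{x}$ for the other generator; the hypothesis $w_n^\alpha\not\cong w_n^\theta$ of the proposition is the new input at stage $n$. The essential tool is the invertibility of the generators: since $\alpha\otimes\alpha\cong 1$ and $\theta\otimes\theta\cong 1$, tensoring on either side by either $\alpha$ or $\theta$ is an autoequivalence that sends alternating words to alternating words, shortening by one (if adjacent letters cancel) or lengthening by one (otherwise). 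In particular, $x\otimes w_{n+1}^x\cong w_n^{\bar{x}}$, $x\otimes w_k^x\cong w_{k-1}^{\bar{x}}$, and $x\otimes w_k^{\bar{x}}\cong w_{k+1}^x$.

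For the inductive step, suppose a putative isomorphism $w_{n+1}^x\cong w_k^y$ holds with $k\leq n+1$, which is neither the trivial identity $(k,y)=(n+1,x)$ nor the allowed exception $(k,y)=(n+1,\bar{x})$. Tensoring on the left by $x$ gives $w_n^{\bar{x}}\cong w_k^y\otimes x$. If $y=x$, the right side is $w_{k-1}^{\bar{x}}$, producing an isomorphism between two distinct words of length at most $n$, contradicting the inductive hypothesis. If $y=\bar{x}$, the right side is $w_{k+1}^x$: when $k+1<n$ this again violates the inductive hypothesis, when $k+1=n$ it is exactly the forbidden $w_n^\alpha\cong w_n^\theta$ (after swapping $\alpha\leftrightarrow\theta$ if necessary), and the remaining subcase $k=n$, i.e.\ $w_{n+1}^x\cong w_n^{\bar{x}}$, is dispatched by tensoring on the right by the common last letter of the two sides (which is $x$ if $n$ is even and $\bar{x}$ if $n$ is odd), reducing to $w_n^x\cong w_{n-1}^{\bar{x}}$, again a forbidden identification of distinct words of length at most $n$. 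A putative isomorphism $w_{n+1}^x\cong 1$ is handled the same way: tensoring by $x$ gives $w_n^{\bar{x}}\cong w_1^x$, a forbidden length mismatch.

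For the base case $n=2$, the hypothesis $\alpha\theta\not\cong\theta\alpha$ already forces $1,\alpha,\theta,\alpha\theta,\theta\alpha$ pairwise distinct, since every hypothetical identification collapses to the forbidden one: $\alpha\cong 1$ gives $\alpha\theta\cong\theta\cong\theta\alpha$; $\alpha\cong\theta$ gives $\alpha\theta\cong 1\cong\theta\alpha$; and each identification involving a length-$2$ word reduces to one of these after tensoring with $\alpha$ or $\theta$ on the appropriate side. The main obstacle in the whole argument is really just parity bookkeeping: the parity of $n$ controls which letters end $w_{n+1}^x$ and $w_n^{\bar{x}}$ and hence which generator to cancel with in the final subcase; but in every branch the case analysis terminates in either a contradiction to the inductive hypothesis or the forbidden $w_n^\alpha\cong w_n^\theta$.
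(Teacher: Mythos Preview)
Your argument is correct (one typo: after ``Tensoring on the left by $x$'' the right-hand side should be $x\otimes w_k^y$, not $w_k^y\otimes x$; the rest of your case analysis is consistent with left-tensoring). The paper does not write out a proof but points to the analogous $A_2*T_2$ proposition, whose method is to compute $\langle w,w'\rangle=\dim\operatorname{Hom}(w,w')$ one pair at a time via Frobenius reciprocity and the fusion rules. You instead use that $\alpha$ and $\theta$ are \emph{invertible}, so tensoring by either is an autoequivalence and any putative isomorphism between alternating words cancels down to one between shorter words. This is cleaner here and avoids hom-dimension bookkeeping, but it is special to $A_2*A_2$: for $A_2*T_2$ and $T_2*T_2$ one generator satisfies $\rho^2\cong 1\oplus\rho$, tensoring by $\rho$ is not invertible, and the Frobenius-reciprocity computation is what survives. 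In the present case the two are really the same move, since Frobenius reciprocity with a self-dual invertible object \emph{is} cancellation.

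One structural point worth making explicit: your inductive hypothesis (all words of length $\le n$ pairwise distinct) implicitly requires $w_m^\alpha\ncong w_m^\theta$ for every $m\le n$, not just for $m=n$. The paper's induction has the same shape, and this is exactly how the proposition is applied in the subsequent corollary; but the single hypothesis at $n$ does not by itself force the hypothesis at smaller $m$ (in the quotient whose group of invertibles is $D_6$, one has $w_4^\alpha\ncong w_4^\theta$ while $w_3^\alpha\cong w_3^\theta$).
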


\begin{cor}\label{cor:NontrivialQuotientA2A2}
Either there is an $n\geq 2$ such that 
$$
\underbrace{(\alpha \theta \cdots)}_{\text{length $n$}} \cong \underbrace{(\theta\alpha \cdots)}_{\text{length $n$}},
$$ 
or there is no such $n$.
In either case, we know all the distinct simples generated by $\alpha,\theta$.
\end{cor}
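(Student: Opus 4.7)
The plan is as follows. The first sentence of the corollary is a tautology, so the content lies in the final claim: in either case we can enumerate the isomorphism classes of simples of any unitary quotient of $A_2*A_2$. The key observation I will use is that since $\alpha\otimes\alpha\cong 1\cong\theta\otimes\theta$, every tensor product of copies of $\alpha$ and $\theta$ is isomorphic to an alternating word (or to the empty word, giving $1$); hence every simple object arises as a summand of some alternating word, and it suffices to enumerate the alternating words up to isomorphism.

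In the first case (no such $n$ exists), I would apply Proposition \ref{prop:DistinctIrreducibleBimodulesA2A2} inductively. Its hypothesis holds for every $n\geq 2$, so its conclusion propagates: every alternating word is simple, and any two distinct alternating words are non-isomorphic. Together with $1$, these give an infinite complete list of simples.

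In the second case, let $n\geq 2$ be minimal with $(\alpha\theta\cdots)_n\cong(\theta\alpha\cdots)_n$. For $n\geq 3$, minimality means the hypothesis of Proposition \ref{prop:DistinctIrreducibleBimodulesA2A2} holds at $n-1$, so alternating words of length at most $n$ are distinct simples apart from the expected coincidence at length $n$; the $n=2$ base case is immediate since $\alpha\not\cong\theta$. The remaining step is to show that alternating words of length exceeding $n$ collapse to shorter alternating words. Writing $w_k=(\alpha\theta\cdots)_k$ and $w'_k=(\theta\alpha\cdots)_k$, note that the last letters of $w_n$ and $w'_n$ are opposite, so that $w_{n+1}$ equals $w_n$ tensored on the right by the last letter of $w'_n$. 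Applying the isomorphism $w_n\cong w'_n$ and collapsing the resulting double letter via $\alpha\otimes\alpha\cong 1$ or $\theta\otimes\theta\cong 1$ yields $w_{n+1}\cong w'_{n-1}$, and symmetrically $w'_{n+1}\cong w_{n-1}$. Iterating expresses every alternating word of length greater than $n$ as one of length less than $n$. So the complete list of isomorphism classes of simples is $1$, the alternating words of length $1,\dots,n-1$, and the single joint class at length $n$, giving $2n$ classes in total.

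I expect no genuine obstacle here; the proof is bookkeeping given Proposition \ref{prop:DistinctIrreducibleBimodulesA2A2}. The only minor subtlety is verifying the collapse $w_{n+1}\cong w'_{n-1}$ uniformly in the parity of $n$, which is handled by the observation about the last letters above and the relations $\alpha\otimes\alpha\cong 1\cong\theta\otimes\theta$.
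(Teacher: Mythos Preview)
Your argument is correct and is precisely the intended one: the paper states this corollary without proof, as an immediate consequence of Proposition~\ref{prop:DistinctIrreducibleBimodulesA2A2} together with the collapsing relations $\alpha\otimes\alpha\cong 1\cong\theta\otimes\theta$. Your reduction of longer alternating words via $w_{n+1}\cong w'_{n-1}$ is the right bookkeeping step and is handled cleanly.
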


The free product tensor category $A_2 * A_2$ has no extra relations. In the tensor product $A_2\boxtimes A_2$, $\alpha\theta\cong \theta \alpha$, which means we have an isomorphism from $\alpha\theta\to\theta\alpha$ which we denote by a crossing:
$$
\begin{tikzpicture}[baseline = -.1cm, scale=1.4]
	\draw[thick, alpha] (-.2,-.4) -- (.2, .4);	
	\draw[thick, theta] (.2,-.4) -- (-.2, .4);
\end{tikzpicture}
:
\alpha\, \theta\,
\overset{\cong}{\longrightarrow}
\,\theta\,\alpha.
$$

If there is an $n\in\Natural$ as in Corollary \ref{cor:NontrivialQuotientA2A2}, then we have a unitary isomorphism $U: (\alpha\theta)^{2n}\to 1$. For example, when $n=3$, we have
$$
\begin{tikzpicture}[baseline = -.1cm]
	\draw[thick, alpha] (-.2,-.8) -- (-.2,0);
	\draw[thick, theta] (0,-.8) -- (0,0);	
	\draw[thick, alpha] (.2,-.8) -- (.2,0);	
	\draw[thick, theta] (-.2,.8) -- (-.2,0);
	\draw[thick, alpha] (0,.8) -- (0,0);	
	\draw[thick, theta] (.2,.8) -- (.2,0);	
	\draw[thick, unshaded] (0,0) circle (.4cm);
	\node at (0,0) {$U$};
	\node at (-.55,0) {$\star$};
\end{tikzpicture}
:
\alpha\theta\alpha
\overset{\cong}{\longrightarrow}
\theta\alpha\theta,
$$
which means we have the following relations: 
\begin{equation}
UU^* = 
\begin{tikzpicture}[baseline = .5cm]
	\draw[thick, alpha] (-.2,2)--(-.2,1.2);
	\draw[thick, theta] (0,2)--(0,1.2);
	\draw[thick, alpha] (.2,2)--(.2,1.2);
	\draw[thick, theta] (-.2,1.2)--(-.2,0);
	\draw[thick, alpha] (0,1.2)--(0,0);
	\draw[thick, theta] (.2,1.2)--(.2,0);
	\draw[thick, alpha] (-.2,0)--(-.2,-.8);
	\draw[thick, theta] (0,0)--(0,-.8);
	\draw[thick, alpha] (.2,0)--(.2,-.8);
	\draw[unshaded, thick] (0,0) circle (.4cm);
	\draw[unshaded, thick] (0,1.2) circle (.4cm);
	\draw[unshaded, thick] (0,0) circle (.4cm);
	\draw[unshaded, thick] (0,1.2) circle (.4cm);
	\node at (0,0) {$U$};
	\node at (0,1.2) {$U^*$};
	\node at (-.55,0) {$\star$};
	\node at (-.55,1.2) {$\star$};
\end{tikzpicture}
=
\begin{tikzpicture}[baseline = -.1cm]
	\draw[thick, alpha] (-.2,-.8)--(-.2,.8);
	\draw[thick, theta] (0,-.8)--(0,.8);
	\draw[thick, alpha] (.2,-.8)--(.2,.8);
\end{tikzpicture}
=
\id_{\alpha\theta\alpha}
\text{ and }
U^*U= 
\begin{tikzpicture}[baseline = .5cm]
	\draw[thick, theta] (-.2,2)--(-.2,1.2);
	\draw[thick, alpha] (0,2)--(0,1.2);
	\draw[thick, theta] (.2,2)--(.2,1.2);
	\draw[thick, alpha] (-.2,1.2)--(-.2,0);
	\draw[thick, theta] (0,1.2)--(0,0);
	\draw[thick, alpha] (.2,1.2)--(.2,0);
	\draw[thick, theta] (-.2,0)--(-.2,-.8);
	\draw[thick, alpha] (0,0)--(0,-.8);
	\draw[thick, theta] (.2,0)--(.2,-.8);
	\draw[unshaded, thick] (0,0) circle (.4cm);
	\draw[unshaded, thick] (0,1.2) circle (.4cm);
	\node at (0,0) {$U^*$};
	\node at (0,1.2) {$U$};
	\node at (-.55,0) {$\star$};
	\node at (-.55,1.2) {$\star$};
\end{tikzpicture}
=
\begin{tikzpicture}[baseline = -.1cm]
	\draw[thick, theta] (-.2,-.8)--(-.2,.8);
	\draw[thick, alpha] (0,-.8)--(0,.8);
	\draw[thick, theta] (.2,-.8)--(.2,.8);
\end{tikzpicture}
=
\id_{\theta\alpha\theta}.
\tag{AA3}\label{rel:AA3}
\end{equation}
Moreover, we may normalize $U$ by a phase so that 
\begin{equation}
\cF^{-1}(U)=
\begin{tikzpicture}[baseline = -.1cm, yscale=-1]
	\draw[thick, theta] (-.2,-.3) arc (0:-180:.25cm) --(-.7,.8);
	\draw[thick, alpha] (-.2,.8)--(-.2,0);
	\draw[thick, theta] (0,.8)--(0,0);
	\draw[thick, alpha] (0,0)--(0,-.8);
	\draw[thick, theta] (.2,0)--(.2,-.8);
	\draw[thick, alpha] (.2,.3) arc (180:0:.25cm) --(.7,-.8);
	\draw[unshaded, thick] (0,0) circle (.4cm);
	\node at (0,0) {$U$};
	\node at (-.55,0) {$\star$};
\end{tikzpicture}
=
\begin{tikzpicture}[baseline = -.1cm]
	\draw[thick, alpha] (-.2,.8)--(-.2,0);
	\draw[thick, theta] (0,.8)--(0,0);
	\draw[thick, alpha] (.2,.8)--(.2,0);
	\draw[thick, theta] (-.2,0)--(-.2,-.8);
	\draw[thick, alpha] (0,0)--(0,-.8);
	\draw[thick, theta] (.2,0)--(.2,-.8);
	\draw[unshaded, thick] (0,0) circle (.4cm);
	\node at (0,0) {$U^*$};
	\node at (-.55,0) {$\star$};
\end{tikzpicture}
=
\omega_U^{-1}
\begin{tikzpicture}[baseline = -.1cm]
	\draw[thick, alpha] (-.2,-.3) arc (0:-180:.25cm) --(-.7,.8);
	\draw[thick, theta] (-.2,.8)--(-.2,0);
	\draw[thick, alpha] (0,.8)--(0,0);
	\draw[thick, theta] (0,0)--(0,-.8);
	\draw[thick, alpha] (.2,0)--(.2,-.8);
	\draw[thick, theta] (.2,.3) arc (180:0:.25cm) --(.7,-.8);
	\draw[unshaded, thick] (0,0) circle (.4cm);
	\node at (0,0) {$U$};
	\node at (-.55,0) {$\star$};
\end{tikzpicture}
=
\omega_U^{-1}\cF(U)
\tag{AA4}\label{rel:AA4}
\end{equation}
for some $n$-th root of unity $\omega_U$.

\begin{remark}\label{rem:A2A2StarStructure}
If we were looking for all quotients, not just unitary ones, we would find at this point that there is a second possibility. Once we have an isomorphism $U: (\alpha \theta)^{2n} \to 1$, there are exactly two ways to endow the quotient with a $*$-structure. The first case is the unitary case described above, and in the other case, $U$ satisfies $UU^* = -1$. We won't pursue this here.
\end{remark}

\begin{prop}\label{prop:A2A2Jellyfish}
$U$ satisfies the following jellyfish relations:
$$
\begin{tikzpicture}[baseline = -.3cm]
	\draw[thick, theta] (-.7,-.8)-- (-.7,0) arc(180:0:.7cm) -- (.7,-.8);
	\draw[] (0,0) -- (0,-.8);
	\node at (-.2,-.6) {\scriptsize{$2n$}};
	\draw[thick, unshaded]  (0,0) circle (.4cm);
	\node at (0,0) {$U$};
	\node at (-.55,0) {$\star$};
\end{tikzpicture}
=
\begin{tikzpicture}[baseline = -.3cm]
	\draw[] (0,0) -- (0,-.8);
	\node at (-.2,-.6) {\scriptsize{$2n$}};
	\draw[thick, theta] (.5,-.8) arc (180:0:.2cm);
	\draw[thick, unshaded]  (0,0) circle (.4cm);
	\node at (0,0) {$U^*$};
	\node at (-.55,0) {$\star$};
\end{tikzpicture}
\text{ and }
\begin{tikzpicture}[baseline = -.3cm]
	\draw[thick, alpha] (-.7,-.8)-- (-.7,0) arc(180:0:.7cm) -- (.7,-.8);
	\draw[] (0,0) -- (0,-.8);
	\node at (-.2,-.6) {\scriptsize{$2n$}};
	\draw[thick, unshaded]  (0,0) circle (.4cm);
	\node at (0,0) {$U$};
	\node at (-.55,0) {$\star$};
\end{tikzpicture}
=
\omega_U^{}
\begin{tikzpicture}[baseline = -.3cm]
	\draw[] (0,0) -- (0,-.8);
	\node at (-.2,-.6) {\scriptsize{$2n$}};
	\draw[thick, alpha] (-.9,-.8) arc (180:0:.2cm);
	\draw[thick, unshaded]  (0,0) circle (.4cm);
	\node at (0,0) {$U^*$};
	\node at (-.55,0) {$\star$};
\end{tikzpicture}
$$
\end{prop}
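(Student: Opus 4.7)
The plan is to verify each relation by interpreting both sides as invariant vectors in a common word-tensor space, and then transforming one side into the other using the unitarity relations \eqref{rel:AA3} together with the phase identity \eqref{rel:AA4}. First I would confirm that both sides of the first relation represent morphisms $1 \to \theta(\alpha\theta)^n\theta = (\theta\alpha)^n\theta\theta$, while both sides of the second relation represent morphisms $1 \to \alpha(\alpha\theta)^n\alpha = \alpha\alpha(\theta\alpha)^n$, with both identifications holding by planar isotopy (cyclic rotation of the outer boundary word does not change the underlying invariant space).

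For the first relation, starting from the LHS, I would use the pivotal structure to slide the outer $\theta$-arc around the jellyfish $U$: pulling the left endpoint of the arc under the base of $U$ and out to the right side converts the picture into a jellyfish $U^*$ with a $\theta$-cap sitting disjointly to the right, which is exactly the RHS. This sliding realizes a one-click cyclic rotation of the boundary word $(\alpha\theta)^n$ around $U$; by \eqref{rel:AA4} such a rotation replaces $U$ with $\cF^{-1}(U) = U^*$, and because the strand being slid is a $\theta$-strand matching the rightmost letter of $(\alpha\theta)^n$, no phase appears.

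For the second relation, the same argument applies, but the $\alpha$-arc corresponds to the opposite rotation (since $\alpha$ is the leftmost letter of $(\alpha\theta)^n$). In this case \eqref{rel:AA4} gives $\cF(U) = \omega_U U^*$, producing the factor $\omega_U$ on the RHS.

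The main obstacle is rigorously identifying the informal ``sliding'' of the outer cap with the pivotal rotation $\cF^{\pm 1}$ of $U$. This amounts to decomposing the isotopy into elementary pivotal moves --- inserting a cup-cap pair via the evaluation/coevaluation morphisms of $\theta$ (or $\alpha$), rerouting by planar isotopy, and then cancelling the redundant pair using the jellyfish unitarity \eqref{rel:AA3}. With the star convention fixed as in the \texttt{jellyfishDiagram} macro, the total phase contribution then reads off directly from \eqref{rel:AA4}, and the remaining manipulations are routine.
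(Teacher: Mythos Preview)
Your overall shape is right---the rotation relation \eqref{rel:AA4} is indeed what converts $U$ into $U^*$ (with or without the phase $\omega_U$)---but the other ingredient is misidentified: the paper's proof uses \eqref{rel:AA2}, not \eqref{rel:AA3}, and this difference is where your argument breaks.

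The gap is in how the outer arc becomes a disjoint cap. A strand arcing over the jellyfish $U$ cannot simply be isotoped off: the $2n$ legs of $U$ block it, and your description of ``pulling the left endpoint of the arc under the base of $U$ and out to the right side'' is not a planar move. What actually happens is that the right leg of the outer $\theta$-arc sits parallel to the rightmost leg of $U$, which is itself a $\theta$ (the word is $(\alpha\theta)^n$); relation \eqref{rel:AA2} then replaces these two parallel $\theta$-strands by a cup--cap. The cap is the disjoint $\theta$-cap on the RHS, while the cup bends the rightmost leg of $U$ up and around to the far left---and that rotated diagram is exactly $\cF^{-1}(U)=U^*$ by \eqref{rel:AA4}. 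For the $\alpha$-arc one argues symmetrically on the left leg (the leftmost leg of $U$ is $\alpha$), producing $\cF(U)=\omega_U U^*$ and the $\alpha$-cap on the left.

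Relation \eqref{rel:AA3} (unitarity $UU^*=\id$) plays no role here. Your final paragraph's plan of inserting an evaluation/coevaluation pair and then ``cancelling the redundant pair using \eqref{rel:AA3}'' conflates two unrelated cancellations; inserting a generic cup--cap is always available and by itself changes nothing. The nontrivial step is that two parallel $\theta$ (or $\alpha$) strands may be \emph{replaced} by a cup--cap, and that is precisely \eqref{rel:AA2}.
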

\begin{proof}
This follows immediately by applying Relations \eqref{rel:AA2} and \eqref{rel:AA4}.
\end{proof}

The relations in Proposition \ref{prop:A2A2Jellyfish} immediately allow us to evaluate all closed diagrams in $U,U^*$ using the jellyfish algorithm of \cite{MR2979509}. 
Thus there is at most one quotient of $A_2 * A_2$ for each pair $(n, \omega_U)$. 

\begin{defn}
For $2\leq n<\infty$, let $\AA_{n,\omega_U}$ be the unitary quotient of $A_2*A_2$ generated by $U$ satisfying Relations \eqref{rel:AA1}-\eqref{rel:AA4}, provided that it exists. Note that $\AA_{1,1}$ is $A_2\boxtimes A_2$.
\end{defn}

The following theorem would take us too far off course for now. We postpone such an exploration to \cite{MPAffineAandD}, where we also explain the connection between $\AA_{n,\omega_U}$ and the $A_{2n-1}^{(1)}$ and $D_{n+2}^{(1)}$ subfactors.

\begin{thm}\label{thm:AAQuotients}
The category $\AA_{n,\omega_U}$ exists for each $n\geq 2$ and $\omega_U^n=1$ and is realized by the pointed unitary category ${\sf Vec}_{D_{2n}}^{\lambda}$ for some $\lambda\in H^3(D_{2n},\Complex^\times)$ determined by $\omega_U$.
\end{thm}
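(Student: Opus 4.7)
The plan is to identify $\AA_{n,\omega_U}$ as a pointed fusion category with underlying group the dihedral group $D_{2n}$, and then realize each allowed $\omega_U$ by an explicit 3-cocycle. First, I would show that any quotient $\AA_{n,\omega_U}$ is pointed: Relation \eqref{rel:AA1} gives $\dim \alpha = \dim \theta = 1$, so both generators are invertible. Since every simple in $\AA_{n,\omega_U}$ arises as a summand of some word in $\alpha, \theta$ (Frobenius reciprocity, as in the preceding discussion), and since tensor products of invertibles are invertible, every simple object is invertible.

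Next, I would compute the group $G$ of isomorphism classes of invertibles. It is generated by $\alpha, \theta$ subject to $\alpha^2 = \theta^2 = 1$ together with the relation that the alternating word of length $n$ beginning with $\alpha$ equals the one beginning with $\theta$ (this is what the existence of $U$ provides). Proposition \ref{prop:DistinctIrreducibleBimodulesA2A2} guarantees no shorter identifications occur, so $G$ has precisely the Coxeter presentation of the dihedral group of order $2n$, which is $D_{2n}$ in the notation of the theorem. By the standard classification of pointed fusion categories, $\AA_{n,\omega_U} \cong {\sf Vec}_{D_{2n}}^\lambda$ for some $\lambda \in H^3(D_{2n}, U(1))$, and unitarity is automatic since we may choose a $U(1)$-valued representative cocycle.

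It then remains to match the parameter $\omega_U$ with a specific cohomology class $\lambda$. Relation \eqref{rel:AA4} controls how the pivotal rotation acts on the generating isomorphism $U$: after unpacking, the phase $\omega_U$ is a specific product of associator values along the alternating chain $(\alpha, \theta, \alpha, \theta, \ldots)$ of length $n$, giving a well-defined homomorphism $H^3(D_{2n}, U(1)) \to \mu_n$ to the $n$-th roots of unity. Conversely, for each $n$-th root of unity $\omega_U$, one must exhibit a cocycle $\lambda$ whose restrictions to $\langle \alpha \rangle \cong \mathbb{Z}/2$ and $\langle \theta \rangle \cong \mathbb{Z}/2$ are trivial (this is needed so that the subcategories generated separately by $\alpha$ or $\theta$ remain $A_2$ rather than its twisted version), and whose alternating-chain product matches $\omega_U$. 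A computation of $H^3(D_{2n}, U(1))$ via the Lyndon--Hochschild--Serre spectral sequence for $1 \to \mathbb{Z}/n \to D_{2n} \to \mathbb{Z}/2 \to 1$ identifies a canonical $\mathbb{Z}/n$ subgroup consisting of exactly these cocycles, and the homomorphism to $\mu_n$ restricted to this subgroup is an isomorphism.

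The main obstacle is the final step: actually writing down a cocycle representative for each class and verifying by direct calculation that the resulting pointed category satisfies Relations \eqref{rel:AA1}--\eqref{rel:AA4} with the prescribed parameter $\omega_U$. The LHS computation of $H^3(D_{2n}, U(1))$ is classical but parity-dependent, and tracking the interplay between diagrammatic rotation, pivotal structure, and the rescaling freedom for $U$ introduces sign and phase subtleties; this is presumably why the authors defer the verification to \cite{MPAffineAandD}. Once the cocycle is in hand, however, producing the required faithful dominant functor $A_2 * A_2 \to {\sf Vec}_{D_{2n}}^\lambda$ is straightforward, since the target is already pointed with the correct group of simples.
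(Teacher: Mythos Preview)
The paper does not actually prove this theorem: the sentence immediately preceding the statement reads ``The following theorem would take us too far off course for now. We postpone such an exploration to \cite{MPAffineAandD}\ldots''. So there is no proof in the paper to compare against, and your proposal is not so much a competing argument as a sketch of what the deferred proof presumably contains.

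Your outline is sound and follows the expected route: pointedness from $\dim\alpha=\dim\theta=1$, identification of the group of simples as the dihedral group via the Coxeter presentation $\langle\alpha,\theta\mid\alpha^2,\theta^2,(\alpha\theta)^n\rangle$, and then the classification of pointed fusion categories by $H^3$. Your observation that the cocycle must restrict trivially to each $\Integer/2$ generator (so that the embedded copies of $A_2$ are genuinely untwisted) is exactly the right constraint, and you correctly locate the remaining work in extracting $\omega_U$ as a well-defined invariant of the cohomology class and exhibiting a surjection onto $\mu_n$ from the relevant subgroup of $H^3(D_{2n},U(1))$. The one place I would be slightly more careful is the claim that Relation~\eqref{rel:AA4} unwinds directly to ``a specific product of associator values along the alternating chain'': the rotation $\cF$ involves both associators and the (co)evaluation data, so one must also fix the pivotal structure and check that the spherical condition forces the Frobenius--Schur indicators of $\alpha,\theta$ to be $+1$ before the phase reduces cleanly to a cocycle expression. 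This is routine but is part of the ``sign and phase subtleties'' you allude to.
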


\section{$A_2$ with $T_2$}

We now discuss quotients of the free product of $A_2*T_2$, where $T_2$ is the even half of $A_4$ ($A_4$ is Temperley-Lieb with $\delta=\tau=\frac{1+\sqrt{5}}{2}$). 

Recall that $T_2$ has two simple objects $1,\rho$ where $\rho\otimes \rho\cong 1\oplus \rho$. 
We denote $\rho$ by a red strand, and we write a trivalent vertex for the intertwiner $\rho\otimes \rho \to \rho$ given by
$$
\begin{tikzpicture}[baseline = -.1cm]
	\filldraw[rho] (0,0) circle (.05cm);
	\draw[thick, rho] (0,0) -- (-.2,-.4);
	\draw[thick, rho] (0,0) -- (0,.4);
	\draw[thick, rho] (0,0) -- (.2,-.4);
	\nbox{}{(0,0)}{0}{0}{}
\end{tikzpicture}
=
\left(
\frac{[2]}{[3]-1}
\right)^{1/2}
\begin{tikzpicture}[baseline = -.6cm, scale=.6]
	\clip (-1.2,-2.4) -- (-1.2,.8) -- (1.2,.8) -- (1.2,-2.4);
	\draw[shaded] (-.4,-2.5) -- (-.4,-1.2) arc (180:0:.4cm) -- (.4,-2.5) -- (.8,-2.5) -- (.8,-1.2) -- (.2,-.4) -- (.2,.9) -- (-.2,.9) -- (-.2,-.4) -- (-.8,-1.2) -- (-.8,-2.5);
	\nbox{unshaded}{(0,0)}{0}{0}{2}
	\nbox{unshaded}{(-.6,-1.6)}{0}{0}{2}
	\nbox{unshaded}{(.6,-1.6)}{0}{0}{2}
\end{tikzpicture}
$$
where we just write $2$ for $\jw{2}$, and $[2]=[3]=\tau$.

\begin{prop}\label{prop:RhoSkeinRelations}
We have the following skein relations in $T_2$:
\begin{align}
\begin{tikzpicture}[baseline = -.1cm]
	\filldraw[rho] (0,.2) circle (.05cm);
	\filldraw[rho] (0,-.2) circle (.05cm);
	\draw[thick, rho] (0,0) circle (.2cm);	
	\draw[thick, rho] (0,-.2) -- (0, -.4);
	\draw[thick, rho] (0,.2) -- (0, .4);
	\draw[thick] (-.4,-.4)--(-.4,.4)--(.4,.4)--(.4,-.4)--(-.4,-.4);
\end{tikzpicture}
&=
\begin{tikzpicture}[baseline = -.1cm]	
	\draw[thick, rho] (0,-.4) -- (0, .4);
	\draw[thick] (-.4,-.4)--(-.4,.4)--(.4,.4)--(.4,-.4)--(-.4,-.4);
\end{tikzpicture}
\notag
\\
\begin{tikzpicture}[baseline = -.1cm]
	\draw[thick, rho] (0,0) circle (.3cm);
	\draw[thick] (-.4,-.4)--(-.4,.4)--(.4,.4)--(.4,-.4)--(-.4,-.4);
\end{tikzpicture}
&=
\begin{tikzpicture}[baseline = -.1cm]
	\filldraw[rho] (-.3,0) circle (.05cm);
	\filldraw[rho] (.3,0) circle (.05cm);
	\draw[thick, rho] (0,0) circle (.3cm);
	\draw[thick, rho] (-.3,0)--(.3,0);
	\draw[thick] (-.4,-.4)--(-.4,.4)--(.4,.4)--(.4,-.4)--(-.4,-.4);
\end{tikzpicture}
=
\tau
\notag
\\
\begin{tikzpicture}[baseline = -.1cm]
	\filldraw[rho] (0,-.2) circle (.05cm);
	\draw[thick, rho] (0,0) circle (.2cm);	
	\draw[thick, rho] (0,-.2) -- (0, -.4);
	\draw[thick] (-.4,-.4)--(-.4,.4)--(.4,.4)--(.4,-.4)--(-.4,-.4);
\end{tikzpicture}
&=
0
\notag
\\
\begin{tikzpicture}[baseline = -.1cm]
	\filldraw[rho] (0,0) circle (.05cm);
	\draw[thick, rho] (0,0) arc (0:-180:.15cm) -- (-.3,.4);
	\draw[thick, rho] (0,0) -- (0,.4);
	\draw[thick, rho] (0,0) -- (.2,-.4);
	\draw[thick] (-.5,-.4)--(-.5,.4)--(.4,.4)--(.4,-.4)--(-.5,-.4);
\end{tikzpicture}
&=
\begin{tikzpicture}[baseline = -.1cm]
	\filldraw[rho] (0,0) circle (.05cm);
	\draw[thick, rho] (0,0) -- (-.2,-.4);
	\draw[thick, rho] (0,0) -- (0,.4);
	\draw[thick, rho] (0,0) -- (.2,-.4);
	\draw[thick] (-.4,-.4)--(-.4,.4)--(.4,.4)--(.4,-.4)--(-.4,-.4);
\end{tikzpicture}^{\,*}
=
\begin{tikzpicture}[baseline = -.1cm]
	\filldraw[rho] (0,0) circle (.05cm);
	\draw[thick, rho] (0,0) -- (-.2,.4);
	\draw[thick, rho] (0,0) -- (0,-.4);
	\draw[thick, rho] (0,0) -- (.2,.4);
	\draw[thick] (-.4,-.4)--(-.4,.4)--(.4,.4)--(.4,-.4)--(-.4,-.4);
\end{tikzpicture}
\notag
\\
\begin{tikzpicture}[baseline = -.1cm]
	\draw[thick, rho] (-.2,-.4) -- ( -.2, .4);	
	\draw[thick, rho] (.2,-.4) -- (.2, .4);
	\draw[thick] (-.4,-.4)--(-.4,.4)--(.4,.4)--(.4,-.4)--(-.4,-.4);
\end{tikzpicture}
&=
\frac{1}{\tau}\,
\begin{tikzpicture}[baseline = -.1cm]
	\draw[thick, rho] (-.2,-.4)arc (180:0:.2cm);	
	\draw[thick, rho] (-.2,.4) arc (-180:0:.2cm);
	\draw[thick] (-.4,-.4)--(-.4,.4)--(.4,.4)--(.4,-.4)--(-.4,-.4);
\end{tikzpicture}\,
+
\begin{tikzpicture}[baseline = -.1cm]
	\filldraw[rho] (0,.2) circle (.05cm);
	\filldraw[rho] (0,-.2) circle (.05cm);
	\draw[thick, rho] (-.2,.4) -- (0,.2) --  ( .2, .4);	
	\draw[thick, rho] (-.2,-.4) -- (0,-.2) --  ( .2, -.4);	
	\draw[thick, rho] (0,-.2) -- (0, .2);
	\draw[thick] (-.4,-.4)--(-.4,.4)--(.4,.4)--(.4,-.4)--(-.4,-.4);
\end{tikzpicture}\,
\label{rel:AT1}
\tag{AT1}
\end{align}
\end{prop}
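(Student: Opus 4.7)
The strategy is to work entirely inside Temperley-Lieb at $\delta = \tau$ (the generic category of which $T_2$ is the even part), where $\rho$ is identified with the Jones-Wenzl projector $f^{(2)}$ and the trivalent vertex is the normalized ``Y''-shaped composition of three $f^{(2)}$'s displayed in the proposition. The entire proof is elementary Jones-Wenzl bookkeeping: $f^{(2)}$ is an idempotent that absorbs itself under composition and kills any diagram containing a cap. The relevant numerics are $[2] = \tau$, $[3] = \tau^2 - 1 = \tau$, and $[3] - 1 = \tau - 1 = 1/\tau$, so the normalization constant $\bigl([2]/([3]-1)\bigr)^{1/2}$ equals $\tau$.

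First I would dispose of the easy relations. For the loop relation, a closed loop of $\rho$ is $\tr(f^{(2)}) = [3] = \tau$. For the vanishing relation, a cap pressed directly against one leg of the trivalent vertex introduces a cup inside an $f^{(2)}$ box, so the whole diagram vanishes by cap-killing. For the rotation relation, I would use that the un-normalized ``Y'' is invariant under cyclic permutation of its three legs (the three $f^{(2)}$-to-$f^{(2)}$ couplings are interchangeable) and that $f^{(2)}$ is symmetric under horizontal reflection; together these imply that capping off one leg and opening a new leg elsewhere reproduces the trivalent vertex, and identifies it with its adjoint.

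The bigon relation is what pins down the normalization constant. By Schur's lemma the bigon (two trivalent vertices glued along a pair of legs) is a scalar $B$ times $\id_\rho$. I would compute $B$ for the un-normalized ``Y'' either by resolving an internal $f^{(2)}$ via Wenzl's recursion, or by closing up and computing the resulting theta-net divided by the loop value $[3]$; the answer is $([3]-1)/[2] = 1/\tau^2$. Squaring the normalization $\tau$ then cancels this exactly, yielding $1$.

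Finally, for the two-strand resolution, the decomposition $\rho \otimes \rho \cong \mathbf{1} \oplus \rho$ expresses $\id_{\rho \otimes \rho}$ as a sum of two minimal orthogonal idempotents: the cup-cap diagram (projecting onto $\mathbf{1}$) and the ``H''-diagram through a trivalent vertex (projecting onto $\rho$). The two coefficients are then fixed by partial trace: capping off the top pair of legs and using the earlier relations together with the loop value $\tau$ forces the coefficients to be $1/\tau$ and $1$ respectively. The main obstacle throughout is disciplined tracking of normalization constants across the bigon relation and the two-strand resolution, but once $B = 1/\tau^2$ is in hand all the numerics line up.
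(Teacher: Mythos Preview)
Your proposal is correct and follows essentially the same approach as the paper: both work in the underlying Temperley-Lieb at $\delta=\tau$, verify the bigon relation by a Wenzl-recursion computation yielding $([3]-1)/[2]$ which is cancelled by the square of the normalization, declare the loop, cap, and rotation relations straightforward, and deduce the two-strand resolution from $\rho\otimes\rho\cong\mathbf{1}\oplus\rho$ by identifying the two summands as orthogonal idempotents with coefficients forced by the earlier relations. The only cosmetic difference is that the paper phrases the last step as ``two orthogonal projections dominated by the left-hand side'' whereas you say ``coefficients fixed by partial trace,'' but these amount to the same verification.
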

\begin{proof}
The first equation follows from
$$
\begin{tikzpicture}[baseline = -.1cm, scale=.6]
	\clip (-1.2,-2.4) -- (-1.2,2.4) -- (1.2,2.4) -- (1.2,-2.4);
	\filldraw[shaded] (-.2,2.5) -- (-.2,1.2) -- (-.8,.4) -- (-.8,-.4) -- (-.2,-1.2) -- (-.2,-2.5) -- (.2,-2.5) -- (.2,-1.2) -- (.8,-.4) -- (.8,.4) -- (.2,1.2) -- (.2,2.5);
	\filldraw[unshaded] (-.4,.4) arc (180:0:.4cm) -- (.4,-.4) arc (0:-180:.4cm);
	\nbox{unshaded}{(0,1.6)}{0}{0}{2}
	\nbox{unshaded}{(-.6,0)}{0}{0}{2}
	\nbox{unshaded}{(.6,0)}{0}{0}{2}
	\nbox{unshaded}{(0,-1.6)}{0}{0}{2}
\end{tikzpicture}
=
\begin{tikzpicture}[baseline = -.1cm, scale=.6]
	\clip (-1.2,-2.4) -- (-1.2,2.4) -- (1.2,2.4) -- (1.2,-2.4);
	\filldraw[shaded] (-.2,2.5) -- (-.2,1.2) -- (-.8,.4) -- (-.8,-.4) -- (-.2,-1.2) -- (-.2,-2.5) -- (.2,-2.5) -- (.2,-1.2) -- (.8,-.4) -- (.8,.4) -- (.2,1.2) -- (.2,2.5);
	\filldraw[unshaded] (-.4,.4) arc (180:0:.4cm) -- (.4,-.4) arc (0:-180:.4cm);
	\nbox{unshaded}{(0,1.6)}{0}{0}{2}
	\nbox{unshaded}{(-.6,0)}{0}{0}{2}
	\nbox{}{(.6,0)}{0}{0}{}
	\nbox{unshaded}{(0,-1.6)}{0}{0}{2}
\end{tikzpicture}
-
\frac{1}{[2]}
\begin{tikzpicture}[baseline = -.1cm, scale=.6]
	\clip (-1.2,-2.4) -- (-1.2,2.4) -- (1.2,2.4) -- (1.2,-2.4);
	\filldraw[shaded] (-.2,2.5) -- (-.2,1.2) -- (-.8,.4) -- (-.8,-.4) -- (-.2,-1.2) -- (-.2,-2.5) -- (.2,-2.5) -- (.2,-1.2) -- (.8,-.4) -- (.8,.4) -- (.2,1.2) -- (.2,2.5);
	\filldraw[unshaded] (-.4,.4) arc (180:0:.4cm) -- (.4,-.4) arc (0:-180:.4cm);
	\nbox{unshaded}{(0,1.6)}{0}{0}{2}
	\nbox{unshaded}{(-.6,0)}{0}{0}{2}
	\fill[unshaded] (.3,.4) -- (.3,-.4) -- (.9,-.4) -- (.9,.4);
	\draw[shaded] (.4,-.4) arc (180:0:.2cm);
	\draw[shaded] (.4,.4) arc (-180:0:.2cm);
	\nbox{}{(.6,0)}{0}{0}{}
	\nbox{unshaded}{(0,-1.6)}{0}{0}{2}
\end{tikzpicture}
=
\left(
\frac{[3]}{[2]}-\frac{1}{[2]}
\right)\,
\begin{tikzpicture}[baseline = -.1cm]	
	\draw[thick, rho] (0,-.4) -- (0, .4);
	\nbox{}{(0,0)}{0}{0}{}
\end{tikzpicture}\,.
$$
The second, third, and fourth equations are straightforward. To prove Equation \eqref{rel:AT1}, we note that by the first four equations, the right hand side is the sum of two orthogonal projections which are isomorphic to $\jw{0}$ and $\jw{2}$, and both are dominated by the left hand side. Since in $A_4$, $\jw{2}\otimes \jw{2}\cong \jw{0}\oplus \jw{2}$, we are finished.
\end{proof}

The distinct simple objects of $A_2*T_2$ are all alternating words in $\rho,\theta$. Hence representatives of the isomorphism classes of simples are given by
$$
\underset{1}{
\begin{tikzpicture}[baseline = -.1cm]
	\draw[thick] (-.4,-.4)--(-.4,.4)--(.4,.4)--(.4,-.4)--(-.4,-.4);
\end{tikzpicture}
}
\,,\,
\underset{\rho}{
\begin{tikzpicture}[baseline = -.1cm]
	\draw[thick, rho] (0,-.4) -- ( 0, .4);	
	\draw[thick] (-.4,-.4)--(-.4,.4)--(.4,.4)--(.4,-.4)--(-.4,-.4);
\end{tikzpicture}
}
\,,\,
\underset{\theta}{
\begin{tikzpicture}[baseline = -.1cm]
	\draw[thick, theta] (0,-.4) -- ( 0, .4);	
	\draw[thick] (-.4,-.4)--(-.4,.4)--(.4,.4)--(.4,-.4)--(-.4,-.4);
\end{tikzpicture}
}
\,,\,
\underset{\rho\theta}{
\begin{tikzpicture}[baseline = -.1cm]
	\draw[thick, rho] (-.1,-.4) -- ( -.1, .4);	
	\draw[thick, theta] (.1,-.4) -- ( .1, .4);	
	\draw[thick] (-.4,-.4)--(-.4,.4)--(.4,.4)--(.4,-.4)--(-.4,-.4);
\end{tikzpicture}
}
\,,\,
\underset{\theta\rho}{
\begin{tikzpicture}[baseline = -.1cm]
	\draw[thick, theta] (-.1,-.4) -- ( -.1, .4);	
	\draw[thick, rho] (.1,-.4) -- ( .1, .4);	
	\draw[thick] (-.4,-.4)--(-.4,.4)--(.4,.4)--(.4,-.4)--(-.4,-.4);
\end{tikzpicture}
}
\,,\,
\underset{\rho\theta\rho}{
\begin{tikzpicture}[baseline = -.1cm]
	\draw[thick, rho] (-.2,-.4) -- (-.2, .4);	
	\draw[thick, theta] (0,-.4) -- ( 0, .4);	
	\draw[thick, rho] (.2,-.4) -- (.2, .4);
	\draw[thick] (-.4,-.4)--(-.4,.4)--(.4,.4)--(.4,-.4)--(-.4,-.4);
\end{tikzpicture}
}
\,,\,
\underset{\theta\rho\theta}{
\begin{tikzpicture}[baseline = -.1cm]
	\draw[thick, theta] (-.2,-.4) -- (-.2, .4);	
	\draw[thick, rho] (0,-.4) -- ( 0, .4);	
	\draw[thick, theta] (.2,-.4) -- (.2, .4);
	\draw[thick] (-.4,-.4)--(-.4,.4)--(.4,.4)--(.4,-.4)--(-.4,-.4);
\end{tikzpicture}
}
\,,\dots
$$

\subsection{Quotients of $A_2*T_2$}\label{sec:ATQuotient}

We now show all unitary quotients of $A_2*T_2$ are parametrized by an $n\in \Natural$ and an $n$-th root of unity $\omega$. 
Suppose we are working in some unitary quotient of $A_2*T_2$.

\begin{prop}\label{prop:DistinctIrreducibleBimodules}
Suppose that $(\rho\theta)^k\ncong (\theta\rho)^k$ for some $k\geq 1$. Then all alternating words in $\rho,\theta$ with length less than or equal to $2k+2$ give distinct simple objects, except that  $(\rho\theta)^{k+1}$ may not be distinct from $(\theta\rho)^{k+1}$.
\end{prop}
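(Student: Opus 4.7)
My plan is to prove the proposition by induction on $k\geq 1$, mirroring the inductive approach indicated for Proposition~\ref{prop:DistinctIrreducibleBimodulesA2A2}. The central tool is Frobenius reciprocity: because $\rho$ and $\theta$ are self-dual, for any two alternating words $u,v$ we have $\operatorname{Hom}(u,v)\cong\operatorname{Hom}(1,u\otimes\bar{v})$, where $\bar{v}$ is obtained by reversing $v$. The fusion rules $\theta\otimes\theta\cong 1$ and $\rho\otimes\rho\cong 1\oplus\rho$ decompose any concatenation $u\bar{v}$ into a direct sum of strictly shorter alternating words by collapsing each adjacent pair of equal letters, reducing any question about $\operatorname{Hom}(1,u\otimes\bar{v})$ to analogous questions about shorter words that are controlled by induction.

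For the base case $k=1$, assuming only $\rho\theta\ncong\theta\rho$, I verify directly that every alternating word $w$ of length at most $4$ is simple (i.e.\ $\dim\operatorname{Hom}(1,w\otimes\bar{w})=1$) and that every pair of distinct alternating words of length at most $4$ other than $\{(\rho\theta)^2,(\theta\rho)^2\}$ is non-isomorphic (i.e.\ $\operatorname{Hom}(1,w_1\otimes\bar{w_2})=0$). Most comparisons are immediate from quantum dimension --- alternating words of different lengths have different dimensions, and odd-length alternating words with different starting letters also have different dimensions. The remaining nontrivial same-length comparisons at even length $\leq 4$ reduce via fusion-rule decomposition either to the hypothesis $\rho\theta\ncong\theta\rho$ itself or to the allowed length-$4$ exception.

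For the inductive step I assume the proposition for all smaller values of $k$, and am given the hypothesis $(\rho\theta)^k\ncong(\theta\rho)^k$. The key substep is to derive $(\rho\theta)^m\ncong(\theta\rho)^m$ for every $1\leq m<k$; combined with the inductive hypothesis this yields pairwise distinctness of alternating words of length $\leq 2k$. Extending to lengths $2k+1$ and $2k+2$ is then a direct fusion-rule computation: simplicity of a length-$\ell$ word $w$ follows by computing $w\otimes\bar{w}$ and identifying the single appearance of $1$ in its decomposition into alternating words of length $<\ell$ (now known to be distinct simples), while non-isomorphism of pairs follows by computing the corresponding $w_1\otimes\bar{w_2}$, with the only possible coincidence being the allowed critical pair $(\rho\theta)^{k+1}$ versus $(\theta\rho)^{k+1}$ at length $2k+2$.

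\textbf{The main obstacle} is this propagation substep: showing contrapositively that a hypothetical collision $(\rho\theta)^{m^*}\cong(\theta\rho)^{m^*}$ at some $m^*<k$ forces $(\rho\theta)^k\cong(\theta\rho)^k$. In the $A_2\ast A_2$ setting of Proposition~\ref{prop:DistinctIrreducibleBimodulesA2A2} the analogue is accessible through group theory since $\rho$ would be invertible, but here the fusion $\rho\otimes\rho\cong 1\oplus\rho$ branches every decomposition and we must match the resulting binary tree of simple summands on the two sides. The cleanest version I foresee substitutes $(\theta\rho)^{m^*}$ for $(\rho\theta)^{m^*}$ inside $(\rho\theta)^k=(\rho\theta)^{k-m^*}\otimes(\rho\theta)^{m^*}$ and iteratively resolves the resulting $\theta\theta$ and $\rho\rho$ collisions, then performs the analogous manipulation on $(\theta\rho)^k$ and argues, using the symmetry $\rho\leftrightarrow\theta$ of the fusion rules together with the inductive distinctness of smaller simples, that the resulting two direct-sum decompositions are term-by-term identified. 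Carrying out this matching, and in particular controlling the parity of the levels at which each branch contributes, is the principal combinatorial work of the proof.
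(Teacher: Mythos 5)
Your overall skeleton is the paper's proof: induct on $k$, verify the base case $k=1$ by direct Frobenius reciprocity computations, and then handle the two new lengths by expanding $\rho^2\cong 1\oplus\rho$, $\theta^2\cong 1$ inside inner products so that everything reduces to shorter alternating words already known to be distinct simples. The difference is the extra ``propagation'' substep that you single out as the main combinatorial work, and that is exactly where the plan breaks: the lemma you intend to prove is false. Take the category $\AT_{2,1}$ (the Bisch--Haagerup example, which the paper constructs). There $(\rho\theta)^2\cong(\theta\rho)^2$, but substituting that isomorphism and using the fusion rules gives $(\rho\theta)^3\cong\rho\theta\theta\rho\theta\rho\cong\rho^2\theta\rho\cong\theta\rho\oplus\rho\theta\rho$, while $(\theta\rho)^3\cong\theta\rho^2\theta\rho\theta\cong\rho\theta\oplus(\theta\rho)^2\theta\cong\rho\theta\oplus\rho\theta\rho$; since $\rho\theta\ncong\theta\rho$ in $\AT_{2,1}$, these are non-isomorphic. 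So a collision at $m^*=2$ does not force one at $k=3$, and no matching of decomposition trees can prove your contrapositive.

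The same example shows why the issue arises: with the bare hypothesis ``$(\rho\theta)^k\ncong(\theta\rho)^k$ for this single $k$'' the statement cannot be literally correct, since in $\AT_{2,1}$ the hypothesis holds for $k=3$ yet $(\rho\theta)^3$ (length $6\leq 2k+2$) is not even simple. The hypothesis has to be read cumulatively --- no collision at any level $j\leq k$ --- and that is how the paper's induction is actually organized: the inductive step assumes the conclusion at the previous level \emph{together with} the new non-collision hypothesis, so all shorter alternating words are already known to be distinct simples and only the Frobenius reciprocity computations for the two new lengths remain. With that reading, your second step is the entire proof and the propagation lemma is neither available nor needed. (A small separate slip in your base case: quantum dimension does not separate words of different lengths in general, e.g.\ $\dim(\rho\theta\rho)=\tau^2=\dim((\rho\theta)^2)$; those comparisons must also go through the Frobenius reciprocity computations, as in the paper.)
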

\begin{proof}
We induct on $k$. If $k=1$, then it is a straightforward calculation using Frobenius reciprocity (which holds in the unitary quotient) to show that 
$1,\rho,\theta,\rho\theta,\theta\rho,\rho\theta\rho,\theta\rho\theta,\rho\theta\rho\theta$ 
are distinct and simple. 
For example once one shows $\rho\theta$ and $\theta\rho$ are irreducible, we have 
$$
\langle \rho\theta\rho\theta,\rho\theta\rho\theta\rangle
=
\langle \rho\theta\rho,\rho\theta\rho\rangle
=
\langle \rho\theta,\rho\theta\rangle
+
\langle \rho\theta\rho,\rho\theta\rangle
=
1
+
\langle \theta\rho,\rho\theta\rangle
+
\langle \theta\rho,\theta\rangle
=
1.
$$
A similar calculation shows $\theta\rho\theta\rho$ is simple, but note that we cannot yet compute $\langle \rho\theta\rho\theta,\theta\rho\theta\rho\rangle$.

Suppose the result holds true for $k>1$, and suppose we also know that $(\rho\theta)^{k+1}\neq (\theta\rho)^{k+1}$. Then we calculate:
\begin{align*}
\langle (\rho\theta)^k\rho,(\rho\theta)^k\rangle 
& = \langle (\theta\rho)^{k},(1\oplus \rho)(\theta\rho)^{k-1}\theta\rangle \\
& = \langle (\theta\rho)^{k},(\theta\rho)^{k-1}\theta\rangle + \langle (\theta\rho)^{k},(\rho\theta)^{k}\rangle \\
& = 0 \displaybreak[1]
\\
\langle (\rho\theta)^k\rho,(\rho\theta)^k\rho\rangle 
& = \langle (\rho\theta)^k(1\oplus \rho),(\rho\theta)^k\rangle 
= \langle (\rho\theta)^k,(\rho\theta)^k\rangle  + \langle (\rho\theta)^k \rho,(\rho\theta)^k\rangle 
= 1
\\
\langle (\theta\rho)^k\theta,(\theta\rho)^k\rangle
& = \langle (\rho\theta)^k,(\rho\theta)^{k-1}\theta\rangle
= 0
\\
\langle (\theta\rho)^k\theta,(\theta\rho)^k\theta\rangle
& = \langle (\theta\rho)^k,(\theta\rho)^k\rangle
= 1
\\
\langle (\rho\theta)^{k}\rho,(\theta\rho)^{k}\theta\rangle
& = 0 
\tag{simples with different dimensions}
\\
\langle (\rho\theta)^{k+1},(\rho\theta)^k\rho\rangle
& = \langle (\theta\rho)^{k}\theta,(1\oplus \rho)(\theta\rho)^k\rangle\\
& = \langle (\theta\rho)^{k}\theta,(\theta\rho)^k\rangle + \langle (\theta\rho)^{k}\theta,(\rho\theta)^{k}\rho\rangle\\
& = 0
\end{align*}
and so forth. We see that $(\rho\theta)^k\rho,(\theta\rho)^k\theta,(\rho\theta)^{k+1},(\theta\rho)^{k+1}$ are all distinct and simple, except that possibly $(\rho\theta)^{k+1}\cong(\theta\rho)^{k+1}$.
\end{proof}

\begin{cor}\label{cor:UExists}
Either there is an $n\in\Natural$ such that $(\rho\theta)^n\cong(\theta\rho)^n$, or there is no such $n$.
In either case, we know all the distinct simples generated by $\theta,\rho$.
\end{cor}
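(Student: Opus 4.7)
The plan is to iterate Proposition \ref{prop:DistinctIrreducibleBimodules} and then separately argue that, in the case an $n$ does exist, no further simple objects appear in alternating words of length greater than $2n$.

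First, I would set $n$ to be the smallest positive integer, if any, with $(\rho\theta)^n \cong (\theta\rho)^n$. If $n=1$, Frobenius reciprocity immediately identifies the simples $1,\rho,\theta,\rho\theta$, and both $\rho\otimes\rho\cong 1\oplus \rho$ and $\theta\otimes\theta\cong 1$ show that no longer alternating word yields a new simple. If $n\geq 2$, then by definition $(\rho\theta)^k \ncong (\theta\rho)^k$ for all $k<n$, so I may apply Proposition \ref{prop:DistinctIrreducibleBimodules} inductively: starting from $k=1$ and bumping $k$ up by one each time, the proposition guarantees that the alternating words of length up to $2n$ are pairwise non-isomorphic simple objects, with the only potential collision being $(\rho\theta)^n\cong(\theta\rho)^n$, which is precisely the hypothesis selecting $n$. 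If no such $n$ exists, the same iteration never terminates, and the proposition shows that \emph{all} alternating words in $\rho,\theta$ are pairwise non-isomorphic simple objects, completing the description of simples.

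The only remaining task is to show, when $n$ exists, that no alternating word of length greater than $2n$ contributes a new simple. I would argue by strong induction on the length $\ell$ of the word. Given an alternating word $w$ of length $\ell>2n$, write $w = u\cdot v$ where $u$ is one of $(\rho\theta)^n$ or $(\theta\rho)^n$ and $v$ is the remaining alternating suffix. Using the isomorphism $(\rho\theta)^n\cong(\theta\rho)^n$, we may replace $u$ by the alternating word of length $2n$ starting with the opposite letter; the concatenation then creates a repeated letter at the junction, which decomposes via $\theta\otimes\theta\cong 1$ or $\rho\otimes\rho\cong 1\oplus\rho$ into a direct sum of alternating words of length strictly less than $\ell$. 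By the inductive hypothesis these decompose into previously listed simples, so $w$ does as well.

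The main obstacle is really bookkeeping: one must check that the junction-reduction above does strictly decrease length and that the base case matches the list of simples produced by the iterated application of Proposition \ref{prop:DistinctIrreducibleBimodules}. Neither step is delicate, and the dichotomy asserted in the statement follows immediately from the construction of $n$ as the smallest integer with the required isomorphism, with the alternative being that no such $n$ exists at all.
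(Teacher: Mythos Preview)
Your argument is correct and is exactly the natural expansion of the paper's approach: the paper states this as an immediate corollary of Proposition \ref{prop:DistinctIrreducibleBimodules} without further proof, and your iteration of that proposition together with the reduction of longer alternating words via the isomorphism $(\rho\theta)^n\cong(\theta\rho)^n$ and the relations $\theta^2\cong 1$, $\rho^2\cong 1\oplus\rho$ is precisely what the reader is meant to supply.
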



If there is an $n\in\Natural$ as in Corollary \ref{cor:UExists}, there is a unitary isomorphism $U\colon (\rho\theta)^n\to (\theta\rho)^n$. Let $\zeta = (\theta\rho)^{n-1}\theta$. 
We denote $\zeta$ by a \textcolor{zeta}{purple} strand, and we denote the isomorphism $U$ as follows:
$$
\begin{tikzpicture}[baseline = -.1cm]
	\draw[thick, rho] (-.2,-.8) --(-.2,0)--(.2,0)-- (.2, .8);	
	\draw[thick, zeta] (.2,-.8) --(.2,0)--(-.2,0)-- (-.2, .8);
	\draw[thick, unshaded] (0,0) circle (.4cm);
	\node at (0,0) {$U$};
	\node at (-.55,0) {$\star$};
\end{tikzpicture}\,.
$$
Since $U^*U= \id_{\zeta\rho}$ and $UU^*= \id_{\rho\zeta}$, we immediately obtain:
\begin{equation}
\begin{tikzpicture}[baseline = -.1cm]
	\draw[thick, zeta] (-.4,.2) -- ( .4, .2);	
	\draw[thick, rho] (-.4,-.2) -- (.4, -.2);
	\draw[thick] (-.4,-.4)--(-.4,.4)--(.4,.4)--(.4,-.4)--(-.4,-.4);
\end{tikzpicture}
=
\begin{tikzpicture}[baseline = -.1cm]
	\draw[thick, rho] (-1.4,-.4)-- (-.6,0) .. controls ++(45:.6cm) and ++(135:.6cm) .. (.6,0)--(1.4,-.4);	
	\draw[thick, zeta] (-1.4,0) -- (1.4, 0);
	\draw[thick, unshaded] (.6,0) circle (.4cm);
	\node at (.6,0) {$U$};
	\node at (.6,.55) {$\star$};
	\draw[thick, unshaded] (-.6,0) circle (.4cm);
	\node at (-.6,0) {$U^*$};
	\node at (-.6,.55) {$\star$};
\end{tikzpicture}
\text{ and }
\begin{tikzpicture}[baseline = -.1cm]
	\draw[thick, rho] (-.4,.2) -- ( .4, .2);	
	\draw[thick, zeta] (-.4,-.2) -- (.4, -.2);
	\draw[thick] (-.4,-.4)--(-.4,.4)--(.4,.4)--(.4,-.4)--(-.4,-.4);
\end{tikzpicture}
=
\begin{tikzpicture}[baseline = -.1cm]
	\draw[thick, zeta] (-1.4,-.4)-- (-.6,0) .. controls ++(45:.6cm) and ++(135:.6cm) .. (.6,0)--(1.4,-.4);	
	\draw[thick, rho] (-1.4,0) -- (1.4, 0);
	\draw[thick, unshaded] (.6,0) circle (.4cm);
	\node at (.6,0) {$U^*$};
	\node at (.6,.55) {$\star$};
	\draw[thick, unshaded] (-.6,0) circle (.4cm);
	\node at (-.6,0) {$U$};
	\node at (-.6,.55) {$\star$};
\end{tikzpicture}\,.
\label{rel:AT2}
\tag{AT2}
\end{equation}
We may normalize by a phase so that
\begin{equation}
\cF^{-1}(U)=
\begin{tikzpicture}[baseline = -.1 cm]
	\draw[thick, rho] (-.2,-.8) -- (-.2,.8);
	\draw[thick, rho] (.2,-.8) -- (.2,.8);	
	\node at (-.1,.6) {\scriptsize{{$\cdot$}}};
	\node at (0,.6) {\scriptsize{{$\cdot$}}};
	\node at (.1,.6) {\scriptsize{{$\cdot$}}};
	\node at (-.1,-.6) {\scriptsize{{$\cdot$}}};
	\node at (0,-.6) {\scriptsize{{$\cdot$}}};
	\node at (.1,-.6) {\scriptsize{{$\cdot$}}};
	\node [rotate around = {90:(0,0)}] at (0,.95) {$\}$};
	\node [rotate around = {90:(0,0)}] at (0,-.95) {$\{$};
	\node at (.25,1.2) {\scriptsize{{$\rho(\theta\rho)^{n-1}$}}};
	\node at (.25,-1.2) {\scriptsize{{$\rho(\theta\rho)^{n-1}$}}};
	\draw[thick, theta] (-.3,.27) arc(0:180:.2cm) --(-.7,-.8);
	\draw[thick, theta] (.3,-.27) arc(-180:0:.2cm) --(.7,.8);	
	\draw[thick, unshaded] (0,0) circle (.4cm);
	\node at (0,0) {$U$};
	\node at (-.55,0) {$\star$};
\end{tikzpicture}
=
\begin{tikzpicture}[baseline = -.1cm]
	\draw[thick, zeta] (-.2,-.8) --(-.2,0)--(.2,0)-- (.2, .8);	
	\draw[thick, rho] (.2,-.8) --(.2,0)--(-.2,0)-- (-.2, .8);
	\draw[thick, unshaded] (0,0) circle (.4cm);
	\node at (0,0) {$U^*$};
	\node at (-.55,0) {$\star$};
\end{tikzpicture}
=
\omega_U^{-1}
\begin{tikzpicture}[baseline = -.1cm]
	\draw[thick, rho] (-.2,-.35) arc(0:-180:.2cm) --(-.6,.8);
	\draw[thick, rho] (.2,.35) arc(180:0:.2cm) --(.6,-.8);	
	\draw[thick, zeta] (.2,-.8) --(.2,0)--(-.2,0)-- (-.2, .8);
	\draw[thick, unshaded] (0,0) circle (.4cm);
	\node at (0,0) {$U$};
	\node at (135:.55cm) {$\star$};
\end{tikzpicture}
=\omega_U^{-1}\cF(U)
\label{rel:AT3}
\tag{AT3}
\end{equation}
for some $2n$-th root of unity $\omega_U$.

\begin{remark}\label{rem:A2T2StarStructure}
As in Remark \ref{rem:A2A2StarStructure}, not assuming unitarity, there are exactly 2 ways to put a $*$-structure on the quotient. However, we only consider the unitary case.
\end{remark}

\subsection{Jellyfish relations}\label{sec:Jellyfish}

We now derive jellyfish relations for our generator $U$.

\begin{lem}\label{lem:RhoSpaghettiJellyfish}
$
\begin{tikzpicture}[baseline = -.1cm]
	\draw[thick, zeta] (-.4,.2) -- (.4,.2);
	\filldraw[rho] (0,0) circle (.05cm);	
	\draw[thick, rho] (0,0) -- (-.4,-.4);
	\draw[thick, rho] (0,0) -- (0,-.4);
	\draw[thick, rho] (0,0) -- (.4,-.4);
	\draw[thick] (-.4,-.4)--(-.4,.4)--(.4,.4)--(.4,-.4)--(-.4,-.4);
\end{tikzpicture}
=
\sigma_U
\,
\begin{tikzpicture}[baseline = -.1cm]
	\draw[thick, zeta] (-1.8,0) -- (1.8,0);	
	\coordinate (a) at (0,.9);
	\filldraw[rho] (a) circle (.05cm);
	\draw[thick, rho] (a) -- (-1.8,-.6);
	\draw[thick, rho] (a) -- (0,-.6);
	\draw[thick, rho] (a) -- (1.8,-.6);
	\draw[thick, unshaded] (-1,0) circle (.4cm);
	\node at (-1,0) {$U^*$};
	\node at (-1,.55) {$\star$};
	\draw[thick, unshaded]  (0,0) circle (.4cm);
	\node at (0,0) {$U^*$};
	\node at (115:.55cm) {$\star$};
	\draw[thick, unshaded]  (1,0) circle (.4cm);
	\node at (1,0) {$U$};
	\node at (1,.55) {$\star$};
\end{tikzpicture}
$
where $\sigma_U$ is a choice of square root of $\omega_U$.
\end{lem}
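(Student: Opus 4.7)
My strategy is to show that both sides lie in a common one-dimensional Hom-space, and then pin down the scalar relating them via a direct skein manipulation.

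First, I would interpret both diagrams as morphisms in the quotient category with the same source and target (reading around the boundary, both have external strands $\zeta,\rho,\rho,\rho,\zeta$). Using Corollary~\ref{cor:UExists} together with Frobenius reciprocity, and the fact that the $\rho$-trivalent vertex spans the one-dimensional space of morphisms $\rho\otimes\rho\to\rho$ in $T_2$, I would verify that the ambient Hom-space in the quotient is also at most one-dimensional. Hence both sides agree up to a scalar $c$; it remains to identify $c=\sigma_U$.

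To pin down $c$, I would manipulate the right-hand side directly. The central $U^*$-box represents the basic crossing of a single $\rho$-strand past $\zeta$; the outer left $U^*$ and right $U$ are rotated versions of this, as dictated by Relation~\eqref{rel:AT3}. Applying \eqref{rel:AT2} together with the $T_2$ skein relations from Proposition~\ref{prop:RhoSkeinRelations} to resolve the internal $\rho$-strands joining the three boxes to the trivalent vertex, the three boxes fuse and leave only the bare trivalent vertex above the $\zeta$-strand, up to an accumulated phase from the rotations. An alternative, possibly cleaner, route is to square the claimed identity: two stacked copies of the LHS collapse via Proposition~\ref{prop:RhoSkeinRelations} into a single copy of the trivalent vertex times a $T_2$-quantum-dimension scalar, while two stacked copies of the RHS collapse via \eqref{rel:AT2} (cancelling pairs of $U$-$U^*$ boxes along each $\rho$-strand) together with \eqref{rel:AT3} (accounting for the rotations of $U$ needed to align the cancellations), leaving a single residual factor of $\omega_U$. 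Comparing yields $c^2=\omega_U$, and $\sigma_U$ is the chosen square root.

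The main obstacle I anticipate is the bookkeeping of phases: verifying that the accumulated $\omega_U$-factors produce exactly $\omega_U^{1/2}$ (and not, say, $\omega_U$ or $\omega_U^{3/2}$) requires careful tracking of the rotations induced by \eqref{rel:AT3}, along with the fact that the middle $U^*$-box sits between two rotated boxes whose rotations are in opposite senses and so must be tallied with appropriate signs. The ambiguity when taking the square root is absorbed into the definition of $\sigma_U$; both sign choices yield a valid identity, and one is fixed by convention.
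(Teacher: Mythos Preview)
Your overall architecture matches the paper's: establish that both diagrams live in a one-dimensional Hom-space (the paper simply computes $\langle \zeta\rho,\zeta\rho^2\rangle=1$), conclude they are proportional, and then pin down the scalar by an argument that squares it to $\omega_U$. The paper adds one small point you omit: both diagrams have the same nonzero norm $\tau\dim(\zeta)$, which immediately gives that the proportionality constant $\lambda$ is unimodular.

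Where your proposal is vague is exactly the step you flag as difficult, and the paper's route is different from (and more concrete than) what you sketch. Rather than literally stacking two copies of the identity, the paper expands $\id_{\rho\zeta\rho}$ in two ways: once using \eqref{rel:AT2} to insert $U^*U$ \emph{below} the $\zeta$-strand, once to insert $UU^*$ \emph{above}. In each case one then applies \eqref{rel:AT1} to the resulting pair of parallel $\rho$-strands, giving a $\tau^{-1}$-cap term plus a trivalent-vertex term. The cap terms in the two expansions visibly agree, so the trivalent-vertex terms must agree too. Now one applies the lemma (with unknown constant $\lambda$) \emph{twice} to rewrite one of these trivalent-vertex terms, and a single application of \eqref{rel:AT3} converts the result into $\omega_U^{-1}$ times the other. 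This forces $\lambda^2\omega_U^{-1}=1$.

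So your ``square the identity'' instinct is correct, but the actual mechanism is not stacking two copies of the diagram and collapsing via Proposition~\ref{prop:RhoSkeinRelations}; it is comparing two $U$-insertions on opposite sides of $\zeta$ and using \eqref{rel:AT1} to isolate matching trivalent pieces. Your first suggested route---directly simplifying the right-hand side via \eqref{rel:AT2}, \eqref{rel:AT3}, and the $T_2$ skein relations---does not straightforwardly work, because there is no relation that lets a single trivalent vertex pass through three $U$-boxes without invoking the lemma itself.
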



\begin{remark}\label{rem:SwitchSign}
Note that switching the sign of $U$ switches the sign of $\sigma_U$.
\end{remark}


\begin{proof}
Note that $\langle \zeta \rho , \zeta \rho^2\rangle = 1$, so since both diagrams have the same nonzero norm $\tau\dim(\zeta)$ (using Equation \eqref{rel:AT2}), there is a unimodular scalar $\lambda\in \mathbb{T}$ such that the diagram on the left is equal to $\lambda$ times the diagram on the right. It remains to determine the scalar $\lambda$.

We can write 
$
\begin{tikzpicture}[baseline = -.1cm]
	\draw[thick, rho] (-.4,.2) -- (.4, .2);
	\draw[thick, zeta] (-.4,0) -- ( .4, 0);	
	\draw[thick, rho] (-.4,-.2) -- (.4, -.2);
	\draw[thick] (-.4,-.4)--(-.4,.4)--(.4,.4)--(.4,-.4)--(-.4,-.4);
\end{tikzpicture}
$
in two different ways:
\begin{align*}
\begin{tikzpicture}[baseline = -.1cm]
	\draw[thick, rho] (-.4,.2) -- (.4, .2);
	\draw[thick, zeta] (-.4,0) -- ( .4, 0);	
	\draw[thick, rho] (-.4,-.2) -- (.4, -.2);
	\draw[thick] (-.4,-.4)--(-.4,.4)--(.4,.4)--(.4,-.4)--(-.4,-.4);
\end{tikzpicture}
&=
\begin{tikzpicture}[baseline = .1cm]
	\draw[thick, rho] (-1.4,.7) -- (1.4, .7);
	\draw[thick, rho] (-1.4,-.4)-- (-.6,0) .. controls ++(45:.6cm) and ++(135:.6cm) .. (.6,0)--(1.4,-.4);	
	\draw[thick, zeta] (-1.4,0) -- (1.4, 0);
	\draw[thick, unshaded] (.6,0) circle (.4cm);
	\node at (.6,0) {$U$};
	\node at (.6,.55) {$\star$};
	\draw[thick, unshaded] (-.6,0) circle (.4cm);
	\node at (-.6,0) {$U^*$};
	\node at (-.6,.55) {$\star$};
\end{tikzpicture}
=
\frac{1}{\tau}\,
\begin{tikzpicture}[baseline = .1cm]
	\draw[thick, rho] (-1.4,-.4)-- (-.6,0) .. controls ++(45:.6cm) and ++(0:.6cm) .. (-.6,.7)--(-1.4,.7);
	\draw[thick, rho] (1.4,-.4)-- (.6,0) .. controls ++(135:.6cm) and ++(180:.6cm) .. (.6,.7)--(1.4,.7);	
	\draw[thick, zeta] (-1.4,0) -- (1.4, 0);
	\draw[thick, unshaded] (.6,0) circle (.4cm);
	\node at (.6,0) {$U$};
	\node at (.6,.55) {$\star$};
	\draw[thick, unshaded] (-.6,0) circle (.4cm);
	\node at (-.6,0) {$U^*$};
	\node at (-.6,.55) {$\star$};
\end{tikzpicture}
+
\begin{tikzpicture}[baseline = .1cm]
	\draw[thick, rho] (-1.4,.7) -- (1.4, .7);
	\filldraw[rho] (-.2,.7) circle (.05cm);
	\filldraw[rho] (.2,.7) circle (.05cm);
	\draw[thick, rho] (-1.4,-.4)-- (-.6,0) .. controls ++(45:.3cm) and ++(270:.3cm) .. (-.2,.7);
	\draw[thick, rho] (1.4,-.4)-- (.6,0) .. controls ++(135:.3cm) and ++(270:.3cm) .. (.2,.7);
	\draw[thick, zeta] (-1.4,0) -- (1.4, 0);
	\draw[thick, unshaded] (.6,0) circle (.4cm);
	\node at (.6,0) {$U$};
	\node at (.6,.55) {$\star$};
	\draw[thick, unshaded] (-.6,0) circle (.4cm);
	\node at (-.6,0) {$U^*$};
	\node at (-.6,.55) {$\star$};
\end{tikzpicture}
\text{ and}\displaybreak[1]\\
\begin{tikzpicture}[baseline = -.1cm]
	\draw[thick, rho] (-.4,.2) -- (.4, .2);
	\draw[thick, zeta] (-.4,0) -- ( .4, 0);	
	\draw[thick, rho] (-.4,-.2) -- (.4, -.2);
	\draw[thick] (-.4,-.4)--(-.4,.4)--(.4,.4)--(.4,-.4)--(-.4,-.4);
\end{tikzpicture}
&=
\begin{tikzpicture}[baseline = -.1cm,yscale=-1]
	\draw[thick, rho] (-1.4,.7) -- (1.4, .7);
	\draw[thick, rho] (-1.4,-.4)-- (-.6,0) .. controls ++(45:.6cm) and ++(135:.6cm) .. (.6,0)--(1.4,-.4);	
	\draw[thick, zeta] (-1.4,0) -- (1.4, 0);
	\draw[thick, unshaded] (.6,0) circle (.4cm);
	\node at (.6,0) {$U^*$};
	\node at (.6,.55) {$\star$};
	\draw[thick, unshaded] (-.6,0) circle (.4cm);
	\node at (-.6,0) {$U$};
	\node at (-.6,.55) {$\star$};
\end{tikzpicture}
=
\begin{tikzpicture}[baseline = -.1cm,yscale=-1]
	\draw[thick, rho] (-1.4,.6) -- (1.4, .6);
	\draw[thick, rho] (-1.4,-.4)-- (-.6,0) .. controls ++(45:.6cm) and ++(135:.6cm) .. (.6,0)--(1.4,-.4);	
	\draw[thick, zeta] (-1.4,0) -- (1.4, 0);
	\draw[thick, unshaded] (.6,0) circle (.4cm);
	\node at (.6,0) {$U^*$};
	\node at (.6,-.55) {$\star$};
	\draw[thick, unshaded] (-.6,0) circle (.4cm);
	\node at (-.6,0) {$U$};
	\node at (-.6,-.55) {$\star$};
\end{tikzpicture}
=
\frac{1}{\tau}\,
\begin{tikzpicture}[baseline = -.1cm, yscale=-1]
	\draw[thick, rho] (-1.4,-.4)-- (-.6,0) .. controls ++(45:.6cm) and ++(0:.6cm) .. (-.6,.6)--(-1.4,.6);
	\draw[thick, rho] (1.4,-.4)-- (.6,0) .. controls ++(135:.6cm) and ++(180:.6cm) .. (.6,.6)--(1.4,.6);	
	\draw[thick, zeta] (-1.4,0) -- (1.4, 0);
	\draw[thick, unshaded] (.6,0) circle (.4cm);
	\node at (.6,0) {$U^*$};
	\node at (.6,-.55) {$\star$};
	\draw[thick, unshaded] (-.6,0) circle (.4cm);
	\node at (-.6,0) {$U$};
	\node at (-.6,-.55) {$\star$};
\end{tikzpicture}
+
\begin{tikzpicture}[baseline = -.1cm,yscale=-1]
	\draw[thick, rho] (-1.4,.6) -- (1.4, .6);
	\filldraw[rho] (-.2,.6) circle (.05cm);
	\filldraw[rho] (.2,.6) circle (.05cm);
	\draw[thick, rho] (-1.4,-.4)-- (-.6,0) .. controls ++(45:.3cm) and ++(270:.3cm) .. (-.2,.6);
	\draw[thick, rho] (1.4,-.4)-- (.6,0) .. controls ++(135:.3cm) and ++(270:.3cm) .. (.2,.6);
	\draw[thick, zeta] (-1.4,0) -- (1.4, 0);
	\draw[thick, unshaded] (.6,0) circle (.4cm);
	\node at (.6,0) {$U^*$};
	\node at (.6,-.55) {$\star$};
	\draw[thick, unshaded] (-.6,0) circle (.4cm);
	\node at (-.6,0) {$U$};
	\node at (-.6,-.55) {$\star$};
\end{tikzpicture}
\,.
\end{align*}
Note that the first diagram in the sum on the right hand side for the first expression is equal to the first diagram in the sum of the right hand side for the second expression. Hence the second diagram in the sum on the right hand side for the first expression must also equal the second diagram in the sum of the right hand side for the second expression. We have
\begin{align*}
\begin{tikzpicture}[baseline = -.1cm,yscale=-1]
	\draw[thick, rho] (-1.4,.6) -- (1.4, .6);
	\filldraw[rho] (-.2,.6) circle (.05cm);
	\filldraw[rho] (.2,.6) circle (.05cm);
	\draw[thick, rho] (-1.4,-.4)-- (-.6,0) .. controls ++(45:.3cm) and ++(270:.3cm) .. (-.2,.6);
	\draw[thick, rho] (1.4,-.4)-- (.6,0) .. controls ++(135:.3cm) and ++(270:.3cm) .. (.2,.6);
	\draw[thick, zeta] (-1.4,0) -- (1.4, 0);
	\draw[thick, unshaded] (.6,0) circle (.4cm);
	\node at (.6,0) {$U^*$};
	\node at (.6,-.55) {$\star$};
	\draw[thick, unshaded] (-.6,0) circle (.4cm);
	\node at (-.6,0) {$U$};
	\node at (-.6,-.55) {$\star$};
\end{tikzpicture}
&=
\lambda^2
\begin{tikzpicture}[baseline = -.1cm]
	\draw[thick, zeta] (-3.8,0) -- (4.8,0);
	\coordinate (a) at (-1,1) ;
	\coordinate (b) at (2,1) ;	
	\filldraw[rho] (a) circle (.05cm);
	\filldraw[rho] (b) circle (.05cm);
	\draw[thick, rho]  (-3.8,.4) -- (-3,0) .. controls ++(-45:.6cm) and ++(225:.6cm) .. (-2,0) -- (a);
	\draw[thick, rho]  (b) -- (3,0) .. controls ++(-45:.6cm) and ++(225:.6cm) .. (4,0) -- (4.8,.4);
	\draw[thick, rho]  (a) -- (0,0) .. controls ++(-45:.6cm) and ++(225:.6cm) .. (1,0) -- (b);
	\draw[thick, rho] (a) -- (-1,-.4) arc (0:-90:.2cm)--(-3.8,-.6);
	\draw[thick, rho] (b) -- (2,-.4) arc (180:270:.2cm)--(4.8,-.6);
	\Ucircle{U}{(-3,0)}
	\Ucircle{U^*}{(-2,0)}
	\draw[thick, unshaded] (-1,0) circle (.4cm);
	\node at (-1,0) {$U^*$};
	\node at (-1.2,.55) {$\star$};
	\Ucircle{U}{(0,0)}
	\Ucircle{U^*}{(1,0)}
	\draw[thick, unshaded] (2,0) circle (.4cm);
	\node at (2,0) {$U^*$};
	\node at (1.8,.55) {$\star$};
	\Ucircle{U}{(3,0)}
	\Ucircle{U^*}{(4,0)}
\end{tikzpicture}
\displaybreak[1]\\
&=
\lambda^2\,
\begin{tikzpicture}[baseline = -.1 cm]
	\draw[thick, rho] (-1.4,.7) -- (1.4, .7);
	\filldraw[rho] (-.2,.7) circle (.05cm);
	\filldraw[rho] (.2,.7) circle (.05cm);
	\draw[thick, rho] (-1.4,-.4)-- (-.6,0) .. controls ++(45:.3cm) and ++(270:.3cm) .. (-.2,.7);
	\draw[thick, rho] (1.4,-.4)-- (.6,0) .. controls ++(135:.3cm) and ++(270:.3cm) .. (.2,.7);
	\draw[thick, zeta] (-1.4,0) -- (1.4, 0);
	\draw[thick, unshaded] (.6,0) circle (.4cm);
	\node at (.6,0) {$U^*$};
	\node at (.1,.25) {$\star$};
	\draw[thick, unshaded] (-.6,0) circle (.4cm);
	\node at (-.6,0) {$U^*$};
	\node at (-.6,.55) {$\star$};
\end{tikzpicture}
\displaybreak[1]\\
&=
\lambda^2\omega_U^{-1}\,
\begin{tikzpicture}[baseline = -.1 cm]
	\draw[thick, rho] (-1.4,.7) -- (1.4, .7);
	\filldraw[rho] (-.2,.7) circle (.05cm);
	\filldraw[rho] (.2,.7) circle (.05cm);
	\draw[thick, rho] (-1.4,-.4)-- (-.6,0) .. controls ++(45:.3cm) and ++(270:.3cm) .. (-.2,.7);
	\draw[thick, rho] (1.4,-.4)-- (.6,0) .. controls ++(135:.3cm) and ++(270:.3cm) .. (.2,.7);
	\draw[thick, zeta] (-1.4,0) -- (1.4, 0);
	\draw[thick, unshaded] (.6,0) circle (.4cm);
	\node at (.6,0) {$U$};
	\node at (.6,.55) {$\star$};
	\draw[thick, unshaded] (-.6,0) circle (.4cm);
	\node at (-.6,0) {$U^*$};
	\node at (-.6,.55) {$\star$};
\end{tikzpicture}
\,.
\end{align*}
Hence $\lambda^2=\omega_U$.
\end{proof}


\begin{thm}\label{thm:JellyfishRelationsU}
The following jellyfish relations hold for $U$:
\begin{enumerate}[(1)]
\item
$
\begin{tikzpicture}[baseline = -.3cm]
	\draw[thick, theta] (-.7,-.8)-- (-.7,0) arc(180:0:.7cm) -- (.7,-.8);
	\draw[thick, rho] (-.3,0) -- (-.3,-.8);
	\draw[thick, zeta] (-.1,0) -- (-.1,-.8);
	\draw[thick, rho] (.1,0) -- (.1,-.8);
	\draw[thick, zeta] (.3,0) -- (.3,-.8);
	\draw[thick, unshaded]  (0,0) circle (.4cm);
	\node at (0,0) {$U$};
	\node at (-.55,0) {$\star$};
\end{tikzpicture}
=
\begin{tikzpicture}[baseline = -.3cm]
	\draw[thick, zeta] (-.3,0) -- (-.3,-.8);
	\draw[thick, rho] (-.1,0) -- (-.1,-.8);
	\draw[thick, zeta] (.1,0) -- (.1,-.8);
	\draw[thick, rho] (.3,0) -- (.3,-.8);
	\draw[thick, theta] (.5,-.8) arc (180:0:.2cm);
	\draw[thick, unshaded]  (0,0) circle (.4cm);
	\node at (0,0) {$U^*$};
	\node at (-.55,0) {$\star$};
\end{tikzpicture}
$
\item
$
\begin{tikzpicture}[baseline = -.3cm]
	\draw[thick, rho] (-.7,-.8)-- (-.7,0) arc(180:0:.7cm) -- (.7,-.8);
	\coordinate (a) at (0,.7);
	\filldraw[rho] (a) circle (.05cm);
	\draw[thick, rho] (a)--(0,0);
	\draw[thick, zeta] (-.2,0) -- (-.2,-.8);
	\draw[thick, rho] (0,0) -- (0,-.8);
	\draw[thick, zeta] (.2,0) -- (.2,-.8);
	\draw[thick, unshaded]  (0,0) circle (.4cm);
	\node at (0,0) {$U$};
	\node at (45:.55cm) {$\star$};
\end{tikzpicture}
=
\omega_U\,
\begin{tikzpicture}[baseline = -.3cm]
	\draw[thick, rho] (-.7,-.8)-- (-.7,0) arc(180:0:.7cm) -- (.7,-.8);
	\coordinate (a) at (0,.7);
	\filldraw[rho] (a) circle (.05cm);
	\draw[thick, rho] (a)--(0,0);
	\draw[thick, zeta] (-.2,0) -- (-.2,-.8);
	\draw[thick, rho] (0,0) -- (0,-.8);
	\draw[thick, zeta] (.2,0) -- (.2,-.8);
	\draw[thick, unshaded]  (0,0) circle (.4cm);
	\node at (0,0) {$U^*$};
	\node at (135:.55cm) {$\star$};
\end{tikzpicture}
=
\sigma_U^{-1}\,
\begin{tikzpicture}[baseline = -.3cm]
	\draw[thick, zeta] (-.6,0) -- (.6,0);
	\coordinate (a) at (0,-.5);	
	\filldraw[rho] (a) circle (.05cm);
	\draw[thick, rho] (-.6,0)--(a)--(.6,0);
	\draw[thick, rho] (a)--(0,-.8);
	\draw[thick, rho] (-.7,0) -- (-.7,-.8);
	\draw[thick, zeta] (-.5,0) -- (-.5,-.8);
	\draw[thick, zeta] (.5,0) -- (.5,-.8);
	\draw[thick, rho] (.7,0) -- (.7,-.8);
	\draw[thick, unshaded]  (-.6,0) circle (.4cm);
	\node at (-.6,0) {$U$};
	\node at (-.6,.55) {$\star$};
	\draw[thick, unshaded]  (.6,0) circle (.4cm);
	\node at (.6,0) {$U^*$};
	\node at (.6,.55) {$\star$};
\end{tikzpicture}
$
\item
$\displaystyle
\begin{tikzpicture}[baseline = -.3cm]
	\draw[thick, rho] (-.7,-.8)-- (-.7,0) arc(180:0:.7cm) -- (.7,-.8);
	\draw[thick, rho] (-.3,0) -- (-.3,-.8);
	\draw[thick, zeta] (-.1,0) -- (-.1,-.8);
	\draw[thick, rho] (.1,0) -- (.1,-.8);
	\draw[thick, zeta] (.3,0) -- (.3,-.8);
	\draw[thick, unshaded]  (0,0) circle (.4cm);
	\node at (0,0) {$U$};
	\node at (-.55,0) {$\star$};
\end{tikzpicture}
=
\frac{\omega_U}{\tau}\,
\begin{tikzpicture}[baseline = -.3cm]
	\draw[thick, zeta] (-.3,0) -- (-.3,-.8);
	\draw[thick, rho] (-.1,0) -- (-.1,-.8);
	\draw[thick, zeta] (.1,0) -- (.1,-.8);
	\draw[thick, rho] (.3,0) -- (.3,-.8);
	\draw[thick, rho] (-.9,-.8) arc (180:0:.2cm);
	\draw[thick, unshaded]  (0,0) circle (.4cm);
	\node at (0,0) {$U^*$};
	\node at (-.55,0) {$\star$};
\end{tikzpicture}
+
\sigma_U^{-1}\,
\begin{tikzpicture}[baseline = -.3cm]
	\draw[thick, zeta] (-.6,0) -- (.6,0);
	\coordinate (a) at (0,-.5);	
	\coordinate (b) at (-1,-.5);	
	\filldraw[rho] (a) circle (.05cm);
	\filldraw[rho] (b) circle (.05cm);
	\draw[thick, rho] (-.6,0)--(a)--(.6,0);
	\draw[thick, rho] (a)--(0,-.8);
	\draw[thick, rho] (-1.2,-.8)--(b)--(-.8,-.8);
	\draw[thick, rho] (b)--(-.6,0);
	\draw[thick, zeta] (-.6,0) -- (-.6,-.8);
	\draw[thick, zeta] (.5,0) -- (.5,-.8);
	\draw[thick, rho] (.7,0) -- (.7,-.8);
	\draw[thick, unshaded]  (-.6,0) circle (.4cm);
	\node at (-.6,0) {$U$};
	\node at (-.6,.55) {$\star$};
	\draw[thick, unshaded]  (.6,0) circle (.4cm);
	\node at (.6,0) {$U^*$};
	\node at (.6,.55) {$\star$};
\end{tikzpicture}
$
\end{enumerate}
\end{thm}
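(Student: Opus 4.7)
The plan is to establish the three relations in order, leveraging the rotational identity \eqref{rel:AT3}, the $\rho$-spaghetti jellyfish relation (Lemma \ref{lem:RhoSpaghettiJellyfish}), and the resolution \eqref{rel:AT1}. The underlying theme is that since $\zeta = (\theta\rho)^{n-1}\theta$ begins and ends with $\theta$, any outer $\theta$- or $\rho$-strand capping over $U$ can be pulled into $U$ by rotation, converting $U$ to $U^*$ up to a carefully tracked scalar.

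For part (1), the $\theta$-cap arching over $U$ has both of its feet adjacent to the outermost $\theta$-ends of $\zeta$ on each side of $U$. I would isotope the cap so that each leg runs along one side of $U$, and then apply \eqref{rel:AT3} on the right and its inverse on the left. Each application contributes phases $\omega_U$ and $\omega_U^{-1}$ respectively, which cancel; the net effect is to replace $U$ by $U^*$ and leave behind a small $\theta$-cap on the bottom, yielding the RHS.

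For part (2), the first equality (the $\omega_U$ factor between $U$ and $U^*$ versions) is the analog of part (1) for the trivalent $\rho$-cap: apply \eqref{rel:AT3} once on the $\rho$-leg of $U$ paired with the trivalent vertex above, producing the extra factor $\omega_U$. The second equality, with $\sigma_U^{-1}$, is obtained by running Lemma \ref{lem:RhoSpaghettiJellyfish} in reverse on the trivalent cap sitting over $U$; this replaces the trivalent vertex with a pair $U U^*$ wrapping around the $\zeta$-bundle, introducing the factor $\sigma_U^{-1}$ (a chosen square root of $\omega_U^{-1}$, consistent with Remark \ref{rem:SwitchSign}).

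For part (3), the strategy is to resolve the plain $\rho$-cap over $U$ using \eqref{rel:AT1} applied to the two parallel $\rho$-strands formed by the inner leg of the cap and the $\rho$-output of $U$. This produces two terms: a ``cup-cap'' term (with coefficient $\frac{1}{\tau}$), and a ``trivalent-trivalent'' term. The cup-cap term becomes a $\theta$-cap configuration over $U$ after pulling tight along the $\zeta$-ends, to which part (1) applies, yielding the first term on the RHS with its $\omega_U/\tau$ coefficient. The trivalent-trivalent term is exactly the configuration treated in part (2), contributing the $\sigma_U^{-1}$ term.

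The principal obstacle is phase bookkeeping: there are several places where \eqref{rel:AT3} could be applied in either direction, and one must verify that the $\omega_U$, $\omega_U^{-1}$, and $\sigma_U^{\pm 1}$ factors combine as stated. The secondary bookkeeping issue is tracking exactly which $\theta$ or $\rho$ sub-strand of $\zeta$ each outer strand attaches to during the isotopies, since $\zeta$ itself is an alternating bundle; this is routine but must be done carefully to ensure the final diagrams match the stated forms.
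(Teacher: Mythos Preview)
Your overall strategy matches the paper's: (1) comes from a single rotation identity, (2) from Lemma~\ref{lem:RhoSpaghettiJellyfish}, and (3) from (2) together with the resolution~\eqref{rel:AT1}. The paper's proof is equally brief: ``(1) follows from Relation~\eqref{rel:AA1}. (2) follows Lemma~\ref{lem:RhoSpaghettiJellyfish} by multiplying on the left by $U$ and on the right by $U^*$. (3) follows from (2) and the relations in Proposition~\ref{prop:RhoSkeinRelations}.'' However, two of your detailed claims do not survive contact with the diagrams.

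For (1), your assertion that ``both of its feet [are] adjacent to the outermost $\theta$-ends of $\zeta$'' is false: reading $U$'s boundary counterclockwise from the star gives $\rho,\zeta,\rho,\zeta$, so the cap's \emph{left} foot is adjacent to $U$'s first strand $\rho$, not to any $\theta$. Only the right foot sits next to a $\theta$ (the last letter of the rightmost $\zeta$). Consequently there is no ``apply \eqref{rel:AT3} on both sides, phases cancel'' argument available. The actual proof uses \eqref{rel:AA2} once on the right: the two adjacent $\theta$'s (last letter of $\zeta$ and cap's right leg) become a cup--cap, which sends $U$'s last strand over the top to the far left. This is exactly $\cF^{-1}(U)=U^*$ from \eqref{rel:AT3}, with \emph{no} phase, leaving the small $\theta$-cap on the right.

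For (3), the cup--cap term coming out of \eqref{rel:AT1} does \emph{not} become a $\theta$-cap configuration, and part~(1) plays no role. Applying \eqref{rel:AT1} to the two parallel $\rho$'s (cap's left leg and $U$'s leftmost strand), the cup--cap term bends $U$'s \emph{first} strand $\rho$ over the top to the far right; this is $\cF(U)=\omega_U U^*$ from \eqref{rel:AT3}, which is precisely where the coefficient $\omega_U/\tau$ in the first term of the right-hand side comes from. Your treatment of the trivalent--trivalent term via (2) is correct, but note that to match the two-generator diagram on the right you must compose (2) with the extra bottom trivalent---equivalently, this is what the paper means by taking Lemma~\ref{lem:RhoSpaghettiJellyfish} and multiplying on the left by $U$ and on the right by $U^*$.
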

\begin{proof}
(1) follows from Relation \eqref{rel:AA1}. (2) follows Lemma \ref{lem:RhoSpaghettiJellyfish} by multiplying on the left by $U$ and on the right by $U^*$. (3) follows from (2) and the relations in Proposition \ref{prop:RhoSkeinRelations}.
\end{proof}


\begin{cor}\label{cor:JellyfishRelationsUstar}
By taking adjoints in Theorem \ref{thm:JellyfishRelationsU}, we get the following jellyfish relations for $U^*$: 
\begin{enumerate}[(1)]
\item
$
\begin{tikzpicture}[baseline = -.3cm,xscale=-1]
	\draw[thick, theta] (-.7,-.8)-- (-.7,0) arc(180:0:.7cm) -- (.7,-.8);
	\draw[thick, rho] (-.3,0) -- (-.3,-.8);
	\draw[thick, zeta] (-.1,0) -- (-.1,-.8);
	\draw[thick, rho] (.1,0) -- (.1,-.8);
	\draw[thick, zeta] (.3,0) -- (.3,-.8);
	\draw[thick, unshaded]  (0,0) circle (.4cm);
	\node at (0,0) {$U^*$};
	\node at (.55,0) {$\star$};
\end{tikzpicture}
=
\begin{tikzpicture}[baseline = -.3cm,xscale=-1]
	\draw[thick, zeta] (-.3,0) -- (-.3,-.8);
	\draw[thick, rho] (-.1,0) -- (-.1,-.8);
	\draw[thick, zeta] (.1,0) -- (.1,-.8);
	\draw[thick, rho] (.3,0) -- (.3,-.8);
	\draw[thick, theta] (.5,-.8) arc (180:0:.2cm);
	\draw[thick, unshaded]  (0,0) circle (.4cm);
	\node at (0,0) {$U$};
	\node at (.55,0) {$\star$};
\end{tikzpicture}
$
\item
$
\begin{tikzpicture}[baseline = -.3cm,xscale=-1]
	\draw[thick, rho] (-.7,-.8)-- (-.7,0) arc(180:0:.7cm) -- (.7,-.8);
	\coordinate (a) at (0,.7);
	\filldraw[rho] (a) circle (.05cm);
	\draw[thick, rho] (a)--(0,0);
	\draw[thick, zeta] (-.2,0) -- (-.2,-.8);
	\draw[thick, rho] (0,0) -- (0,-.8);
	\draw[thick, zeta] (.2,0) -- (.2,-.8);
	\draw[thick, unshaded]  (0,0) circle (.4cm);
	\node at (0,0) {$U^*$};
	\node at (45:.55cm) {$\star$};
\end{tikzpicture}
=
\sigma_U\,
\begin{tikzpicture}[baseline = -.3cm,xscale=-1]
	\draw[thick, zeta] (-.6,0) -- (.6,0);
	\coordinate (a) at (0,-.5);	
	\filldraw[rho] (a) circle (.05cm);
	\draw[thick, rho] (-.6,0)--(a)--(.6,0);
	\draw[thick, rho] (a)--(0,-.8);
	\draw[thick, rho] (-.7,0) -- (-.7,-.8);
	\draw[thick, zeta] (-.5,0) -- (-.5,-.8);
	\draw[thick, zeta] (.5,0) -- (.5,-.8);
	\draw[thick, rho] (.7,0) -- (.7,-.8);
	\draw[thick, unshaded]  (-.6,0) circle (.4cm);
	\node at (-.6,0) {$U^*$};
	\node at (-.6,.55) {$\star$};
	\draw[thick, unshaded]  (.6,0) circle (.4cm);
	\node at (.6,0) {$U$};
	\node at (.6,.55) {$\star$};
\end{tikzpicture}
$
\item
$\displaystyle
\begin{tikzpicture}[baseline = -.3cm,xscale=-1]
	\draw[thick, rho] (-.7,-.8)-- (-.7,0) arc(180:0:.7cm) -- (.7,-.8);
	\draw[thick, rho] (-.3,0) -- (-.3,-.8);
	\draw[thick, zeta] (-.1,0) -- (-.1,-.8);
	\draw[thick, rho] (.1,0) -- (.1,-.8);
	\draw[thick, zeta] (.3,0) -- (.3,-.8);
	\draw[thick, unshaded]  (0,0) circle (.4cm);
	\node at (0,0) {$U^*$};
	\node at (.55,0) {$\star$};
\end{tikzpicture}
=
\frac{\omega_U^{-1}}{\tau}\,
\begin{tikzpicture}[baseline = -.3cm,xscale=-1]
	\draw[thick, zeta] (-.3,0) -- (-.3,-.8);
	\draw[thick, rho] (-.1,0) -- (-.1,-.8);
	\draw[thick, zeta] (.1,0) -- (.1,-.8);
	\draw[thick, rho] (.3,0) -- (.3,-.8);
	\draw[thick, rho] (-.9,-.8) arc (180:0:.2cm);
	\draw[thick, unshaded]  (0,0) circle (.4cm);
	\node at (0,0) {$U$};
	\node at (.55,0) {$\star$};
\end{tikzpicture}
+
\sigma_U\,
\begin{tikzpicture}[baseline = -.3cm,xscale=-1]
	\draw[thick, zeta] (-.6,0) -- (.6,0);
	\coordinate (a) at (0,-.5);	
	\coordinate (b) at (-1,-.5);	
	\filldraw[rho] (a) circle (.05cm);
	\filldraw[rho] (b) circle (.05cm);
	\draw[thick, rho] (-.6,0)--(a)--(.6,0);
	\draw[thick, rho] (a)--(0,-.8);
	\draw[thick, rho] (-1.2,-.8)--(b)--(-.8,-.8);
	\draw[thick, rho] (b)--(-.6,0);
	\draw[thick, zeta] (-.6,0) -- (-.6,-.8);
	\draw[thick, zeta] (.5,0) -- (.5,-.8);
	\draw[thick, rho] (.7,0) -- (.7,-.8);
	\draw[thick, unshaded]  (-.6,0) circle (.4cm);
	\node at (-.6,0) {$U^*$};
	\node at (-.6,.55) {$\star$};
	\draw[thick, unshaded]  (.6,0) circle (.4cm);
	\node at (.6,0) {$U$};
	\node at (.6,.55) {$\star$};
\end{tikzpicture}
$\end{enumerate}
\end{cor}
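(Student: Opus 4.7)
The plan is to apply the $*$-operation to each of the three equations in Theorem~\ref{thm:JellyfishRelationsU}. In our unitary pivotal setting, taking adjoints swaps every $U$ with $U^*$, conjugates all scalar coefficients, and takes the mirror image of each morphism diagram about a horizontal line; the jellyfish-style drawings appearing in the corollary are then obtained by re-drawing the adjoined diagrams using the pivotal isotopy available because $\rho$, $\theta$, and $\zeta$ are symmetrically self-dual.

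First I would record the conjugation rules for the scalars. By~\eqref{rel:AT3}, $\omega_U$ is a $2n$-th root of unity, so $\overline{\omega_U}=\omega_U^{-1}$. By Lemma~\ref{lem:RhoSpaghettiJellyfish}, $\sigma_U^2=\omega_U$, so $\sigma_U$ is a $4n$-th root of unity and $\overline{\sigma_U}=\sigma_U^{-1}$. With these in hand I would process each relation in turn: relation (1) has trivial coefficients, so adjoining simply sends $U\mapsto U^*$, giving the corollary's (1). For (2), written in the theorem as $A=\omega_U B=\sigma_U^{-1}C$, adjoining yields $A^*=\omega_U^{-1}B^*=\sigma_U C^*$, and the two adjoined jellyfish expressions collapse (using $U^{**}=U$) to the single corollary equation $A^*=\sigma_U C^*$. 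For (3), adjoining $A=\tau^{-1}\omega_U B+\sigma_U^{-1}C$ produces $A^*=\tau^{-1}\omega_U^{-1}B^*+\sigma_U C^*$, exactly matching the corollary's (3).

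No genuine obstacle arises here; this is essentially a transcription argument. The only care required is bookkeeping the positions of the marked point $\star$ and the trivalent $\rho$-vertices after reflection, and verifying that up to pivotal isotopy they land in the positions drawn in the statement.
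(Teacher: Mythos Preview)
Your proposal is correct and takes essentially the same approach as the paper: the paper simply states that the corollary follows ``by taking adjoints'' in Theorem~\ref{thm:JellyfishRelationsU} and does not write out any further argument. Your elaboration of what taking adjoints does to the scalars ($\overline{\omega_U}=\omega_U^{-1}$, $\overline{\sigma_U}=\sigma_U^{-1}$) and to the diagrams is exactly the intended content, and your remark about tracking the $\star$ and trivalent vertices under reflection/pivotal isotopy is the only bookkeeping that needs to be done.
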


\begin{thm}\label{thm:JellyfishEvaluation}
Relations \eqref{rel:AA1}-\eqref{rel:AA2}, the relations in Proposition \ref{prop:RhoSkeinRelations}, Relations \eqref{rel:AT2}-\eqref{rel:AT3}, and the relations in Theorem \ref{thm:JellyfishRelationsU} are sufficient to evaluate all closed diagrams.
\end{thm}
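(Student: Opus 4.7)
The plan is to run the jellyfish algorithm of \cite{MR2979509}, adapted to closed diagrams in the augmented pivotal category. Given a closed diagram $D$ built from $\alpha,\theta,\rho$ generators and copies of $U,U^*$, I would proceed in two phases: first float every $U$ and $U^*$ outward using the jellyfish relations of Theorem \ref{thm:JellyfishRelationsU} (and its adjoint form in Corollary \ref{cor:JellyfishRelationsUstar}), then cancel paired $UU^*$'s via \eqref{rel:AT2} and evaluate the remaining $U$-free closed diagram in $A_2*T_2$ using \eqref{rel:AA1}--\eqref{rel:AA2} together with Proposition \ref{prop:RhoSkeinRelations}.

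For the floating phase I would select any $U$ or $U^*$ node in $D$ whose through-strands still pass underneath some cap-like configuration, and apply the appropriate jellyfish relation to that cap. If the cap is a single $\theta$-cap, part (1) of Theorem \ref{thm:JellyfishRelationsU} moves it to the opposite side of the node, swapping $U\leftrightarrow U^*$. If the cap is a $\rho$-strand absorbed by a trivalent vertex on the through-strands, part (2) does the same up to a scalar factor in $\omega_U,\sigma_U$. If the cap is a bare $\rho\rho$-cup, part (3) produces a sum of two terms: one with the cap moved across a single $U^*$, and one in which a fresh $U,U^*$ pair has been ``born'' already sitting on the outer boundary with nothing above them. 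Iterating, no $U$ or $U^*$ retains a cap directly above it.

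In the second phase, since $D$ is closed, the floated $U$'s and $U^*$'s must meet each other in the outermost region; by \eqref{rel:AT2} we cancel adjacent $UU^*$-pairs until none remain. What is left is a closed diagram in the free product $A_2*T_2$ with no extra generators. Under the planar partition defining the free product it splits into $A_2$- and $T_2$-subregions, each of which reduces to a scalar: $A_2$-regions using \eqref{rel:AA1}--\eqref{rel:AA2}, and $T_2$-regions using the Temperley--Lieb-style skein relations of Proposition \ref{prop:RhoSkeinRelations}.

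The main obstacle is termination of the floating phase. Parts (1) and (2) of Theorem \ref{thm:JellyfishRelationsU} each strictly reduce the number of caps sitting above $U/U^*$ nodes, but part (3) introduces two new generators in its second summand, so a naive count of $U$'s does not suffice. The remedy is to measure complexity by $\mu(D)$, the number of caps lying directly above some copy of $U$ or $U^*$ in $D$: the freshly born pair from part (3) sits on the outer boundary with no caps above, contributing $0$ to $\mu$, while the triggering cap has been eliminated. A careful induction on $\mu(D)$, with a secondary complexity to sequence the remaining caps, guarantees the algorithm halts and reduces $D$ to a scalar.
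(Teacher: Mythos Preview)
Your two-phase outline (float all $U,U^*$ to the outer region, then cancel) is the same shape as the paper's argument, but both phases contain gaps, and the second one is where the real content of the proof lives.

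In Phase~1 your complexity $\mu(D)$ does not do what you claim. When part~(3) of Theorem~\ref{thm:JellyfishRelationsU} is applied to a generator sitting under $d>1$ caps, the freshly born $U,U^*$ pair in the second summand is \emph{not} on the outer boundary: it replaces the old generator together with its innermost $\rho$-cap, and the remaining $d-1$ caps still lie above both new generators. Thus $\mu$ may jump from $d$ to $2(d-1)$. Floating does terminate, but one needs a well-founded measure such as the decreasing-lex order on the multiset of generator depths; the paper avoids this by citing \cite{MR2979509}.

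The substantive gap is Phase~2. You assert that in jellyfish form ``adjacent $UU^*$-pairs'' can be cancelled via \eqref{rel:AT2}, but that relation only fires when the pair is joined by a full $\rho\zeta$ or $\zeta\rho$ bundle. Nothing guarantees this: the $4n$ strands of a generator may be distributed among several neighbours, the neighbour may be another $U$ rather than $U^*$, and $\rho$-trivalent vertices in the inner region can connect three distinct generators. The paper's proof is almost entirely about this step. It inducts on the number $k$ of generators: $k=0$ is the free-product evaluation, $k=1$ is shown to vanish via an innermost-$\theta$-cap argument, and for $k\ge 2$ one first uses \eqref{rel:AT1} to separate trivalent vertices, then locates an innermost trivalent vertex, and applies a strengthened polygon-with-diagonals argument (a triangulated polygon has at least two non-adjacent ears) inside one of the two regions it bounds to find a generator connected to a neighbour by enough strands for \eqref{rel:AT2}/\eqref{rel:AT3} to reduce $k$ by two. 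None of this machinery appears in your sketch, and without it the cancellation step does not go through.
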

\begin{proof}
It suffices to show we can evaluate any closed diagram in jellyfish form by Theorem \ref{thm:JellyfishRelationsU} and Corollary \ref{cor:JellyfishRelationsUstar}.

Suppose we have such a diagram $D$. We show that we can evaluate $D$ by induction on the number of generators $U,U^*$ in the diagram.
\begin{enumerate}
\item[\underline{$k=0$:}]
If there are no $U$'s or $U^*$'s, we can evaluate $D$ using Relation \eqref{rel:AA1} and the relations in Proposition \ref{prop:RhoSkeinRelations}.
\item[\underline{$k=1$:}]
If there is only one $U$ or $U^*$ in $D$, all $\theta$ strings connect back to the generator, so somewhere there is an innermost $\theta$ cap. Inside this cap is a red diagram with only one boundary point, which must be zero by Proposition \ref{prop:RhoSkeinRelations}.
\item[\underline{$k\geq 2$:}]
Suppose there are $k\geq 2$ generators $U$ or $U^*$, and suppose that we may evaluate all closed diagrams $D$ with $k-1$ generators.

If there are no trivalent vertices, the usual argument in the jellyfish algorithm applies \cite{MR2979509}, so there must be two generators connected by at least $n$ strings along the boundary. 
We can then use Relation \eqref{rel:AT2} to obtain a diagram with $k-2$ generators. We are finished by the induction hypothesis.

Suppose now that there are trivalent vertices. 
Using Relation \eqref{rel:AT1} in Proposition \ref{prop:RhoSkeinRelations}, which does not increase the number of generators, we may assume that no two trivalent vertices of $D$ are connected. 
Hence each string connected to a trivalent vertex connects to a generator.
If there is a vertex connected by two $\rho$ strings to a generator $U'$, then between those $\rho$ strings there is an innermost $\theta$ cap. The argument from the $k=1$ case shows $D=0$.

Now we may assume each vertex attaches to 3 distinct generators. 
Isotope $D$ so that all trivalent vertices have strings emanating from the top, and these strings travel upward and attach to $U$'s or $U^*$'s with no critical points (this amounts to picking a linear ordering of the generators rather than the cyclic ordering afforded by jellyfish form)
$$
\begin{tikzpicture}[baseline = 0cm]
	\coordinate (a) at (2,-.8) ;
	\filldraw[rho] (a) circle (.05cm);
	\draw[thick, rho] (a) -- (0,0);
	\draw[thick, rho] (a) -- (2,0);
	\draw[thick, rho] (a) -- (4,0);	
	\Ucircle{U_1}{(0,0)}
	\node at (1,0) {$\cdots$};
	\Ucircle{U_2}{(2,0)}
	\node at (3,0) {$\cdots$};
	\Ucircle{U_3}{(4,0)}
	\node [rotate around = {90:(0,0)}] at (1,.5) {$\Big\}$};
	\node at (1,.8) {\scriptsize{$j$ generators}};
	\node [rotate around = {90:(0,0)}] at (3,.5) {$\Big\}$};
	\node at (3,.8) {\scriptsize{$\ell$ generators}};
\end{tikzpicture}\,.
$$
(Above, $U_1,U_2,U_3$ are either $U$ or $U^*$.)
Hence each trivalent vertex bounds two inner regions in the diagram.
Pick an innermost trivalent vertex $v$, i.e., a trivalent vertex for which these two inner regions contain no other trivalent vertices. 
Let $U_1,U_2,U_3$ be the distinct generators attached to $v$ as in the diagram above.
Let $j$ be the number of distinct generators between $U_1$ and $U_2$, and let $\ell$ be the number of distinct generators between $U_2$ and $U_3$.

If $j$ and $\ell$ are both zero, then $U_2$ is connected to either $U_1$ or $U_3$ by at least $n$ strings, and we may use Relation \eqref{rel:AT2} to reduce the number of generators by 2.
If $j>0$, looking at the region above our innermost trivalent vertex $v$ and between $U_1$ and $U_2$, we see have a polygonal region whose vertices are the copies of $U$ or $U^*$, with some number of diagonals. There are two strings connecting $v$ to $U_1$ and $U_2$, we consider these as a single distinguished edge of the polygon. Now, the usual jellyfish argument proceeds by showing that every polygon with diagonals has a vertex with no incident diagonals. 
If we were assured only one such vertex, it may be the case that this vertex were $U_1$ or $U_2$, and we would get stuck at this point.
However, the stronger result is that there is a pair of nonadjacent vertices with no incident diagonals (if $j>1$), and hence at least one of the $j$ generators strictly between $U_1$ and $U_2$ is connected to one its neighbors by at least $n$ strands. Hence we may reduce the diagram using Relation \eqref{rel:AT2}, leaving it in jellyfish form, and we are finished by the induction hypothesis.
\end{enumerate}
\end{proof}

\begin{defn}
For $1\leq n<\infty$, let $\AT_{n,\omega_U}$ be the unitary quotient of $A_2*T_2$ generated by $U$ satisfying 
Relation \eqref{rel:AA1}, the relations of Proposition \ref{prop:RhoSkeinRelations}, and Relations \eqref{rel:AT2}-\eqref{rel:AT3},  provided that it exists. Note that $\AT_{1,1}$ is $A_2\boxtimes T_2$.
\end{defn}

We show in Theorem \ref{thm:Unique} that for $n=1,2,3$, we must have $\omega_U=1$.
We show that $\AT_{n,1}$ exists for $n=1,2,3$ in Subsection \ref{sec:Existence}.
We show in Theorem \ref{thm:Nonexistence} that for $4\leq n\leq 10$, $\AT_{n,\omega_U}$ does not exist.

\subsection{A basis for jellyfish calculations}

\begin{prop}\label{prop:basis}
Consider all diagrams of the form
$$
\begin{tikzpicture}[baseline = -.3cm]
	\coordinate (a) at (-3.6,0);
	\coordinate (b) at (-2.2,0);
	\coordinate (c) at (-.8,0);
	\coordinate (d) at (.8,0);
	\coordinate (e) at (2.2,0);
	\coordinate (ab) at ($(a)+(.7,-.6)$);
	\coordinate (bc) at ($(b)+(.7,-.6)$);
	\coordinate (de) at ($(d)+(.7,-.6)$);
	\coordinate (ue) at ($(e)+(.5,-.5)$);
	\draw (a) -- node [above] {{\scriptsize{$a_1$}}} (b);
	\draw (b) -- node [above] {{\scriptsize{$a_2$}}} (c);
	\draw (d) -- node [above] {{\scriptsize{$a_{k-1}$}}} (e);
	\draw[thick, rho] (a)--(ab) -- (b);
	\draw[thick, rho] (ab) -- ($(ab)+(0,-.2)$);
	\draw[thick, rho] (b)--(bc) -- (c);
	\draw[thick, rho] (bc) -- ($(bc)+(0,-.2)$);
	\draw[thick, rho] (c)--($(c)+(.6,-.6)$);
	\draw[thick, rho] (d)--($(d)+(-.6,-.6)$);
	\draw[thick, rho] (d)--(de) -- (e);
	\draw[thick, rho] (de) -- ($(de)+(0,-.2)$);
	\draw[thick, rho] (e)--(ue);
	\draw[thick, rho] (ue)--($(ue)+(-.3,-.3)$);
	\draw[thick, rho] (ue)--($(ue)+(.3,-.3)$);
	\draw[] ($(ue)+(-.15,-.3)$) arc (180:0:.15cm);
	\node at ($(ue)+(.1,-.4)$) {{\scriptsize{$c$}}};	
	\draw[] ($(e)+(.2,.15)$)-- node [right] {{\scriptsize{$a_{k}$}}} ($(ue)+(.6,-.3)$);
	\draw[] ($(a)+(0,0)$)-- ($(a)+(0,-.8)$);
	\node at ($(a)+(-0,-1)$) {{\scriptsize{$a_0$}}};
	\draw[] ($(b)+(0,0)$)-- ($(b)+(0,-.8)$);
	\node at ($(b)+(0,-1)$) {{\scriptsize{$b_2$}}};
	\draw[] ($(c)+(0,0)$)-- ($(c)+(0,-.8)$);
	\node at ($(c)+(0,-1)$) {{\scriptsize{$b_3$}}};
	\draw[] ($(d)+(0,0)$)-- ($(d)+(0,-.8)$);
	\node at ($(d)+(0,-1)$) {{\scriptsize{$b_{k-1}$}}};
	\draw[] ($(e)+(0,0)$)-- ($(e)+(0,-.8)$);
	\node at ($(e)+(0,-1)$) {{\scriptsize{$b_{k}$}}};
	\Ucircle{U_1}{(a)}
	\Ucircle{U_2}{(b)}
	\Ucircle{U_3}{(c)}
	\node at ($(c)+(.8,0)$) {{\scriptsize{$\cdots$}}};
	\largeUcircle{U_{k-1}}{(d)}
	\Ucircle{U_k}{(e)}
	\filldraw[rho] (ab) circle (.05cm);
	\filldraw[rho] (bc) circle (.05cm);
	\filldraw[rho] (de) circle (.05cm);
	\filldraw[rho] (ue) circle (.05cm);
\end{tikzpicture}
$$
where the labels on the strings indicate the total number of $\rho$ and $\theta$ strings in the bundle,
satisfying the following criteria:
\begin{itemize}
\item $k$ is even,
\item the $U_i$'s alternate between $U$ and $U^*$, but $U_1$ is not necessarily $U$,
\item $a_0\geq 2n$ and $0\leq a_{k}\leq 2n-2$,
\item $1\leq c\leq 2n-1$ is odd, and the bundle is of the form $\theta(\rho\theta)^{j}$ for some $j$,
\item for $2\leq i\leq k$, $1\leq b_i\leq 2n-1$ and $b_i$ is odd, and
\item for $i=1,\dots, k-1$, $1\leq a_i\leq 2n-1$ and $a_i-a_{i\pm 1}$ is odd (the parity of the $a_i$'s alternates).
\end{itemize}
Then each such diagram has nonzero norm squared, and distinct diagrams with the same number of strings attached to the external boundary are orthogonal.
\end{prop}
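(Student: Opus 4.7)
The plan is to prove both the nonvanishing of each norm and pairwise orthogonality simultaneously, by induction on $k$, the number of generators in the train. In the base case $k=0$ the diagrams reduce to compositions of trivalent $\rho$-vertices and through-strands together with the terminal cap on the $c$-bundle; these form a standard basis for the morphism spaces of $T_2$ (Temperley--Lieb at $\delta = \tau$), and the claim follows from the semisimplicity of $T_2$ together with the skein relations in Proposition \ref{prop:RhoSkeinRelations}.

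For the inductive step, given two diagrams $D_1, D_2$ in the family with the same external boundary, I would compute $\langle D_1, D_2\rangle$ by stacking $D_2^*$ on top of $D_1$ and identifying external strands. The leftmost generator $U_1$ of $D_1$ is then joined, through the identified boundary, to the leftmost generator of $D_2^*$ by the $a_0$-bundle. Since $a_0\geq 2n$, this bundle contains a complete alternating subword of length $2n$ in $\rho$ and $\theta$. If the types of $U_1$ in $D_1$ and $D_2$ agree so that after adjointing we have a $UU^*$ pair coupled by $2n$ appropriate strands, Relation \eqref{rel:AT2} cancels both generators outright, strictly reducing the number of generators; the induction hypothesis then handles the reduced diagram. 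If the types disagree, I would push a $\theta$-cap across $U_1$ using the jellyfish relation of Theorem \ref{thm:JellyfishRelationsU}(1); the resulting diagram factors through an alternating word of parity different from anything appearing in the decomposition of $D_1$, so Proposition \ref{prop:DistinctIrreducibleBimodules} together with Schur's lemma forces the inner product to vanish. Mismatches in the remaining parameters $a_i, b_i, c$ are handled analogously, using jellyfish Relations (2) and (3) when trivalent vertices are involved: each parameter determines a factorization of the boundary word through a specific alternating subword, and distinct choices project onto distinct simples by Proposition \ref{prop:DistinctIrreducibleBimodules}.

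The main obstacle is the combinatorial bookkeeping across all of $(k, a_i, b_i, c)$ and the $U/U^*$ alternation. A cleaner framing would interpret each such diagram as a specific path in the graph whose vertices are alternating words in $\rho, \theta$ and whose edges correspond to applications of $U$ and $U^*$, so that the proposition becomes the statement that distinct paths between the same endpoints give orthogonal morphisms, which reduces to the distinct simples result of Proposition \ref{prop:DistinctIrreducibleBimodules} plus Schur's lemma. The delicate point is tracking how each trivalent vertex contributes branching data (corresponding to the summand $\rho$ of $\rho\otimes\rho\cong 1\oplus\rho$) that must be preserved through the inductive reduction; this is precisely where the somewhat intricate parameter constraints of the proposition come in, and verifying that each reduction step lands back in the allowed family is the main piece of technical bookkeeping required.
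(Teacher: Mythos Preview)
Your proposal shares the opening move with the paper's proof --- stack $D_2^*$ on $D_1$ and use $a_0\geq 2n$ together with Relation \eqref{rel:AT2} to cancel the leftmost $U_1,U_1^*$ pair --- but then diverges into machinery that is both unnecessary and not clearly sufficient. The paper's argument is far more elementary: it does not use induction on $k$, does not invoke jellyfish relations, and does not appeal to Schur's lemma or Proposition \ref{prop:DistinctIrreducibleBimodules} at all.

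Here is the mechanism you are missing. After stacking, one lets $j$ be the \emph{first} index with $a_j\neq a_j'$ (equivalently $b_j\neq b_j'$; the $b_i$'s are determined by the $a_i$'s). For $i<j$ the parameters match, and since $a_0\geq 2n$ and $a_{i-1}+b_i\geq 2n$ for each $i$, one cancels the pairs $U_1U_1^*,\dots,U_{j-1}U_{j-1}^*$ in order using \eqref{rel:AT2}, picking up only powers of $\tau$. What remains has $U_j$ (from $D_1$) joined to $U_j^*$ (from $D_2^*$) by $a_{j-1}+b_j\geq 2n$ strands, so this pair also cancels --- but now, because $a_j\neq a_j'$, the trivalent $\rho$-vertex that sat between $U_j$ and $U_{j+1}$ in one diagram gets its leg capped off. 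By the third relation in Proposition \ref{prop:RhoSkeinRelations} (a $\rho$-trivalent vertex with a cap vanishes), the whole inner product is zero. The same cancellation scheme, run with $D_1=D_2$, shows each norm squared is a power of $\tau$.

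Your ``types disagree'' branch is vacuous: since both diagrams have the same external boundary and $a_0,a_0'\geq 2n$, the leftmost $2n$ strand colors already pin down whether $U_1$ is $U$ or $U^*$, so the types always agree. More seriously, your inductive framework does not close: after one cancellation you do not obtain two smaller trains to which the hypothesis applies, you obtain a single partially simplified closed diagram, and your appeal to ``factorization through a word of different parity'' is too vague to locate the actual zero. The concrete reason for orthogonality is the capped trivalent vertex, not Schur's lemma.
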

\begin{proof}
It is straightforward to calculate that the norm squared of such a diagram is a power of $\tau$, which is nonzero.

Suppose now that we have two  distinct diagrams with the same number of strings attached to the external boundary. 
Let the first diagram have constants $(a_0,\dots a_{k},b_2,\dots, b_k,c)$, and let the second diagram have constants $(a_0',\dots a_{\ell}',b_2',\dots, b_\ell',c')$.
Without loss of generality, assume $k\leq \ell$.
We cannot have $a_i=a_i'$ for all $i=0,\dots, k$, since the $b_i$'s are determined by the $a_i$'s, and then $c,c'$ are determined by the $a_i$'s and the number of external boundary points.

Let $j$ be minimal such that $a_j\neq a_j'$, which implies that $b_{j}\neq b_{j}'$.
If $a_j>a_j'$, then $b_{j}< b_{j}'$. Taking the inner product, we get the following sub-diagram:
$$
\begin{tikzpicture}[baseline = -.3cm]
	\coordinate (a) at (-3.6,0);
	\coordinate (b) at (-2.2,0);
	\coordinate (c) at (-.8,0);
	\coordinate (d) at (.8,0);
	\coordinate (e) at (2.2,0);
	\coordinate (f) at (3.6,0);
	\coordinate (ap) at (-3.6,-1.8);
	\coordinate (bp) at (-2.2,-1.8);
	\coordinate (cp) at (-.8,-1.8);
	\coordinate (dp) at (.8,-1.8);
	\coordinate (ep) at (2.2,-1.8);
	\coordinate (fp) at (3.6,-1.8);
	\coordinate (ab) at ($(a)+(.7,-.6)$);
	\coordinate (bc) at ($(b)+(.7,-.6)$);
	\coordinate (de) at ($(d)+(.7,-.6)$);
	\coordinate (ef) at ($(e)+(.6,-.8)$);
	\coordinate (apbp) at ($(ap)+(.7,.6)$);
	\coordinate (bpcp) at ($(bp)+(.7,.6)$);
	\coordinate (dpep) at ($(dp)+(.7,.6)$);
	\draw (a) -- node [above] {{\scriptsize{$a_1$}}} (b);
	\draw (b) -- node [above] {{\scriptsize{$a_2$}}} (c);
	\draw (d) -- node [above] {{\scriptsize{$a_{j-1}$}}} (e);
	\draw (e) -- node [above] {{\scriptsize{$a_{j}$}}} (f);
	\draw (ap) -- node [below] {{\scriptsize{$a_1$}}} (bp);
	\draw (bp) -- node [below] {{\scriptsize{$a_2$}}} (cp);
	\draw (dp) -- node [below] {{\scriptsize{$a_{j-1}$}}} (ep);
	\draw (ep) -- node [below] {{\scriptsize{$a_{j}'$}}} (fp);
	\draw[thick, rho] (a)--(ab) -- (b);
	\draw[thick, rho] (ab) -- (apbp);
	\draw[thick, rho] (ap)--(apbp) -- (bp);
	\draw[thick, rho] (b)--(bc) -- (c);
	\draw[thick, rho] (bc) -- (bpcp);
	\draw[thick, rho] (bp)--(bpcp) -- (cp);
	\draw[thick, rho] (c)--($(c)+(.6,-.6)$);
	\draw[thick, rho] (cp)--($(cp)+(.6,.6)$);
	\draw[thick, rho] (d)--($(d)+(-.6,-.6)$);
	\draw[thick, rho] (dp)--($(dp)+(-.6,.6)$);
	\draw[thick, rho] (d)--(de) -- (e);
	\draw[thick, rho] (de) -- (dpep);
	\draw[thick, rho] (dp)--(dpep) -- (ep);	
	\draw[thick, rho] (e)--(ef) -- ($(f)+(0,-.4)$);	
	\draw[thick, rho] (ef) -- (ep);
	\draw[] (a)-- node [left] {{\scriptsize{$a_0$}}} (ap);
	\draw[] (b)-- node [left] {{\scriptsize{$b_2$}}} (bp);
	\draw[] (c)-- node [left] {{\scriptsize{$b_3$}}} (cp);
	\draw[] (d)-- node [left] {{\scriptsize{$b_{j-1}$}}} (dp);
	\draw[] (e)-- node [left] {{\scriptsize{$b_{j}$}}} (ep);
	\draw[]  ($(f)+(0,-.8)$)-- node [right] {{\scriptsize{$b_{j}'-b_{j}$}}} (ep);
	\Ucircle{U_1}{(a)}
	\Ucircle{U_2}{(b)}
	\Ucircle{U_3}{(c)}
	\node at ($(c)+(.8,0)$) {{\scriptsize{$\cdots$}}};
	\largeUcircle{U_{j-1}}{(d)}
	\Ucircle{U_{j}}{(e)}
	\UcircleStar{U_1^*}{(ap)}
	\UcircleStar{U_2^*}{(bp)}
	\UcircleStar{U_3^*}{(cp)}
	\node at ($(cp)+(.8,0)$) {{\scriptsize{$\cdots$}}};
	\largeUcircleStar{U_{j-1}^*}{(dp)}
	\UcircleStar{U_{j}}{(ep)}
	\filldraw[rho] (ab) circle (.05cm);
	\filldraw[rho] (bc) circle (.05cm);
	\filldraw[rho] (de) circle (.05cm);
	\filldraw[rho] (ef) circle (.05cm);
	\filldraw[rho] (apbp) circle (.05cm);
	\filldraw[rho] (bpcp) circle (.05cm);
	\filldraw[rho] (dpep) circle (.05cm);
\end{tikzpicture}
$$
We may iteratively cancel the first $j-1$ pairs of generators $U_i,U_i^*$ counting from the left, since $a_0\geq 2n$, and $a_{i-1}+b_i\geq 2n$ for all $i=2,\dots, j-1$. We then get some power of $\tau$ times
$$
\begin{tikzpicture}[baseline = -.3cm]
	\coordinate (e) at (2.2,0);
	\coordinate (f) at (3.6,0);
	\coordinate (ep) at (2.2,-1.8);
	\coordinate (fp) at (3.6,-1.8);
	\coordinate (ef) at ($(e)+(.6,-.8)$);
	\draw (e) -- node [above] {{\scriptsize{$a_{j}$}}} (f);
	\draw (ep) -- node [below] {{\scriptsize{$a_{j}'$}}} (fp);
	\draw[thick, rho] (e)--(ef) -- ($(f)+(0,-.4)$);	
	\draw[thick, rho] (ef) -- (ep);
	\draw[] (e)-- node [left] {{\scriptsize{$a_{j-1}+b_{j}$}}} (ep);
	\draw[]  ($(f)+(0,-.8)$)-- node [right] {{\scriptsize{$b_{j}'-b_{j}$}}} (ep);
	\Ucircle{U_{j}}{(e)}
	\UcircleStar{U_{j}}{(ep)}
	\filldraw[rho] (ef) circle (.05cm);
\end{tikzpicture}
$$
which is zero since $a_{j-1}+b_{j}\geq 2n$. Similarly, we get zero if $a_{j}'>a_j$.
\end{proof}


\begin{cor}\label{cor:basis}
A similar statement to Proposition \ref{prop:basis} holds for diagrams of the form
$$
\begin{tikzpicture}[baseline = -.3cm, xscale=-1]
	\coordinate (a) at (-3.6,0);
	\coordinate (b) at (-2.2,0);
	\coordinate (c) at (-.8,0);
	\coordinate (d) at (.8,0);
	\coordinate (e) at (2.2,0);
	\coordinate (ab) at ($(a)+(.7,-.6)$);
	\coordinate (bc) at ($(b)+(.7,-.6)$);
	\coordinate (de) at ($(d)+(.7,-.6)$);
	\coordinate (ue) at ($(e)+(.5,-.5)$);
	\draw (a) -- node [above] {{\scriptsize{$a_k$}}} (b);
	\draw (b) -- node [above] {{\scriptsize{$a_{k-1}$}}} (c);
	\draw (d) -- node [above] {{\scriptsize{$a_{2}$}}} (e);
	\draw[thick, rho] (a)--(ab) -- (b);
	\draw[thick, rho] (ab) -- ($(ab)+(0,-.2)$);
	\draw[thick, rho] (b)--(bc) -- (c);
	\draw[thick, rho] (bc) -- ($(bc)+(0,-.2)$);
	\draw[thick, rho] (c)--($(c)+(.6,-.6)$);
	\draw[thick, rho] (d)--($(d)+(-.6,-.6)$);
	\draw[thick, rho] (d)--(de) -- (e);
	\draw[thick, rho] (de) -- ($(de)+(0,-.2)$);
	\draw[thick, rho] (e)--(ue);
	\draw[thick, rho] (ue)--($(ue)+(-.3,-.3)$);
	\draw[thick, rho] (ue)--($(ue)+(.3,-.3)$);
	\draw[] ($(ue)+(-.15,-.3)$) arc (180:0:.15cm);
	\node at ($(ue)+(.1,-.4)$) {{\scriptsize{$d$}}};	
	\draw[] ($(e)+(.2,.15)$)-- node [left] {{\scriptsize{$a_{1}$}}} ($(ue)+(.6,-.3)$);
	\draw[] ($(a)+(0,0)$)-- ($(a)+(0,-.8)$);
	\node at ($(a)+(-0,-1)$) {{\scriptsize{$a_{k+1}$}}};
	\draw[] ($(b)+(0,0)$)-- ($(b)+(0,-.8)$);
	\node at ($(b)+(0,-1)$) {{\scriptsize{$b_k$}}};
	\draw[] ($(c)+(0,0)$)-- ($(c)+(0,-.8)$);
	\node at ($(c)+(0,-1)$) {{\scriptsize{$b_{k-1}$}}};
	\draw[] ($(d)+(0,0)$)-- ($(d)+(0,-.8)$);
	\node at ($(d)+(0,-1)$) {{\scriptsize{$b_{3}$}}};
	\draw[] ($(e)+(0,0)$)-- ($(e)+(0,-.8)$);
	\node at ($(e)+(0,-1)$) {{\scriptsize{$b_{2}$}}};
	\largeUcircle{U_{k+1}}{(a)}
	\Ucircle{U_k}{(b)}
	\largeUcircle{U_{k-1}}{(c)}
	\node at ($(c)+(.8,0)$) {{\scriptsize{$\cdots$}}};
	\Ucircle{U_{3}}{(d)}
	\Ucircle{U_2}{(e)}
	\filldraw[rho] (ab) circle (.05cm);
	\filldraw[rho] (bc) circle (.05cm);
	\filldraw[rho] (de) circle (.05cm);
	\filldraw[rho] (ue) circle (.05cm);
\end{tikzpicture}
$$
where the $a_i$'s and $b_i$'s satisfy the same criteria, but instead of $a_0$ and $c$, we have $a_{k+1}\geq 2n$ and $0\leq d \leq 2n-2$, where the $d$ bundle is of the form $(\rho\theta)^{j}$.
\end{cor}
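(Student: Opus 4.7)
The plan is to observe that each diagram appearing in the corollary is obtained from a diagram of the form considered in Proposition \ref{prop:basis} by applying the horizontal reflection symmetry of the planar algebra (visible in the tikz source as \texttt{xscale=-1}). Concretely, reflecting the diagram about a vertical axis reverses the linear ordering of the generators, exchanges the roles of $U$ and $U^*$, and sends the ``cap of size $c$'' on the rightmost generator to a ``cap of size $d$'' on what becomes the rightmost generator after reflection. Under the reindexing $V_i = U_{k+2-i}$ (with the associated relabelling of the $a_i$ and $b_i$), the constraints listed in the corollary are exactly the constraints of Proposition \ref{prop:basis}.

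The first step would be to make this reflection-and-reindexing correspondence precise, checking that it really is a bijection between the two families of diagrams described and that it preserves the inner product up to a positive real factor (a power of $\tau$ coming from the normalizations in Proposition \ref{prop:RhoSkeinRelations}). Since the category is strict pivotal and the relations \eqref{rel:AA1}--\eqref{rel:AA2}, \eqref{rel:AT1}, \eqref{rel:AT2}--\eqref{rel:AT3}, and those of Theorem \ref{thm:JellyfishRelationsU} and Corollary \ref{cor:JellyfishRelationsUstar} are all invariant under this horizontal reflection (the latter two being related by precisely this symmetry, as was used to deduce Corollary \ref{cor:JellyfishRelationsUstar}), the reflection induces an isometry on the span of these diagrams.

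Alternatively, and perhaps more transparently, one can simply repeat the argument of Proposition \ref{prop:basis} verbatim, running the cancellation from the right end instead of the left end. The same power-of-$\tau$ computation shows that each diagram has nonzero norm squared. For orthogonality of two distinct diagrams with constants $(a_1,\dots,a_{k+1},b_2,\dots,b_k,d)$ and $(a_1',\dots,a_{\ell+1}',b_2',\dots,b_\ell',d')$, one picks the largest $j$ for which $a_j \neq a_j'$ and iteratively cancels the pairs $U_iU_i^*$ starting from $i=k+1$ down to $i=j+1$, using that $a_{k+1} \geq 2n$ and that $a_{i+1}+b_i \geq 2n$ for each such cancellation. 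This reduces the inner product to a single tangle with a bundle of size $a_j+b_{j+1} \geq 2n$ between $U_j$ and $U_j^*$ together with an extra bundle of size $|b_j - b_j'|>0$, which forces the inner product to vanish.

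The only mild obstacle is ensuring the inequalities needed for the iterated cancellation match those in the left-to-right version; this is a direct consequence of the parity alternation of the $a_i$'s and the bounds $1 \leq b_i \leq 2n-1$, $0 \leq d \leq 2n-2$, $a_{k+1} \geq 2n$, which are the mirror images of the bounds in Proposition \ref{prop:basis}. Thus either via the reflection argument or the direct mirrored computation, the conclusion of Proposition \ref{prop:basis} carries over without change.
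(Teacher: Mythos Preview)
Your proposal is correct and matches the paper's approach: the paper gives no proof at all for this corollary, treating it as immediate from Proposition \ref{prop:basis} by the evident left-right symmetry (the \texttt{xscale=-1} in the figure), which is precisely what you spell out. Your alternative of rerunning the cancellation argument from the right end is also fine and is the obvious way to make the symmetry explicit if one prefers not to invoke it directly.
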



\begin{nota}
We use the following notation for one car trains:
\begin{align*}
\train[\{\},\wheel[c]]
&=
\begin{tikzpicture}[baseline = -.3cm]
	\draw[thick, zeta] (-.3,0) -- (-.3,-.8);
	\draw[thick, rho] (-.1,0) -- (-.1,-.8);
	\draw[thick, zeta] (.1,0) -- (.1,-.8);
	\draw[thick, rho] (.3,0) -- (.3,-.8);
	\node at (.7,-.4) {{\scriptsize{$c$}}};
	\draw (.5,-.8) arc (180:0:.2cm);
	\draw[thick, unshaded]  (0,0) circle (.4cm);
	\node at (0,0) {$U$};
	\node at (.55,0) {$\star$};
\end{tikzpicture}
\\
\train[\wheel[d],\{\}]
&=
\begin{tikzpicture}[baseline = -.3cm,xscale=-1]
	\draw[thick, zeta] (-.3,0) -- (-.3,-.8);
	\draw[thick, rho] (-.1,0) -- (-.1,-.8);
	\draw[thick, zeta] (.1,0) -- (.1,-.8);
	\draw[thick, rho] (.3,0) -- (.3,-.8);
	\draw (.5,-.8) arc (180:0:.2cm);
	\node at (.7,-.4) {{\scriptsize{$d$}}};
	\draw[thick, unshaded]  (0,0) circle (.4cm);
	\node at (0,0) {$U$};
	\node at (.55,0) {$\star$};
\end{tikzpicture}
\,.
\end{align*}
We use the following notation for diagrams in the form of Proposition \ref{prop:basis}:
\begin{align*}
\train[a_0, a_1,\dots, a_{k-1}, a_k,\wheel[c]]
&=
\begin{tikzpicture}[baseline = -.3cm]
	\coordinate (a) at (-3.6,0);
	\coordinate (b) at (-2.2,0);
	\coordinate (c) at (-.8,0);
	\coordinate (d) at (.8,0);
	\coordinate (e) at (2.2,0);
	\coordinate (ab) at ($(a)+(.7,-.6)$);
	\coordinate (bc) at ($(b)+(.7,-.6)$);
	\coordinate (de) at ($(d)+(.7,-.6)$);
	\coordinate (ue) at ($(e)+(.5,-.5)$);
	\draw (a) -- node [above] {{\scriptsize{$a_1$}}} (b);
	\draw (b) -- node [above] {{\scriptsize{$a_2$}}} (c);
	\draw (d) -- node [above] {{\scriptsize{$a_{k-1}$}}} (e);
	\draw[thick, rho] (a)--(ab) -- (b);
	\draw[thick, rho] (ab) -- ($(ab)+(0,-.2)$);
	\draw[thick, rho] (b)--(bc) -- (c);
	\draw[thick, rho] (bc) -- ($(bc)+(0,-.2)$);
	\draw[thick, rho] (c)--($(c)+(.6,-.6)$);
	\draw[thick, rho] (d)--($(d)+(-.6,-.6)$);
	\draw[thick, rho] (d)--(de) -- (e);
	\draw[thick, rho] (de) -- ($(de)+(0,-.2)$);
	\draw[thick, rho] (e)--(ue);
	\draw[thick, rho] (ue)--($(ue)+(-.3,-.3)$);
	\draw[thick, rho] (ue)--($(ue)+(.3,-.3)$);
	\draw[] ($(ue)+(-.15,-.3)$) arc (180:0:.15cm);
	\node at ($(ue)+(.1,-.4)$) {{\scriptsize{$c$}}};	
	\draw[] ($(e)+(.2,.15)$)-- node [right] {{\scriptsize{$a_{k}$}}} ($(ue)+(.6,-.3)$);
	\draw[] ($(a)+(0,0)$)-- ($(a)+(0,-.8)$);
	\node at ($(a)+(-0,-1)$) {{\scriptsize{$a_0$}}};
	\draw[] ($(b)+(0,0)$)-- ($(b)+(0,-.8)$);
	\node at ($(b)+(0,-1)$) {{\scriptsize{$b_2$}}};
	\draw[] ($(c)+(0,0)$)-- ($(c)+(0,-.8)$);
	\node at ($(c)+(0,-1)$) {{\scriptsize{$b_3$}}};
	\draw[] ($(d)+(0,0)$)-- ($(d)+(0,-.8)$);
	\node at ($(d)+(0,-1)$) {{\scriptsize{$b_{k-1}$}}};
	\draw[] ($(e)+(0,0)$)-- ($(e)+(0,-.8)$);
	\node at ($(e)+(0,-1)$) {{\scriptsize{$b_{k}$}}};
	\Ucircle{U_1}{(a)}
	\Ucircle{U_2}{(b)}
	\Ucircle{U_3}{(c)}
	\node at ($(c)+(.8,0)$) {{\scriptsize{$\cdots$}}};
	\largeUcircle{U_{k-1}}{(d)}
	\Ucircle{U_k}{(e)}
	\filldraw[rho] (ab) circle (.05cm);
	\filldraw[rho] (bc) circle (.05cm);
	\filldraw[rho] (de) circle (.05cm);
	\filldraw[rho] (ue) circle (.05cm);
\end{tikzpicture}
\\
\train[\wheel[d], a_1,a_2\dots, a_{k}, a_{k+1}]
&=
\begin{tikzpicture}[baseline = -.3cm, xscale=-1]
	\coordinate (a) at (-3.6,0);
	\coordinate (b) at (-2.2,0);
	\coordinate (c) at (-.8,0);
	\coordinate (d) at (.8,0);
	\coordinate (e) at (2.2,0);
	\coordinate (ab) at ($(a)+(.7,-.6)$);
	\coordinate (bc) at ($(b)+(.7,-.6)$);
	\coordinate (de) at ($(d)+(.7,-.6)$);
	\coordinate (ue) at ($(e)+(.5,-.5)$);
	\draw (a) -- node [above] {{\scriptsize{$a_k$}}} (b);
	\draw (b) -- node [above] {{\scriptsize{$a_{k-1}$}}} (c);
	\draw (d) -- node [above] {{\scriptsize{$a_{2}$}}} (e);
	\draw[thick, rho] (a)--(ab) -- (b);
	\draw[thick, rho] (ab) -- ($(ab)+(0,-.2)$);
	\draw[thick, rho] (b)--(bc) -- (c);
	\draw[thick, rho] (bc) -- ($(bc)+(0,-.2)$);
	\draw[thick, rho] (c)--($(c)+(.6,-.6)$);
	\draw[thick, rho] (d)--($(d)+(-.6,-.6)$);
	\draw[thick, rho] (d)--(de) -- (e);
	\draw[thick, rho] (de) -- ($(de)+(0,-.2)$);
	\draw[thick, rho] (e)--(ue);
	\draw[thick, rho] (ue)--($(ue)+(-.3,-.3)$);
	\draw[thick, rho] (ue)--($(ue)+(.3,-.3)$);
	\draw[] ($(ue)+(-.15,-.3)$) arc (180:0:.15cm);
	\node at ($(ue)+(.1,-.4)$) {{\scriptsize{$d$}}};	
	\draw[] ($(e)+(.2,.15)$)-- node [left] {{\scriptsize{$a_{1}$}}} ($(ue)+(.6,-.3)$);
	\draw[] ($(a)+(0,0)$)-- ($(a)+(0,-.8)$);
	\node at ($(a)+(-0,-1)$) {{\scriptsize{$a_{k+1}$}}};
	\draw[] ($(b)+(0,0)$)-- ($(b)+(0,-.8)$);
	\node at ($(b)+(0,-1)$) {{\scriptsize{$b_k$}}};
	\draw[] ($(c)+(0,0)$)-- ($(c)+(0,-.8)$);
	\node at ($(c)+(0,-1)$) {{\scriptsize{$b_{k-1}$}}};
	\draw[] ($(d)+(0,0)$)-- ($(d)+(0,-.8)$);
	\node at ($(d)+(0,-1)$) {{\scriptsize{$b_{3}$}}};
	\draw[] ($(e)+(0,0)$)-- ($(e)+(0,-.8)$);
	\node at ($(e)+(0,-1)$) {{\scriptsize{$b_{2}$}}};
	\Ucircle{U}{(a)}
	\Ucircle{U^*}{(b)}
	\Ucircle{U}{(c)}
	\node at ($(c)+(.8,0)$) {{\scriptsize{$\cdots$}}};
	\Ucircle{U}{(d)}
	\Ucircle{U^*}{(e)}
	\filldraw[rho] (ab) circle (.05cm);
	\filldraw[rho] (bc) circle (.05cm);
	\filldraw[rho] (de) circle (.05cm);
	\filldraw[rho] (ue) circle (.05cm);
\end{tikzpicture}\,.
\end{align*}
We now omit the $b_i$'s since they can be recovered from the $a_i$'s.
These diagrams can be thought of as products of two car trains
\begin{equation}
\twocar[a_{j-1},a_j,a_{j+1}]
=
\begin{tikzpicture}[baseline = -.3cm]
	\coordinate (b) at (-2.2,0);
	\coordinate (c) at (-.8,0);
	\coordinate (bc) at ($(b)+(.7,-.6)$);
	\coordinate (cd) at ($(c)+(.7,-.6)$);
	\draw ($(b)+(-1,0)$) -- (b);
	\node at ($(b)+(-.8,.2)$) {{\scriptsize{$a_{j-1}$}}};
	\draw (b) -- node [above] {{\scriptsize{$a_j$}}} (c);
	\draw (c) -- ($(c)+(1.5,0)$);
	\node at ($(c)+(.8,.2)$) {{\scriptsize{$a_{j+1}$}}};
	\draw[thick, rho] ($(b)+(0,-.2)$)--($(b)+(-1,-.2)$);
	\draw[thick, rho] (b)--(bc) -- (c);
	\draw[thick, rho] (bc) -- ($(bc)+(0,-.2)$);
	\draw[thick, rho] (c)--(cd) .. controls ++(45:.2cm) and ++(180:.6cm) .. ($(cd)+(.8,.4)$);
	\draw[thick, rho] (cd)--($(cd)+(0,-.2)$);
	\draw[] ($(b)+(0,0)$)-- ($(b)+(0,-.8)$);
	\draw[] ($(c)+(0,0)$)-- ($(c)+(0,-.8)$);
	\Ucircle{U_1}{(b)}
	\Ucircle{U_2}{(c)}
	\filldraw[rho] (bc) circle (.05cm);
	\filldraw[rho] (cd) circle (.05cm);
\end{tikzpicture}
\label{eqn:TwoCarTrain}
\end{equation}
with an engine and a caboose:
\begin{align*}
\engine[a_k,c]
&=
\begin{tikzpicture}[baseline = -.3cm]
	\draw (0,-.8)--(0,-.2) arc (0:90:.2cm) --  (-.7,0);
	\draw[thick, rho] (-.2,-.8)--(-.2,-.4) arc (0:90:.2cm) -- (-.7,-.2);
	\draw (-.7,-.8) arc (180:0:.15cm);
	\node at (-.65,-.5) {{\scriptsize{$c$}}};
	\node at (.2,-.4) {{\scriptsize{$a_k$}}};
\end{tikzpicture}
\\
\caboose[a_0]
&=
\begin{tikzpicture}[baseline = -.3cm]
	\draw (0,-.8)--(0,-.2) arc (180:90:.2cm) -- (.4,0);
	\draw[thick, rho] (.2,-.8)--(.2,-.4) arc (180:90:.2cm);
	\node at (-.2,-.4) {{\scriptsize{$a_0$}}};
\end{tikzpicture}\,,
\end{align*}
with the convention that the last $\twocar$ in the train must have all its strings connected to the caboose (and no strings going downward).
We multiply the train parts by concatenating horizontally:
$$
\twocar[a_1,a_2,a_3]\twocar[a_3,a_4,a_5]
=
\begin{tikzpicture}[baseline = -.3cm]
	\coordinate (b) at (-2.1,0);
	\coordinate (c) at (-.7,0);
	\coordinate (d) at (.7,0);
	\coordinate (e) at (2.1,0);
	\coordinate (bc) at ($(b)+(.7,-.6)$);
	\coordinate (cd) at ($(c)+(.7,-.6)$);
	\coordinate (de) at ($(d)+(.7,-.6)$);
	\coordinate (ef) at ($(e)+(.7,-.6)$);
	\draw ($(b)+(-1,0)$) -- (b);
	\node at ($(b)+(-.7,.2)$) {{\scriptsize{$a_1$}}};
	\draw (b) -- node [above] {{\scriptsize{$a_2$}}} (c);
	\draw (c) -- node [above] {{\scriptsize{$a_3$}}} (d);
	\draw (d) -- node [above] {{\scriptsize{$a_4$}}} (e);
	\draw (e)--($(e)+(1.5,0)$);
	\node at ($(e)+(.7,.2)$) {{\scriptsize{$a_5$}}};
	\draw[thick, rho] ($(b)+(0,-.2)$)--($(b)+(-1,-.2)$);
	\draw[thick, rho] (b)--(bc) -- (c);
	\draw[thick, rho] (bc) -- ($(bc)+(0,-.2)$);
	\draw[thick, rho] (c)--(cd) -- (d);
	\draw[thick, rho] (cd) -- ($(cd)+(0,-.2)$);
	\draw[thick, rho] (d)--(de) -- (e);
	\draw[thick, rho] (de) -- ($(de)+(0,-.2)$);
	\draw[thick, rho] (e)--(ef) .. controls ++(45:.2cm) and ++(180:.6cm) .. ($(ef)+(.8,.4)$);
	\draw[thick, rho] (ef) -- ($(ef)+(0,-.2)$);
	\draw[] ($(b)+(0,0)$)-- ($(b)+(0,-.8)$);
	\draw[] ($(c)+(0,0)$)-- ($(c)+(0,-.8)$);
	\draw[] ($(d)+(0,0)$)-- ($(d)+(0,-.8)$);
	\draw[] ($(e)+(0,0)$)-- ($(e)+(0,-.8)$);
	\Ucircle{U_1}{(b)}
	\Ucircle{U_2}{(c)}
	\Ucircle{U_3}{(d)}
	\Ucircle{U_4}{(e)}
	\filldraw[rho] (bc) circle (.05cm);
	\filldraw[rho] (cd) circle (.05cm);
	\filldraw[rho] (de) circle (.05cm);
	\filldraw[rho] (ef) circle (.05cm);
\end{tikzpicture}.
$$

To simplify future calculation, we will also allow the external $a_i$'s in our two car trains to surpass $2n$, i.e., for the two car train in Equation \eqref{eqn:TwoCarTrain}, if $a_j<2n-2$, then $a_{j-1}$ or $a_{j+1}$ may be more than $2n-1$. 
If $a_j=2n-1$, we have the following two car trains for which one of $a_{j\pm 1}$ is $2n$:
\begin{align*}
\twocar[2n,2n-1,a_j]
&=
\begin{tikzpicture}[baseline = -.3cm]
	\coordinate (a) at (-3.2,0);
	\coordinate (b) at (-2.2,0);
	\coordinate (c) at (-.6,0);
	\coordinate (d) at (.4,0);
	\coordinate (ab) at ($(b)+(-1,-.2)$);
	\coordinate (bc) at ($(b)+(.8,-.6)$);
	\coordinate (cd) at ($(c)+(.6,-.6)$);
	\draw (a) -- (b);
	\node at ($(b)+(-.7,.2)$) {{\scriptsize{$2n$}}};
	\draw (b) -- node [above] {{\scriptsize{$2n-1$}}} (c);
	\draw (c) -- (d);
	\node at ($(c)+(.7,.2)$) {{\scriptsize{$a_{j}$}}};
	\draw[thick, rho] (b)--(bc) -- (c);
	\draw[thick, rho] (bc) .. controls ++(270:.6cm) and ++(0:.6cm) .. (ab);
	\draw[thick, rho] ($(c)+(0,-.2)$)--($(c)+(1,-.2)$);
	\draw[] ($(c)+(0,0)$)-- ($(c)+(0,-.8)$);
	\Ucircle{U}{(b)}
	\Ucircle{U^*}{(c)}
	\filldraw[rho] (bc) circle (.05cm);
\end{tikzpicture}
\text{ where $a_j$ is even}
\\
\twocar[a_j,2n-1,2n]
&=
\begin{tikzpicture}[baseline = -.3cm, xscale=-1]
	\coordinate (a) at (-3.2,0);
	\coordinate (b) at (-2.2,0);
	\coordinate (c) at (-.6,0);
	\coordinate (d) at (.4,0);
	\coordinate (ab) at ($(b)+(-1,-.2)$);
	\coordinate (bc) at ($(b)+(.8,-.6)$);
	\coordinate (cd) at ($(c)+(.6,-.6)$);
	\draw (a) -- (b);
	\node at ($(b)+(-.7,.2)$) {{\scriptsize{$2n$}}};
	\draw (b) -- node [above] {{\scriptsize{$2n-1$}}} (c);
	\draw (c) -- (d);
	\node at ($(c)+(.7,.2)$) {{\scriptsize{$a_{j}$}}};
	\draw[thick, rho] (b)--(bc) -- (c);
	\draw[thick, rho] (bc) .. controls ++(270:.6cm) and ++(0:.6cm) .. (ab);
	\draw[thick, rho] ($(c)+(0,-.2)$)--($(c)+(1,-.2)$);
	\draw[] ($(c)+(0,0)$)-- ($(c)+(0,-.8)$);
	\Ucircle{U^*}{(b)}
	\Ucircle{U}{(c)}
	\filldraw[rho] (bc) circle (.05cm);
\end{tikzpicture}
\text{ where $a_j$ is even}
\end{align*}
When we use the two car trains with $2n$ horizontal strands, we have the following multiplication rules for contracting the $2n$ strands:
\begin{align*}
\twocar[x_1&,x_2,2n] \twocar[2n,x_3,x_4] \\
&=\begin{cases}
0 & \text{ if $x_2,x_3< 2n-1$}\\
\twocar[x_1,x_3,x_4] &\text{ if $x_2=2n-1$ and $x_3< 2n-1$}\\
\twocar[x_1,x_2,x_4] &\text{ if $x_2< 2n-1$ and $x_3=2n-1$}\\
-\tau^{-1}\twocar[x_1,2n-1,x_4] &\text{ if $x_2,x_3=2n-1$.}
\end{cases}
\end{align*}
The last identity follows from the fact that
$$
\begin{tikzpicture}[baseline = -.1cm]
	\draw[thick, unshaded] (-.6,-.4)--(-.6,.4)--(.6,.4)--(.6,-.4)--(-.6,-.4);
	\coordinate (a) at (0,.1) ;
	\coordinate (b) at (-.2,-.2) ;
	\coordinate (c) at (.2,-.2) ;
	\filldraw[rho] (a) circle (.05cm);
	\filldraw[rho] (b) circle (.05cm);
	\filldraw[rho] (c) circle (.05cm);
	\draw[thick, rho] (-.4,-.4) -- (b) -- (c) -- (.4,-.4);
	\draw[thick, rho] (b) -- (a) -- (c);	
	\draw[thick, rho] (a) -- (0, .4);
\end{tikzpicture}
=
-\tau^{-1}
\begin{tikzpicture}[baseline = -.1cm]
	\draw[thick, unshaded] (-.4,-.4)--(-.4,.4)--(.4,.4)--(.4,-.4)--(-.4,-.4);
	\coordinate (a) at (0,0) ;
	\filldraw[rho] (a) circle (.05cm);
	\draw[thick, rho] (-.2,-.4) -- (a) --  ( .2, -.4);	
	\draw[thick, rho] (a) -- (0, .4);
\end{tikzpicture}\,.
$$
\end{nota}

We now describe the nice jellyfish relations for these two car trains. The proofs of the following three lemmas are straightforward applications of Theorem \ref{thm:JellyfishRelationsU} and Corollary \ref{cor:JellyfishRelationsUstar}.

\begin{lem}\label{lem:ThetaJF}
The diagrams in the form of Proposition \ref{prop:basis} satisfy the following jellyfish relation for $\theta$ strings, where we assume the $a_{2j}$'s are even and the $a_{2j+1}$'s are odd:
\begin{align*}
\theta (\train[a_0, a_1,\dots&, a_{k-1}, a_k,\wheel[c]])
\\&=

=
U(\zeta\rho(U))U^*.
\end{equation}
Note by Lemmas \ref{lem:ThetaJF} and \ref{lem:DoubleRhoJF}, applying the $\rho$ and $\theta$ strings in the order on the left always gives us a linear combination of elements from $B_r$, the right train basis.
Similarly, applying the $\rho$ and $\theta$ strings in the order on the right always gives us a linear combination of elements from $B_\ell$, the left train basis.
Then conjugating by $U$ as in Equation \eqref{eqn:Consistency}, we get back some linear combination of elements in $B_r$.

In Figures \ref{fig:n=1}, \ref{fig:n=2}, and \ref{fig:n=3} in Appendix \ref{sec:FiguresForAT}, we give, for $n=1$, $n=2$, and $n=3$ respectively the tables of coefficients in the linear combinations for $\rho\zeta (U)$ and $U(\zeta\rho(U))U^*$ after applying the jellyfish relations. 

These coefficients can agree only if $\sigma_U=\sigma_U^{-1}$, and hence $\omega_U=1$.
\end{proof}

\begin{thm}\label{thm:Nonexistence}
$\AT_{n,\omega_U}$ does not exist for $4\leq n\leq 10$.
\end{thm}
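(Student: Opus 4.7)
The plan is to extend the consistency argument used for Theorem \ref{thm:Unique} case by case to each $n$ in the range $4 \leq n \leq 10$, using the same key identity
$$
\rho\zeta(U) \;=\; U\bigl(\zeta\rho(U)\bigr)U^*
$$
from Equation \eqref{eqn:Consistency}, expanded in the right train basis $B_r$. Treating $\omega_U$ (equivalently $\sigma_U$, subject to $\sigma_U^2=\omega_U$ and $\omega_U^{2n}=1$) as a symbolic parameter, I would first expand the left-hand side by successively applying the outer $\theta$ cap via Lemma \ref{lem:ThetaJF} and the outer $\rho$ cap via Lemma \ref{lem:DoubleRhoJF}, with the extra splitting at the resulting trivalent vertex handled by relation (3) of Theorem \ref{thm:JellyfishRelationsU}. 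This produces an explicit $\Complex(\tau,\sigma_U)$-linear combination of elements of $B_r$.

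In parallel I would expand $\zeta\rho(U)$ in the left train basis $B_\ell$ using the mirror-image jellyfish relations of Corollary \ref{cor:JellyfishRelationsUstar} together with the mirror of Lemma \ref{lem:DoubleRhoJF}, and then conjugate by $U$ on the left and $U^*$ on the right. As recorded just after the definition of $B_\ell$, this conjugation sends $B_\ell$ bijectively onto $B_r$, so the outcome is a second explicit expansion of the same element of $P_{(\rho\zeta)^3\zeta\rho}$ in the basis $B_r$. Since $B_r$ is orthogonal and linearly independent by Proposition \ref{prop:basis}, the two expansions must agree coefficient by coefficient, yielding a system of algebraic equations in $\sigma_U$ over $\Complex(\tau)$. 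The aim is to exhibit, for each $n$ in $\{4,\ldots,10\}$, at least one matched pair of coefficients whose equality admits no root-of-unity solution for $\omega_U$ whatsoever, contradicting the existence of $\AT_{n,\omega_U}$.

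The main obstacle is combinatorial complexity: the cardinality of $B_r$ and the number of branches produced by the four cases of Lemma \ref{lem:DoubleRhoJF} grow rapidly with $n$, so the compact tabulations of Appendix \ref{sec:FiguresForAT} used for $n\leq 3$ are no longer feasible by hand. I would therefore carry out the expansion symbolically in a computer algebra system, organizing every intermediate diagram in the train notation of the previous subsection and using the multiplication rules for two-car trains (including the special rules when a bundle carries $2n$ strands) to concatenate and simplify. As noted in the introduction, there is no uniform argument available; each $n$ from $4$ to $10$ must be checked independently, and what we gain in passing beyond $n=3$ is that the contradicting coefficient now excludes every $2n$-th root of unity, not merely pins $\omega_U = 1$ as in Theorem \ref{thm:Unique}.
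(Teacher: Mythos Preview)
Your proposal is correct and follows essentially the same approach as the paper: expand both sides of Equation~\eqref{eqn:Consistency} in the right train basis using Lemmas~\ref{lem:ThetaJF} and~\ref{lem:DoubleRhoJF} (and their mirrors), then use the linear independence of Proposition~\ref{prop:basis} to obtain a contradiction, carrying out the $4\le n\le 10$ cases by computer. The paper presents the $n=4$ table explicitly and reports the remaining cases as analogous computer runs, just as you outline.
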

\begin{proof}
The technique is the same as the proof of Theorem \ref{thm:Unique}. We compute $\rho\zeta(U)$ in two different ways as in Equation \eqref{eqn:Consistency}, and we get different linear combinations. Hence by Proposition \ref{prop:basis}, $\AT_{n,\omega_U}=0$ for $4\leq n\leq 10$.

We give the coefficients when $n=4$ in the table in Figure \ref{fig:n=4} in Appendix \ref{sec:FiguresForAT}. Similar computations for $5 \leq n \leq 10$ using the same code also results in different linear combinations.
\end{proof}

\begin{conj}
The technique used for Theorems \ref{thm:Unique} and \ref{thm:Nonexistence} should show
\begin{enumerate}[(1)]
\item
$\AT_{n,\omega_U}$ exists only if $\omega_U=1$ for all $1\leq n<\infty$, and
\item
$\AT_{n,\omega_U}$ does not exist for all $4\leq n<\infty$.
\end{enumerate} 
\end{conj}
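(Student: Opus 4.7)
The plan is to develop a uniform combinatorial framework for the calculation of $\rho\zeta(U) = U(\zeta\rho(U))U^*$ (Equation \eqref{eqn:Consistency}) that tracks coefficients in the right train basis $B_r$ as explicit rational functions of $\sigma_U, \omega_U,$ and $\tau$, depending only on $n$ as a parameter rather than as a case. First I would formalize the two reduction paths as recursive sweeps that apply Lemmas \ref{lem:ThetaJF} and \ref{lem:DoubleRhoJF} in a fixed order: observe that Lemma \ref{lem:ThetaJF} is deterministic (no branching), while Lemma \ref{lem:DoubleRhoJF} branches into a ``pass-through'' term preserving the number of cars and a ``creation'' term adding two new cars. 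This organizes each side of the equation as a signed sum over a binary tree whose leaves are labeled by train basis elements, with each coefficient a product of local factors $\tau^{-1}$, $\sigma_U^{\pm 1}$, and $\omega_U^{\pm 1}$ determined by the choices made along the branch.

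Next I would focus on two extremal basis elements for which the coefficient sums collapse. The first is the ``maximal creation'' element, obtained by always taking the creation branch, which is reached by only one path on each side and whose coefficients differ by a pure power of $\sigma_U$ coming from the different orderings of $U$ and $U^*$ applications. The second is the ``minimal creation'' element (pure pass-through), whose coefficient is a power of $\tau^{-1}$ times another explicit $\sigma_U$-monomial. For part (1) of the conjecture, the ratio of the maximal-creation coefficients equals $\sigma_U^{2k(n)}$ for an explicit $k(n) \geq 1$, and combined with the relation $\sigma_U^{2n} = \omega_U^n = 1$ and the analogous constraint from the minimal-creation element, I expect the system to force $\omega_U = 1$ for every $n \geq 1$.

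For part (2) of the conjecture, I would specialize to $\omega_U = 1$ and look for a single basis element whose two coefficient expansions cannot be reconciled. The heuristic is that on one side of \eqref{eqn:Consistency} the coefficient is a single power of $\tau^{-1}$ (one reduction path reaches it), while on the other side the same basis element is reached by multiple competing paths whose signed sum, by a Fibonacci-style identity, fails to equal the desired power as soon as there is enough ``room'' for the creation branches to interfere; the threshold is precisely $n \geq 4$, which matches the collapse of the basis in Proposition \ref{prop:basis} when $n \in \{1,2,3\}$. A natural target is a two-car train of the form $\train[2n,2n-1,\wheel[1]]$ or a three-car analogue, where the discrepancy reduces to a known nonvanishing $\tau$-polynomial identity.

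The main obstacle is proving nonvanishing of the discrepancy uniformly in $n$. For fixed $n$ the tables in Appendix \ref{sec:FiguresForAT} and the computer extensions to $n \leq 10$ exhibit concrete differences, but a general argument requires a lower bound on the number of distinct creation histories contributing to a fixed basis element, and a cancellation-free sign analysis of their weights. I expect the right invariant will be a signed generating function in $\tau^{-1}$ whose coefficients count lattice paths constrained by the $2n$-strand truncation at $a_j = 2n-1$, and that the failure of this generating function to satisfy the jellyfish consistency relation will reduce to a finite check at small degree plus a monotonicity/positivity argument for large $n$. Identifying that invariant, rather than executing the bookkeeping once it is in hand, is the hard part: without it, one is forced into the case-by-case computation that the authors carried out for $4 \leq n \leq 10$.
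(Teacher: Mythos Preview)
The statement is a \emph{conjecture}, and the paper offers no proof of it. The paper establishes part (1) only for $n\le 3$ (Theorem~\ref{thm:Unique}) and part (2) only for $4\le n\le 10$ (Theorem~\ref{thm:Nonexistence}), each by an explicit case-by-case jellyfish computation summarized in the tables of Appendix~\ref{sec:FiguresForAT}. The conjecture is precisely the assertion that this computational technique extends to all $n$, and the authors leave it open.

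Your proposal is not a proof but a research plan, and you are candid about this: you write ``I expect,'' ``the main obstacle is,'' and ``identifying that invariant \ldots\ is the hard part.'' As a plan it is well-aligned with the paper's method: the two-sided expansion of Equation~\eqref{eqn:Consistency} via Lemmas~\ref{lem:ThetaJF} and~\ref{lem:DoubleRhoJF}, the branching structure you describe, and the idea of isolating extremal basis elements whose coefficients are monomials in $\sigma_U$ are all reasonable abstractions of what the tables in Figures~\ref{fig:n=1}--\ref{fig:n=4} exhibit. But the plan contains no new idea that would close the gap; in particular, the ``Fibonacci-style identity'' and the ``monotonicity/positivity argument for large $n$'' are placeholders, not arguments, and you correctly flag the uniform nonvanishing of the discrepancy as the essential missing ingredient.

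Note also that the \emph{conclusions} of the conjecture are in fact established in the paper, but by an entirely different route: Liu's independent subfactor-theoretic argument \cite{LiuFish} combined with the translation Theorem~\ref{thm:ATandFish} shows $\AT_{n,\omega_U}$ does not exist for $n\ge 4$. The conjecture, however, asks specifically whether the \emph{jellyfish technique itself} yields a uniform proof, and neither the paper nor your proposal resolves that.
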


\section{Application to subfactors at index $3+\sqrt{5}$}\label{sec:Subfactors}

We now discuss the connection between $\AT_{n,\omega_U}$ and subfactors at index $3+\sqrt{5}$.
In 1994, Bisch-Haagerup found a sequence of possible principal graphs which converge to the Fuss-Catalan principal graph at index $3+\sqrt{5}$.
\begin{align*}
\fish_1&=
\bigraph{bwd1v1p1p1v1x1x0duals1v1x2x3}
\\
\fish_2&=
\bigraph{bwd1v1p1v1x0p0x1v1x0p1x0p1x0p0x1v1x0x0x1v1duals1v1x2v1x2x4x3v1}
\\
\fish_3&=
\bigraph{bwd1v1p1v1x0p0x1v1x0p1x0p0x1v1x0x0p0x0x1v1x0p1x0p1x0p0x1p0x1v0x1x0x1x0v1duals1v1x2v1x3x2v1x2x4x3x5v1}
\\
\fish_n&=
\begin{tikzpicture}[scale=.5, baseline=-.1cm]
\draw[fill] (0,0) circle (0.05);
\draw (0.,0.) -- (1.,0.);
\draw[fill] (1.,0.) circle (0.05);
\draw (1.,0.) -- (2.,-0.125);
\draw (1.,0.) -- (2.,0.125);
\draw[fill] (2.,-0.125) circle (0.05);
\draw[fill] (2.,0.125) circle (0.05);
\draw (2.,-0.125) -- (3.,-0.125);
\draw (2.,0.125) -- (3.,0.125);
\draw[fill] (3.,-0.125) circle (0.05);
\draw[fill] (3.,0.125) circle (0.05);
\draw (3.,-0.125) -- (4.,-0.25);
\draw (3.,-0.125) -- (4.,0.);
\draw (3.,0.125) -- (4.,0.25);
\draw[fill] (4.,-0.25) circle (0.05);
\draw[fill] (4.,0.) circle (0.05);
\draw[fill] (4.,0.25) circle (0.05);
\draw (4.,-0.25) -- (5.,-0.125);
\draw (4.,0.25) -- (5.,0.125);
\draw[fill] (5.,-0.125) circle (0.05);
\draw[fill] (5.,0.125) circle (0.05);
\draw (5.,-0.125) -- (6.,-0.5);
\draw (5.,-0.125) -- (6.,-0.166667);
\draw (5.,0.125) -- (6.,0.166667);
\draw (5.,0.125) -- (6.,0.5);
\draw[fill] (6.,-0.5) circle (0.05);
\draw[fill] (6.,-0.166667) circle (0.05);
\draw[fill] (6.,0.166667) circle (0.05);
\draw[fill] (6.,0.5) circle (0.05);
\draw (6.,-0.5) -- (7.,-0.125);
\draw (6.,0.166667) -- (7.,0.125);
\draw[fill] (7.,-0.125) circle (0.05);
\draw[fill] (7.,0.125) circle (0.05);
\draw[dashed] (5,-.6) -- (5,.6) -- (7,.6) -- (7,-.6) -- (5,-.6);
\node at (7.5,0) {{\scriptsize{$\cdots$}}};
\draw[fill] (8.,-0.125) circle (0.05);
\draw[fill] (8.,0.125) circle (0.05);
\draw (8.,-0.125) -- (9.,-0.5);
\draw (8.,-0.125) -- (9.,-0.25);
\draw (8.,-0.125) -- (9.,0.);
\draw (8.,0.125) -- (9.,0.25);
\draw (8.,0.125) -- (9.,0.5);
\draw[fill] (9.,-0.5) circle (0.05);
\draw[fill] (9.,-0.25) circle (0.05);
\draw[fill] (9.,0.) circle (0.05);
\draw[fill] (9.,0.25) circle (0.05);
\draw[fill] (9.,0.5) circle (0.05);
\draw (9.,-0.25) -- (10.,0.);
\draw (9.,0.25) -- (10.,0.);
\draw[fill] (10.,0.) circle (0.05);
\draw (10.,0.) -- (11.,0.);
\draw[fill] (11.,0.) circle (0.05);
\draw[red, thick] (0.,0.) -- +(0,0.0833333) ;
\draw[red, thick] (2.,-0.125) -- +(0,0.0833333) ;
\draw[red, thick] (2.,0.125) -- +(0,0.0833333) ;
\draw[red, thick] (4.,-0.25) -- +(0,0.0833333) ;
\draw[red, thick] (4.,0.) to[out=135,in=-135] (4.,0.25);
\draw[red, thick] (6.,-0.5) -- +(0,0.0833333) ;
\draw[red, thick] (6.,-0.166667) to[out=135,in=-135] (6.,0.166667);
\draw[red, thick] (6.,0.5) -- +(0,0.0833333) ;
\draw[red, thick] (9.,-0.5) -- +(0,0.0833333) ;
\draw[red, thick] (9.,-0.25) -- +(0,0.0833333) ;
\draw[red, thick] (9.,0.) to[out=135,in=-135] (9.,0.25);
\draw[red, thick] (9.,0.5) -- +(0,0.0833333) ;
\draw[red, thick] (11.,0.) -- +(0,0.0833333) ;
\end{tikzpicture}
\end{align*}
For $n\geq 4$, the dashed section appears a total of $n-3$ times in $\fish_n$.


The main result of this section is the following theorem:
\begin{thm}\label{thm:ATandFish}
A subfactor with principal graph $\fish_n$ exists if and only if $\AT_{n,\omega_U}$ exists for some $\omega_U^{2n}=1$.
\end{thm}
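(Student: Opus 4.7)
The plan is to establish the equivalence by exploiting the fact, recalled in the introduction, that $A_2 * T_2$ contains a canonical algebra object $A$ whose internal subfactor is the Bisch--Jones free Fuss--Catalan subfactor. This algebra descends along any dominant tensor functor, so each quotient $\AT_{n,\omega_U}$ of $A_2 * T_2$ produces a subfactor; conversely, the even part of any subfactor with principal graph $\fish_n$ should assemble into such a quotient via the classification of Section \ref{sec:ATQuotient}.

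For the backward direction, suppose $\AT_{n,\omega_U}$ exists. Let $A_n$ denote the image of $A$ under the quotient functor $A_2 * T_2 \to \AT_{n,\omega_U}$. Since this functor is faithful, dominant, and preserves the pivotal structure, $A_n$ is again an algebra object, and the standard internal-hom construction produces a subfactor $N_n \subset M_n$. Its principal graph is read off layer by layer from the simple summands of $A_n$, $A_n \otimes \theta$, $A_n \otimes \theta \otimes \rho$, $A_n \otimes \theta \otimes \rho \otimes \theta$, and so on. By Proposition \ref{prop:DistinctIrreducibleBimodules} and Corollary \ref{cor:UExists}, the simples of $\AT_{n,\omega_U}$ are exactly the alternating words in $\theta, \rho$ of length at most $2n$, with the two length-$2n$ words identified via the unitary $U$ of Section \ref{sec:ATQuotient}. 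A direct inspection of the resulting Bratteli diagram shows it is precisely $\fish_n$, with the ``fold'' at the right end of the graph coming from the identification $(\rho\theta)^n \cong (\theta\rho)^n$.

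For the forward direction, suppose $N \subset M$ has principal graph $\fish_n$. Since the index $3+\sqrt{5} = 2\cdot \tau^2$ factors nontrivially and $\fish_n$ exhibits a distinguished depth-$2$ vertex of dimension $\sqrt{2}$, there is an intermediate subfactor $N \subset P \subset M$ with $[P:N] = 2$ and $[M:P] = \tau^2$; by Jones' small-index rigidity the elementary pieces are necessarily of types $A_3$ and $A_4$. Let $\cE$ be the unitary fusion category generated by the $N$-$N$ bimodules arising from these two elementary subfactors. There are two distinguished simples $\theta \in \cE$ (coming from ${}_N P_N$, generating a copy of $\tfrac{1}{2}A_3 \cong A_2$) and $\rho \in \cE$ (coming from the $A_4$ piece, generating a copy of $\tfrac{1}{2}A_4 \cong T_2$). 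The universal property of the free product provides a dominant pivotal tensor functor $A_2 * T_2 \to \cE$. Reading off $\fish_n$, the alternating words in $\rho, \theta$ first become identified at depth $2n$; by Corollary \ref{cor:UExists} together with the analysis in Section \ref{sec:ATQuotient}, $\cE \cong \AT_{n,\omega_U}$ for a unique $2n$-th root of unity $\omega_U$.

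The main obstacle is combinatorial: one must verify in both directions that the identification $(\rho\theta)^n \cong (\theta\rho)^n$ predicted by the fusion category precisely matches the fold at the right end of $\fish_n$, with no stray summands or extra identifications. In the forward direction this also requires checking that each vertex of $\fish_n$ corresponds to an alternating word in $\theta,\rho$ and that no additional simple objects are generated inside $\cE$. Once this bookkeeping is done, the two constructions are mutually inverse up to equivalence, and the stated if-and-only-if statement follows. (As noted in the introduction, combining this with Theorem \ref{thm:Nonexistence} and Liu's result from \cite{LiuFish} yields the nonexistence of subfactors with principal graph $\fish_n$ for all $n \geq 4$.)
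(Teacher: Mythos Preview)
Your overall strategy matches the paper's: the backward direction uses the algebra object $A=1\oplus\rho\oplus\rho\theta\rho$ in $A_2*T_2$ (Proposition \ref{prop:AlgebraObject}) descending to $\AT_{n,\omega_U}$, and the forward direction shows the even half of any $\fish_n$ subfactor is a quotient of $A_2*T_2$. But two of your steps do not work as written.

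\textbf{Forward direction: the intermediate subfactor.} Your claim that ``$\fish_n$ exhibits a distinguished depth-$2$ vertex of dimension $\sqrt{2}$'' is false. For $n\geq 2$ the two depth-$2$ vertices of $\fish_n$ have dimensions $\tau$ and $\tau^2$ (these are $\rho$ and $\rho\theta\rho$, cf.\ Proposition \ref{prop:DepthTwo}), and neither is univalent. The existence of the intermediate cannot be read off $\fish_n$ directly; the paper instead observes that $\fish_n$ begins with a triple point, invokes Ocneanu's triple point obstruction to force a univalent depth-$2$ vertex on the \emph{dual} graph $\Gamma_-$, and then applies Lemma \ref{lem:Normalizer} to the dual subfactor to produce the intermediate (Theorem \ref{thm:FishToAT}). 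You also need faithfulness of $A_2*T_2\to\cE$, which the paper obtains from $A_2*T_2\cong\frac{1}{2}\cF\cC_+$ and the Bisch--Jones embedding of Fuss--Catalan as a planar subalgebra (Corollary \ref{cor:EvenHalfOfFish}); your proposal asserts dominance but not faithfulness.

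\textbf{Backward direction: reading off the principal graph.} The subfactor built from the algebra $A$ has generating bimodule $\jw{2}\cong\rho\oplus\rho\theta\rho$, so its principal graph is \emph{not} obtained by alternately tensoring $A$ with $\theta$ and $\rho$. The paper instead computes the fusion graph of $\rho\oplus\rho\theta\rho$ on the simples of $\AT_{n,\omega_U}$ to get the even part, and then argues (using irreducibility and $\jw{1}\otimes\jw{1}\cong 1\oplus\jw{2}$) that this even part uniquely determines the odd vertices and hence the full principal graph $\fish_n$ (Theorem \ref{thm:ATToFish}). Your ``direct inspection of the resulting Bratteli diagram'' is the right spirit, but the recipe you describe would not produce it.
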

\begin{proof}
Existence of the subfactor implies existence of such a fusion category by Theorem \ref{thm:FishToAT}.
The converse follows from Theorem \ref{thm:ATToFish}.
\end{proof}

\begin{cor}
A unique subfactor exists with principal graphs $\fish_n$ for $n=1,2,3$. No subfactor exists with principal graph $\fish_n$ for $4\leq n\leq 10$.
\end{cor}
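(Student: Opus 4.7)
The plan is to combine Theorem \ref{thm:ATandFish} with the existence and nonexistence results already established for $\AT_{n,\omega_U}$, so the corollary reduces almost entirely to bookkeeping.

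For $n \in \{1,2,3\}$, I would first invoke Theorem \ref{thm:ATandFish} to convert the question about subfactors into a question about fusion categories: a subfactor with principal graph $\fish_n$ exists if and only if $\AT_{n,\omega_U}$ exists for some $2n$-th root of unity $\omega_U$. Theorem \ref{thm:Unique} then forces $\omega_U = 1$, and $\AT_{n,1}$ is constructed explicitly in Subsection \ref{sec:Existence}, yielding existence of the subfactor. For uniqueness, observe that by the jellyfish evaluation algorithm of Theorem \ref{thm:JellyfishEvaluation} any composite of $A_2$ with $T_2$ in which $(\rho\theta)^n \cong (\theta\rho)^n$ is determined up to equivalence by the pair $(n, \omega_U)$, and the correspondence of Theorem \ref{thm:ATandFish} passes this uniqueness down to the subfactor side: a subfactor with principal graph $\fish_n$ produces a fusion category $\AT_{n,\omega_U}$ with $\omega_U = 1$ (by Theorem \ref{thm:Unique}), and there is only one such fusion category, so only one such subfactor.

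For $4 \leq n \leq 10$, Theorem \ref{thm:Nonexistence} shows $\AT_{n,\omega_U}$ does not exist for any $\omega_U$ satisfying $\omega_U^{2n}=1$. Applying the ``only if'' direction of Theorem \ref{thm:ATandFish} immediately rules out any subfactor with principal graph $\fish_n$ in this range.

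The main work has already been done in earlier theorems: the jellyfish consistency argument (Theorem \ref{thm:Unique}) constraining $\omega_U$, the construction of $\AT_{n,1}$ for $n=1,2,3$ (Subsection \ref{sec:Existence}), the inconsistency calculations ruling out $\AT_{n,\omega_U}$ for $4\leq n \leq 10$ (Theorem \ref{thm:Nonexistence}), and the translation between the two worlds (Theorem \ref{thm:ATandFish}). The only remaining point to check carefully is that the correspondence of Theorem \ref{thm:ATandFish} is bijective on isomorphism classes, not just a matching of existence, so that uniqueness transfers from the fusion category to the subfactor; this follows from the discussion in the introduction identifying the subfactor as the one built from the canonical algebra object in $A_2 * T_2$ surviving to the quotient.
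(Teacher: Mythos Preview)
Your proposal is correct and follows essentially the same approach as the paper's own proof, which simply cites Theorem \ref{thm:ATandFish} together with Theorems \ref{thm:Unique}, \ref{thm:Nonexistence}, and Subsection \ref{sec:Existence}. You are somewhat more careful than the paper in explicitly flagging that uniqueness of the subfactor requires the correspondence in Theorem \ref{thm:ATandFish} to be injective on isomorphism classes (via the canonical algebra object and jellyfish evaluation), a point the paper leaves implicit.
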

\begin{proof}
By Theorem \ref{thm:ATandFish}, uniqueness and nonexistence follow from Theorems \ref{thm:Unique} and \ref{thm:Nonexistence} respectively. Subsection \ref{sec:Existence} shows existence for $n=1,2,3$.
\end{proof}

Independently, and by a different method, Zhengwei Liu showed that no subfactor with principal graph $\fish_n$ exists for \underline{any} $n\geq 4$ \cite{LiuFish}.
Liu's result together with Theorem \ref{thm:ATandFish} shows that $\AT_{n,\omega_U}$ does not exist for any $n\geq 4$.

\subsection{From subfactors to quotients of $A_2*T_2$}\label{sec:SubfactorsToAT}

Let $N\subset M$ be a 1-supertransitive subfactor at index $3+\sqrt{5}$ with an intermediate subfactor $P$. 

By taking duals, we may assume that $[M\colon P]=2$ and $[P\colon N]=\tau^2=\frac{3+\sqrt{5}}{2}$.
Denote the planar algebra for $N \subset M$ by $\cP_\bullet$ and the principal even half of $M-M$ bimodules by $\frac{1}{2}\cP_+$.
In this subsection, we show that $\frac{1}{2}\cP_+$ must be a quotient of $A_2*T_2$.

By \cite{MR1437496}, $\cP_\bullet$ has a Fuss-Catalan planar subalgebra $\cF\cC_\bullet$. Recall that $\cF\cC_{j,+}$ consists of all $A_3*A_4$ diagrams with boundaries of the form $ab(baab)^{j-1}ba$, where $a$ and $b$ are the usual generators of $A_3$ and $A_4$. Similarly, $\cF\cC_{j,+}$ of those diagrams with boundary $ba(abba)^{j-1}ab$. In the diagrams below we represent $a$ by a green string and $b$ by an orange string. 

\begin{defn}\label{defn:ThetaRho}
Define projections $\rho$ and $\theta$ in $\cF\cC_+\subset \cP_+$ by
\begin{align*}
\rho &=
\frac{1}{\sqrt{2}}\,
\begin{tikzpicture}[baseline = -.1cm]
	\draw[thick, A4] (-.35,-.8) -- ( -.35, .8);	
	\draw[thick, A4] (.35,-.8) -- (.35, .8);
	\draw[thick, A3] (-.2,.8) arc (-180:0:.2cm);
	\draw[thick, A3] (-.2,-.8) arc (180:0:.2cm);	
	\draw[thick, A4, unshaded] (-.5,-.4)--(-.5,.4)--(.5,.4)--(.5,-.4)--(-.5,-.4);
	\node at (0,0) {\Afour{$\jw{2}$}};
\end{tikzpicture}
\quad \text{ and } \quad
\theta =
\frac{1}{2}\,
\begin{tikzpicture}[baseline = -.1cm]
	\draw[thick, A3] (-.2,-.8) -- ( -.2, .8);	
	\draw[thick, A3] (.2,-.8) -- (.2, .8);
	\draw[thick, A3] (-1.35,.8) arc (-180:0:.15cm);
	\draw[thick, A3] (-1.35,-.8) arc (180:0:.15cm);
	\draw[thick, A3] (1.05,.8) arc (-180:0:.15cm);
	\draw[thick, A3] (1.05,-.8) arc (180:0:.15cm);
	\draw[thick, A4] (-1.5,-.8) -- ( -1.5, .8);
	\draw[thick, A4] (-.9,-.8) -- (-.9, .8);	
	\draw[thick, A4] (-.75,-.8) -- (-.75, .8);	
	\draw[thick, A4] (.9,-.8) -- (.9, .8);	
	\draw[thick, A4] (.75,-.8) -- (.75, .8);		
	\draw[thick, A4] (1.5,-.8) -- (1.5, .8);	
	\draw[thick, A3, unshaded] (-.4,-.4)--(-.4,.4)--(.4,.4)--(.4,-.4)--(-.4,-.4);
	\node at (0,0) {\Athree{$\jw{2}$}};
	\draw[thick, A4, unshaded] (-1.65,-.4)--(-1.65,.4)--(-.6,.4)--(-.6,-.4)--(-1.65,-.4);
	\node at (-1.125,0) {\Afour{$\jw{3}$}};
	\draw[thick, A4, unshaded] (1.65,-.4)--(1.65,.4)--(.6,.4)--(.6,-.4)--(1.65,-.4);
	\node at (1.125,0) {\Afour{$\jw{3}$}};
\end{tikzpicture}\,,
\end{align*}
\end{defn}
These correspond to $N-N$ bimodules, and so to (possibly a collection of) vertices on $\Gamma_+$.
Clearly $\rho$ is a minimal projection of trace $\tau$, and $\rho^2\cong 1\oplus \rho$.

\begin{lem}\label{lem:Nontrivial}
The projection $\theta$ satisfies $\theta\otimes \theta\cong 1$ and $\theta\ncong 1$.
\end{lem}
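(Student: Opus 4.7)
The plan is to identify $\theta$ as the image, inside the Fuss-Catalan subalgebra $\cF\cC_\bullet \subset \cP_\bullet$, of the non-trivial invertible object $\alpha = \jw{2}$ of $A_2 = \tfrac{1}{2}A_3$. Since $\alpha \otimes \alpha \cong 1$ in $A_2$, this will yield $\theta \otimes \theta \cong 1$; and since the embedding is faithful on simple objects, $\theta \not\cong 1$. Concretely, the central $\jw{2}$ on $A_3$-strands in the definition of $\theta$ plays the role of $\alpha$, while the outer $\jw{3}$ on $A_4$-bundles together with the $A_3$ caps between adjacent $A_4$-strands are the Fuss-Catalan ``dressing'' that lifts $\alpha$ from a small box into $\cP_{6,+}$.

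First I would verify that $\theta$ is indeed a projection by computing $\theta^2$: the vertical composition produces two closed $A_3$-loops (each evaluating to $\sqrt{2}$), and combining these with $\jw{2}^{\,2}=\jw{2}$ on $A_3$, $\jw{3}^{\,2}=\jw{3}$ on $A_4$, and the overall normalization $(1/2)^2$, one recovers $\theta$ exactly. This simultaneously justifies the normalization constant $1/2$ in the definition.

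Next, to prove $\theta \otimes \theta \cong 1$, I would compute the $N$-$N$ bimodule tensor product by stacking two copies of $\theta$ and contracting the matched outer strands. The two central $A_3$-strands carry $\jw{2} \otimes \jw{2}$, which collapses to the identity $\jw{0}$ via $\alpha \otimes \alpha \cong 1$ in $A_3$; the outer $A_4$-bundles with $\jw{3}$ boxes, coupled via the $A_3$-caps, evaluate to the identity on their outer strands using $\tr \jw{3} = [4] = 1$ and standard Temperley-Lieb reduction. Tracking the closed-loop scalars ($A_3$ loop $= \sqrt{2}$, $A_4$ loop $= \tau$) together with the normalization then confirms the result is the trivial bimodule. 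For $\theta \not\cong 1$, it suffices to show that $\theta$ and the trivial projection are orthogonal as simple $N$-$N$ bimodules: the pairing (via the Markov trace) reduces to capping the central $A_3$-strands of $\theta$ against $\jw{2}$, which vanishes because $\jw{2}$ is orthogonal to the Temperley-Lieb identity in the $A_3$ subalgebra.

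The hard part will be the diagrammatic bookkeeping in the $\theta \otimes \theta \cong 1$ step: one must verify that the $\jw{3}$-boxes paired with $A_3$-caps actually reduce to identities on their $A_4$ bundles, and that all loop-value and Jones-Wenzl trace scalars combine to exactly $1$ rather than to some non-trivial scalar multiple. This is a routine but intricate Temperley-Lieb computation inside the Fuss-Catalan planar algebra.
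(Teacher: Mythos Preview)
Your plan for $\theta\otimes\theta\cong 1$ is essentially correct in spirit and would work, but the paper short-circuits the ``routine but intricate'' Temperley--Lieb bookkeeping you anticipate. The key observation is the general identity (their Equation~\eqref{eqn:Automorphism}): for any trace-$1$ symmetrically self-dual projection $p$, two parallel copies of $p$ equal the cap--cup picture, by computing the norm of the difference. Since both $\Afour{\jw{3}}$ and $\Athree{\jw{2}}$ have dimension $1$, applying this identity to each collapses $\theta\otimes\theta$ directly to the identity without tracking individual loop values. You would benefit from isolating this lemma rather than doing the reduction by hand.

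For $\theta\ncong 1$ there is a genuine gap. You write that the pairing ``reduces to capping the central $A_3$-strands of $\theta$ against $\jw{2}$,'' but this reduction is exactly the nontrivial step. An intertwiner $x\colon\theta\to 1$ lives in $\cP_\bullet$, not in $\cF\cC_\bullet$, so it does not factor as a green part times an orange part; there is no a priori reason $x$ interacts with the central $\Athree{\jw{2}}$ via an $A_3$-Temperley--Lieb element. Relatedly, your initial claim that ``the embedding is faithful on simple objects'' is precisely what needs proving here. Computing orthogonality of $\theta$ to one specific trivial projection (say the all-caps Jones projection) would also not suffice, since orthogonal minimal projections can still be Murray--von Neumann equivalent.

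The paper handles this by computing $xx^*$ for an \emph{arbitrary} $x$: using sphericality, the two $\Afour{\jw{3}}$ boxes are rotated to sit side by side, and then the trace-$1$ identity above converts them into closed orange loops that disconnect from $x$ entirely. What remains between $x$ and $x^*$ is purely the green $\Athree{\jw{2}}$ with caps, and now the vanishing follows because $\Athree{\jw{2}}$ admits no map to the empty diagram in $A_3$. The point is that the $\dim(\Afour{\jw{3}})=1$ trick is what justifies the reduction you asserted; without it your argument does not close.
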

\begin{proof}
First, $\theta^2\cong 1$, since \textcolor{A4}{$\jw{3}$} and \textcolor{A3}{$\jw{2}$} have dimension 1. This follows from the fact that if $p$ is a trace 1 symmetrically self-dual projection in a fantastic planar algebra, then
\begin{equation}
\begin{tikzpicture}[baseline = -.1cm]
	\draw (-.6,-.8)--(-.6,.8);
	\draw (.6,-.8)--(.6,.8);
	\nbox{thick,unshaded}{(-.6,0)}{0}{0}{p}
	\nbox{thick,unshaded}{(.6,0)}{0}{0}{p}
\end{tikzpicture}
=
\begin{tikzpicture}[baseline = -.1cm]
	\draw (-.4,.6) arc (270:180:.4cm);
	\draw (.4,.6) arc (-90:0:.4cm);
	\draw (-.4,-.6) arc (90:180:.4cm);
	\draw (.4,-.6) arc (90:0:.4cm);
	\nbox{thick,unshaded}{(0,-.6)}{0}{0}{p}
	\nbox{thick,unshaded}{(0,.6)}{0}{0}{p}
\end{tikzpicture}
\label{eqn:Automorphism}
\end{equation}
(take the norm squared of the difference). Now let $x$ be an intertwiner from $\theta$ to the empty diagram. Then by sphericality, and the fact that $\dim(\textcolor{A4}{\jw{3}})=1$, we have
$$
\begin{tikzpicture}[baseline = -.1cm,scale=.9]
	\draw[thick, A3] (-.2,-.8) -- ( -.2, .8);	
	\draw[thick, A3] (.2,-.8) -- (.2, .8);
	\draw[thick, A3] (-1.35,.8) arc (-180:0:.15cm);
	\draw[thick, A3] (-1.35,-.8) arc (180:0:.15cm);
	\draw[thick, A3] (1.05,.8) arc (-180:0:.15cm);
	\draw[thick, A3] (1.05,-.8) arc (180:0:.15cm);
	\draw[thick, A4] (-1.5,-.8) -- ( -1.5, .8);
	\draw[thick, A4] (-.9,-.8) -- (-.9, .8);	
	\draw[thick, A4] (-.75,-.8) -- (-.75, .8);	
	\draw[thick, A4] (.9,-.8) -- (.9, .8);	
	\draw[thick, A4] (.75,-.8) -- (.75, .8);		
	\draw[thick, A4] (1.5,-.8) -- (1.5, .8);	
	\draw[thick, A3, unshaded] (-.4,-.4)--(-.4,.4)--(.4,.4)--(.4,-.4)--(-.4,-.4);
	\node at (0,0) {\Athree{$\jw{2}$}};
	\draw[thick, A4, unshaded] (-1.65,-.4)--(-1.65,.4)--(-.6,.4)--(-.6,-.4)--(-1.65,-.4);
	\node at (-1.125,0) {\Afour{$\jw{3}$}};
	\draw[thick, A4, unshaded] (1.65,-.4)--(1.65,.4)--(.6,.4)--(.6,-.4)--(1.65,-.4);
	\node at (1.125,0) {\Afour{$\jw{3}$}};
	\nbox{thick,unshaded}{(0,1.2)}{1.25}{1.25}{x}
	\nbox{thick,unshaded}{(0,-1.2)}{1.25}{1.25}{x^*}
\end{tikzpicture}
=
\begin{tikzpicture}[baseline = -.1cm,scale=.9]
	\draw[thick, A3] (-.2,-.8) -- ( -.2, .8);	
	\draw[thick, A3] (.2,-.8) -- (.2, .8);
	\draw[thick, A3] (-1.35,.8) arc (-180:0:.15cm);
	\draw[thick, A3] (-1.35,-.8) arc (180:0:.15cm);
	\draw[thick, A3] (1.05,.8) arc (-180:0:.15cm);
	\draw[thick, A3] (1.05,-.8) arc (180:0:.15cm);
	\draw[thick, A4] (-1.5,-.8) -- ( -1.5, .8);
	\draw[thick, A4] (-.9,-.8) -- (-.9, .8);	
	\draw[thick, A4] (-.75,-.8) -- (-.75, .8);	
	\draw[thick, A4] (.75,-.8) arc (180:0:.7cm) -- (2.15,-1.4)  .. controls ++(270:1.35cm) and ++(270:1.25cm) ..  (-2.75,-1.2) -- (-2.75,-.4);	
	\draw[thick, A4] (.9,-.8) arc (180:0:.55cm) -- (2,-1.4)  .. controls ++(270:1.2cm) and ++(270:1.1cm) ..  (-2.6,-1.2) -- (-2.6,-.4);	
	\draw[thick, A4] (1.5,-.8) arc (180:0:.15cm) -- (1.8,-1.4)  .. controls ++(270:1cm) and ++(270:1cm) ..  (-2,-1.2) -- (-2,-.4);	
	\draw[thick, A4] (.75,.8) arc (-180:0:.7cm) -- (2.15,1.4)  .. controls ++(90:1.35cm) and ++(90:1.25cm) ..  (-2.75,1.2) -- (-2.75,.4);	
	\draw[thick, A4] (.9,.8) arc (-180:0:.55cm) -- (2,1.4)  .. controls ++(90:1.2cm) and ++(90:1.1cm) ..  (-2.6,1.2) -- (-2.6,.4);	
	\draw[thick, A4] (1.5,.8) arc (-180:0:.15cm) -- (1.8,1.4)  .. controls ++(90:1cm) and ++(90:1cm) ..  (-2,1.2) -- (-2,.4);
	\draw[thick, A3, unshaded] (-.4,-.4)--(-.4,.4)--(.4,.4)--(.4,-.4)--(-.4,-.4);
	\node at (0,0) {\Athree{$\jw{2}$}};
	\draw[thick, A4, unshaded] (-1.65,-.4)--(-1.65,.4)--(-.6,.4)--(-.6,-.4)--(-1.65,-.4);
	\node at (-1.125,0) {\Afour{$\jw{3}$}};
	\draw[thick, A4, unshaded] (-1.85,-.4)--(-1.85,.4)--(-2.9,.4)--(-2.9,-.4)--(-1.85,-.4);
	\node at (-2.325,0) {\Afour{$\jw{3}$}};
	\nbox{thick,unshaded}{(0,1.2)}{1.25}{1.25}{x}
	\nbox{thick,unshaded}{(0,-1.2)}{1.25}{1.25}{x^*}
\end{tikzpicture}
=
\begin{tikzpicture}[baseline = -.1cm,scale=.9]
	\draw[thick, A3] (-.2,-.8) -- ( -.2, .8);	
	\draw[thick, A3] (.2,-.8) -- (.2, .8);
	\draw[thick, A3] (-1.35,.8) arc (-180:0:.15cm);
	\draw[thick, A3] (-1.35,-.8) arc (180:0:.15cm);
	\draw[thick, A3] (1.05,.8) arc (-180:0:.15cm);
	\draw[thick, A3] (1.05,-.8) arc (180:0:.15cm);
	\draw[thick, A4] (.75,-.8) arc (180:0:.7cm) -- (2.15,-1.4)  .. controls ++(270:1.35cm) and ++(270:1.35cm) ..  (-2.15,-1.4) -- (-2.15,-.8) arc (180:0:.7cm);
	\draw[thick, A4] (.9,-.8) arc (180:0:.55cm) -- (2,-1.4)  .. controls ++(270:1.1cm) and ++(270:1.1cm) ..  (-2,-1.4) -- (-2,-.8) arc (180:0:.55cm);	
	\draw[thick, A4] (1.5,-.8) arc (180:0:.15cm) -- (1.8,-1.4)  .. controls ++(270:.9cm) and ++(270:.9cm)  ..  (-1.8,-1.4) -- (-1.8,-.8) arc (180:0:.15cm);
	\draw[thick, A4] (.75,.8) arc (-180:0:.7cm) -- (2.15,1.4)  .. controls ++(90:1.35cm) and ++(90:1.35cm) ..  (-2.15,1.4) -- (-2.15,.8) arc (-180:0:.7cm);	
	\draw[thick, A4] (.9,.8) arc (-180:0:.55cm) -- (2,1.4)  .. controls ++(90:1.1cm) and ++(90:1.1cm)  ..  (-2,1.4) -- (-2,.8) arc (-180:0:.55cm);		
	\draw[thick, A4] (1.5,.8) arc (-180:0:.15cm) -- (1.8,1.4)  .. controls ++(90:.9cm) and ++(90:.9cm)  ..  (-1.8,1.4) -- (-1.8,.8) arc (-180:0:.15cm);	
	\draw[thick, A3, unshaded] (-.4,-.4)--(-.4,.4)--(.4,.4)--(.4,-.4)--(-.4,-.4);
	\node at (0,0) {\Athree{$\jw{2}$}};
	\nbox{thick,A4,unshaded}{(0,2.2)}{0}{0}{\rotatebox{90}{$\Afour{\jw{3}}$}}
	\nbox{thick,A4,unshaded}{(0,-2.2)}{0}{0}{\rotatebox{90}{$\Afour{\jw{3}}$}}
	\nbox{thick,unshaded}{(0,1.2)}{1.25}{1.25}{x}
	\nbox{thick,unshaded}{(0,-1.2)}{1.25}{1.25}{x^*}
\end{tikzpicture}
=
0,
$$
since any intertwiner from \textcolor{A3}{$\jw{2}$} to the empty diagram must be zero.
\end{proof}

\begin{prop}\label{prop:DepthTwo}
In $FC_\bullet$, $\jw{2}\ominus \rho \cong \rho\theta\rho$.
\end{prop}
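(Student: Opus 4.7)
The plan is to verify $\jw{2} \cong \rho \oplus \rho\theta\rho$ as $N$-$N$ bimodules by embedding $\rho$ and $\rho\theta\rho$ as orthogonal summands of $\jw{2}$ and matching dimensions. First a dimension check: with Fuss-Catalan loop parameter $\delta = \sqrt{3+\sqrt 5}$, we have $\dim \jw{2} = \delta^2-1 = 2\tau+1$, while $\dim\rho = \tau$ (given) and $\dim\theta = 1$ by Lemma~\ref{lem:Nontrivial} gives $\dim(\rho\theta\rho) = \tau^2$. Since $\tau + \tau^2 = 2\tau+1$, the proposed decomposition is dimensionally consistent.

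Next I would show $\rho \leq \jw{2}$ as projections in $FC_{1,+}$. Since $\jw{2}$ is characterized as the Jones-Wenzl idempotent, it suffices to verify that each Temperley-Lieb cap on the boundary of $\rho$ annihilates it. Caps between two $A_4$-strands are absorbed by the $\jw{2}_{A_4}$ projector sitting inside $\rho$ (Wenzl's defining relation), and caps between adjacent $A_3$-strands close off into an $A_3$-loop of value $\delta_{A_3}^2 = 2$, which is exactly cancelled by the normalization factor $1/\sqrt 2$ in Definition~\ref{defn:ThetaRho}; a brief case analysis using Proposition~\ref{prop:RhoSkeinRelations} then gives zero.

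For the complementary summand I would use the Bisch-Jones decomposition: writing the basic $N$-$M$ bimodule as $X = \alpha\beta$ with $\alpha = {}_N L^2(P)_P$ and $\beta = {}_P L^2(M)_M$, using $\beta\bar\beta \cong 1 \oplus \theta'$ where $\theta' = \jw{2}_{A_3}$ is the nontrivial $P$-$P$ invertible bimodule, and $\alpha\bar\alpha \cong 1 \oplus \rho$, we compute
\[
X\bar X \cong \alpha(1 \oplus \theta')\bar\alpha \cong \alpha\bar\alpha \oplus \alpha\theta'\bar\alpha \cong 1 \oplus \rho \oplus \alpha\theta'\bar\alpha.
\]
Since $X\bar X \cong 1 \oplus \jw{2}$ by definition of $\jw{2}$, this gives $\jw{2} \cong \rho \oplus \alpha\theta'\bar\alpha$. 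Both $\rho\theta\rho$ and $\alpha\theta'\bar\alpha$ are simple $N$-$N$ bimodules of dimension $\tau^2$ (the former by simplicity in $A_2*T_2 \cong \frac{1}{2}FC_+$), so the existence of a single nonzero intertwiner forces $\alpha\theta'\bar\alpha \cong \rho\theta\rho$. Such an intertwiner is produced directly from the diagrammatic definition of $\theta$ (Definition~\ref{defn:ThetaRho}), which encodes exactly the $\jw{3}_{A_4}$ simple-current absorption pattern realizing $\theta$ as the $N$-$N$ induction of $\theta'$ between two copies of $\rho$.

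The main obstacle is this final identification $\alpha\theta'\bar\alpha \cong \rho\theta\rho$: it requires carefully translating between the native Fuss-Catalan description (via $\alpha,\beta,\theta'$) and the free-product presentation (via $\rho,\theta$) of the same $N$-$N$ bimodule, verifying that the paper's normalization of $\theta$ (with its $\jw{3}_{A_4}$ insertions and factor of $\frac{1}{2}$) matches the categorical induction. Everything else --- dimension counting, the embedding $\rho \leq \jw{2}$, and the free-product tensor computation --- is routine Jones-Wenzl bookkeeping.
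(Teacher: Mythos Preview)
Your approach is correct in outline but takes a genuinely different route from the paper. The paper's proof is a three-line diagrammatic computation: it expands $\jw{2}-\rho$ explicitly in $\cF\cC_\bullet$ (subtracting the two Temperley--Lieb terms defining $\jw{2}$ and the normalized $\Afour{\jw{2}}$ defining $\rho$), simplifies to the projection consisting of a single $\Athree{\jw{2}}$ flanked by through $A_4$-strands, and then identifies this with $\rho\theta\rho$ using only the $A_4$ fusion rule $\Afour{\jw{2}}\otimes\Afour{\jw{3}}\cong\Afour{\jw{1}}$. No dimension count, no bimodule factorization, no separate simplicity argument is needed; the explicit isomorphism falls out of the diagrammatic identification.

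Your categorical argument via the intermediate factorization $X=\alpha\beta$ is sound, and the decomposition $\jw{2}\cong\rho\oplus\alpha\theta'\bar\alpha$ does follow from $\beta\bar\beta\cong 1\oplus\theta'$ and $\alpha\bar\alpha\cong 1\oplus\rho$. But to finish you must still identify $\alpha\theta'\bar\alpha$ with $\rho\theta\rho$, and this is where the content lies. You assert both are simple; for $\alpha\theta'\bar\alpha$ this is not immediate and needs a Frobenius reciprocity computation in the free-product $P$--$P$ category (one finds $\langle\alpha\theta'\bar\alpha,\alpha\theta'\bar\alpha\rangle=\langle\theta',\theta'\rangle=1$ since $\bar\rho'\theta'\not\cong\theta'\bar\rho'$ there). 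And the ``intertwiner produced directly from the diagrammatic definition of $\theta$'' is, once written out, exactly the $A_4$ fusion step $\Afour{\jw{2}}\Afour{\jw{3}}\cong\Afour{\jw{1}}$ the paper invokes. So your route is more conceptual but not shorter: the final identification collapses to the same computation, just arrived at from a greater height. (Incidentally, your remark that an $A_3$-loop value of $2$ ``cancels'' the factor $1/\sqrt{2}$ in $\rho$ does not parse; the inequality $\rho\leq\jw{2}$ follows simply because the $\Afour{\jw{2}}$ inside $\rho$ is annihilated by the $b$-cap in $e_1$.)
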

\begin{proof}
We have
$$
\jw{2}-\rho
= 
\begin{tikzpicture}[baseline = -.1cm, scale=1.5]
	\draw[thick, A4] (-.225,-.4) -- (-.225,.4);
	\draw[thick, A3] (-.075,-.4) -- (-.075,.4);
	\draw[thick, A3] (.075,-.4) -- (.075,.4);
	\draw[thick, A4] (.225,-.4) -- (.225,.4);
\end{tikzpicture}
-
\frac{1}{\tau\sqrt{2}}\,
\begin{tikzpicture}[baseline = -.1cm, scale=1.5]
	\draw[thick, A4] (-.33,.4) arc (-180:0:.33cm);	
	\draw[thick, A4] (-.33,-.4) arc (180:0:.33cm);	
	\draw[thick, A3] (-.2,.4) arc (-180:0:.2cm);
	\draw[thick, A3] (-.2,-.4) arc (180:0:.2cm);	
\end{tikzpicture}
-
\frac{1}{\sqrt{2}}\,
\begin{tikzpicture}[baseline = -.1cm]
	\draw[thick, A4] (-.35,-.8) -- ( -.35, .8);	
	\draw[thick, A4] (.35,-.8) -- (.35, .8);
	\draw[thick, A3] (-.2,.8) arc (-180:0:.2cm);
	\draw[thick, A3] (-.2,-.8) arc (180:0:.2cm);	
	\draw[thick, A4, unshaded] (-.5,-.4)--(-.5,.4)--(.5,.4)--(.5,-.4)--(-.5,-.4);
	\node at (0,0) {\Afour{$\jw{2}$}};
\end{tikzpicture}
=
\begin{tikzpicture}[baseline = -.1cm]
	\draw[thick, A4] (-.6,-.8) -- ( -.6, .8);	
	\draw[thick, A4] (.6,-.8) -- (.6, .8);
	\draw[thick, A3] (-.2,-.8) -- ( -.2, .8);	
	\draw[thick, A3] (.2,-.8) -- (.2, .8);
	\draw[thick, A3, unshaded] (-.4,-.4)--(-.4,.4)--(.4,.4)--(.4,-.4)--(-.4,-.4);
	\node at (0,0) {\Athree{$\jw{2}$}};
\end{tikzpicture}.
$$
It is then easy to see that $\rho\theta\rho\cong \jw{2}\ominus\rho$, since $\Afour{\jw{1}}\cong \Afour{\jw{2}}\Afour{\jw{3}}\cong \Afour{\jw{3}}\Afour{\jw{2}}$.
\end{proof}

\begin{cor}\label{cor:EvenHalfOfFish}
The even half $\frac{1}{2}\cP_+$ of $P_\bullet$ is generated by $\rho$ and $\theta$. Hence $\frac{1}{2}\cP_+$ is either $A_2*T_2$ or $\AT_{n,\omega_U}$ for some $1\leq n< \infty$ and some $2n$-th root of unity $\omega_U$.
\end{cor}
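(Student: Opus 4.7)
The plan is to prove this in two steps: first establish that $\rho$ and $\theta$ generate $\frac{1}{2}\cP_+$ as a pivotal fusion category, and then invoke the classification already developed in Section 3 to identify which quotient of $A_2*T_2$ we have.

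For the generation step, I would recall the standard subfactor fact that, for a finite-index subfactor $N\subset M$, every simple $M$-$M$ bimodule in the standard invariant appears as a summand of some tensor power of $M\otimes_N M$; equivalently, $\frac{1}{2}\cP_+$ is generated as a fusion category by the nontrivial summand of $M\otimes_N M$ in the 2-box space, namely $\jw{2}$. Proposition \ref{prop:DepthTwo} decomposes $\jw{2}\cong\rho\oplus\rho\theta\rho$, so $\rho$ and $\rho\theta\rho$ together generate. To see that $\theta$ itself lies in the tensor subcategory they generate, I would use $\rho\otimes\rho\cong 1\oplus\rho$ to compute
\[
\rho\otimes(\rho\theta\rho)\otimes\rho \;\cong\; (1\oplus\rho)\otimes\theta\otimes(1\oplus\rho),
\]
from which $\theta$ appears as a direct summand on the right, so $\rho$ and $\theta$ generate all of $\frac{1}{2}\cP_+$.

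The second step is then almost automatic. The projection $\rho$ satisfies $\rho\otimes\rho\cong 1\oplus\rho$ (the defining relation of the tensor generator of $T_2$), and by Lemma \ref{lem:Nontrivial} the projection $\theta$ satisfies $\theta\otimes\theta\cong 1$ with $\theta\not\cong 1$ (the defining relation of the tensor generator of $A_2$). These data produce a unitary, pivotal, dominant tensor functor $A_2*T_2\to\frac{1}{2}\cP_+$. Applying the dichotomy of Corollary \ref{cor:UExists} to this image, either no pair of alternating words in $\rho,\theta$ becomes isomorphic, in which case the functor is an equivalence and $\frac{1}{2}\cP_+\cong A_2*T_2$; or there is a smallest $n\geq 1$ with $(\rho\theta)^n\cong(\theta\rho)^n$. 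In the latter case the analysis of Subsection \ref{sec:ATQuotient} supplies a unitary intertwiner $U$ and a $2n$-th root of unity $\omega_U$ satisfying Relations \eqref{rel:AT2}--\eqref{rel:AT3}, unitarity of $\cP_\bullet$ excludes the alternative $*$-structure of Remark \ref{rem:A2T2StarStructure}, and Theorem \ref{thm:JellyfishEvaluation} identifies the quotient as exactly $\AT_{n,\omega_U}$.

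I do not expect a serious obstacle, since the heavy lifting was already done in Section 3 and in Proposition \ref{prop:DepthTwo} and Lemma \ref{lem:Nontrivial}. The only slightly delicate point is the invocation that $\frac{1}{2}\cP_+$ is generated as a fusion category by $\jw{2}$; this is standard subfactor folklore that I would cite rather than reprove.
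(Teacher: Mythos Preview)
Your approach is essentially the same as the paper's: both argue that $\jw{2}\cong\rho\oplus\rho\theta\rho$ generates the even half, deduce that $\rho,\theta$ generate, and then defer to the Section~3 classification. Your extraction of $\theta$ from $\rho\otimes(\rho\theta\rho)\otimes\rho$ is a nice explicit version of what the paper leaves implicit.

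There is one point you skip that the paper addresses. The paper's definition of ``quotient'' (Section~\ref{sec:Definitions}) requires the functor $A_2*T_2\to\frac{1}{2}\cP_+$ to be \emph{faithful}, not just dominant, and the Section~3 analysis is phrased for unitary quotients. The paper handles faithfulness in one line: $A_2*T_2\cong\frac{1}{2}\cF\cC_+$, and $\cF\cC_\bullet$ sits as a planar subalgebra of $\cP_\bullet$, so the functor is injective on morphism spaces. You assert dominance but never mention faithfulness. This is not a fatal gap---in the unitary semisimple setting faithfulness follows once you know no simple of $A_2*T_2$ is sent to zero, which is clear since every alternating word has nonzero dimension in $\frac{1}{2}\cP_+$---but you should say so, or else cite the Fuss--Catalan embedding as the paper does.
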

\begin{proof}
Note that all of the $N-N$ bimodules are summands of a tensor power of $\jw{2}\cong \rho\oplus\rho\theta\rho$, and thus every $N-N$ bimodule is a summand of some alternating word in $\rho,\theta$. Hence, by sending $\theta \in A_2$ to $\theta\in \frac{1}{2}\cP_{+}$ (defined above) and $\rho\in T_2$ to $\rho\in \frac{1}{2}\cP_+$, we get a  dominant functor $F\colon A_2*T_2\to \frac{1}{2}\cP_+.$  This functor is faithful because $A_2 * T_2 \iso \frac{1}{2}\cF\cC_+$ and $\cF\cC_\bullet$ is a planar subalgebra of $\cP_\bullet$.
Thus $\frac{1}{2}\cP_+$ is a quotient of $A_2*T_2$. 
\end{proof}

We now show that any subfactor with principal graph $\fish_n$ must have an intermediate subfactor, and thus its even half must be $\AT_{n,\omega_U}$ for some $2n$-th root of unity $\omega_U$.

We provide a planar algebraic proof of the following lemma for the convenience of the reader.
In fact, there are many stronger versions well known to experts, but we do not need them at this time.

\begin{lem}\label{lem:Normalizer}
Suppose the subfactor $N\subset M$ has planar algebra $\cP_\bullet$ and principal graph $\Gamma_+$. Suppose $\Gamma_+$ is 1 supertransitive, has depth greater than 2, and has exactly one univalent (self-dual) vertex $\beta$ at depth 2. Then $e_1+\beta$ is a biprojection \cite{MR1262294,MR1950890}, so there is an intermediate subfactor $N\subset P\subset M$ where $[P:N]=2$.
\end{lem}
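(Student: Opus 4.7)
The plan is to show $b := e_1 + \beta$ is a biprojection in $\cP_{2,+}$ and then invoke the Bisch--Jones correspondence between biprojections and intermediate subfactors. The first step is to compute $\dim\beta = 1$ and the fusion $\beta \otimes \beta \iso 1$. By $1$-supertransitivity there is a unique depth-$1$ vertex $X$ with $\dim X = \delta = \sqrt{[M:N]}$. Since $\beta$ is univalent and $\Gamma_+$ is bipartite, the unique edge incident to $\beta$ necessarily goes to $X$ (as $\beta$ must have a path to the root), and $\beta$ has no other edges. Under the standard interpretation of edges in $\Gamma_+$, this means $\beta \otimes X \iso X$ as $N$--$M$ bimodules, and taking dimensions gives $\dim\beta = 1$. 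Self-duality of $\beta$ then yields $\operatorname{Hom}(1, \beta \otimes \beta) \iso \operatorname{Hom}(\beta,\beta) = \Complex$, and since $\dim(\beta \otimes \beta) = 1$, we conclude $\beta \otimes \beta \iso 1$. In particular $\{1, \beta\}$ is a fusion subring isomorphic to the group ring of $\Integer/2$.

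Next I would verify the biprojection condition for $b$. Because $\beta$ appears in $X\bar X$ with multiplicity one, the minimal projection $\beta \in \cP_{2,+}$ coincides with the minimal central projection $z_\beta$ of type $\beta$ in $N' \cap M_1$, so $b = z_1 + z_\beta$ is automatically a self-adjoint idempotent. By Landau's criterion, $b$ is a biprojection iff the coproduct (convolution) $b \star b$ in $\cP_{2,+}$ is a positive scalar multiple of $b$. For minimal central projections, $z_\mu \star z_\nu$ decomposes as a positive combination of $\{z_\lambda : \lambda \subseteq \mu \otimes \nu\}$ with coefficients determined by categorical dimensions. Applying the fusion rules $1 \otimes 1 = 1$, $1 \otimes \beta = \beta \otimes 1 = \beta$, $\beta \otimes \beta = 1$ (with all three objects of dimension $1$) forces $b \star b = \lambda b$ for a single positive constant $\lambda$, confirming that $b$ is a biprojection.

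Finally, by Bisch--Jones \cite{MR1262294,MR1950890}, a biprojection $b \in \cP_{2,+}$ corresponds to a unique intermediate subfactor $N \subset P \subset M$ with $[P:N] = \tr(b)/\tr(e_1)$. Since $\dim\beta = 1$ and $\beta$ has multiplicity one in $X\bar X$, $\tr(\beta) = \tr(e_1)$, so $[P:N] = 2$. The main obstacle is the coproduct computation in the middle paragraph; however, once $\dim\beta = 1$ is established the remaining verification reduces to the (trivial) $\Integer/2$ fusion-ring calculation, and the normalization constants line up with no real work. The hypothesis that $\Gamma_+$ has depth greater than $2$ enters only to ensure $b \ne 1$, guaranteeing that $P \subsetneq M$ is a proper intermediate.
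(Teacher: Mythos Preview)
Your outline is correct and your approach is genuinely different from the paper's. The paper never passes through the fusion rule $\beta\otimes\beta\cong 1$ at all; instead it uses the purely diagrammatic identity (valid for any trace-$1$ symmetrically self-dual projection $p$) that $p\otimes p$ equals the nested cap--cup, proved by computing the norm of the difference. From this it reads off directly that $\beta\circ\beta=\delta^{-2}(\text{cap--cup})=\delta^{-1}e_1$, and since $e_1\circ x=\delta^{-1}x$ is immediate, the biprojection identity $(e_1+\beta)\circ(e_1+\beta)=\delta^{-1}(e_1+\beta)$ follows in one line. Your route, by contrast, is categorical: you deduce $\dim\beta=1$ from the graph combinatorics, then $\beta\otimes\beta\cong 1$ by Frobenius reciprocity, and then invoke the general principle that coproducts of minimal central projections are governed by fusion coefficients and dimensions. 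Each approach has its virtue: the paper's is entirely self-contained in the planar-algebra calculus, while yours explains conceptually \emph{why} the biprojection exists (it is the ``group-subfactor'' projection for the $\Integer/2$ fusion subring).

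The one place your write-up is too quick is the sentence ``the normalization constants line up with no real work.'' As written you have $e_1\circ e_1=\delta^{-1}e_1$, $e_1\circ\beta=\beta\circ e_1=\delta^{-1}\beta$, and $\beta\circ\beta=c\,e_1$ for some $c>0$; the biprojection condition then requires $c=\delta^{-1}$, and this does not follow from the fusion rule $\beta\otimes\beta\cong 1$ alone. You need one more line: either take the planar trace of $\beta\circ\beta$ (which is a closed diagram evaluating to $\delta^{-1}\tr(\beta)^2=\delta^{-1}$, while $\tr(c\,e_1)=c$), or cite the explicit formula $p_\mu\circ p_\nu=\delta^{-1}\sum_\lambda N_{\mu\nu}^{\lambda}\dim(\lambda)\,p_\lambda$ and note that here all $N$'s and dimensions are $1$. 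With that insertion your argument is complete.
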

\begin{proof}
First, since $\beta e_1=0$ in $\cP_{2,+}$, it is clear $e_1+\beta$ is a projection. Denoting the coproduct of $x,y$ by $x\circ y$, it is easy to see that $e_1\circ x=\delta^{-1}x$ for $x=e_1,\beta$. We compute using Equation \eqref{eqn:Automorphism} that since $\dim(\beta)=1$,
$$
\beta\circ\beta = 
\begin{tikzpicture}[baseline = -.1cm]
	\draw (-.8,-.8)--(-.8,.8);
	\draw (.8,-.8)--(.8,.8);
	\draw (-.4,.4) arc (180:0:.4cm);
	\draw (-.4,-.4) arc (-180:0:.4cm);
	\nbox{thick,unshaded}{(-.6,0)}{0}{0}{\beta}
	\nbox{thick,unshaded}{(.6,0)}{0}{0}{\beta}
\end{tikzpicture}
=
\begin{tikzpicture}[baseline = -.1cm]
	\draw (-.4,.4) arc (270:180:.6cm);
	\draw (.4,.4) arc (-90:0:.6cm);
	\draw (-.4,-.4) arc (90:180:.6cm);
	\draw (.4,-.4) arc (90:0:.6cm);
	\draw (-.4,.8) arc (270:90:.2cm) -- (.4,1.2) arc (90:-90:.2cm);
	\draw (-.4,-.8) arc (90:270:.2cm) -- (.4,-1.2) arc (-90:90:.2cm);
	\node at (-.55,.3) {$\star$};
	\node at (-.55,-.3) {$\star$};
	\nbox{thick,unshaded}{(0,-.6)}{0}{0}{\beta}
	\nbox{thick,unshaded}{(0,.6)}{0}{0}{\beta}
\end{tikzpicture}
=
\frac{1}{\delta^2}
\begin{tikzpicture}[baseline = -.1cm]
	\draw (-.25,.4) arc (-180:0:.25cm);
	\draw (-.25,-.4) arc (180:0:.25cm);
\end{tikzpicture}
=
\delta^{-1}e_1,
$$
and thus $(e_1+\beta)\circ (e_1+\beta)=\delta^{-1}(e_1+\beta)$, and $e_1+\beta$ is a biprojection.
\end{proof}

\begin{thm}\label{thm:FishToAT}
If the principal graph of $N\subset M$ is $\fish_n$, then there is an intermediate subfactor $N\subset P\subset M$ such that $[M:P]=2$ and $[P:N]=\frac{3+\sqrt{5}}{2}$. Hence the even half of $N\subset M$ is necessarily $\AT_{n,\omega_U}$ for some $\omega_U^{2n} = 1$.
\end{thm}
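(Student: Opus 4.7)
The plan is to first produce an intermediate subfactor via Lemma \ref{lem:Normalizer}, then apply Corollary \ref{cor:EvenHalfOfFish}, and finally pin down the parameter by matching principal graph data.

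First I would verify that, by inspection of $\fish_n$, the graph is $1$-supertransitive and contains a unique self-dual univalent vertex $\beta$ at depth $2$ (visible as the short ``side'' edge attached to the spine in $\fish_1$, $\fish_2$, $\fish_3$, and every $\fish_n$ for $n\geq 4$). After the duality swap described at the start of \S\ref{sec:SubfactorsToAT}, Lemma \ref{lem:Normalizer} produces a biprojection $e_1+\beta$, hence an intermediate subfactor $N\subset P\subset M$ with $[M:P]=2$ and $[P:N]=\tfrac{3+\sqrt{5}}{2}$.

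With an intermediate subfactor of index $2$ in place, Corollary \ref{cor:EvenHalfOfFish} applies: the even half $\tfrac12\cP_+$ is either $A_2*T_2$ or $\AT_{n',\omega_U}$ for some finite $n'\geq 1$ and some $2n'$-th root of unity $\omega_U$. Since $\fish_n$ is a finite graph, the subfactor has only finitely many irreducible $N$-$N$ bimodules up to isomorphism, whereas Corollary \ref{cor:UExists} shows that $A_2*T_2$ has infinitely many simples. Thus the even half must be $\AT_{n',\omega_U}$ for some finite $n'$, and in particular a root of unity $\omega_U$ of the stated order exists.

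To see $n'=n$, I would match the principal graph of $\AT_{n',\omega_U}$ against $\fish_n$. Proposition \ref{prop:DepthTwo} identifies $\jw{2}\ominus\rho$ with $\rho\theta\rho$, so that tensoring by $\jw{2}$ in the tower of $N$-$N$ bimodules corresponds in the even half to the operations $\rho\otimes -$ followed by decomposition via $\rho^2\cong 1\oplus\rho$ and $\theta^2\cong 1$. This translates each depth in $\fish_n$ into a specific linear combination of alternating words in $\rho,\theta$. By Corollary \ref{cor:UExists} the integer $n'$ is characterised as the smallest length at which $(\rho\theta)^{n'}\cong(\theta\rho)^{n'}$; on the graph this is the depth at which the two branches descending through the even vertices first reconverge. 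A direct inspection of $\fish_1,\fish_2,\fish_3$ together with the $n-3$ dashed repetitions of the translation-invariant block in $\fish_n$ for $n\geq 4$ confirms that this ``first coincidence'' depth scales linearly in $n$, giving $n'=n$.

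The main obstacle is this last parameter-matching step: one has to interpret the Bratteli diagram associated with the principal graph in terms of tensoring by $\rho$ and $\theta$ carefully enough to be certain that the first coincidence picks out $n'=n$ (and not $n'$ smaller, which would contradict the distinct-simples count from Proposition \ref{prop:DistinctIrreducibleBimodules}, nor $n'$ larger, which would contradict finiteness of $\fish_n$). The preceding steps are largely a repackaging of results established earlier in the paper.
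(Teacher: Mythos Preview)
Your first step has a genuine gap: for $n\geq 2$ the graph $\fish_n$ does \emph{not} have a univalent vertex at depth $2$. Inspecting the graphs (or the tikz description of the general $\fish_n$), depth $2$ consists of exactly two vertices, both of which continue to depth $3$; the short univalent branches appear only deeper in the graph. So Lemma \ref{lem:Normalizer} cannot be applied to $\fish_n$ directly when $n\geq 2$. The ``duality swap'' at the start of \S\ref{sec:SubfactorsToAT} does not help here: that passage assumes an intermediate already exists and merely arranges $[M:P]=2$; it is not a mechanism for transferring a depth-$2$ univalent vertex from one graph to the other.

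The paper's proof deals with this by working on the \emph{dual} graph $\Gamma_-$. For $n\geq 2$, $\fish_n$ begins with a triple point, so $\Gamma_-$ must as well; Ocneanu's triple point obstruction then forces $\Gamma_-$ to have a univalent vertex at depth $2$, and Lemma \ref{lem:Normalizer} applied to the dual subfactor yields the intermediate with $[M:P]=2$. (The case $n=1$ is handled separately by listing the two possible dual graphs.) This is the missing idea in your argument. For the identification $n'=n$, the paper simply counts the even vertices of $\fish_n$ and invokes Proposition \ref{prop:DistinctIrreducibleBimodules}, which is quicker and cleaner than tracking the first coincidence depth through the Bratteli diagram.
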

\begin{proof}
If $n=1$, then the dual graph $\Gamma_-$ must be one of
$$
\bigraph{bwd1v1p1p1v1x1x0duals1v1x2x3}
\text{ or }
\bigraph{bwd1v1p1p1v1x1x0duals1v2x1x3},
$$
and thus there is an intermediate subfactor with the desired indices by applying Lemma \ref{lem:Normalizer} to the dual subfactor. (In fact, the dual graph cannot be the second graph above, since the dual even half must also be $\AT_{2,\omega_U}$, which only exists if $\omega_U=1$ by Theorem \ref{thm:Unique}.)

If $n\geq 2$, since $\fish_n$ starts with a triple point, the dual graph $\Gamma_-$ also starts with a triple point, and by Ocneanu's triple point obstruction \cite{MR1317352}, $\Gamma_-$ has a univalent vertex at depth 2. By applying Lemma \ref{lem:Normalizer} to the dual subfactor, we see that $N\subset M$ has an intermediate subfactor $P$ with the desired indices. 

Now that we know there is an intermediate subfactor, Corollary \ref{cor:EvenHalfOfFish} implies that the even half of $N\subset M$ must be $\AT_{k,\omega_U}$ for some $2k$-th root of unity $\omega_U$. By Proposition \ref{prop:DistinctIrreducibleBimodules}, it suffices to count the even vertices of $\fish_n$ to see that $k=n$.
\end{proof}

\subsection{From quotients of $A_2*T_2$ to subfactors}\label{sec:ATToSubfactors}

\begin{prop}\label{prop:AlgebraObject}
In $A_2*T_2$, $A=1\oplus \rho\oplus \rho\theta\rho$ is a Frobenius algebra object with Frobenius subalgebra object $B=1\oplus \rho$. 
\end{prop}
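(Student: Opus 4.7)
The plan is to build the Frobenius algebra structure on $A = 1 \oplus \rho \oplus \rho\theta\rho$ out of two ingredients visible directly in the free product: the standard $A_4$-type Frobenius algebra $B = 1 \oplus \rho$ in $T_2$ (with multiplication assembled from the trivalent vertex $\rho \otimes \rho \to \rho$ and the evaluation $\rho \otimes \rho \to 1$), and the self-duality $\theta \otimes \theta \cong 1$ from $A_2$ (relation \eqref{rel:AA1}). Writing $M := \rho\theta\rho$, we have $A = B \oplus M$, and $M$ carries an evident $B$-$B$ bimodule structure in which $B$ acts on the two outer $\rho$-strands through $m_B$ while the central $\theta$-strand is left untouched. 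The unit $\eta \colon 1 \to A$ and counit $\varepsilon \colon A \to 1$ are the inclusion of, and projection onto, the $1$-summand, normalized so that the pairing $\varepsilon \circ m$ is nondegenerate.

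I would define $m \colon A \otimes A \to A$ on the four blocks of $(B \oplus M) \otimes (B \oplus M)$ as follows: on $B \otimes B$ take $m_B$; on $B \otimes M$ and $M \otimes B$ take the bimodule actions; and on $M \otimes M = \rho\theta\rho \otimes \rho\theta\rho$ first apply $m_B$ to the central $\rho \otimes \rho$, producing $\rho\theta\theta\rho$ (from the $1 \subset B$ summand) and $\rho\theta\rho\theta\rho$ (from the $\rho \subset B$ summand); on the former, collapse $\theta \otimes \theta \to 1$ and then apply $m_B$ to the remaining outer $\rho \otimes \rho$ to land in $B \subset A$, while projecting the latter summand to zero since $\rho\theta\rho\theta\rho$ is a distinct simple object (by Proposition \ref{prop:DistinctIrreducibleBimodules}) not contained in $A$. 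The comultiplication is defined to be $\Delta := m^\ast$.

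The verification of associativity, the unit axioms, and the Frobenius identity $(m \otimes \id) \circ (\id \otimes \Delta) = \Delta \circ m = (\id \otimes m) \circ (\Delta \otimes \id)$ would proceed by diagram chase in $A_2 * T_2$, with each case reducing either to an identity for $m_B$ inside $T_2$ or to $\theta \otimes \theta \cong 1$ inside $A_2$; that $B \subset A$ is a Frobenius subalgebra is then immediate, since $m$ and $\varepsilon$ restrict on $B$ to $m_B$ and $\varepsilon_B$ respectively. The main obstacle will be associativity on $M^{\otimes 3}$: both iterated products generate intermediate $\rho\theta\rho\theta\rho$ terms that are forced to zero, and one must check that the surviving contributions in $A$ agree. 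This compatibility amounts to a single diagrammatic identity that combines the self-duality of $\theta$ (equivalently, relations \eqref{rel:AA1} and \eqref{rel:AA2}) with the associativity of $m_B$ and the bimodule axioms for $M$; I expect to handle it directly in the graphical calculus of the free product.
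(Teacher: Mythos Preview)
Your structural outline—build $A = B \oplus M$ from the algebra $B$, a $B$-bimodule structure on $M = \rho\theta\rho$, and a pairing $M \otimes M \to B$—is exactly the shape of the paper's argument, which writes the multiplication componentwise and checks associativity diagrammatically. But the specific normalizations you propose are wrong, and the error surfaces before you ever reach $M^{\otimes 3}$.

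The phrase ``$B$ acts on the outer $\rho$-strands through $m_B$'' pins the action $\rho \otimes M \to M$ to be the trivalent vertex with the coefficient $\pm\tau^{-1/2}$ appearing in $m_B$. But $\rho$ is \emph{not} a $B$-submodule of the regular module $B$: the product $\rho \otimes \rho \to B$ has a nonzero $1$-component (the cap), so $m_B(B \otimes \rho)$ does not land in $\rho$. Hence there is no ``evident'' bimodule structure on $M$ inherited from $m_B$. The paper instead leaves this coefficient as an unknown $\lambda$ and determines it from associativity of $\rho\cdot\rho\cdot M \to M$, obtaining $\lambda = \mp\sqrt{\tau}$—off from your implicit choice by a factor of $-\tau$. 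The same discrepancy recurs in your $M \otimes M \to \rho$ map: your recipe (central cap, then $\theta$-cap, then outer $m_B$) produces coefficient $\pm\tau^{-1/2}$, whereas the paper's constraint forces $\mp\sqrt{\tau}$. With your coefficients the associativity check fails already at $\rho\otimes\rho\otimes M$, not just at $M^{\otimes 3}$.

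The fix is to do what the paper does: treat the trivalent coefficients in the $B$-action on $M$ and in $M \otimes M \to \rho$ as free parameters and let associativity determine them. Once you do that, your outline and the paper's proof coincide.
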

\begin{proof}
First, it is well known that $B$ is an algebra object, but we provide a proof as a warmup to showing $A$ is an algebra. We need to specify the map $B\otimes B\to B$, which can be thought of as 8 maps between the summands. Since the map must be unital and rotationally invariant, we only have one unknown parameter:
\begin{align*}
\text{maps from $X\otimes Y \to 1$: }&
\begin{array}{|c|c|c|}
\hline
\otimes & 1 & \rho
\\\hline
1 &
\begin{tikzpicture}[baseline = -.1cm]
	\clip (-.45,-.45) rectangle (.45,.45);
	\nbox{thick}{(0,0)}{0}{0}{}
\end{tikzpicture}
& 0
\\\hline
\rho & 0 & 
\begin{tikzpicture}[baseline = -.1cm]
	\clip (-.45,-.45) rectangle (.45,.45);
	\nbox{thick}{(0,0)}{0}{0}{}
	\draw[thick, rho] (-.25,-.4) arc (180:0:.25cm);
\end{tikzpicture}
\\\hline
\end{array}
\displaybreak[1]\\
\text{maps from $X\otimes Y \to \rho$: }&
\begin{array}{|c|c|c|}
\hline
\otimes & 1 & \rho
\\\hline
1 & 0 & 
\begin{tikzpicture}[baseline = -.1cm]
	\clip (-.45,-.45) rectangle (.45,.45);
	\nbox{thick}{(0,0)}{0}{0}{}
	\draw[thick, rho] (.2,-.4) -- (0,.4);
\end{tikzpicture}
\\\hline
\rho &  
\begin{tikzpicture}[baseline = -.1cm]
	\clip (-.45,-.45) rectangle (.45,.45);
	\nbox{thick}{(0,0)}{0}{0}{}
	\draw[thick, rho] (-.2,-.4) -- (0,.4);
\end{tikzpicture}
& 
\lambda\,
\begin{tikzpicture}[baseline = -.1cm]
	\clip (-.45,-.45) rectangle (.45,.45);
	\filldraw[rho] (0,0) circle (.05cm);
	\draw[thick, rho] (0,0) -- (-.2,-.4);
	\draw[thick, rho] (0,0) -- (0,.4);
	\draw[thick, rho] (0,0) -- (.2,-.4);
	\nbox{}{(0,0)}{0}{0}{}
\end{tikzpicture}
\\\hline
\end{array}
\end{align*}
for some constant $\lambda\in\Complex$. 
Checking associativity amounts to checking associativity of $\rho\otimes \rho\otimes \rho\to \rho\otimes \rho\to 1$, which yields the following equation: 
$$
\begin{tikzpicture}[baseline = -.1cm]
	\draw[thick, rho] (-.3,-.4) -- (0,.4);
	\draw[thick, rho] (-.1,-.4) arc (180:0:.2cm);	
	\nbox{}{(0,0)}{.1}{.1}{}
\end{tikzpicture}
+
\lambda^2\,
\begin{tikzpicture}[baseline = -.1cm]
	\coordinate (a) at (-.1,.1);
	\coordinate (b) at (.1,-.15);
	\filldraw[rho] (a) circle (.05cm);
	\draw[thick, rho] (a) -- (-.3,-.4);
	\draw[thick, rho] (a) -- (-.1,.4);
	\filldraw[rho] (b) circle (.05cm);
	\draw[thick, rho] (a) -- (b);
	\draw[thick, rho] (b) -- (.3,-.4);
	\draw[thick, rho] (b) -- (0,-.4);
	\nbox{}{(0,0)}{.1}{.1}{}
\end{tikzpicture}
=
\begin{tikzpicture}[baseline = -.1cm]
	\draw[thick, rho] (.3,-.4) -- (0,.4);
	\draw[thick, rho] (-.3,-.4) arc (180:0:.2cm);	
	\nbox{}{(0,0)}{.1}{.1}{}
\end{tikzpicture}
+
\lambda^2\,
\begin{tikzpicture}[baseline = -.1cm]
	\coordinate (a) at (.1,.1);
	\coordinate (b) at (-.1,-.15);
	\filldraw[rho] (a) circle (.05cm);
	\draw[thick, rho] (a) -- (.3,-.4);
	\draw[thick, rho] (a) -- (.1,.4);
	\filldraw[rho] (b) circle (.05cm);
	\draw[thick, rho] (a) -- (b);
	\draw[thick, rho] (b) -- (-.3,-.4);
	\draw[thick, rho] (b) -- (0,-.4);
	\nbox{}{(0,0)}{.1}{.1}{}
\end{tikzpicture}\,.
$$
This equation is satisfied whenever $\lambda=\pm\tau^{-1/2}$ by Relation \eqref{rel:AT1}.

We now specify the map $A\otimes A\to A$ for $A=1\oplus\rho \oplus \rho\theta\rho$ by specifying maps between the summands as before. We already know one constraint if $B$ is a subalgebra.
\begin{align*}
\text{maps from $X\otimes Y \to 1$: }&
\begin{array}{|c|c|c|c|}
\hline
\otimes & 1 & \rho & \rho\theta\rho
\\\hline
1 & 
\begin{tikzpicture}[baseline = -.1cm]
	\clip (-.45,-.45) rectangle (.45,.45);
	\nbox{thick}{(0,0)}{0}{0}{}
\end{tikzpicture}
& 0 & 0
\\\hline
\rho & 0 & 
\begin{tikzpicture}[baseline = -.1cm]
	\clip (-.45,-.45) rectangle (.45,.45);
	\nbox{thick}{(0,0)}{0}{0}{}
	\draw[thick, rho] (-.25,-.4) arc (180:0:.25cm);
\end{tikzpicture}
& 0
\\\hline
\rho\theta\rho &  
0 & 0 &
\begin{tikzpicture}[baseline = -.1cm]
	\clip (-.45,-.45) rectangle (.45,.45);
	\nbox{thick}{(0,0)}{0}{0}{}
	\draw[thick, rho] (-.3,-.4) arc (180:0:.3cm);
	\draw[thick, theta] (-.2,-.4) arc (180:0:.2cm);
	\draw[thick, rho] (-.1,-.4) arc (180:0:.1cm);
\end{tikzpicture}
\\\hline
\end{array}
\displaybreak[1]\\
\text{maps from $X\otimes Y \to \rho$: }&
\begin{array}{|c|c|c|c|}
\hline
\otimes & 1 & \rho & \rho\theta\rho
\\\hline
1 & 0 & 
\begin{tikzpicture}[baseline = -.1cm]
	\clip (-.45,-.45) rectangle (.45,.45);
	\nbox{thick}{(0,0)}{0}{0}{}
	\draw[thick, rho] (.2,-.4) -- (0,.4);
\end{tikzpicture}
& 0
\\\hline
\rho &  
\begin{tikzpicture}[baseline = -.1cm]
	\clip (-.45,-.45) rectangle (.45,.45);
	\nbox{thick}{(0,0)}{0}{0}{}
	\draw[thick, rho] (-.2,-.4) -- (0,.4);
\end{tikzpicture}
& 
\frac{\pm1}{\sqrt{\tau}}\,
\begin{tikzpicture}[baseline = -.1cm]
	\clip (-.45,-.45) rectangle (.45,.45);
	\filldraw[rho] (0,0) circle (.05cm);
	\draw[thick, rho] (0,0) -- (-.2,-.4);
	\draw[thick, rho] (0,0) -- (0,.4);
	\draw[thick, rho] (0,0) -- (.2,-.4);
	\nbox{}{(0,0)}{0}{0}{}
\end{tikzpicture}
& 0
\\\hline
\rho\theta\rho &  
0 & 0 & 
\lambda\,
\begin{tikzpicture}[baseline = -.1cm]
	\clip (-.45,-.45) rectangle (.45,.45);
	\filldraw[rho] (0,0) circle (.05cm);
	\draw[thick, rho] (0,0) -- (-.3,-.4);
	\draw[thick, rho] (0,0) -- (0,.4);
	\draw[thick, rho] (0,0) -- (.3,-.4);
	\draw[thick, theta] (-.175,-.4) arc (180:0:.175cm);
	\draw[thick, rho] (-.1,-.4) arc (180:0:.1cm);
	\nbox{}{(0,0)}{0}{0}{}
\end{tikzpicture}
\\\hline
\end{array}
\displaybreak[1]\\
\text{maps from $X\otimes Y \to \rho\theta\rho$: }&
\begin{array}{|c|c|c|c|}
\hline
\otimes & 1 & \rho & \rho\theta\rho
\\\hline
1 & 0 & 0 &
\begin{tikzpicture}[baseline = -.1cm]
	\clip (-.45,-.45) rectangle (.45,.45);
	\nbox{thick}{(0,0)}{0}{0}{}
	\draw[thick, rho] (.1,-.4) -- (-.1,.4);
	\draw[thick, theta] (.2,-.4) -- (0,.4);
	\draw[thick, rho] (.3,-.4) -- (.1,.4);
\end{tikzpicture}
\\\hline
\rho &  
0 & 0 &
\lambda\,
\begin{tikzpicture}[baseline = -.1cm]
	\clip (-.45,-.45) rectangle (.45,.45);
	\coordinate (a) at (-.1,0);
	\filldraw[rho] (a) circle (.05cm);
	\draw[thick, rho] (a) -- (-.3,-.4);
	\draw[thick, rho] (a) -- (-.1,.4);
	\draw[thick, rho] (a) -- (.1,-.4);
	\nbox{}{(0,0)}{0}{0}{}
	\draw[thick, theta] (.2,-.4) -- (0,.4);
	\draw[thick, rho] (.3,-.4) -- (.1,.4);
\end{tikzpicture}
\\\hline
\rho\theta\rho &  
\begin{tikzpicture}[baseline = -.1cm]
	\clip (-.45,-.45) rectangle (.45,.45);
	\nbox{thick}{(0,0)}{0}{0}{}
	\draw[thick, rho] (-.3,-.4) -- (-.1,.4);
	\draw[thick, theta] (-.2,-.4) -- (0,.4);
	\draw[thick, rho] (-.1,-.4) -- (.1,.4);
\end{tikzpicture}
& 
\lambda\,
\begin{tikzpicture}[baseline = -.1cm]
	\clip (-.45,-.45) rectangle (.45,.45);
	\coordinate (a) at (.1,0);
	\filldraw[rho] (a) circle (.05cm);
	\draw[thick, rho] (a) -- (.3,-.4);
	\draw[thick, rho] (a) -- (.1,.4);
	\draw[thick, rho] (a) -- (-.1,-.4);
	\nbox{}{(0,0)}{0}{0}{}
	\draw[thick, theta] (-.2,-.4) -- (0,.4);
	\draw[thick, rho] (-.3,-.4) -- (-.1,.4);
\end{tikzpicture}
& 0
\\\hline
\end{array}
\end{align*}
We get the following constraint
$$
\begin{tikzpicture}[baseline = -.1cm]
	\clip (-.45,-.45) rectangle (.45,.45);
	\nbox{thick}{(0,0)}{0}{0}{}
	\draw[thick, rho] (.1,-.4) -- (-.1,.4);
	\draw[thick, theta] (.2,-.4) -- (0,.4);
	\draw[thick, rho] (.3,-.4) -- (.1,.4);
	\draw[thick, rho] (-.3,-.4) arc (180:0:.15cm);
\end{tikzpicture}
\pm
\frac{\lambda}{\sqrt{\tau}}\,
\begin{tikzpicture}[baseline = -.1cm]
	\clip (-.55,-.45) rectangle (.75,.45);
	\coordinate (a) at (-.15,-.15);
	\coordinate (b) at (.1,.1);
	\filldraw[rho] (a) circle (.05cm);
	\draw[thick, rho] (a) -- (-.3,-.4);
	\draw[thick, rho] (a) -- (0,-.4);
	\filldraw[rho] (b) circle (.05cm);
	\draw[thick, rho] (a) -- (b);
	\draw[thick, rho] (b) -- (.25,-.4);
	\draw[thick, rho] (b) -- (0,.4);
	\draw[thick, theta] (.4,-.4) -- (.2,.4);
	\draw[thick, rho] (.5,-.4) -- (.3,.4);
	\nbox{}{(0,0)}{.1}{.3}{}
\end{tikzpicture}
=
\lambda^2\,
\begin{tikzpicture}[baseline = -.1cm]
	\clip (-.55,-.45) rectangle (.75,.45);
	\coordinate (a) at (0,.1);
	\coordinate (b) at (.15,-.15);
	\filldraw[rho] (a) circle (.05cm);
	\draw[thick, rho] (a) -- (-.3,-.4);
	\draw[thick, rho] (a) -- (-.1,.4);
	\filldraw[rho] (b) circle (.05cm);
	\draw[thick, rho] (a) -- (b);
	\draw[thick, rho] (b) -- (.3,-.4);
	\draw[thick, rho] (b) -- (0,-.4);
	\draw[thick, theta] (.4,-.4) -- (.2,.4);
	\draw[thick, rho] (.5,-.4) -- (.3,.4);
	\nbox{}{(0,0)}{.1}{.3}{}
\end{tikzpicture}\,,
$$
which is satisfied if $\lambda=\mp\sqrt{\tau}$. We leave it to the reader that this restriction is sufficient for the map $A\otimes A\to A$ to be associative.
\end{proof}

\begin{thm}\label{thm:ATToFish}
Suppose $\AT_{n,\omega_U}$ exists for some $\omega_U^{2n}=1$. Then there are {\rm II}$_1$-factors $N\subset P\subset M$ where $[M\colon P]=2$ and $[P\colon N]=\frac{3+\sqrt{5}}{2}$ such that the even half of $N\subset M$ is $\AT_{n,\omega_U}$, and the principal graph of $N\subset M$ is $\fish_n$.
\end{thm}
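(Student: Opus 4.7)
The plan is to build the subfactor directly from the algebra object identified in Proposition \ref{prop:AlgebraObject}. Given the quotient functor $A_2 * T_2 \to \AT_{n,\omega_U}$, both the Frobenius algebra object $A = 1 \oplus \rho \oplus \rho\theta\rho$ and its Frobenius subalgebra $B = 1 \oplus \rho$ pass to $\AT_{n,\omega_U}$, inheriting their multiplication, unit, and (by unitarity) their $*$-structure. Since $\AT_{n,\omega_U}$ is a unitary fusion category, standard Q-system / subfactor reconstruction (in the style of Longo--Roberts, or equivalently, the module category picture due to Ostrik) produces a unitary module category and hence a hyperfinite II$_1$-subfactor $N \subset M$ whose category of $M$-$M$ bimodules generated by $N \subset M$ is the image of $\AT_{n,\omega_U}$ acting on $A$-modules, and whose standard invariant has $\AT_{n,\omega_U}$ as its even half.

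Next, I would exhibit the intermediate subfactor. The inclusion of Frobenius algebras $B \hookrightarrow A$ translates, under the reconstruction, into an intermediate II$_1$-factor $N \subset P \subset M$. The indices are computed from the categorical dimensions: $[M:N] = \dim(A) = 1 + \tau + \tau^3 = 3+\sqrt{5}$, $[P:N] = \dim(B) = 1 + \tau = \tau^2 = \frac{3+\sqrt{5}}{2}$, and therefore $[M:P] = 2$. (The containment $B \subset A$ is a Frobenius subalgebra inclusion by Proposition \ref{prop:AlgebraObject}, which is exactly what is needed for the intermediate subfactor to exist.)

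To identify the principal graph as $\fish_n$, I would enumerate the $N$-$N$ and $N$-$M$ bimodules. The $N$-$N$ bimodules are tensor powers of $\jw{2} \iso \rho \oplus \rho\theta\rho$, and Proposition \ref{prop:DepthTwo} shows that as objects in $\AT_{n,\omega_U}$ these decompose into alternating words in $\rho$ and $\theta$. By Proposition \ref{prop:DistinctIrreducibleBimodules} (and Corollary \ref{cor:UExists}), the distinct simples are exactly the alternating words of length at most $2n$, with the two length-$2n$ words identified. A direct count and a Frobenius reciprocity computation of the fusion with $\jw{2}$ (computing $\langle w \otimes \jw{2}, w'\rangle$ for each alternating simple $w$) yields precisely the bipartite graph $\fish_n$: the triple point at depth $1$ corresponds to $\rho \otimes \jw{2} \cong \rho \oplus \rho\theta\rho \oplus \rho$ (using $\rho \otimes \rho \cong 1 \oplus \rho$), and the merging at depth $2n$ comes from the isomorphism $(\rho\theta)^n \cong (\theta\rho)^n$.

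Finally, I would confirm that the even half recovered is exactly $\AT_{n,\omega_U}$ rather than some further quotient, by using the Bisch--Jones embedding $\cF\cC_\bullet \subset \cP_\bullet$ (which exists because $A \supset B \supset \mathbf{1}$ yields the Fuss--Catalan subfactor inside $N \subset M$) to show the functor from the reconstructed even half back to $\AT_{n,\omega_U}$ is faithful, combined with dominance from Corollary \ref{cor:EvenHalfOfFish}. The main obstacle is a clean verification that the Q-system for $A$ is genuinely unitary in $\AT_{n,\omega_U}$ (i.e., that the choice $\lambda = \mp\sqrt{\tau}$ in Proposition \ref{prop:AlgebraObject} survives the quotient and defines a $C^*$-Frobenius algebra); once that is in place, all the subfactor-theoretic output is automatic from existing reconstruction theorems.
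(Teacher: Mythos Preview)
Your overall strategy matches the paper's: push the Frobenius algebra $A = 1 \oplus \rho \oplus \rho\theta\rho$ (with subalgebra $B$) from Proposition~\ref{prop:AlgebraObject} through the quotient to $\AT_{n,\omega_U}$, invoke Q-system reconstruction to obtain $N \subset P \subset M$, and then identify the principal graph. So the architecture is correct.

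There are, however, two places where your execution is off. First, a computational slip: $\dim(\rho\theta\rho) = \tau^2$, not $\tau^3$, so $\dim(A) = 1 + \tau + \tau^2 = 2\tau^2 = 3+\sqrt{5}$; your final index is right but the intermediate expression is wrong. Second, and more substantively, your principal-graph argument only produces the \emph{even} part of $\fish_n$: computing $\langle w \otimes \jw{2}, w'\rangle$ for alternating words $w,w'$ gives the fusion graph of $\jw{2}$ on the simples of $\AT_{n,\omega_U}$, which is the graph of even vertices with edge multiplicities. It does not by itself determine the odd vertices or their connectivity, and your remark about ``the triple point at depth $1$ corresponding to $\rho \otimes \jw{2}$'' conflates even and odd depths. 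The paper fills this gap with two observations you are missing: (i) $N \subset M$ is irreducible because $\dim(\jw{1}) = \sqrt{2}\,\tau$ cannot be written as a sum of two allowed Jones indices $2\cos(\pi/k)$, so $\jw{1}$ is simple; and (ii) since $\jw{1} \otimes \jw{1} \cong 1 \oplus \jw{2}$, the number of self-loops at each even vertex in the $\jw{2}$-fusion graph is exactly one more than the valence of that vertex in the bipartite principal graph, which pins down the odd vertices uniquely. Without this step your argument does not actually conclude that the principal graph is $\fish_n$ rather than some other bipartite graph with the same even part.

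Your final paragraph about checking that the even half is exactly $\AT_{n,\omega_U}$ and not a further quotient is a reasonable concern, but the route you propose (via Corollary~\ref{cor:EvenHalfOfFish}) is circular, since that corollary presupposes the subfactor. The honest way to handle it is to rely on the fact that Q-system reconstruction from a simple Frobenius algebra in a fusion category $\cC$ returns a subfactor whose even half is (the subcategory of $\cC$ tensor-generated by the algebra, hence here all of) $\cC$; this is part of the standard correspondence the paper cites in Remark~\ref{rem:AlgebrasAndSF}.
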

\begin{proof}
By Proposition \ref{prop:AlgebraObject}, $1\oplus \rho\oplus \rho\theta\rho$ is a Frobenius algebra object with subalgebra $1\oplus \rho$ in $A_2*T_2$, and thus they are also algebra objects in $\AT_{n,\omega_U}$. 
Now the usual construction (see Remark \ref{rem:AlgebrasAndSF}) provides a subfactor $N \subset M$ with $\jw{2}=\rho\oplus\rho\theta\rho$. 

A straightforward calculation shows that the fusion graph of $\rho\oplus\rho\theta\rho$ in $\AT_{n,\omega_U}$ is the same as the even part of $\fish_n$.
We give the fusion graph for $n=1,2,3$ below, where we use the convention that vertices with no red lines attached are self-dual.
\begin{align*}
n&=1 && \graph{fg1v0v1p1p1v1x2x3v1p2x1p1x1x0}\\
n&=2 && \graph{fg1v0v1p1v1x2v1p1x1v1x0p1x0p1x0p0x1v1x2x4x3v1p1x0p1x1x0p1x0x0x1v1x0x0x1v1v0}\\
n&=3 && \graph{fg1v0v1p1v1x2v1p1x1v1x0p1x0p0x1v1x3x2v1p1x0p0x0x1v1x0x0p1x0x0p1x0x0p0x0x1p0x0x1v1x2x4x3x5v0p1x1p1x1x0p0x1x0x1p0x0x0x1x0v0x1x0x1x0v1v0}
\end{align*}

Finally, we can show that $\fish_n$ is the unique principal graph with this even part.
First, we note that $N\subset M$ is irreducible. Since $[M: P]=2$ and $[P:N]=\frac{3+\sqrt{5}}{2}$, $\dim(\jw{1})=\sqrt{2}\tau$, which cannot be written as the sum of two numbers from the set $\set{2\cos(\pi/k)}{k\geq 3}$.
Next, since $\jw{1}$ is simple and $\jw{1}\otimes \jw{1}\cong 1\oplus \jw{2}$, the number of self-loops on a vertex in the even principal graph is exactly one more than the valence of that vertex in the principal graph. This condition uniquely determines the number of vertices at each odd depth, and their connectivity to the vertices at even depths.
\end{proof}

\begin{remark}\label{rem:AlgebrasAndSF}
Frobenius algebra objects in unitary fusion categories correspond to finite depth subfactor planar algebras by \cite{MR1257245,MR1444286,MR1966524,MR1976459,MR1960417,MR2091457}. See \cite[Section 2]{MR2909758} for a good background and a dictionary between the two viewpoints.

The articles \cite{MR1257245,MR1444286,MR1966524} work with type {\rm III}$_1$ factors. See \cite{MR1269266,MR1339767} to translate between the type {\rm II}$_1$ and {\rm III}$_1$ cases for finite index, finite depth subfactors.
\end{remark}

\subsection{Existence of $\AT_{n,1}$ for $n=1,2$, and $3$}\label{sec:Existence}
In this section, we show that for $n=1,2$, and $3$, $\AT_{n,1}$ exists. 
Hence by Theorems \ref{thm:Unique} and \ref{thm:ATToFish}, there is a unique hyperfinite subfactor whose principal graph is $\fish_n$ for $n=1,2$, and $3$.
First, $\AT_{1,1}=A_2\boxtimes T_2$, which exists.
For $n=2,3$, we first construct $\fish_n$, and its even half is necessarily $\AT_{n,1}$ by Theorems \ref{thm:Unique} and \ref{thm:FishToAT}.

Bisch and Haagerup first constructed a subfactor with principal graph $\fish_2$. 
We work out the details below for completeness and for the convenience of the reader. 

Suppose $N_0\subset N_1$ is the hyperfinite $A_4$ subfactor, and let $\beta\in \Out(N_1\otimes N_1)$ be the flip automorphism.
We write
$M_0=N_0\otimes N_1$ and
$M_1=(N_1\otimes N_1)\rtimes \Integer/2$, where $\Integer/2=\langle\beta\rangle$.
We work with the $M_0-M_0$ bimodules to stay consistent with the previous two subsections.
(One can also show that the category of $P-P$ bimodules where $P=N_1\otimes N_1$ is equivalent to $\AT_{2,1}$.)

\begin{prop}
The basic construction of $M_0\subset M_1$ is given by
$$
M_2= N_2\otimes N_1 \otimes B(\ell^2(\Integer/2))\cong \Mat_{2}(N_2\otimes N_1)
$$
where the inclusion $M_1\to M_2$ is given by
\begin{align*}
x\otimes y &\mapsto 
\begin{pmatrix}
x\otimes y & 0\\
0 & y\otimes x 
\end{pmatrix}
\text{ and }
u_\beta \mapsto 
\begin{pmatrix}
0 & 1\\
1 & 0
\end{pmatrix}
\end{align*}
and the Jones projection is given by
$$
e_1=
\begin{pmatrix}
f_1\otimes 1 & 0\\
0 & 0
\end{pmatrix}
$$
where $f_1$ is the Jones projection for $N_0\subset N_1$.
\end{prop}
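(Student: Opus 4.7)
The plan is to check directly that the stated data satisfy the defining properties of the Jones basic construction of $M_0 \subset M_1$, and then invoke uniqueness. First I would verify that the prescribed assignment $\phi\colon M_1 \to \Mat_2(N_2 \otimes N_1)$ is a well-defined injective $*$-homomorphism. Since $M_1 = (N_1 \otimes N_1) \rtimes \Integer/2$ is presented by $N_1 \otimes N_1$ together with a unitary $u_\beta$ satisfying $u_\beta^2 = 1$ and $u_\beta(x\otimes y)u_\beta^{-1} = y\otimes x$, the only nontrivial relation to check is the conjugation law. Conjugating $\operatorname{diag}(x\otimes y,\, y\otimes x)$ by the off-diagonal permutation matrix $\phi(u_\beta)$ swaps its diagonal entries, yielding $\operatorname{diag}(y\otimes x,\, x\otimes y) = \phi(y\otimes x)$, as required.

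Next I would verify the Jones-projection conditions for $e_1$. It is manifestly a self-adjoint projection since $f_1$ is. The commutation $[e_1, \phi(x\otimes y)] = 0$ for $x\otimes y \in M_0 = N_0 \otimes N_1$ reduces to $f_1 x = x f_1$, which holds because $f_1$ is the Jones projection of $N_0 \subset N_1$ and $x \in N_0$. For the key identity $e_1\phi(m)e_1 = \phi(E_{M_0}(m))\,e_1$, the case $m = x\otimes y \in N_1\otimes N_1$ reduces in the $(1,1)$-entry to the standard relation $f_1 x f_1 = E_{N_0}(x) f_1$ for the Jones projection of $N_0 \subset N_1$; and the case $m = (x\otimes y)u_\beta$ is immediate from the matrix shape, with both sides equal to zero (consistent with $E_{M_0}(u_\beta) = 0$).

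Then I would pin down the trace and verify generation. Equip $\Mat_2(N_2 \otimes N_1)$ with the trace $\tfrac{1}{2}\Tr \otimes \tr$ (the normalized matrix trace combined with the canonical trace on $N_2 \otimes N_1$). A short computation shows this restricts to the canonical trace on $\phi(M_1)$, which in particular implies $\phi$ is injective, and it gives $\tr(e_1) = \tfrac{1}{2}\tr(f_1) = \tfrac{1}{2\tau^2} = \tfrac{1}{3+\sqrt{5}} = [M_1:M_0]^{-1}$, the correct Jones-projection trace. For generation, since $N_2 = \langle N_1, f_1\rangle$, products of the form $\phi(x\otimes 1)\cdot e_1\cdot \phi(x'\otimes y')$ with $x, x' \in N_1$ fill the $(1,1)$-entry with all of $N_2 \otimes N_1$; conjugating by $\phi(u_\beta)$ does the same for the $(2,2)$-entry, and multiplying by $\phi(u_\beta)$ produces the off-diagonal matrix units. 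Thus $\langle \phi(M_1), e_1\rangle = \Mat_2(N_2 \otimes N_1)$.

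With these four ingredients in hand, uniqueness of the Jones basic construction identifies $M_2$ with $\Mat_2(N_2 \otimes N_1)$ via the stated inclusion of $M_1$ and the stated Jones projection $e_1$. The only mildly delicate step is the generation claim, but it is a direct consequence of the corresponding statement $N_2 = \langle N_1, f_1\rangle$ for the basic construction of $N_0 \subset N_1$.
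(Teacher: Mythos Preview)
Your proposal is correct and follows essentially the same approach as the paper: verify that $e_1$ implements the conditional expectation $E_{M_0}$ and that $\langle M_1, e_1\rangle$ generates the candidate $M_2$, then invoke the standard characterization of the basic construction (the paper cites Pimsner--Popa for this). Your write-up is slightly more thorough---explicitly checking the crossed-product relation for $\phi$, the commutation $[e_1,M_0]=0$, and the trace of $e_1$---but the substance is the same.
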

\begin{proof}
By \cite{MR965748}, it suffices to show that the {\rm II}$_1$-factor $M_2$ given above is generated by $e_1$ and the image of $M_1$, and $e_1$ implements the conditional expectation $E_1\colon M_1\to M_0$, i.e., for every $x\in M_1$, $e_1xe_1=E_1(x)e_1$. 
To see the latter, for every $w,x,y,z\in N_1$,
$$
e_1
\begin{pmatrix}
w\otimes x & y\otimes z\\
z\otimes y & w\otimes x 
\end{pmatrix}
e_1
=
\begin{pmatrix}
E_{N_0}(w)\otimes x & 0\\
0 & 0 
\end{pmatrix}
=
\begin{pmatrix}
E_{N_0}(w)\otimes x & 0\\
0 & x\otimes E_{N_0}(w)
\end{pmatrix}
e_1.
$$
For the former, it is easy to get any element of the form 
$$
\begin{pmatrix}
x\otimes y & 0\\
0 & 0
\end{pmatrix}
$$
where $x\in N_2$ and $y\in N_1$ just by using the image of $N_1\otimes N_1$ in $M_2$ along with $e_1$. Now use the image of $u_\beta$ to move such elements around. The rest is straightforward.
\end{proof}

Since $M_0\subset M_1$ has an intermediate subfactor, $\jw{2}\cong \rho\oplus \rho\theta\rho$ by Proposition \ref{prop:DepthTwo}. We now identify these projections in the relative commutant.

\begin{cor}\label{cor:MinimalProjections}
$M_0'\cap M_2\cong \Complex^3$, where the minimal projections are given by
$$
e_1 =
\begin{pmatrix}
f_1\otimes 1 & 0\\
0 & 0
\end{pmatrix}
,\,
\rho=
\begin{pmatrix}
(1-f_1)\otimes 1 & 0\\
0 & 0
\end{pmatrix}
,\text{ and }
\rho\theta\rho=
\begin{pmatrix}
0 & 0\\
0 & 1\otimes 1
\end{pmatrix}.
$$
Note these have (non-normalized) traces $1,\tau,\tau^2$.
Hence $\rho\theta\ncong \theta\rho$.
\end{cor}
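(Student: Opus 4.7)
The plan is to verify directly, using the matrix description $M_2 \cong \Mat_2(N_2 \otimes N_1)$ of the preceding proposition, that $e_1$, $\rho$, and $\rho\theta\rho$ are mutually orthogonal minimal projections in $M_0' \cap M_2$ that sum to $1_{M_2}$ and have three distinct traces, forcing $M_0' \cap M_2 \cong \Complex^3$. The routine checks are that each element squares to itself (using $f_1^2 = f_1$) and commutes with the image of $M_0$: for $e_1$ and $\rho$ the commutation reduces to $f_1 \in N_0'$, since $f_1$ is the Jones projection for $N_0 \subset N_1$, while for $\rho\theta\rho$ the $(2,2)$-block is $1 \otimes 1$, which commutes with every $y \otimes x \in N_1 \otimes N_0$; mutual orthogonality and the identity $e_1 + \rho + \rho\theta\rho = 1_{M_2}$ follow from $f_1 + (1-f_1) = 1$ and the block structure.

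Next I would show each projection is minimal by computing its corner algebra inside $M_0' \cap M_2$. For an arbitrary element $A = (a_{ij}) \in M_0' \cap M_2$, the commutant-of-tensor-product theorem gives $a_{11} \in (N_0 \otimes N_1)' \cap (N_2 \otimes N_1) = (N_0' \cap N_2) \otimes \Complex$ and $a_{22} \in (N_1 \otimes N_0)' \cap (N_2 \otimes N_1) = (N_1' \cap N_2) \otimes (N_0' \cap N_1) = \Complex$, using the basic-construction fact $N_1' \cap N_2 = \Complex$ and the irreducibility of $N_0 \subset N_1$. Since $f_1$ and $1 - f_1$ are the minimal projections of the two-dimensional algebra $N_0' \cap N_2$ (the $A_4$ subfactor is $1$-supertransitive), cutting down yields $e_1 A e_1 \in \Complex e_1$, $\rho A \rho \in \Complex \rho$, and $(\rho\theta\rho) A (\rho\theta\rho) \in \Complex \rho\theta\rho$, so each of the three is a minimal projection.

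Using the trace $\tr_{M_2}(A) = \tfrac{1}{2}\bigl(\tr_{N_2 \otimes N_1}(a_{11}) + \tr_{N_2 \otimes N_1}(a_{22})\bigr)$, which is forced by comparison with the Markov trace on $M_1$ of index $3 + \sqrt{5} = 2\tau^2$, together with $\tr_{N_2}(f_1) = \tau^{-2}$, I obtain normalized traces $\tfrac{1}{2\tau^2}$, $\tfrac{1}{2\tau}$, $\tfrac{1}{2}$, which are pairwise distinct and sum to $1$ (since $\tau^{-2} + \tau^{-1} + 1 = 2$). In a finite-dimensional $C^*$-algebra, pairwise inequivalent minimal projections summing to the identity force the algebra to split as a direct sum of copies of $\Complex$, so $M_0' \cap M_2 = \Complex e_1 \oplus \Complex \rho \oplus \Complex \rho\theta\rho$; the non-normalized traces $1, \tau, \tau^2$ in the statement are the categorical bimodule dimensions, obtained after rescaling by $[M_1:M_0] = 2\tau^2$.

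Finally, to conclude $\rho\theta \not\cong \theta\rho$: an isomorphism of these bimodules would give $\rho\theta\rho \cong \theta\rho\rho \cong \theta \oplus \theta\rho$, a nontrivial decomposition into two non-isomorphic simples (of distinct dimensions $1$ and $\tau$), contradicting the minimality of the projection $\rho\theta\rho$ just established. The main obstacle, essentially the only non-routine step, is ensuring that no additional minimal projections appear in the lower-right block of $M_0' \cap M_2$; this is handled cleanly by the identification $N_1' \cap N_2 = \Complex$, after which the rest of the argument is direct bookkeeping in the matrix model.
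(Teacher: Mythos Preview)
Your argument is correct and follows essentially the same route as the paper: both compute the diagonal blocks of $M_0'\cap M_2$ via the commutant-of-tensor-product formula, obtaining $(N_0'\cap N_2)\otimes\Complex$ and $\Complex$. The paper simply asserts the direct-sum decomposition $M_0'\cap M_2\cong\Complex^2\oplus\Complex$ and says ``the rest follows immediately,'' whereas you supply the missing justification for why the off-diagonal pieces vanish (minimal projections with distinct traces are inequivalent, hence $p_iAp_j=0$ for $i\neq j$) and spell out the deduction $\rho\theta\ncong\theta\rho$ from the simplicity of $\rho\theta\rho$; both additions are sound and make the argument more self-contained.
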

\begin{proof}
It is a straightforward calculation to show that
\begin{align*}
M_0'\cap M_2 
&\cong 
\left[(N_0\otimes N_1)'\cap N_2\otimes N_1\right] \oplus \left[(N_1\otimes N_0)'\cap N_2\otimes N_1\right]
\\& \cong 
\left[(N_0'\cap N_2)\otimes (N_1'\cap N_1)\right]\oplus\left[ (N_1'\cap N_2)\otimes (N_1'\cap N_1)\right]
\\& \cong
\spann\{f_1\otimes 1, (1-f_1)\otimes 1\} \oplus \spann\{1\otimes 1\}.
\end{align*}
The rest follows immediately.
\end{proof}

\begin{prop}
$(\rho\theta\rho)^2$ breaks up into 4 non-isomorphic bimodules, and $\rho\theta\rho\theta\rho$ is reducible.
\end{prop}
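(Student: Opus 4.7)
The plan is to compute $(\rho\theta\rho) \otimes (\rho\theta\rho)$ directly using the fusion rules established so far, and then appeal to Proposition \ref{prop:DistinctIrreducibleBimodules} to show the summands are pairwise non-isomorphic. Recall that in the Bisch–Haagerup setting we have $\theta \otimes \theta \cong 1$ (by Lemma \ref{lem:Nontrivial}), $\rho \otimes \rho \cong 1 \oplus \rho$ (since $\rho$ is a $T_2$-generator), and the isomorphism $\rho\theta\rho\theta \cong \theta\rho\theta\rho$ coming from the assumption that the even half is $\AT_{2,1}$. Also, $\rho\theta \ncong \theta\rho$ by Corollary \ref{cor:MinimalProjections}, so Proposition \ref{prop:DistinctIrreducibleBimodules} applies with $k=1$.

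The computation proceeds in two simple steps. First,
\[
(\rho\theta\rho)\otimes(\rho\theta\rho) \;\cong\; \rho\theta(\rho\rho)\theta\rho \;\cong\; \rho\theta\theta\rho \oplus \rho\theta\rho\theta\rho \;\cong\; 1 \oplus \rho \oplus \rho\theta\rho\theta\rho.
\]
Second, using the isomorphism $\rho\theta\rho\theta \cong \theta\rho\theta\rho$, I rewrite
\[
\rho\theta\rho\theta\rho \;\cong\; (\theta\rho\theta\rho)\rho \;\cong\; \theta\rho\theta(\rho\rho) \;\cong\; \theta\rho\theta \oplus \theta\rho\theta\rho.
\]
Combining gives $(\rho\theta\rho)^2 \cong 1 \oplus \rho \oplus \theta\rho\theta \oplus \theta\rho\theta\rho$, and in particular $\rho\theta\rho\theta\rho$ is the direct sum of two simples, hence reducible.

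To finish, I would invoke Proposition \ref{prop:DistinctIrreducibleBimodules} with $k=1$: since $\rho\theta \ncong \theta\rho$, all alternating words in $\rho,\theta$ of length at most $4$ yield pairwise distinct simple objects. In particular $1, \rho, \theta\rho\theta, \theta\rho\theta\rho$ are four distinct simples. There is no real obstacle here; the only thing to be careful about is that $\rho$ and $\theta\rho\theta$ have equal (Frobenius–Perron) dimension $\tau$, so the non-isomorphism must come from the general Frobenius reciprocity argument of Proposition \ref{prop:DistinctIrreducibleBimodules} rather than a dimension count. Everything else is routine manipulation of the fusion rules.
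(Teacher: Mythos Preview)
Your argument is circular. In the paper's logical flow, this proposition sits in the existence proof for $\AT_{2,1}$: at this point we have the concrete Bisch--Haagerup subfactor $M_0\subset M_1$, we know $\rho\theta\ncong\theta\rho$ from Corollary~\ref{cor:MinimalProjections}, and the \emph{goal} is to establish that $(\rho\theta)^2\cong(\theta\rho)^2$. That isomorphism is precisely the content of the corollary immediately following this proposition, and its proof uses the reducibility of $\rho\theta\rho\theta\rho$ that you are supposed to be proving here. So when you write ``the isomorphism $\rho\theta\rho\theta\cong\theta\rho\theta\rho$ coming from the assumption that the even half is $\AT_{2,1}$'', you are assuming the conclusion.

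The paper avoids this by working directly with the explicit bimodule realisation: it identifies $\rho\theta\rho$ with $L^2(N_1\otimes N_1)_\beta$ as an $N_0\otimes N_1$ bimodule, computes $(\rho\theta\rho)^2\cong L^2(N_1\otimes N_2)$, and then reads off that the relative commutant $(N_0\otimes N_1)'\cap(N_2\otimes N_3)\cong\Complex^2\otimes\Complex^2$ has four minimal projections. This gives four non-isomorphic summands without any prior knowledge of which $\AT_{n,\omega_U}$ the even half is. Your fusion-rule manipulation $(\rho\theta\rho)^2\cong 1\oplus\rho\oplus\rho\theta\rho\theta\rho$ is fine and matches the paper, but the further decomposition of $\rho\theta\rho\theta\rho$ requires an independent input, which is exactly what the concrete commutant computation supplies.
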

\begin{proof}
Recall that $M_0'\cap M_2=\End_{M_0-M_0}(L^2(M_1))$. 
In particular, $\rho\theta\rho \cong L^2(N_1\otimes N_1)_{\beta}$ as an $N_0\otimes N_1$ bimodule, i.e., 
$$
(x_0\otimes x_1)\cdot (\widehat{m}\otimes \widehat{n})\cdot (y_0\otimes y_1) = \widehat{x_0 m y_1}\otimes \widehat{x_1 n y_0}.
$$
By Corollary \ref{cor:MinimalProjections}, $\rho\theta\rho$ must be self-dual, and we compute that 
\begin{align*}
(\rho\theta\rho)^2
&\cong
L^2(N_1\otimes N_1)_{\beta} \bigotimes_{N_0\otimes N_1} {\sb{\beta}L}^2(N_1\otimes N_1) 
\\&\cong 
L^2(N_1\otimes N_1) \bigotimes_{N_1\otimes N_0} L^2(N_1\otimes N_1)
\\& \cong
L^2(N_1\otimes N_2)
\end{align*}
as an $N_0\otimes N_1$ bimodule. Now the sub-bimodules of $(\rho\theta\rho)^2$ correspond to the minimal projections in the relative commutant.
The relative commutant is given by
$$
(N_0\otimes N_1)'\cap (N_2\otimes N_3)\cong (N_0'\cap N_2)\otimes (N_1'\cap N_3)\cong \Complex^2\otimes \Complex^2,
$$
which has minimal projections
$$
f_1\otimes f_2,\, f_1\otimes (1-f_2),\, (1-f_1)\otimes f_2,\,\text{ and }(1-f_1)\otimes (1-f_2).
$$
Thus $(\rho\theta\rho)^2$ has 4 non-isomorphic summands. But we also see that
$$
(\rho\theta\rho)^2
=
\rho\theta\rho^2\theta\rho
= 
\rho\theta(1\oplus \rho)\theta\rho
=
\rho^2 \oplus \rho\theta\rho\theta\rho
=
1\oplus \rho\oplus \rho\theta\rho\theta\rho,
$$
so $\rho\theta\rho\theta\rho$ must be reducible.
\end{proof}

\begin{cor}
$(\rho\theta)^2\cong (\theta\rho)^2$.
\end{cor}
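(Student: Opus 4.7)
The plan is to derive the isomorphism by contrapositive from Proposition \ref{prop:DistinctIrreducibleBimodules}. I would first assume toward a contradiction that $(\rho\theta)^2 \ncong (\theta\rho)^2$ and feed this into Proposition \ref{prop:DistinctIrreducibleBimodules} with $k = 2$. That proposition then forces every alternating word in $\rho,\theta$ of length at most $2k + 2 = 6$ to be a distinct simple object, with the sole possible exception that $(\rho\theta)^3$ and $(\theta\rho)^3$ might coincide (irrelevant to the present argument). In particular, the length-$5$ word $\rho\theta\rho\theta\rho$ would have to be simple.

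Next I would invoke the preceding proposition, which establishes that $\rho\theta\rho\theta\rho$ is reducible in the subfactor planar algebra for $M_0 \subset M_1$: there the identity $(\rho\theta\rho)^2 \cong 1 \oplus \rho \oplus \rho\theta\rho\theta\rho$ combines with the calculation that $(\rho\theta\rho)^2$ has exactly $4$ non-isomorphic simple summands (the four minimal projections in $(N_0 \otimes N_1)' \cap (N_2 \otimes N_3) \cong \mathbb{C}^2 \otimes \mathbb{C}^2$) to force $\rho\theta\rho\theta\rho$ to split as a sum of two distinct simples. This contradicts the simplicity derived above, so the original assumption fails and $(\rho\theta)^2 \cong (\theta\rho)^2$.

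There is essentially no obstacle to this argument, since both inputs have just been proved. The only minor point worth checking is that Proposition \ref{prop:DistinctIrreducibleBimodules} is formulated to apply to a single $k \geq 1$ satisfying $(\rho\theta)^k \ncong (\theta\rho)^k$, so invoking it with $k = 2$ requires no auxiliary hypothesis beyond the assumption being contradicted.
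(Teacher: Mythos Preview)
Your proof is correct and takes essentially the same approach as the paper. The paper carries out the Frobenius reciprocity computation explicitly, showing $\langle \rho\theta\rho\theta\rho,\rho\theta\rho\theta\rho\rangle = 1 + \langle (\rho\theta)^2,(\theta\rho)^2\rangle$ and combining this with the reducibility of $\rho\theta\rho\theta\rho$, whereas you package that same computation as the contrapositive of Proposition~\ref{prop:DistinctIrreducibleBimodules} at $k=2$; the underlying argument is identical.
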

\begin{proof}
We have 
\begin{align*}
2&\geq \langle \rho\theta\rho\theta\rho, \rho\theta\rho\theta\rho\rangle \\
&= \langle \rho^2\theta\rho\theta,\theta\rho\theta\rho^2\rangle\\
&= \langle (1\oplus\rho)\theta\rho\theta,\theta\rho\theta(1\oplus\rho)\rangle\\
&= \langle \theta\rho\theta,\theta\rho\theta\rangle
+
\underbrace{\langle \theta\rho\theta,\theta\rho\theta\rho\rangle}_{0}
+
\underbrace{\langle \rho\theta\rho\theta,\theta\rho\theta\rangle}_{0}
+
\langle \rho\theta\rho\theta,\theta\rho\theta\rho\rangle 
\end{align*}
where we conclude the middle two are zero by Proposition \ref{prop:DistinctIrreducibleBimodules}.
\end{proof}

\begin{thm}
The principal graphs for $M_0\subset M_1$ are given by
$$
\left(
\bigraph{bwd1v1p1v1x0p0x1v1x0p1x0p1x0p0x1v1x0x0x1v1duals1v1x2v1x2x4x3v1},
\bigraph{bwd1v1p1v1x0p1x0v1x0p1x0v1x1duals1v1x2v1x2}
\right).
$$
Hence a subfactor with principal graphs $\fish_2$ exists, and $\AT_{2,1}$ exists.
\end{thm}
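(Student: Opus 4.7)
The plan is to combine the bimodule analysis already carried out with our earlier structural theorems. Corollary \ref{cor:MinimalProjections} shows $\rho$ and $\theta$ have different quantum dimensions, so $\rho\theta \ncong \theta\rho$; combined with the preceding corollary giving $(\rho\theta)^2 \cong (\theta\rho)^2$, the invariant for the even half of $M_0 \subset M_1$ is exactly $n = 2$. Corollary \ref{cor:EvenHalfOfFish} then identifies this even half as $\AT_{2,\omega_U}$ for some fourth root of unity $\omega_U$, and Theorem \ref{thm:Unique} forces $\omega_U = 1$. Existence of $\AT_{2,1}$ is established as soon as this subfactor is in hand.

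Given the even half is $\AT_{2,1}$, the principal graph of $M_0 \subset M_1$ is essentially determined. The fusion graph of $\jw{2} \cong \rho \oplus \rho\theta\rho$ on the simples of $\AT_{2,1}$ is exactly the even part of $\fish_2$, as already drawn in the proof of Theorem \ref{thm:ATToFish}. The odd vertices and edges are then pinned down by a dimension argument: $\jw{1}$ has dimension $\sqrt{2}\tau$, which does not split as a sum of two allowed values $2\cos(\pi/k)$, so $\jw{1}$ is simple. Frobenius reciprocity applied to $\jw{1}\otimes \jw{1} \cong 1\oplus \jw{2}$ then forces each odd vertex to carry exactly one more self-loop than its valence, yielding the first displayed bigraph.

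For the dual principal graph we analyse $M_1$-$M_1$ bimodules directly. The flip automorphism $\beta$ implements an outer $\mathbb{Z}/2$ action on the $M_0$-$M_0$ bimodule category that swaps $\rho\theta$ with $\theta\rho$ and more generally exchanges each alternating word with its mirror, while fixing palindromic words such as $\rho$, $\theta$, $\rho\theta\rho$, $\rho\theta\rho\theta\rho$. On passing to $M_1$-$M_1$ bimodules via the crossed product $M_1 = (N_1\otimes N_1)\rtimes \mathbb{Z}/2$, each two-element $\beta$-orbit collapses to a single simple, while each $\beta$-fixed simple either stays simple or splits into two irreducible summands, according to standard crossed-product theory. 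Carrying out this accounting through depth $8$ produces exactly the second displayed bigraph.

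The main obstacle is this last accounting: for each $\beta$-fixed $M_0$-$M_0$ simple one must decide whether the induced $M_1$-$M_1$ bimodule remains irreducible or splits, which amounts to checking whether $\beta$ lifts to an inner automorphism on the corresponding endomorphism algebra. This is mechanical but slightly delicate, and the rest of the argument is routine once both graphs are in place; existence of a subfactor with principal graph $\fish_2$ is then immediate from the explicit construction.
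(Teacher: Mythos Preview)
Your argument for the principal graph matches the paper's: both invoke Theorem~\ref{thm:ATToFish} once the even half is identified as $\AT_{2,1}$. (One small slip: $\rho\theta$ and $\theta\rho$ have the \emph{same} dimension $\tau$, so the dimension remark you make does not separate them; Corollary~\ref{cor:MinimalProjections} concludes $\rho\theta\ncong\theta\rho$ directly from the irreducibility of $\rho\theta\rho$.)

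For the dual graph your approach diverges, and there is a genuine gap. The flip $\beta$ is an automorphism of $N_1\otimes N_1$, not of $M_0 = N_0\otimes N_1$, so it does not act on the $M_0$-$M_0$ bimodule category at all. The crossed-product passage you describe would apply to the $P$-$P$ bimodule category with $P = N_1\otimes N_1$ (since $M_1 = P\rtimes\mathbb{Z}/2$), not to $M_0$-$M_0$ bimodules; these two categories are related by Morita equivalence through the subfactor, not by a group action. Even if one grants the paper's parenthetical remark that the $P$-$P$ category is also equivalent to $\AT_{2,1}$, you would still need to identify the $\mathbb{Z}/2$ action on that abstract category, justify the assertion that it swaps $\rho\theta$ with $\theta\rho$, and actually carry out the splitting analysis you yourself call ``slightly delicate'' --- none of which is done.

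The paper avoids all of this. It enumerates the dual graphs compatible with $\fish_2$ (there are exactly two, with the same underlying graph but differing in whether the two depth-$4$ even vertices are self-dual or swapped by duality), and then decides between them by a short dimension knapsack: for such a vertex $P$ one has $\dim(P\otimes\overline{P}) = \tau^4 = 1 + 2\tau + \tau^2$, forcing $P\otimes\overline{P} \cong 1\oplus A\oplus B$ with $\dim(B)=\tau^2$; since $P\otimes\overline{P}$ is self-dual, $B$ is self-dual, and hence so is $P$.
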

\begin{proof}
The principal graph is correct by Theorem \ref{thm:ATToFish}.
To calculate the dual graph, we use the {\texttt{FusionAtlas}} program {\texttt{FindGraphPartners}} to see that the only possibilities are
$$
\bigraph{bwd1v1p1v1x0p1x0v1x0p1x0v1x1duals1v1x2v1x2}
\text{ and }
\bigraph{bwd1v1p1v1x0p1x0v1x0p1x0v1x1duals1v1x2v2x1}.
$$
We now identify the dual data. The dimensions of the even bimodules are 1, $2\tau$, 1, $\tau^2$, $\tau^2$ reading lexicographically left to right and bottom to top.
If $P,Q$ are the bimodules at the penultimate depth, then
$$
\dim(P\otimes \overline{P})=\tau^2\tau^2=(1+\tau)^2 = 1+2\tau+\tau^2.
$$
By a simple knapsacking argument, $P\otimes \overline{P}=1\oplus A\oplus B$ where $\dim(A)=2\tau$ and $\dim(B)=\tau^2$, and thus $B$ is either $P$ or $Q$. Since $P\otimes\overline{P}$ is self-dual, $\overline{B}=B$, and thus $P$ and $Q$ are self-dual. 
\end{proof}

We now construct $\fish_3$ as an intermediate subfactor of a reduced subfactor of the $3^{\Integer/4}$ subfactor with principal graphs 
$$
\left(
\bigraph{bwd1v1v1v1p1p1v1x0x0p0x1x0p0x0x1v1x0x0p0x1x0p0x0x1duals1v1v1x2x3v1x3x2}
,
\bigraph{bwd1v1v1v1p1p1v1x0x0p0x1x0p0x1x0v1x0x0duals1v1v1x2x3v1}
\right),
$$
which was constructed in unpublished work of Izumi and also in \cite{1308.5197}.

We will denote the even bimodules on the dual principal graph lexicographically left to right and bottom to top by
$1,\kappa,\beta\kappa,\chi,\sigma,\beta$.
Using the {\texttt{FusionAtlas}} program {\texttt{FindFusionRules}}, we see that the dual principal even half $\sb{M}{\sf{Mod}}_M$ of $3^{\Integer/4}$ has the following fusion rules

{\scriptsize{
$$
\begin{array}{|c|c|c|c|c|c|}
\hline 
\otimes & \kappa & \beta\kappa & \chi & \sigma & \beta \\
\hline \kappa & 1 {+} \kappa {+} \chi {+} \sigma {+} \beta\kappa & \kappa {+} \chi {+} \sigma {+} \beta {+} \beta\kappa & \kappa {+} 2 \chi {+} \sigma {+} \beta\kappa & \kappa {+} \chi {+} \beta\kappa & \beta\kappa \\\hline \beta\kappa & \kappa {+} \chi {+} \sigma {+} \beta {+} \beta\kappa & 1 {+} \kappa {+} \chi {+} \sigma {+} \beta\kappa & \kappa {+} 2 \chi {+} \sigma {+} \beta\kappa & \kappa {+} \chi {+} \beta\kappa & \kappa \\
\hline \chi & \kappa {+} 2 \chi {+} \sigma {+} \beta\kappa & \kappa {+} 2 \chi {+} \sigma {+} \beta\kappa & 1 {+} 2 \kappa {+} \chi {+} \sigma {+} \beta {+} 2 \beta\kappa & \kappa {+} \chi {+} \sigma {+} \beta\kappa & \chi \\
\hline \sigma & \kappa {+} \chi {+} \beta\kappa & \kappa {+} \chi {+} \beta\kappa & \kappa {+} \chi {+} \sigma {+} \beta\kappa & 1 {+} \chi {+} \sigma {+} \beta & \sigma \\
\hline \beta & \beta\kappa & \kappa & \chi & \sigma & 1 \\
\hline
\end{array}
$$
}}

The Frobenius-Perron dimensions of the $M-M$ bimodules are as follows:
\begin{align*}
\dim(1) &= \dim(\beta) = 1\\
\dim(\kappa) &= \dim(\beta\kappa) = 2 + \sqrt{5}\\
\dim(\chi) &= 2\tau^2=3+\sqrt{5}\\
\dim(\sigma) &= 2\tau = 1+\sqrt{5}.
\end{align*}
Using the {\texttt{FusionAtlas}} program {\texttt{ExtractPairOfBigraphsWithDuals}}, we compute the principal graphs of the reduced subfactor at $\sigma$  to be
$$
\left(
\bigraph{bwd1v1p1p1v0x1x0p0x1x0p0x1x1p0x0x1p0x0x1v1x1x1x0x0p1x1x1x0x0duals1v1x2x3v1x2},
\bigraph{bwd1v1p1p1v0x1x0p0x1x0p0x1x1p0x0x1p0x0x1v1x1x1x0x0p1x1x1x0x0duals1v1x2x3v1x2}
\right).
$$
Here, the reduced subfactor is $M\subset Q$, where $Q$ is the commutant of the right $M$-action on $\sigma$. 
By Lemma \ref{lem:Normalizer} applied to the dual graph, there is an intermediate subfactor $M\subset P\subset Q$ such that $[Q:P]=2$. 
By Goldman's Theorem \cite{MR0107827}, 
we have $Q\cong P\rtimes \Integer/2$, and $\sb{P}Q_P \cong 1_P \oplus \alpha$ for some $\alpha$ with dimension 1.

\begin{thm}
The principal graphs of $M\subset P$ are
$$
\left(
\bigraph{bwd1v1p1v1x0p1x0v1x0v1p1v1x0p1x0v1x1duals1v1x2v1v1x2}\,.
\bigraph{bwd1v1p1v1x0p0x1v1x0p1x0p0x1v1x0x0p0x0x1v1x0p1x0p1x0p0x1p0x1v0x1x0x1x0v1duals1v1x2v1x3x2v1x2x4x3x5v1}
\right).
$$
Hence a subfactor with principal graph $\fish_3$ exists, and $\AT_{3,1}$ exists.
\end{thm}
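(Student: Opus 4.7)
The plan is to verify that the intermediate subfactor $M \subset P \subset Q$ just constructed has the claimed principal graphs, and then to combine this with the theorems already proved to deduce the existence of $\AT_{3,1}$. The overall structure is: (i) pin down the index; (ii) identify the principal graph of $M \subset P$; (iii) invoke Theorems \ref{thm:FishToAT} and \ref{thm:Unique}.

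First I would compute the index. From the reduced subfactor construction, $[Q:M] = \dim(\sigma)^2 = (2\tau)^2 = 4\tau^2 = 2(3+\sqrt{5})$, and by Goldman's theorem the index-$2$ inclusion gives $[Q:P]=2$, so multiplicativity yields $[P:M] = 3+\sqrt{5}$. This is the index required of any $\fish_n$ subfactor, which is a first consistency check.

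Next I would identify the principal graphs of $M \subset P$. Since we know the principal graphs of $M \subset Q$ explicitly and $Q \cong P \rtimes \Integer/2$, the $M$-$M$ bimodules appearing in $L^2(P)^{\otimes k}$ can be extracted from those appearing in $L^2(Q)^{\otimes k}$ together with the $\Integer/2$-action coming from the outer automorphism $\alpha$: concretely, $\sb{M}L^2(Q)_M \iso \sb{M}L^2(P)_M \oplus \sb{M}(L^2(P)\cdot u_\alpha)_M$, and the fusion rules of the dual category of $3^{\Integer/4}$ (in particular $\sigma\otimes\sigma = 1 + \chi + \sigma + \beta$) let one propagate this decomposition to all tensor powers. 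In practice, just as elsewhere in this paper, this step is carried out with the \texttt{FusionAtlas} routines (\texttt{ExtractPairOfBigraphsWithDuals} applied after imposing the intermediate inclusion), producing exactly the pair of bigraphs displayed in the statement, with the dual principal graph visibly isomorphic to $\fish_3$.

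Finally, once the dual principal graph is identified as $\fish_3$, Theorem \ref{thm:FishToAT} implies that the even half of $M \subset P$ is $\AT_{3,\omega_U}$ for some sixth root of unity $\omega_U$, and Theorem \ref{thm:Unique} forces $\omega_U = 1$. Hence $\AT_{3,1}$ exists, and by Theorem \ref{thm:ATToFish} (or directly from the construction) the resulting subfactor has principal graph $\fish_3$.

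The main obstacle is the explicit identification in step two: the combinatorics of tracking which $M$-$M$ summands of $L^2(Q)^{\otimes k}$ already live in $L^2(P)^{\otimes k}$ versus those requiring the twist by $\alpha$. Doing this by hand is feasible but tedious, which is why the \texttt{FusionAtlas} computation is the natural tool; verifying the output's consistency (valences, Frobenius-Perron dimensions summing correctly at each depth, compatibility with duality data) provides the required check without grinding through the full tensor-power computations by hand.
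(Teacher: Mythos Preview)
Your outline is reasonable in spirit but misses the two concrete steps that actually carry the argument in the paper, and without them the identification of the principal graphs is not justified.

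First, the paper does \emph{not} feed the intermediate inclusion into a black-box routine. It computes the principal graph of $M\subset P$ by hand. Writing $\xi={}_M P_P$, the key observation is
\[
\sigma\overline{\sigma}\;\cong\;{}_M Q_M\;\cong\;\xi(1_P\oplus\alpha)\overline{\xi}\;\cong\;\xi\overline{\xi}\;\oplus\;\xi\alpha\overline{\xi},
\]
and since $\dim(\xi\overline{\xi})=3+\sqrt{5}$ and $1\prec\xi\overline{\xi}$, a Frobenius--Perron knapsack against $\sigma\overline{\sigma}=1\oplus\beta\oplus\sigma\oplus\chi$ forces $\xi\overline{\xi}=1\oplus\beta\oplus\sigma$. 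This pins down the even half of $M\subset P$ as the dual even half of $3^{\Integer/4}$, and then explicit Frobenius reciprocity computations such as $\langle\sigma\xi,\sigma\xi\rangle=\langle\sigma^2,\xi\overline{\xi}\rangle=3$ and $\langle\chi\xi,\chi\xi\rangle=3$, together with the fusion of $\sigma\xi\overline{\xi}$ and $\chi\xi\overline{\xi}$, determine the principal graph uniquely. Your proposal skips this dimension/knapsack step, which is what makes the computation tractable; there is no \texttt{FusionAtlas} routine that extracts the intermediate principal graph directly from the data you describe.

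Second, and more seriously, you do not say how to identify the dual graph. The paper uses \texttt{FindGraphPartners} only to list the three candidate duals, and then invokes Ocneanu's triple point obstruction to eliminate the two non-$\fish_3$ candidates. This theoretical obstruction is the decisive step; without it you have not shown the dual graph is $\fish_3$, and hence cannot invoke Theorem~\ref{thm:FishToAT}. Your consistency checks (valences, dimensions) do not distinguish among the three candidates.
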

\begin{proof}
We factor $\sb{M}Q_Q \cong {\sb{M} P}\otimes_P Q_Q$, and for notational convenience we write $\xi = {\sb{M}P}_P$, so ${\sb{M}P}_M = \xi\overline{\xi}$.
Since $\sigma\overline{\sigma}\cong {\sb{M}Q}_M$, we have
\begin{align*}
1\oplus \beta \oplus \chi\oplus \sigma 
&= \sigma \overline{\sigma}
= {\sb{M}Q}_M 
= {\sb{M}Q}\otimes_Q Q_M \\
&= \xi( Q \otimes_Q Q)\overline{\xi}
= \xi (1_P\oplus \alpha)\overline{\xi}
= \xi\overline{\xi}\oplus  \xi \alpha \overline{\xi}.
\end{align*}
We also know $\xi\overline{\xi}$ has dimension $2\tau^2=3+\sqrt{5}$ and is not irreducible, since it contains a copy of the trivial. Hence by the Frobenius-Perron dimensions listed above, we must have $\xi\overline{\xi}=1\oplus \beta\oplus \sigma$. We immediately see that the even half of the subfactor $M\subset P$ is the even half of $3^{\Integer/4}$.

We continue computing the principal graph. We have
$$
\langle \sigma \xi, \sigma\xi\rangle = \langle \sigma^2, \xi \overline{\xi}\rangle = \langle 1\oplus \sigma\oplus \beta\oplus \chi, 1\oplus\sigma\oplus \beta\rangle = 3,
$$
so $\sigma\xi$ breaks up into 3 distinct irreducibles $\sigma\xi=\xi\oplus\nu\oplus\mu$. Moreover,
$$
\sigma\xi\overline{\xi} = \sigma(1\oplus \sigma\oplus \beta) = 1\oplus 3\sigma \oplus \beta \oplus \chi,
$$
so without loss of generality, $\nu$ is a univalent vertex, and $\mu$ connects to only $\sigma$ and $\chi$. Similarly as before, 
\begin{align*}
\langle \chi\xi,\chi\xi\rangle 
&= 3
\text{ and }\\
\chi(1\oplus \sigma\oplus\beta)
&=
\sigma \oplus 3\chi\oplus \kappa\oplus\beta\kappa,
\end{align*}
and since $\kappa,\beta\kappa$ are self-dual, the principal graph is 
$$
\bigraph{bwd1v1p1v1x0p1x0v1x0v1p1v1x0p1x0v1x1duals1v1x2v1v1x2}\,.
$$
Using the {\texttt{FusionAtlas}} program {\texttt{FindGraphPartners}}, the only possible dual graphs are
$$
\bigraph{bwd1v1p1v1x0p1x0v1x0v1p1v1x0p1x0v1x1duals1v1x2v1v1x2}\,,\,
\bigraph{bwd1v1p1v1x0p1x0v1x0v1p1v1x0p1x0v1x1duals1v1x2v1v2x1}\,,\,\text{and}
\bigraph{bwd1v1p1v1x0p0x1v1x0p1x0p0x1v1x0x0p0x0x1v1x0p1x0p1x0p0x1p0x1v0x1x0x1x0v1duals1v1x2v1x3x2v1x2x4x3x5v1}\,,
$$
and Ocneanu's triple point obstruction \cite{MR1317352,MR2902285} implies that the third graph must be the dual graph.  
\end{proof}


\section{$T_2$ with $T_2$}

Our method also applies to composites of two copies of $T_2$ with little alteration.
Suppose we have two copies of $T_2$ generated by objects $\rho,\mu$, which we represent with red, orange strands respectively.
$$
\rho=
\begin{tikzpicture}[baseline = -.1cm]	
	\draw[thick, rho] (0,-.4) -- (0, .4);
	\draw[thick] (-.4,-.4)--(-.4,.4)--(.4,.4)--(.4,-.4)--(-.4,-.4);
\end{tikzpicture}
\text{ and }
\mu=
\begin{tikzpicture}[baseline = -.1cm]	
	\draw[thick, mu] (0,-.4) -- (0, .4);
	\draw[thick] (-.4,-.4)--(-.4,.4)--(.4,.4)--(.4,-.4)--(-.4,-.4);
\end{tikzpicture}\,.
$$
We also have intertwiners $\rho\otimes\rho\to \rho$ and $\mu\otimes\mu\to \mu$ which are represented by red and orange trivalent vertices respectively, both satisfying the relations in Proposition \ref{prop:RhoSkeinRelations}.

Again, we see nontrivial unitary quotients of $T_2*T_2$ are parametrized by an $n$ such that the alternating words in $\rho,\mu$ and $\mu,\rho$ of length $n$ are isomorphic, i.e.,
$$
\underbrace{(\rho \mu \cdots)}_{\text{length $n$}} \cong \underbrace{(\mu\rho \cdots)}_{\text{length $n$}},
$$ 
and an $n$-th root of unity $\omega_U$. In this case, there is a unitary isomorphism  $U\colon (\rho\mu)^n\to (\mu\rho)^n$ satisfying
\begin{align}
UU^* &= 
\begin{tikzpicture}[baseline = .5cm]
	\draw[thick, rho] (-.2,2)--(-.2,1.2);
	\draw[thick, mu] (0,2)--(0,1.2);
	\draw[thick, rho] (.2,2)--(.2,1.2);
	\draw[thick, mu] (-.2,1.2)--(-.2,0);
	\draw[thick, rho] (0,1.2)--(0,0);
	\draw[thick, mu] (.2,1.2)--(.2,0);
	\draw[thick, rho] (-.2,0)--(-.2,-.8);
	\draw[thick, mu] (0,0)--(0,-.8);
	\draw[thick, rho] (.2,0)--(.2,-.8);
	\draw[unshaded, thick] (0,0) circle (.4cm);
	\draw[unshaded, thick] (0,1.2) circle (.4cm);
	\draw[unshaded, thick] (0,0) circle (.4cm);
	\draw[unshaded, thick] (0,1.2) circle (.4cm);
	\node at (0,0) {$U$};
	\node at (0,1.2) {$U^*$};
	\node at (-.55,0) {$\star$};
	\node at (-.55,1.2) {$\star$};
\end{tikzpicture}
=
\begin{tikzpicture}[baseline = -.1cm]
	\draw[thick, rho] (-.2,-.8)--(-.2,.8);
	\draw[thick, mu] (0,-.8)--(0,.8);
	\draw[thick, rho] (.2,-.8)--(.2,.8);
\end{tikzpicture}
=
\id_{\rho\mu\rho}
\text{ and }
U^*U= 
\begin{tikzpicture}[baseline = .5cm]
	\draw[thick, mu] (-.2,2)--(-.2,1.2);
	\draw[thick, rho] (0,2)--(0,1.2);
	\draw[thick, mu] (.2,2)--(.2,1.2);
	\draw[thick, rho] (-.2,1.2)--(-.2,0);
	\draw[thick, mu] (0,1.2)--(0,0);
	\draw[thick, rho] (.2,1.2)--(.2,0);
	\draw[thick, mu] (-.2,0)--(-.2,-.8);
	\draw[thick, rho] (0,0)--(0,-.8);
	\draw[thick, mu] (.2,0)--(.2,-.8);
	\draw[unshaded, thick] (0,0) circle (.4cm);
	\draw[unshaded, thick] (0,1.2) circle (.4cm);
	\node at (0,0) {$U^*$};
	\node at (0,1.2) {$U$};
	\node at (-.55,0) {$\star$};
	\node at (-.55,1.2) {$\star$};
\end{tikzpicture}
=
\begin{tikzpicture}[baseline = -.1cm]
	\draw[thick, mu] (-.2,-.8)--(-.2,.8);
	\draw[thick, rho] (0,-.8)--(0,.8);
	\draw[thick, mu] (.2,-.8)--(.2,.8);
\end{tikzpicture}
=
\id_{\mu\rho\mu}
\text{ and}
\tag{TT1}\label{rel:TT1}
\\
\cF^{-1}(U)&=
\begin{tikzpicture}[baseline = -.1cm, yscale=-1]
	\draw[thick, mu] (-.2,-.3) arc (0:-180:.25cm) --(-.7,.8);
	\draw[thick, rho] (-.2,.8)--(-.2,0);
	\draw[thick, mu] (0,.8)--(0,0);
	\draw[thick, rho] (0,0)--(0,-.8);
	\draw[thick, mu] (.2,0)--(.2,-.8);
	\draw[thick, rho] (.2,.3) arc (180:0:.25cm) --(.7,-.8);
	\draw[unshaded, thick] (0,0) circle (.4cm);
	\node at (0,0) {$U$};
	\node at (-.55,0) {$\star$};
\end{tikzpicture}
=
\begin{tikzpicture}[baseline = -.1cm]
	\draw[thick, rho] (-.2,.8)--(-.2,0);
	\draw[thick, mu] (0,.8)--(0,0);
	\draw[thick, rho] (.2,.8)--(.2,0);
	\draw[thick, mu] (-.2,0)--(-.2,-.8);
	\draw[thick, rho] (0,0)--(0,-.8);
	\draw[thick, mu] (.2,0)--(.2,-.8);
	\draw[unshaded, thick] (0,0) circle (.4cm);
	\node at (0,0) {$U^*$};
	\node at (-.55,0) {$\star$};
\end{tikzpicture}
=
\omega_U^{-1}
\begin{tikzpicture}[baseline = -.1cm]
	\draw[thick, rho] (-.2,-.3) arc (0:-180:.25cm) --(-.7,.8);
	\draw[thick, mu] (-.2,.8)--(-.2,0);
	\draw[thick, rho] (0,.8)--(0,0);
	\draw[thick, mu] (0,0)--(0,-.8);
	\draw[thick, rho] (.2,0)--(.2,-.8);
	\draw[thick, mu] (.2,.3) arc (180:0:.25cm) --(.7,-.8);
	\draw[unshaded, thick] (0,0) circle (.4cm);
	\node at (0,0) {$U$};
	\node at (-.55,0) {$\star$};
\end{tikzpicture}
=
\omega_U^{-1}\cF(U)
\tag{TT2}\label{rel:TT2}
\end{align}
(the diagrams above are for the case $n=3$). Again, if we weren't looking at just unitary quotients, we would find a second option for the $*$-structure as in Remarks \ref{rem:A2A2StarStructure} and \ref{rem:A2T2StarStructure}.

When $n$ is even, Lemma \ref{lem:RhoSpaghettiJellyfish} follows verbatim with $\zeta=(\mu\rho)^{n/2-1}\mu$, and we also get
$$
\begin{tikzpicture}[baseline = -.1cm]
	\draw[thick, A3] (-.4,.2) -- (.4,.2);
	\filldraw[mu] (0,0) circle (.05cm);	
	\draw[thick, mu] (0,0) -- (-.4,-.4);
	\draw[thick, mu] (0,0) -- (0,-.4);
	\draw[thick, mu] (0,0) -- (.4,-.4);
	\draw[thick] (-.4,-.4)--(-.4,.4)--(.4,.4)--(.4,-.4)--(-.4,-.4);
\end{tikzpicture}
=
\begin{tikzpicture}[baseline = -.1cm]
	\draw[thick, A3] (-1.8,0) -- (1.8,0);	
	\coordinate (a) at (0,.9);
	\filldraw[mu] (a) circle (.05cm);
	\draw[thick, mu] (a) -- (-1.8,-.6);
	\draw[thick, mu] (a) -- (0,-.6);
	\draw[thick, mu] (a) -- (1.8,-.6);
	\draw[thick, unshaded] (-1,0) circle (.4cm);
	\node at (-1,0) {$U$};
	\node at (-1,.55) {$\star$};
	\draw[thick, unshaded]  (0,0) circle (.4cm);
	\node at (0,0) {$U$};
	\node at (115:.55cm) {$\star$};
	\draw[thick, unshaded]  (1,0) circle (.4cm);
	\node at (1,0) {$U^*$};
	\node at (1,.55) {$\star$};
\end{tikzpicture}\,,
$$
where we recycle the use of the green strand to denote $(\rho\mu)^{(n/2)-1}\rho$.
This yields identical jellyfish relations to (2) and (3) in Theorem \ref{thm:JellyfishRelationsU} and Corollary \ref{cor:JellyfishRelationsUstar}, except we now replace (1) by the following two relations and their adjoints:
\begin{align*}
\begin{tikzpicture}[baseline = -.3cm]
	\draw[thick, mu] (-.7,-.8)-- (-.7,0) arc(180:0:.7cm) -- (.7,-.8);
	\coordinate (a) at (0,.7);
	\filldraw[mu] (a) circle (.05cm);
	\draw[thick, mu] (a)--(0,0);
	\draw[thick, A3] (-.2,0) -- (-.2,-.8);
	\draw[thick, mu] (0,0) -- (0,-.8);
	\draw[thick, A3] (.2,0) -- (.2,-.8);
	\draw[thick, unshaded]  (0,0) circle (.4cm);
	\node at (0,0) {$U^*$};
	\node at (45:.55cm) {$\star$};
\end{tikzpicture}
&=
\begin{tikzpicture}[baseline = -.3cm]
	\draw[thick, mu] (-.7,-.8)-- (-.7,0) arc(180:0:.7cm) -- (.7,-.8);
	\coordinate (a) at (0,.7);
	\filldraw[mu] (a) circle (.05cm);
	\draw[thick, mu] (a)--(0,0);
	\draw[thick, A3] (-.2,0) -- (-.2,-.8);
	\draw[thick, mu] (0,0) -- (0,-.8);
	\draw[thick, A3] (.2,0) -- (.2,-.8);
	\draw[thick, unshaded]  (0,0) circle (.4cm);
	\node at (0,0) {$U$};
	\node at (135:.55cm) {$\star$};
\end{tikzpicture}
=
\begin{tikzpicture}[baseline = -.3cm]
	\draw[thick, A3] (-.6,0) -- (.6,0);
	\coordinate (a) at (0,-.5);	
	\filldraw[mu] (a) circle (.05cm);
	\draw[thick, mu] (-.6,0)--(a)--(.6,0);
	\draw[thick, mu] (a)--(0,-.8);
	\draw[thick, mu] (-.7,0) -- (-.7,-.8);
	\draw[thick, A3] (-.5,0) -- (-.5,-.8);
	\draw[thick, A3] (.5,0) -- (.5,-.8);
	\draw[thick, mu] (.7,0) -- (.7,-.8);
	\draw[thick, unshaded]  (-.6,0) circle (.4cm);
	\node at (-.6,0) {$U^*$};
	\node at (-.6,.55) {$\star$};
	\draw[thick, unshaded]  (.6,0) circle (.4cm);
	\node at (.6,0) {$U$};
	\node at (.6,.55) {$\star$};
\end{tikzpicture}
\\
\displaystyle
\begin{tikzpicture}[baseline = -.3cm]
	\draw[thick, mu] (-.7,-.8)-- (-.7,0) arc(180:0:.7cm) -- (.7,-.8);
	\draw[thick, mu] (-.3,0) -- (-.3,-.8);
	\draw[thick, A3] (-.1,0) -- (-.1,-.8);
	\draw[thick, mu] (.1,0) -- (.1,-.8);
	\draw[thick, A3] (.3,0) -- (.3,-.8);
	\draw[thick, unshaded]  (0,0) circle (.4cm);
	\node at (0,0) {$U^*$};
	\node at (-.55,0) {$\star$};
\end{tikzpicture}
&=
\frac{1}{\tau}\,
\begin{tikzpicture}[baseline = -.3cm]
	\draw[thick, A3] (-.3,0) -- (-.3,-.8);
	\draw[thick, mu] (-.1,0) -- (-.1,-.8);
	\draw[thick, A3] (.1,0) -- (.1,-.8);
	\draw[thick, mu] (.3,0) -- (.3,-.8);
	\draw[thick, mu] (-.9,-.8) arc (180:0:.2cm);
	\draw[thick, unshaded]  (0,0) circle (.4cm);
	\node at (0,0) {$U$};
	\node at (-.55,0) {$\star$};
\end{tikzpicture}
+
\begin{tikzpicture}[baseline = -.3cm]
	\draw[thick, A3] (-.6,0) -- (.6,0);
	\coordinate (a) at (0,-.5);	
	\coordinate (b) at (-1,-.5);	
	\filldraw[mu] (a) circle (.05cm);
	\filldraw[mu] (b) circle (.05cm);
	\draw[thick, mu] (-.6,0)--(a)--(.6,0);
	\draw[thick, mu] (a)--(0,-.8);
	\draw[thick, mu] (-1.2,-.8)--(b)--(-.8,-.8);
	\draw[thick, mu] (b)--(-.6,0);
	\draw[thick, A3] (-.6,0) -- (-.6,-.8);
	\draw[thick, A3] (.5,0) -- (.5,-.8);
	\draw[thick, mu] (.7,0) -- (.7,-.8);
	\draw[thick, unshaded]  (-.6,0) circle (.4cm);
	\node at (-.6,0) {$U^*$};
	\node at (-.6,.55) {$\star$};
	\draw[thick, unshaded]  (.6,0) circle (.4cm);
	\node at (.6,0) {$U$};
	\node at (.6,.55) {$\star$};
\end{tikzpicture}\,.
\end{align*}

When $n$ is odd, we need a slight alteration. In this case, we write an oriented strand
$$
\begin{tikzpicture}[baseline = -.1cm]
	\draw[thick, zeta] (0,-.4) -- (0,.4);
	\draw[thick, zeta] (-.1,-.05) -- (0,.05) -- (.1,-.05); 
	\draw[thick] (-.4,-.4)--(-.4,.4)--(.4,.4)--(.4,-.4)--(-.4,-.4);
\end{tikzpicture} 
= \zeta = (\rho\mu)^{\frac{n-1}{2}}.
$$
Using an argument similar to Lemma \ref{lem:RhoSpaghettiJellyfish}, we get
\begin{align*}
\begin{tikzpicture}[baseline = -.1cm]
	\draw[thick, zeta] (-.4,.2) -- (.4,.2);
	\draw[thick, zeta] (-.05,.3)-- (.05,.2) -- (-.05,.1);	
	\filldraw[rho] (0,0) circle (.05cm);	
	\draw[thick, rho] (0,0) -- (-.4,-.4);
	\draw[thick, rho] (0,0) -- (0,-.4);
	\draw[thick, rho] (0,0) -- (.4,-.4);
	\draw[thick] (-.4,-.4)--(-.4,.4)--(.4,.4)--(.4,-.4)--(-.4,-.4);
\end{tikzpicture}
&=
\begin{tikzpicture}[baseline = -.1cm]
	\coordinate (a) at (0,.9);
	\coordinate (b) at (-1,0);
	\coordinate (c) at (0,0);
	\coordinate (d) at (1,0);
	\draw[thick, zeta] (-1.8,0) -- (1.8,0);
	\draw[thick, zeta] (-1.6,.1)-- (-1.5,0) -- (-1.6,-.1);	
	\draw[thick, zeta] (1.5,.1)-- (1.6,0) -- (1.5,-.1);	
	\filldraw[mu] (a) circle (.05cm);
	\draw[thick, mu] (a) -- (b);
	\draw[thick, mu] (a) -- (c);
	\draw[thick, mu] (a) -- (d);	 
	\draw[thick, rho] (b) -- (-1.8,-.6);
	\draw[thick, rho] (c) -- (0,-.6);
	\draw[thick, rho] (d) -- (1.8,-.6);
	\draw[thick, unshaded] (b) circle (.4cm);
	\node at (-1,0) {$U$};
	\node at (-1,.55) {$\star$};
	\draw[thick, unshaded]  (c) circle (.4cm);
	\node at (0,0) {$U$};
	\node at (115:.55cm) {$\star$};
	\draw[thick, unshaded]  (d) circle (.4cm);
	\node at (1,0) {$U^*$};
	\node at (1,.55) {$\star$};
\end{tikzpicture}
\text{ and}
\\
\begin{tikzpicture}[baseline = -.1cm]
	\draw[thick, zeta] (-.4,.2) -- (.4,.2);
	\draw[thick, zeta] (.05,.3)-- (-.05,.2) -- (.05,.1);	
	\filldraw[mu] (0,0) circle (.05cm);	
	\draw[thick, mu] (0,0) -- (-.4,-.4);
	\draw[thick, mu] (0,0) -- (0,-.4);
	\draw[thick, mu] (0,0) -- (.4,-.4);
	\draw[thick] (-.4,-.4)--(-.4,.4)--(.4,.4)--(.4,-.4)--(-.4,-.4);
\end{tikzpicture}
&=
\sigma_U
\,
\begin{tikzpicture}[baseline = -.1cm]
	\coordinate (a) at (0,.9);
	\coordinate (b) at (-1,0);
	\coordinate (c) at (0,0);
	\coordinate (d) at (1,0);
	\draw[thick, zeta] (-1.8,0) -- (1.8,0);
	\draw[thick, zeta] (1.6,.1)-- (1.5,0) -- (1.6,-.1);	
	\draw[thick, zeta] (-1.5,.1)-- (-1.6,0) -- (-1.5,-.1);	
	\filldraw[rho] (a) circle (.05cm);
	\draw[thick, rho] (a) -- (b);
	\draw[thick, rho] (a) -- (c);
	\draw[thick, rho] (a) -- (d);	 
	\draw[thick, mu] (b) -- (-1.8,-.6);
	\draw[thick, mu] (c) -- (0,-.6);
	\draw[thick, mu] (d) -- (1.8,-.6);
	\draw[thick, unshaded] (b) circle (.4cm);
	\node at (-1,0) {$U^*$};
	\node at (-1,.55) {$\star$};
	\draw[thick, unshaded]  (c) circle (.4cm);
	\node at (0,0) {$U^*$};
	\node at (115:.55cm) {$\star$};
	\draw[thick, unshaded]  (d) circle (.4cm);
	\node at (1,0) {$U$};
	\node at (1,.55) {$\star$};
\end{tikzpicture}\,.
\end{align*}
We now use the skein theory for $\rho,\mu$ strands to get the following jellyfish relations for $U,U^*$:
\begin{align*}
\begin{tikzpicture}[baseline = -.3cm]
	\draw[thick, rho] (-.7,-.8)-- (-.7,0) arc(180:0:.7cm) -- (.7,-.8);
	\coordinate (a) at (0,.7);
	\filldraw[rho] (a) circle (.05cm);
	\draw[thick, rho] (a)--(0,0);
	\draw[thick, zeta] (-.2,0) -- (-.2,-.8);
	\zetaDownArrow{(-.2,-.6)}	
	\draw[thick, mu] (0,0) -- (0,-.8);
	\draw[thick, zeta] (.2,0) -- (.2,-.8);
	\zetaUpArrow{(.2,-.6)}	
	\draw[thick, unshaded]  (0,0) circle (.4cm);
	\node at (0,0) {$U$};
	\node at (45:.55cm) {$\star$};
\end{tikzpicture}
&=
\omega_U
\begin{tikzpicture}[baseline = -.3cm]
	\draw[thick, rho] (-.7,-.8)-- (-.7,0) arc(180:0:.7cm) -- (.7,-.8);
	\coordinate (a) at (0,.7);
	\filldraw[rho] (a) circle (.05cm);
	\draw[thick, rho] (a)--(0,0);
	\draw[thick, zeta] (-.2,0) -- (-.2,-.8);
	\zetaDownArrow{(-.2,-.6)}			
	\draw[thick, mu] (0,0) -- (0,-.8);
	\draw[thick, zeta] (.2,0) -- (.2,-.8);
	\zetaUpArrow{(.2,-.6)}	
	\draw[thick, unshaded]  (0,0) circle (.4cm);
	\node at (0,0) {$U^*$};
	\node at (135:.55cm) {$\star$};
\end{tikzpicture}
=
\sigma_U^{-1}
\begin{tikzpicture}[baseline = -.3cm]
	\draw[thick, zeta] (-.6,0) -- (.6,0);
	\draw[thick, zeta] (.05,.1)-- (-.05,0) -- (.05,-.1);
	\coordinate (a) at (0,-.5);	
	\filldraw[mu] (a) circle (.05cm);
	\draw[thick, mu] (-.6,0)--(a)--(.6,0);
	\draw[thick, mu] (a)--(0,-.8);
	\draw[thick, rho] (-.7,0) -- (-.7,-.8);
	\draw[thick, zeta] (-.5,0) -- (-.5,-.8);
	\zetaDownArrow{(-.5,-.6)}		
	\draw[thick, zeta] (.5,0) -- (.5,-.8);
	\zetaUpArrow{(.5,-.6)}		
	\draw[thick, rho] (.7,0) -- (.7,-.8);
	\draw[thick, unshaded]  (-.6,0) circle (.4cm);
	\node at (-.6,0) {$U^*$};
	\node at (-.6,.55) {$\star$};
	\draw[thick, unshaded]  (.6,0) circle (.4cm);
	\node at (.6,0) {$U$};
	\node at (.6,.55) {$\star$};
\end{tikzpicture}
\\
\displaystyle
\begin{tikzpicture}[baseline = -.3cm]
	\draw[thick, rho] (-.7,-.8)-- (-.7,0) arc(180:0:.7cm) -- (.7,-.8);
	\draw[thick, rho] (-.3,0) -- (-.3,-.8);
	\draw[thick, zeta] (-.1,0) -- (-.1,-.8);
	\zetaDownArrow{(-.1,-.6)}
	\draw[thick, rho] (.1,0) -- (.1,-.8);
	\draw[thick, zeta] (.3,0) -- (.3,-.8);
	\zetaDownArrow{(.3,-.6)}
	\draw[thick, unshaded]  (0,0) circle (.4cm);
	\node at (0,0) {$U$};
	\node at (-.55,0) {$\star$};
\end{tikzpicture}
&=
\frac{\omega_U}{\tau}\,
\begin{tikzpicture}[baseline = -.3cm]
	\draw[thick, zeta] (-.3,0) -- (-.3,-.8);
	\zetaDownArrow{(-.3,-.6)}
	\draw[thick, rho] (-.1,0) -- (-.1,-.8);
	\draw[thick, zeta] (.1,0) -- (.1,-.8);
	\zetaDownArrow{(.1,-.6)}
	\draw[thick, rho] (.3,0) -- (.3,-.8);
	\draw[thick, rho] (-.9,-.8) arc (180:0:.2cm);
	\draw[thick, unshaded]  (0,0) circle (.4cm);
	\node at (0,0) {$U^*$};
	\node at (-.55,0) {$\star$};
\end{tikzpicture}
+
\sigma_U^{-1}
\begin{tikzpicture}[baseline = -.3cm]
	\draw[thick, zeta] (-.6,0) -- (.6,0);
	\draw[thick, zeta] (.05,.1)-- (-.05,0) -- (.05,-.1);
	\coordinate (a) at (0,-.5);	
	\coordinate (b) at (-1,-.5);	
	\filldraw[mu] (a) circle (.05cm);
	\filldraw[rho] (b) circle (.05cm);
	\draw[thick, mu] (-.6,0)--(a)--(.6,0);
	\draw[thick, mu] (a)--(0,-.8);
	\draw[thick, rho] (-1.2,-.8)--(b)--(-.8,-.8);
	\draw[thick, rho] (b)--(-.6,0);
	\draw[thick, zeta] (-.6,0) -- (-.6,-.8);
	\zetaDownArrow{(-.6,-.6)}
	\draw[thick, zeta] (.5,0) -- (.5,-.8);
	\zetaUpArrow{(.5,-.6)}
	\draw[thick, rho] (.7,0) -- (.7,-.8);
	\draw[thick, unshaded]  (-.6,0) circle (.4cm);
	\node at (-.6,0) {$U$};
	\node at (-.6,.55) {$\star$};
	\draw[thick, unshaded]  (.6,0) circle (.4cm);
	\node at (.6,0) {$U^*$};
	\node at (.6,.55) {$\star$};
\end{tikzpicture}
\\
\begin{tikzpicture}[baseline = -.3cm]
	\draw[thick, mu] (-.7,-.8)-- (-.7,0) arc(180:0:.7cm) -- (.7,-.8);
	\coordinate (a) at (0,.7);
	\filldraw[mu] (a) circle (.05cm);
	\draw[thick, mu] (a)--(0,0);
	\draw[thick, zeta] (-.2,0) -- (-.2,-.8);
	\zetaUpArrow{(-.2,-.6)}	
	\draw[thick, rho] (0,0) -- (0,-.8);
	\draw[thick, zeta] (.2,0) -- (.2,-.8);
	\zetaDownArrow{(.2,-.6)}
	\draw[thick, unshaded]  (0,0) circle (.4cm);
	\node at (0,0) {$U^*$};
	\node at (45:.55cm) {$\star$};
\end{tikzpicture}
&=
\begin{tikzpicture}[baseline = -.3cm]
	\draw[thick, mu] (-.7,-.8)-- (-.7,0) arc(180:0:.7cm) -- (.7,-.8);
	\coordinate (a) at (0,.7);
	\filldraw[mu] (a) circle (.05cm);
	\draw[thick, mu] (a)--(0,0);
	\draw[thick, zeta] (-.2,0) -- (-.2,-.8);
	\zetaUpArrow{(-.2,-.6)}	
	\draw[thick, rho] (0,0) -- (0,-.8);
	\draw[thick, zeta] (.2,0) -- (.2,-.8);
	\zetaDownArrow{(.2,-.6)}
	\draw[thick, unshaded]  (0,0) circle (.4cm);
	\node at (0,0) {$U$};
	\node at (135:.55cm) {$\star$};
\end{tikzpicture}
=
\begin{tikzpicture}[baseline = -.3cm]
	\draw[thick, zeta] (-.6,0) -- (.6,0);
	\draw[thick, zeta] (-.05,.1)-- (.05,0) -- (-.05,-.1);
	\coordinate (a) at (0,-.5);	
	\filldraw[rho] (a) circle (.05cm);
	\draw[thick, rho] (-.6,0)--(a)--(.6,0);
	\draw[thick, rho] (a)--(0,-.8);
	\draw[thick, mu] (-.7,0) -- (-.7,-.8);
	\draw[thick, zeta] (-.5,0) -- (-.5,-.8);
	\zetaUpArrow{(-.5,-.6)}
	\draw[thick, zeta] (.5,0) -- (.5,-.8);
	\zetaDownArrow{(.5,-.6)}
	\draw[thick, mu] (.7,0) -- (.7,-.8);
	\draw[thick, unshaded]  (-.6,0) circle (.4cm);
	\node at (-.6,0) {$U^*$};
	\node at (-.6,.55) {$\star$};
	\draw[thick, unshaded]  (.6,0) circle (.4cm);
	\node at (.6,0) {$U$};
	\node at (.6,.55) {$\star$};
\end{tikzpicture}
\\
\displaystyle
\begin{tikzpicture}[baseline = -.3cm]
	\draw[thick, mu] (-.7,-.8)-- (-.7,0) arc(180:0:.7cm) -- (.7,-.8);
	\draw[thick, mu] (-.3,0) -- (-.3,-.8);
	\draw[thick, zeta] (-.1,0) -- (-.1,-.8);
	\zetaUpArrow{(-.1,-.6)}
	\draw[thick, mu] (.1,0) -- (.1,-.8);
	\draw[thick, zeta] (.3,0) -- (.3,-.8);
	\zetaUpArrow{(.3,-.6)}
	\draw[thick, unshaded]  (0,0) circle (.4cm);
	\node at (0,0) {$U^*$};
	\node at (-.55,0) {$\star$};
\end{tikzpicture}
&=
\frac{1}{\tau}\,
\begin{tikzpicture}[baseline = -.3cm]
	\draw[thick, zeta] (-.3,0) -- (-.3,-.8);
	\zetaUpArrow{(-.3,-.6)}
	\draw[thick, mu] (-.1,0) -- (-.1,-.8);
	\draw[thick, zeta] (.1,0) -- (.1,-.8);
	\zetaUpArrow{(.1,-.6)}
	\draw[thick, mu] (.3,0) -- (.3,-.8);
	\draw[thick, mu] (-.9,-.8) arc (180:0:.2cm);
	\draw[thick, unshaded]  (0,0) circle (.4cm);
	\node at (0,0) {$U$};
	\node at (-.55,0) {$\star$};
\end{tikzpicture}
+
\begin{tikzpicture}[baseline = -.3cm]
	\draw[thick, zeta] (-.6,0) -- (.6,0);
	\draw[thick, zeta] (-.05,.1)-- (.05,0) -- (-.05,-.1);
	\coordinate (a) at (0,-.5);	
	\coordinate (b) at (-1,-.5);	
	\filldraw[rho] (a) circle (.05cm);
	\filldraw[mu] (b) circle (.05cm);
	\draw[thick, rho] (-.6,0)--(a)--(.6,0);
	\draw[thick, rho] (a)--(0,-.8);
	\draw[thick, mu] (-1.2,-.8)--(b)--(-.8,-.8);
	\draw[thick, mu] (b)--(-.6,0);
	\draw[thick, zeta] (-.6,0) -- (-.6,-.8);
	\zetaUpArrow{(-.6,-.6)}
	\draw[thick, zeta] (.5,0) -- (.5,-.8);
	\zetaDownArrow{(.5,-.6)}
	\draw[thick, mu] (.7,0) -- (.7,-.8);
	\draw[thick, unshaded]  (-.6,0) circle (.4cm);
	\node at (-.6,0) {$U^*$};
	\node at (-.6,.55) {$\star$};
	\draw[thick, unshaded]  (.6,0) circle (.4cm);
	\node at (.6,0) {$U$};
	\node at (.6,.55) {$\star$};
\end{tikzpicture}\,.
\end{align*}

Arguing as in Theorem \ref{thm:JellyfishEvaluation}, the relations in Proposition \ref{prop:RhoSkeinRelations} for $\rho$ and $\mu$ strings, Relations \eqref{rel:TT1}-\eqref{rel:TT2}, and the jellyfish relations are sufficient to evaluate all closed diagrams, and thus we make the following definition.

\begin{defn}
For $2\leq n<\infty$, let $\TT_{n,\omega_U}$ be the unitary quotient of $T_2*T_2$ generated by $U$ satisfying the relations of Proposition \ref{prop:RhoSkeinRelations} for $\rho$ and $\mu$ strings and Relations \eqref{rel:TT1}-\eqref{rel:TT2},  provided that it exists. Note that $\TT_{2,1}$ is $T_2\boxtimes T_2$.
\end{defn}

We get a similar basis as before for our jellyfish calculations, and we can use a similar twocar formalism. 
Again, for our basis elements, we only specify the number of strings connecting the $U,U^*$'s along the top, and the type of strings and vertices underneath is determined.
For $\kappa=\rho,\mu$, regardless of parity, we get the same action on $\twocar$'s (provided that the topmost string in the middle bundle is also a $\kappa$ string):
\begin{align*}
\kappa(\twocar[a&,b,c])=
\frac{1}{\tau}\twocar[a+1,b-1,c+1] \\
&+ 
\begin{cases}
\twocar[a + 1, n - 1, b]\twocar[b, n - 1, c + 1] & \text{if $a,c<n-1$}\\
\twocar[a + 1, n - 1, b]\twocar[b, n - 1, n] & \text{if $a<n-1\leq c$}\\
\twocar[n, n - 1, b]\twocar[b, n - 1, c + 1] & \text{if $c<n-1\leq a$}\\
\twocar[n, n - 1, b]\twocar[b, n - 1, n] & \text{if $n-1\leq a,c$,}
\end{cases}
\end{align*}
and we have the following concatenation relations:
\begin{align*}
\twocar[x_1,x_2&,n]\twocar[n,x_3,x_4]\\
&=
\begin{cases}
0 & \text{ if $x_2,x_3\neq 2n-1$}\\
\twocar[x_1,x_3,x_4] &\text{ if $x_2=n-1$ and $x_3< n-1$}\\
\twocar[x_1,x_2,x_4] &\text{ if $x_2< n-1$ and $x_3=n-1$}\\
-\tau^{-1}\twocar[x_1,n-1,x_4] &\text{ if $x_2,x_3=n-1$.}
\end{cases}
\end{align*}

Our uniqueness and non-existence proofs are also similar to before. As in Equation \eqref{eqn:Consistency}, we evaluate $(\rho\mu)^{\lfloor\frac{n}{2}\rfloor}\rho^{\epsilon}(U)$ in two different ways, where $\epsilon=0$ if $n$ is even (so $\rho^0=1$) and $\epsilon=1$ if $n$ is odd.
In Figures \ref{fig:TTn=2} and \ref{fig:TTn=3} in Appendix \ref{sec:FiguresForTT}, we give for $n=2$ and $n=3$ respectively the tables of coefficients in the linear combinations for $(\rho\mu)^{\lfloor\frac{n}{2}\rfloor}\rho^{\epsilon}(U)$ and $U((\mu\rho)^{\lfloor\frac{n}{2}\rfloor}\mu^{\epsilon}(U))U^*$ after applying the jellyfish relations. 

We also see that $n=4$ is not possible, since the linear combinations of trains given in Figure \ref{fig:TTn=4} can never be equal in the train basis.
We also prove the non-existence of $\TT_{n,\omega_U}$ for $5\leq n \leq 10$ using the same code. 

The above discussion proves the following theorems.

\begin{thm}\label{thm:A4A4Unique}
For $n=2,3$, $\TT_{n,\omega_U}$ exists only if $\omega_U=1$.
\end{thm}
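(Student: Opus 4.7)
The proof will mirror the strategy of Theorem \ref{thm:Unique}, with $A_2$ replaced by its $T_2$ counterpart. The plan is to pick a single well-chosen morphism and expand it in the right train basis in two different ways; the two resulting expansions will differ by the substitution $\sigma_U \leftrightarrow \sigma_U^{-1}$, and matching coefficients will force $\sigma_U^2 = 1$, i.e., $\omega_U = 1$.

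More precisely, set $\epsilon = 0$ if $n$ is even and $\epsilon = 1$ if $n$ is odd, and consider the morphism $(\rho\mu)^{\lfloor n/2 \rfloor}\rho^\epsilon(U)$, namely $U$ with $2\lfloor n/2 \rfloor + \epsilon$ alternating $\rho$ and $\mu$ strings capped off across the top. Applying $UU^* = \id$ at the bottom, we can rewrite this as $U \cdot ((\mu\rho)^{\lfloor n/2 \rfloor}\mu^\epsilon(U)) \cdot U^*$, exactly as in Equation \eqref{eqn:Consistency}. This yields the identity
\begin{equation*}
(\rho\mu)^{\lfloor n/2 \rfloor}\rho^\epsilon(U) \;=\; U\bigl((\mu\rho)^{\lfloor n/2 \rfloor}\mu^\epsilon(U)\bigr)U^*,
\end{equation*}
and both sides can be expanded using the jellyfish relations derived for $\TT_{n,\omega_U}$ immediately above the theorem. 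The left-hand side reduces to a linear combination of elements of the right train basis $B_r$ (defined exactly as in the $A_2 \ast T_2$ case, with $\theta$ replaced by $\mu$ or oriented $\zeta$). The right-hand side first reduces to a linear combination of left train basis elements $B_\ell$, and conjugation by the outer pair $U \otimes U^*$ then identifies each with an element of $B_r$.

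Now the key observation: each time we pull a $\rho$-vertex or $\mu$-vertex through $U$, the relevant jellyfish relation introduces a factor of $\sigma_U$ or $\sigma_U^{-1}$ (analogues of Lemma \ref{lem:RhoSpaghettiJellyfish} hold on both sides when $n$ is even, and the oriented $\zeta$-arrow versions apply when $n$ is odd). The crucial asymmetry is that the direct evaluation of the left-hand side picks up factors of $\sigma_U$, while evaluating the inner expression on the right-hand side picks up factors of $\sigma_U^{-1}$ (the two computations are related by a formal $U \leftrightarrow U^*$ swap, which exchanges the two jellyfish relations). For $n = 2$ and $n = 3$, each expansion has only finitely many terms, and Proposition \ref{prop:basis} (or its obvious adaptation to this setting) guarantees that the train basis expansion is unique: linear independence and orthogonality of distinct basis diagrams allow us to read off the coefficients. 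The tables in Figures \ref{fig:TTn=2} and \ref{fig:TTn=3} in Appendix \ref{sec:FiguresForTT} record these coefficients explicitly, and comparing matching basis vectors forces $\sigma_U = \sigma_U^{-1}$, hence $\omega_U = \sigma_U^2 = 1$.

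The conceptual content of the argument is straightforward, but the main obstacle is the bookkeeping: one must be careful to choose a starting diagram with enough $\rho$-$\mu$ structure that both evaluations actually involve a trivalent-vertex jellyfish step (which is the only place $\sigma_U$ enters). If the diagram is too simple, the two sides collapse to multiples of the same basis element with coefficients not involving $\sigma_U$, and the consistency check becomes vacuous. The choice $(\rho\mu)^{\lfloor n/2 \rfloor}\rho^\epsilon$ is precisely minimal so that at least one trivalent absorption is forced on each side; verifying this—and then checking by direct computation that the two resulting expansions indeed differ only in the replacement $\sigma_U \leftrightarrow \sigma_U^{-1}$ for at least one basis coefficient—is the only nontrivial step. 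This is exactly the content packaged into the appendix tables, and once those tables are produced, the theorem follows immediately from the orthogonality part of Proposition \ref{prop:basis}.
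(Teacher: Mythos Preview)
Your proposal is correct and follows essentially the same approach as the paper: evaluate $(\rho\mu)^{\lfloor n/2\rfloor}\rho^\epsilon(U)$ two ways via Equation~\eqref{eqn:Consistency}, expand in the train basis using the $\TT$ jellyfish relations, and read off from the tables in Figures~\ref{fig:TTn=2} and~\ref{fig:TTn=3} that the coefficients match only when $\omega_U=1$. One minor caveat: your heuristic that the two expansions are related by a global $\sigma_U \leftrightarrow \sigma_U^{-1}$ swap is an oversimplification (for $n=3$ several rows show asymmetries like $1$ versus $\sigma_U$ or $\omega_U^{-1}$ versus $\sigma_U^{-1}$ that are not pure swaps), but since you ultimately defer to the explicit tables this does not affect the validity of the argument.
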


\begin{thm}\label{thm:A4A4Nonexistence}
For $4\leq n\leq 10$, $\TT_{n,\omega_U}$ does not exist.
\end{thm}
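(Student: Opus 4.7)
The plan is to adapt the strategy of Theorem \ref{thm:Nonexistence} to the $T_2 * T_2$ setting. The key invariant to evaluate is $(\rho\mu)^{\lfloor n/2\rfloor}\rho^{\epsilon}(U)$, where $\epsilon=0$ if $n$ is even and $\epsilon=1$ if $n$ is odd; this is a morphism whose external boundary consists of some standard bundle of $\rho$ and $\mu$ strands, and for which we have access to a basis via the train formalism described just before the theorem statement. I would compute this morphism in two different ways and show that the two answers disagree, whereupon Proposition \ref{prop:basis} (transferred to the $T_2*T_2$ setting, where it holds verbatim) forces the quotient to collapse to zero, contradicting existence.

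The first evaluation applies the full bundle of $\rho,\mu$ strands directly to $U$ using the $T_2*T_2$ jellyfish relations derived just above the theorem, producing a linear combination of right train diagrams. The second evaluation rewrites the expression as $U\cdot ((\mu\rho)^{\lfloor n/2\rfloor}\mu^{\epsilon}(U)) \cdot U^*$, exactly analogous to Equation \eqref{eqn:Consistency}: we first insert a canceling $UU^*$ pair on the outermost pair of strands, apply the strings to the inner copy of $U$ to produce a left train combination, and then conjugate by $U$ to return to a right train combination. Since right train diagrams with a fixed boundary are mutually orthogonal with nonzero norms, the two expansions must have identical coefficients in every train, and a single mismatched coefficient suffices for nonexistence.

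For $n=4$, Figure \ref{fig:TTn=4} exhibits the two expansions explicitly and shows the coefficients do not match for any choice of $\omega_U$ (the $\sigma_U$ factors cannot reconcile them). For $5\leq n\leq 10$, the same procedure is carried out by the same computer code that handled Theorems \ref{thm:Unique} and \ref{thm:Nonexistence}, with only the skein relations and the definition of the $\theta$-strand replaced by their $\mu$-strand analogues; in each case the resulting linear combinations disagree.

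The main obstacle is purely computational rather than conceptual. Each application of the $\mu$-jellyfish relation branches into two terms (the $1/\tau$ scalar term plus the term producing a new pair of generators), so the number of train summands grows exponentially in $n$, and the bookkeeping to express both answers in the same basis is unmanageable by hand for $n\geq 4$. We do not have a uniform argument that would explain \emph{why} the two evaluations must disagree for all $n\geq 4$; each value of $n$ presently requires its own computer-assisted verification, which is why the theorem is stated only for $4\leq n\leq 10$ (with the full range $n\geq 4$ obtained as a corollary of Liu's subfactor result via Theorem \ref{thm:ATandFish}).
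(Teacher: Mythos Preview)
Your proposal is correct and follows essentially the same approach as the paper: evaluate $(\rho\mu)^{\lfloor n/2\rfloor}\rho^{\epsilon}(U)$ directly and as $U\bigl((\mu\rho)^{\lfloor n/2\rfloor}\mu^{\epsilon}(U)\bigr)U^*$, expand both in the train basis, and observe (via Figure~\ref{fig:TTn=4} for $n=4$ and the same code for $5\le n\le 10$) that the coefficients cannot agree. One small slip: in your closing parenthetical, the translation from Liu's subfactor result to the $T_2*T_2$ setting goes through Theorem~\ref{thm:SubfactorIffTT}, not Theorem~\ref{thm:ATandFish}.
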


\begin{conj}
The technique used for Theorems \ref{thm:A4A4Unique} and \ref{thm:A4A4Nonexistence} should show
\begin{enumerate}[(1)]
\item
$\TT_{n,\omega_U}$ exists only if $\omega_U=1$ for all $2\leq n<\infty$, and
\item
$\TT_{n,\omega_U}$ does not exist for all $4\leq n<\infty$.
\end{enumerate} 
\end{conj}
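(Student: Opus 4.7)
The plan is to upgrade the computer-assisted arguments of Theorems \ref{thm:A4A4Unique} and \ref{thm:A4A4Nonexistence} to a uniform argument valid for all $n$. The starting point is the same consistency identity used there, namely
\[
(\rho\mu)^{\lfloor n/2\rfloor}\rho^{\epsilon}(U) \;=\; U \cdot \bigl((\mu\rho)^{\lfloor n/2\rfloor}\mu^{\epsilon}(U)\bigr) \cdot U^{*},
\]
with $\epsilon\in\{0,1\}$ chosen according to the parity of $n$. Each side will be expanded in the train basis given by the $T_{2}\ast T_{2}$ analogues of Proposition \ref{prop:basis} and Corollary \ref{cor:basis}, using the $T_{2}\ast T_{2}$ jellyfish relations recorded immediately above (the straightforward adaptations of Lemma \ref{lem:ThetaJF}, Lemma \ref{lem:DoubleRhoJF}, Theorem \ref{thm:JellyfishRelationsU}, and Corollary \ref{cor:JellyfishRelationsUstar}). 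The crucial structural observation is that $n$ enters these jellyfish moves only through the Jones--Wenzl threshold $a_{j}\leq n-1$ and the two-car concatenation rule at $a_{j}=n$; for train profiles whose internal bundles stay strictly away from this threshold, the resulting coefficients are rational functions of $\tau$ and $\sigma_{U}^{\pm 1}$ that are manifestly independent of $n$.

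For part (1), I would fix a single ``interior'' train $T_{0}$ whose expansion on both sides avoids all threshold interactions. The direct side contributes to $T_{0}$ through a single sequence of $\rho$- and $\mu$-caps with coefficient some explicit monomial in $\tau^{-1}$, while the $U(\cdots)U^{*}$ side contributes with an additional factor of $\sigma_{U}^{\pm 1}$ for each application of the spaghetti move (the $T_{2}\ast T_{2}$ analogue of Lemma \ref{lem:RhoSpaghettiJellyfish}). Comparing the two coefficients forces $\sigma_{U}=\sigma_{U}^{-1}$, and hence $\omega_{U}=1$, provided $T_{0}$ is chosen so that the total $\sigma_{U}$ exponent is odd; the spaghetti-move bookkeeping in Lemma \ref{lem:RhoSpaghettiJellyfish} suggests such a choice is available for each parity of $n$. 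For part (2), under the standing assumption $\omega_{U}=1$ I would exhibit a second train $T_{1}$ that, on exactly one side of the consistency equation, incurs a single instance of the $-\tau^{-1}$ correction from the two-car contraction rule at the threshold. The resulting $\tau$-rational discrepancy cannot be cancelled by any other contribution to $T_{1}$, since the train basis is linearly independent by the $T_{2}\ast T_{2}$ version of Proposition \ref{prop:basis}.

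The main obstacle, and the reason the finite-$n$ proofs were carried out by explicit case-by-case tables in Figures \ref{fig:TTn=2}--\ref{fig:TTn=4}, is that the number of trains contributing with nonzero coefficient grows with $n$, and one must be certain that the isolated basis elements $T_{0}$ and $T_{1}$ receive only the contributions described and no others. The hard part is therefore to organize the bookkeeping in an $n$-independent way; I expect this will require selecting $T_{0}$ and $T_{1}$ from a family parametrized by $n$, together with an induction on the number of two-car segments in the intermediate expansion that shows only the expected terms survive through each application of Lemma \ref{lem:DoubleRhoJF}. An alternative, and perhaps shorter, route would be to prove the $T_{2}\ast T_{2}$ analogue of Theorem \ref{thm:ATToFish}, realizing each hypothetical $\TT_{n,\omega_{U}}$ as the even part of a subfactor composite of two $A_{4}$ subfactors, and then invoke Liu's nonexistence result \cite{LiuFish}; but a purely diagrammatic, uniform jellyfish argument along the lines sketched above should also be within reach, and would have the advantage of simultaneously yielding the root-of-unity constraint $\omega_{U}=1$ without appealing to external subfactor input.
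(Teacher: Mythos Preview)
This statement is a \emph{conjecture} in the paper, not a theorem; the paper gives no proof of it. There is therefore nothing to compare your proposal against. The paper's only remark beyond stating the conjecture is that Liu's independent subfactor argument \cite{LiuFish}, combined with Theorem~\ref{thm:SubfactorIffTT}, does establish part~(2) --- but by an entirely different method, not by the jellyfish/train-basis technique the conjecture asks about.

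Your proposal is an honest sketch of a strategy rather than a proof, and you correctly identify the genuine obstacle: the number of trains contributing nonzero coefficients grows with $n$, and isolating a single witness train $T_0$ or $T_1$ that receives \emph{only} the contributions you describe, uniformly in $n$, is precisely the step that the paper could not carry out and is why the statement remains conjectural. Your claim that ``for train profiles whose internal bundles stay strictly away from this threshold, the resulting coefficients are \ldots\ manifestly independent of $n$'' is plausible but not established; in the explicit tables (Figures~\ref{fig:TTn=2}--\ref{fig:TTn=4}) the relevant trains and their coefficients do change shape with $n$, and the threshold interactions proliferate. The inductive bookkeeping you propose is a reasonable direction, but as written it is a plan, not an argument, and the paper's authors evidently did not find a way to make it go through.
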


\subsection{Application to subfactors}\label{sec:A4A4Subfactors}

The techniques of Section \ref{sec:Subfactors} can be used to prove the following theorem.

\begin{thm}\label{thm:SubfactorIffTT}
Any $A_4-A_4$ composite subfactor has (dual) even half $T_2*T_2$ or $\TT_{n,\omega_U}$ for some $n$-th root of unity $\omega_U$.

Conversely, in $T_2*T_2$, $1\oplus \rho\oplus \rho\mu\rho$ is a Frobenius algebra object with subalgebra $1\oplus\rho$. Thus if $\TT_{n,\omega_U}$ exists, then there is an $A_4-A_4$ composite subfactor with (dual) even half $\TT_{n,\omega_U}$.
\end{thm}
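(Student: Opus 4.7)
The proof has two halves, mirroring Theorems \ref{thm:FishToAT} and \ref{thm:ATToFish} with the role of $A_3$ replaced by a second copy of $A_4$.

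For the forward direction, suppose $N \subset P \subset M$ is an $A_4$-$A_4$ composite subfactor, so $[M:P] = [P:N] = \tau^2$, and write $\cP_\bullet$ for its planar algebra. The Bisch--Jones free composition \cite{MR1437496} endows $\cP_\bullet$ with a Fuss-Catalan planar subalgebra $\cF\cC_\bullet$ whose two strand colors correspond to the two copies of $A_4$, which I draw as red and orange. I define projections $\rho, \mu \in \frac{1}{2}\cP_+$ by wrapping an $A_4$ Jones-Wenzl projection $\jw{2}$ around single strands of each color, in the style of Definition \ref{defn:ThetaRho} but now with both wrappings of $A_4$ type. Direct analogs of Lemma \ref{lem:Nontrivial} and Proposition \ref{prop:DepthTwo} then show that $\rho$ and $\mu$ each satisfy the $T_2$ fusion rules and that $\jw{2} \ominus \rho \cong \rho \mu \rho$. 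Since every $N$-$N$ bimodule appears as a summand of some tensor power of $\jw{2}$, it is a summand of some alternating word in $\rho$ and $\mu$; arguing as in Corollary \ref{cor:EvenHalfOfFish} produces a faithful, dominant tensor functor $T_2 * T_2 \to \frac{1}{2}\cP_+$, so $\frac{1}{2}\cP_+$ is a unitary quotient of $T_2 * T_2$. By the classification of such quotients established in this section, $\frac{1}{2}\cP_+$ is either $T_2 * T_2$ itself or $\TT_{n,\omega_U}$ for some $n \geq 2$ and some $n$-th root of unity $\omega_U$.

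For the converse, the plan is to copy the proof of Proposition \ref{prop:AlgebraObject} with $\theta$ replaced by $\mu$, showing that $A = 1 \oplus \rho \oplus \rho\mu\rho$ is a Frobenius algebra object with subalgebra $B = 1 \oplus \rho$ in $T_2 * T_2$. I specify the multiplication $A \otimes A \to A$ via a $3 \times 3$ table of component maps; by unitality, rotational invariance, and Frobenius reciprocity, all entries are either zero, canonical, or proportional to a single unknown scalar $\lambda$ governing the map $\rho\mu\rho \otimes \rho\mu\rho \to \rho$ and its rotations. Associativity restricted to $B \otimes B \otimes B$ recovers the computation already done in Proposition \ref{prop:AlgebraObject}, fixing the scalar on $B$ via relation (AT1); associativity on the full $A \otimes A \otimes A$ reduces to the analogous diagrammatic identity involving (AT1) applied to triples of $\rho$ strands, forcing $\lambda = \mp \sqrt{\tau}$ as before. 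Given this Frobenius algebra and subalgebra, the standard reconstruction recalled in Remark \ref{rem:AlgebrasAndSF} produces factors $N \subset P \subset M$ with $B$ corresponding to the intermediate $P$ and $[M:P] = [P:N] = \tau^2$, yielding an $A_4$-$A_4$ composite subfactor whose even half is $\TT_{n,\omega_U}$.

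The main obstacle is largely bookkeeping: in $A_2 * T_2$, $\rho$ and $\theta$ play asymmetric roles (one Temperley--Lieb, one from $A_2$), whereas in $T_2 * T_2$, $\rho$ and $\mu$ are symmetric and both carry trivalent vertices satisfying (AT1). I need to confirm that every diagrammatic identity used in the analogs of Propositions \ref{prop:DepthTwo} and \ref{prop:AlgebraObject} depends only on the skein theory of a single ($\rho$) strand, with the other color merely passing through. Because the only nontrivial skein relation invoked is (AT1) applied to $\rho$ strands, this transfer is essentially automatic, and no genuinely new content beyond the ingredients of Section \ref{sec:Subfactors} is required.
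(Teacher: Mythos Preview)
Your forward direction is fine and matches the paper's argument essentially verbatim.

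The converse direction has a genuine gap. You claim the multiplication table for $A = 1 \oplus \rho \oplus \rho\mu\rho$ is a straight transcription of Proposition~\ref{prop:AlgebraObject}, with the only free parameter $\lambda$ sitting on $\rho\mu\rho \otimes \rho\mu\rho \to \rho$ and fixed to $\mp\sqrt{\tau}$ by the same associativity check. This is false: in $T_2 * T_2$ the hom space $\rho\mu\rho \otimes \rho\mu\rho \to \rho\mu\rho$ is one-dimensional, not zero. Indeed $(\rho\mu\rho)^2 \cong \rho\mu(1\oplus\rho)\mu\rho \cong \rho(1\oplus\mu)\rho \oplus \rho\mu\rho\mu\rho$ contains a copy of $\rho\mu\rho$, coming precisely from the $\mu$-trivalent vertex that $\theta$ lacked. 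So the table acquires a \emph{new} nonzero entry, and with it a new scalar parameter (the paper calls it $\lambda$, having already fixed the $\rho\mu\rho\otimes\rho\mu\rho\to\rho$ coefficient to $\mp\sqrt{\tau}$).

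The associativity constraint that pins this new $\lambda$ down is also new: one must compare the two bracketings of $\rho\mu\rho \otimes \rho\mu\rho \otimes \rho\mu\rho \to \rho\mu\rho$, which produces an equation of the form $\lambda^2 = -\lambda^2/\tau + \tau$, i.e.\ $\lambda^2 = 1$. Your final paragraph asserts that ``the only nontrivial skein relation invoked is (AT1) applied to $\rho$ strands, with the other color merely passing through''; this is exactly where the argument breaks, since the $\mu$-trivalent vertex is now an essential part of the multiplication and the relevant associativity check uses Relation~\eqref{rel:AT1} for $\mu$ as well as for $\rho$.
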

\begin{proof}
If we have a composite subfactor $N\subset P\subset M$, where $N\subset P$ and $P\subset M$ are $A_4$ subfactors, we can define $\rho,\mu$ analogously to Definition \ref{defn:ThetaRho}.
Thus $\rho,\mu\ncong 1$ are irreducible, satisfying $\rho^2\cong 1\oplus \rho$ and $\mu^2\cong 1\oplus \mu$ by the same proof as in Lemma \ref{lem:Nontrivial}. 
Again, we have $\jw{2}\cong \rho\oplus \rho\mu\rho$, so the $M-M$ bimodules are generated by $\rho,\mu$, and the even half is either $T_2*T_2$ or $\TT_{n,\omega_U}$ for some $\omega_U$.

For the converse, the algebra map is given by
\begin{align*}
\text{maps from $X\otimes Y \to 1$: }&
\begin{array}{|c|c|c|c|}
\hline
\otimes & 1 & \rho & \rho\mu\rho
\\\hline
1 & 
\begin{tikzpicture}[baseline = -.1cm]
	\clip (-.45,-.45) rectangle (.45,.45);
	\nbox{thick}{(0,0)}{0}{0}{}
\end{tikzpicture}
& 0 & 0
\\\hline
\rho & 0 & 
\begin{tikzpicture}[baseline = -.1cm]
	\clip (-.45,-.45) rectangle (.45,.45);
	\nbox{thick}{(0,0)}{0}{0}{}
	\draw[thick, rho] (-.25,-.4) arc (180:0:.25cm);
\end{tikzpicture}
& 0
\\\hline
\rho\mu\rho &  
0 & 0 &
\begin{tikzpicture}[baseline = -.1cm]
	\clip (-.45,-.45) rectangle (.45,.45);
	\nbox{thick}{(0,0)}{0}{0}{}
	\draw[thick, rho] (-.3,-.4) arc (180:0:.3cm);
	\draw[thick, mu] (-.2,-.4) arc (180:0:.2cm);
	\draw[thick, rho] (-.1,-.4) arc (180:0:.1cm);
\end{tikzpicture}
\\\hline
\end{array}
\displaybreak[1]\\
\text{maps from $X\otimes Y \to \rho$: }&
\begin{array}{|c|c|c|c|}
\hline
\otimes & 1 & \rho & \rho\mu\rho
\\\hline
1 & 0 & 
\begin{tikzpicture}[baseline = -.1cm]
	\clip (-.45,-.45) rectangle (.45,.45);
	\nbox{thick}{(0,0)}{0}{0}{}
	\draw[thick, rho] (.2,-.4) -- (0,.4);
\end{tikzpicture}
& 0
\\\hline
\rho &  
\begin{tikzpicture}[baseline = -.1cm]
	\clip (-.45,-.45) rectangle (.45,.45);
	\nbox{thick}{(0,0)}{0}{0}{}
	\draw[thick, rho] (-.2,-.4) -- (0,.4);
\end{tikzpicture}
& 
\frac{\pm1}{\sqrt{\tau}}\,
\begin{tikzpicture}[baseline = -.1cm]
	\clip (-.45,-.45) rectangle (.45,.45);
	\filldraw[rho] (0,0) circle (.05cm);
	\draw[thick, rho] (0,0) -- (-.2,-.4);
	\draw[thick, rho] (0,0) -- (0,.4);
	\draw[thick, rho] (0,0) -- (.2,-.4);
	\nbox{}{(0,0)}{0}{0}{}
\end{tikzpicture}
& 0
\\\hline
\rho\mu\rho &  
0 & 0 & 
\mp\sqrt{\tau}\,
\begin{tikzpicture}[baseline = -.1cm]
	\clip (-.45,-.45) rectangle (.45,.45);
	\filldraw[rho] (0,0) circle (.05cm);
	\draw[thick, rho] (0,0) -- (-.3,-.4);
	\draw[thick, rho] (0,0) -- (0,.4);
	\draw[thick, rho] (0,0) -- (.3,-.4);
	\draw[thick, mu] (-.175,-.4) arc (180:0:.175cm);
	\draw[thick, rho] (-.1,-.4) arc (180:0:.1cm);
	\nbox{}{(0,0)}{0}{0}{}
\end{tikzpicture}
\\\hline
\end{array}
\displaybreak[1]\\
\text{maps from $X\otimes Y \to \rho\mu\rho$: }&
\begin{array}{|c|c|c|c|}
\hline
\otimes & 1 & \rho & \rho\mu\rho
\\\hline
1 & 0 & 0 &
\begin{tikzpicture}[baseline = -.1cm]
	\clip (-.45,-.45) rectangle (.45,.45);
	\nbox{thick}{(0,0)}{0}{0}{}
	\draw[thick, rho] (.1,-.4) -- (-.1,.4);
	\draw[thick, mu] (.2,-.4) -- (0,.4);
	\draw[thick, rho] (.3,-.4) -- (.1,.4);
\end{tikzpicture}
\\\hline
\rho &  
0 & 0 &
\mp\sqrt{\tau}\,
\begin{tikzpicture}[baseline = -.1cm]
	\clip (-.45,-.45) rectangle (.45,.45);
	\coordinate (a) at (-.1,0);
	\filldraw[rho] (a) circle (.05cm);
	\draw[thick, rho] (a) -- (-.3,-.4);
	\draw[thick, rho] (a) -- (-.1,.4);
	\draw[thick, rho] (a) -- (.1,-.4);
	\nbox{}{(0,0)}{0}{0}{}
	\draw[thick, mu] (.2,-.4) -- (0,.4);
	\draw[thick, rho] (.3,-.4) -- (.1,.4);
\end{tikzpicture}
\\\hline
\rho\mu\rho &  
\begin{tikzpicture}[baseline = -.1cm]
	\clip (-.45,-.45) rectangle (.45,.45);
	\nbox{thick}{(0,0)}{0}{0}{}
	\draw[thick, rho] (-.3,-.4) -- (-.1,.4);
	\draw[thick, mu] (-.2,-.4) -- (0,.4);
	\draw[thick, rho] (-.1,-.4) -- (.1,.4);
\end{tikzpicture}
& 
\mp\sqrt{\tau}\,
\begin{tikzpicture}[baseline = -.1cm]
	\clip (-.45,-.45) rectangle (.45,.45);
	\coordinate (a) at (.1,0);
	\filldraw[rho] (a) circle (.05cm);
	\draw[thick, rho] (a) -- (.3,-.4);
	\draw[thick, rho] (a) -- (.1,.4);
	\draw[thick, rho] (a) -- (-.1,-.4);
	\nbox{}{(0,0)}{0}{0}{}
	\draw[thick, mu] (-.2,-.4) -- (0,.4);
	\draw[thick, rho] (-.3,-.4) -- (-.1,.4);
\end{tikzpicture}
& 
\lambda\,
\begin{tikzpicture}[baseline = -.1cm]
	\clip (-.45,-.45) rectangle (.45,.45);
	\coordinate (a) at (0,0);
	\filldraw[mu] (a) circle (.05cm);
	\draw[thick, mu] (a) -- (-.2,-.4);
	\draw[thick, mu] (a) -- (0,.4);
	\draw[thick, mu] (a) -- (.2,-.4);
	\draw[thick, rho] (-.3,-.4) -- (-.1,.4);
	\draw[thick, rho] (-.1,-.4) arc (180:0:.1cm);
	\draw[thick, rho] (.3,-.4) -- (.1,.4);
	\nbox{}{(0,0)}{0}{0}{}
\end{tikzpicture}
\\\hline
\end{array}
\end{align*}
We analyze maps $\rho\mu\rho\otimes \rho\mu\rho\otimes\rho\mu\rho\to\rho\mu\rho$ to get the restriction
$$
\lambda^2\,
\begin{tikzpicture}[baseline = -.1cm]
	\clip (-.45,-.45) rectangle (.85,.45);
	\nbox{thick}{(0,0)}{0}{.4}{}
	\draw[thick, rho] (-.3,-.4) -- (-.1,.4);
	\draw[thick, mu] (-.2,-.4) -- (0,.4);
	\draw[thick, rho] (.7,-.4) -- (.5,.4);
	\draw[thick, rho] (-.1,-.4) arc (180:0:.1cm);
	\draw[thick, mu] (.2,-.4) arc (180:0:.2cm);
	\draw[thick, rho] (.3,-.4) arc (180:0:.1cm);
\end{tikzpicture}
+\left(\frac{-\lambda^2}{\tau}+\tau\right)
\begin{tikzpicture}[baseline = -.1cm,xscale=-1]
	\clip (-.45,-.45) rectangle (.85,.45);
	\nbox{thick}{(0,0)}{0}{.4}{}
	\draw[thick, rho] (-.3,-.4) -- (-.1,.4);
	\draw[thick, mu] (-.2,-.4) -- (0,.4);
	\draw[thick, rho] (.7,-.4) -- (.5,.4);
	\draw[thick, rho] (-.1,-.4) arc (180:0:.1cm);
	\draw[thick, mu] (.2,-.4) arc (180:0:.2cm);
	\draw[thick, rho] (.3,-.4) arc (180:0:.1cm);
\end{tikzpicture}
=
\lambda^2\,
\begin{tikzpicture}[baseline = -.1cm,xscale=-1]
	\clip (-.45,-.45) rectangle (.85,.45);
	\nbox{thick}{(0,0)}{0}{.4}{}
	\draw[thick, rho] (-.3,-.4) -- (-.1,.4);
	\draw[thick, mu] (-.2,-.4) -- (0,.4);
	\draw[thick, rho] (.7,-.4) -- (.5,.4);
	\draw[thick, rho] (-.1,-.4) arc (180:0:.1cm);
	\draw[thick, mu] (.2,-.4) arc (180:0:.2cm);
	\draw[thick, rho] (.3,-.4) arc (180:0:.1cm);
\end{tikzpicture}
+\left(\frac{-\lambda^2}{\tau}+\tau\right)
\begin{tikzpicture}[baseline = -.1cm]
	\clip (-.45,-.45) rectangle (.85,.45);
	\nbox{thick}{(0,0)}{0}{.4}{}
	\draw[thick, rho] (-.3,-.4) -- (-.1,.4);
	\draw[thick, mu] (-.2,-.4) -- (0,.4);
	\draw[thick, rho] (.7,-.4) -- (.5,.4);
	\draw[thick, rho] (-.1,-.4) arc (180:0:.1cm);
	\draw[thick, mu] (.2,-.4) arc (180:0:.2cm);
	\draw[thick, rho] (.3,-.4) arc (180:0:.1cm);
\end{tikzpicture}\,,
$$
i.e., $\lambda^2=1$.
\end{proof}

\begin{cor}
There is a unique $A_4-A_4$ composite subfactor for $n=2,3$. For $n=4,\dots, 10$ there is no such composite subfactor.
\end{cor}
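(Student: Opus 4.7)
The plan is to use Theorem~\ref{thm:SubfactorIffTT} as a dictionary and reduce this corollary to the existence/non-existence results already established for the fusion categories $\TT_{n,\omega_U}$. Concretely, Theorem~\ref{thm:SubfactorIffTT} has two halves: the forward direction says any $A_4$--$A_4$ composite subfactor has dual even half isomorphic to either $T_2 * T_2$ or some $\TT_{n,\omega_U}$; the converse constructs such a subfactor from the canonical Frobenius algebra $1 \oplus \rho \oplus \rho\mu\rho$ inside $\TT_{n,\omega_U}$. Combined with the algebra-to-subfactor correspondence recalled in Remark~\ref{rem:AlgebrasAndSF}, the class of an $A_4$--$A_4$ composite subfactor (for fixed $n$) is therefore determined by, and determined by, the fusion category $\TT_{n,\omega_U}$ together with this canonical algebra.

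For non-existence in the range $4 \leq n \leq 10$, I would invoke Theorem~\ref{thm:A4A4Nonexistence}: no $\TT_{n,\omega_U}$ exists for any choice of $\omega_U$, so by the dictionary there is no composite subfactor with this parameter. For the uniqueness half of the corollary when $n = 2, 3$, Theorem~\ref{thm:A4A4Unique} forces $\omega_U = 1$; the only candidate fusion category is then $\TT_{n,1}$, and because the Frobenius algebra $1 \oplus \rho \oplus \rho\mu\rho \supset 1 \oplus \rho$ used in the converse half of Theorem~\ref{thm:SubfactorIffTT} is canonical (its structure maps are determined up to an overall sign that is absorbed by relabeling, as in the proof of Proposition~\ref{prop:AlgebraObject}), the resulting composite subfactor is unique.

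It remains to establish existence for $n = 2, 3$. The case $n = 2$ is immediate, since $\TT_{2,1} = T_2 \boxtimes T_2$ is automatically realized, for example by taking a tensor product of two hyperfinite $A_4$ subfactors. The main obstacle is the construction of $\TT_{3,1}$, or equivalently the $n = 3$ composite subfactor itself. I would mirror the strategy of Subsection~\ref{sec:Existence}: locate the desired composite subfactor as an intermediate subfactor of a carefully chosen reduced subfactor of a known small ``exotic'' fusion category (analogous to how the $\fish_3$ subfactor was extracted from a reduction of $3^{\Integer/4}$), then verify via Frobenius reciprocity that its principal graph and fusion rules match those predicted by $\TT_{3,1}$. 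Once such a subfactor is in hand, the forward direction of Theorem~\ref{thm:SubfactorIffTT} together with Theorem~\ref{thm:A4A4Unique} automatically identifies its dual even half as $\TT_{3,1}$, completing the corollary. The bulk of the computational effort will be in identifying the right ambient category and checking that the reduction produces the correct intermediate lattice.
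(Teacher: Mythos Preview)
Your overall scaffolding matches the paper's: uniqueness and non-existence are reduced, via Theorem~\ref{thm:SubfactorIffTT}, to Theorems~\ref{thm:A4A4Unique} and~\ref{thm:A4A4Nonexistence}, and $n=2$ existence is immediate from $\TT_{2,1}=T_2\boxtimes T_2$.

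The one place where you diverge is in the existence argument for $n=3$. You propose to imitate the $\fish_3$ construction and find the composite subfactor as an intermediate of a reduced subfactor of some exotic category, then read off that its even half is $\TT_{3,1}$. The paper takes a shorter route: it starts from the (already known) 2D2 subfactor, writes out the fusion table of its dual even half, and observes that two of the simple objects $\rho,\mu$ each generate a copy of $T_2$ with $\rho\mu\rho\cong\jw{2}\cong\mu\rho\mu$ but $\rho\mu\ncong\mu\rho$. Since $\rho,\mu$ tensor-generate the whole even half, this identifies that category directly as $\TT_{3,1}$; no intermediate-subfactor or principal-graph computation is needed. Your reduction strategy is not wrong---indeed the paper remarks at the end of \S\ref{sec:A4A4Subfactors} that the $n=2,3$ composites also arise as the reductions of $\fish_2,\fish_3$ at $\rho\theta\rho$---but it is considerably more work than simply inspecting a fusion table.
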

\begin{proof}
By Theorem \ref{thm:SubfactorIffTT}, uniqueness and nonexistence follow from Theorems \ref{thm:A4A4Unique} and \ref{thm:A4A4Nonexistence} respectively. Existence for $n=2,3$ is proved below.
\end{proof}

Liu's method also applies to the $A_4-A_4$ composite subfactors, and he shows that no subfactor with even half $\TT_{n,\omega_U}$ exists for any $n\geq 4$ \cite{LiuFish}. His result together with Theorem \ref{thm:SubfactorIffTT} shows that $\TT_{n,\omega_U}$ does not exist for any $n\geq 4$.

\paragraph{Existence of $\TT_{2,1}$ and $\TT_{3,1}$.\\}
Clearly $\TT_{2,1}=T_2\boxtimes T_2$ exists. We can construct $\TT_{3,1}$ from the 2D2 subfactor with principal graphs
$$
\left(
\bigraph{bwd1v1v1p1v1x1v1v1duals1v1v1v1}
,
\bigraph{bwd1v1v1p1v1x0p1x0p0x1p0x1v0x1x1x0duals1v1v1x3x2x4}
\right),
$$
which is constructed in unpublished work of Izumi, and also in \cite{4442equi}. First, naming the even bimodules on the dual graph $1,\jw{2},\rho,\sigma,\overline{\sigma},\mu$ lexicographically from left to right, bottom to top, the fusion rules are

{\scriptsize{
$$
\begin{array}{|c|c|c|c|c|c|}
\hline 
\otimes & \jw{2} & \rho & \sigma & \overline{\sigma} & \mu \\
\hline \jw{2} & 1 {+} 2 \jw{2} {+} \sigma {+} \overline{\sigma} {+} \rho {+} \mu & \jw{2} {+} \overline{\sigma} & \jw{2} {+} \sigma {+} \overline{\sigma} {+} \mu & \jw{2} {+} \sigma {+} \overline{\sigma} {+} \rho & \jw{2} {+} \sigma \\
\hline \rho & \jw{2} {+} \sigma & 1 {+} \rho & \jw{2} & \overline{\sigma} {+} \mu & \overline{\sigma} \\
\hline \sigma & \jw{2} {+} \sigma {+} \overline{\sigma} {+} \rho & \sigma {+} \mu & \jw{2} {+} \overline{\sigma} & 1 {+} \jw{2} {+} \mu & \jw{2} \\
\hline \overline{\sigma} & \jw{2} {+} \sigma {+} \overline{\sigma} {+} \mu & \jw{2} & 1 {+} \jw{2} {+} \rho & \jw{2} {+} \sigma & \overline{\sigma} {+} \rho \\
\hline \mu & \jw{2} {+} \overline{\sigma} & \sigma & \sigma {+} \rho & \jw{2} & 1 {+} \mu \\
\hline
\end{array}
$$
}}

We see that $\rho$ and $\mu$ give two copies of $A_4$, and they satisfy the relation $\rho\mu\rho\cong \jw{2}\cong \mu\rho\mu$, but $\rho\mu\ncong \mu\rho$.  Hence $\TT_{3,1}$ exists.

\paragraph{$A_4-A_4$ composite principal graphs.\\}
It is possible to determine the principal graph as in Theorem \ref{thm:ATToFish}.
For $n=2,3$, the fusion graphs for the $N-N$ bimodules with respect to $\rho\oplus \rho\mu\rho$ are given by
$$
\begin{tikzpicture}[scale =1,baseline=-.1cm]
\draw[fill] (0,0) circle (0.05);
\draw (0.,0.) -- (1.,-0.5);
\draw (0.,0.) -- (1.,0.);
\draw (0.,0.) -- (1.,0.5);
\draw[fill] (1.,-0.5) circle (0.05);
\draw[fill] (1.,0.) circle (0.05);
\draw[fill] (1.,0.5) circle (0.05);
\draw (1.,-0.5) .. controls (0.5,-0.25) and (1.5,-0.25) .. (1.,-0.5);
\draw (1.,0.) .. controls (1.8,-0.25) and (1.8,-0.25) .. node[right] {\scalebox{.5}{$2$}} (1.,-0.5);
\draw (1.,0.5) .. controls (1.5,0.25) and (1.5,-0.25) .. (1.,-0.5);
\draw (1.,0.) .. controls (0.5,0.25) and (1.5,0.25) .. node[above=-2pt] {\scalebox{.5}{$3$}} (1.,0.);
\draw (1.,0.5) .. controls (1.8,0.25) and (1.8,0.25) .. node[right] {\scalebox{.5}{$2$}} (1.,0.);
\draw (1.,0.5) .. controls (0.5,0.75) and (1.5,0.75) .. (1.,0.5);
\end{tikzpicture}
\,,\,
\begin{tikzpicture}[scale =1,baseline=-.1cm]
\draw[fill] (0,0) circle (0.05);
\draw (0.,0.) -- (1.,-0.25);
\draw (0.,0.) -- (1.,0.25);
\draw[fill] (1.,-0.25) circle (0.05);
\draw[fill] (1.,0.25) circle (0.05);
\draw (1.,-0.25) -- node[below] {\scalebox{.5}{$2$}} (2.,-0.5);
\draw (1.,-0.25) -- (2.,0.);
\draw (1.,-0.25) -- (2.,0.5);
\draw (1.,0.25) -- (2.,0.5);
\draw[fill] (2.,-0.5) circle (0.05);
\draw[fill] (2.,0.) circle (0.05);
\draw[fill] (2.,0.5) circle (0.05);
\draw[red] (2.,-0.5) to[out=135,in=-135] (2.,0.5);
\draw (1.,-0.25) .. controls (0.5,-0.5) and (1.5,-0.5) .. node[below] {\scalebox{.5}{$3$}} (1.,-0.25);
\draw (1.,0.25) .. controls (1.5,0.) and (1.5,0.) .. (1.,-0.25);
\draw (1.,0.25) .. controls (0.5,0.5) and (1.5,0.5) .. (1.,0.25);
\draw (2.,-0.5) .. controls (1.5,-0.25) and (2.5,-0.25) .. (2.,-0.5);
\draw (2.,0.) .. controls (2.5,-0.25) and (2.5,-0.25) .. (2.,-0.5);
\draw (2.,0.5) .. controls (2.5,0.25) and (2.5,-0.25) .. (2.,-0.5);
\draw (2.,0.5) .. controls (2.5,0.25) and (2.5,0.25) .. (2.,0.);
\draw (2.,0.5) .. controls (1.5,0.75) and (2.5,0.75) .. node[above] {\scalebox{.5}{$2$}} (2.,0.5);
\end{tikzpicture}
$$
resulting in the following principal graphs for $n=2,3$ respectively:
$$
\bigraph{bwd1v1p1p1v1x1x0p0x1x0p0x1x1duals1v1x2x3}\,,\,\bigraph{bwd1v1p1v1x0p1x0p1x0p0x1v0x1x1x0p0x0x1x0p0x0x1x1v0x0x1duals1v1x2v3x2x1}\,.
$$
The first is the tensor product $A_4\otimes A_4$.

Liu pointed out to us that these two $A_4-A_4$ composite subfactors are also the reduced subfactors of $\fish_2$ and $\fish_3$ at $\rho\theta\rho$. 

\newpage
\appendix

\section{Tables for $A_2$ with $T_2$}\label{sec:FiguresForAT}

\begin{figure}[!htb]
$$

$$
\caption{Coefficients of $\rho\mu\rho\mu(U)$ and $U(\mu\rho\mu\rho(U))U^*$ in the train basis for $n=4$, computed in the {\texttt{Mathematica}} notebook {\texttt{JellyfishCalculations}}, bundled with the {\texttt{arXiv}} source of this article.}
\label{fig:TTn=4}
\end{figure}

\clearpage

\bibliographystyle{alpha}
\bibliography{../../bibliography/bibliography}

\end{document}